\documentclass[12pt,reqno]{amsart}
\pdfoutput=1

\usepackage[utf8]{inputenc}   
\usepackage[T1]{fontenc}
\usepackage{lmodern}
\usepackage{amsmath}
\usepackage{amsthm}
\usepackage{thmtools}
\usepackage{amssymb}
\usepackage{amscd}
\usepackage[inline]{enumitem}
\usepackage{caption}
\captionsetup{position=below}
\usepackage[dvipsnames]{xcolor}
\usepackage[pdftex, colorlinks, linkcolor = MidnightBlue, citecolor = WildStrawberry, urlcolor = BrickRed]{hyperref}
\usepackage[margin=1.in]{geometry} 

\usepackage{setspace}
\onehalfspacing

\usepackage{tikz-cd}


\newcommand{\Z}{\mathbb{Z}}
\newcommand{\R}{\mathbb{R}}

\newcommand{\C}{\mathbb{C}}

\newcommand{\D}{\mathbb{D}}
\newcommand{\DD}{\mathcal{D}}

\newcommand{\M}{\mathcal{M}}

\newcommand{\HH}{\mathbb{H}}


\newcommand{\eps}{\varepsilon}
\newcommand{\vphi}{\varphi}
\newcommand{\wtilde}{\widetilde}
\newcommand{\what}{\widehat}


\DeclareMathOperator{\Symp}{Symp}
\newcommand{\Smcg}{\operatorname{Mod}_{\omega}}

\DeclareMathOperator{\Diff}{Diff}
\DeclareMathOperator{\Mcg}{Mod}


\newcommand{\Grass}{\mathcal{L}}
\newcommand{\GrassOr}{\mathcal{L}^{\operatorname{or}}}

\newcommand{\cob}{\rightsquigarrow} 

\newcommand{\Cob}{\mathsf{Cob}}
\newcommand{\Lag}{\mathsf{Lag}}


\newcommand{\Gimm}{\Omega_{\operatorname{imm}}}
\newcommand{\Gunob}{\Omega_{\operatorname{unob}}}
\newcommand{\Gred}{\widetilde{\Omega}_{\operatorname{unob}}}
\newcommand{\Gemb}{\Omega_{\operatorname{emb}}}
\newcommand{\Gcob}{\Omega}


\newcommand{\chord}{\mathcal{O}}  	
\newcommand{\data}{\mathcal{D}} 	



\DeclareMathOperator{\Fuk}{Fuk}
\DeclareMathOperator{\DFuk}{DFuk}

\newcommand{\FukCob}{\operatorname{Fuk}_{\operatorname{cob}}}

\newcommand{\Kgrp}{K_0 \DFuk}

\DeclareMathOperator{\Mod}{mod}


\newcommand{\Def}[1]{\textbf{#1}}


\DeclareMathOperator{\id}{id}

\DeclareMathOperator{\Hom}{Hom}
\newcommand{\tens}{\otimes}
\DeclareMathOperator{\Ker}{Ker}

\newcommand{\bdry}{\partial}
\DeclareMathOperator{\Int}{int}

\newcommand{\iso}{\cong}
\newcommand{\del}{\partial}

\DeclareMathOperator{\flux}{flux}
\DeclareMathOperator{\hol}{Hol}

\DeclareMathOperator{\area}{area}


\theoremstyle{plain}

\newtheorem{maintheorem}{Theorem}
\setcounter{maintheorem}{0}

\newtheorem{maincor}[maintheorem]{Corollary}

\newtheorem{theorem}{Theorem}[section]
\newtheorem{proposition}[theorem]{Proposition}
\newtheorem{lemma}[theorem]{Lemma}
\newtheorem{corollary}[theorem]{Corollary}
\newtheorem{definition}[theorem]{Definition}

\newtheorem{main_question}{Question}

\theoremstyle{definition}
\newtheorem{remark}[theorem]{Remark}

\theoremstyle{remark}
\newtheorem*{notation*}{Notation}


\author{Dominique Rathel-Fournier}
\email{d.rathelfournier@gmail.com}
\title{Unobstructed Lagrangian cobordism groups of surfaces}
\date{\today}

\setcounter{tocdepth}{1}

\begin{document}

\begin{abstract}
	We study Lagrangian cobordism groups of closed symplectic surfaces of genus $g \geq 2$ whose relations
	are given by unobstructed, immersed Lagrangian cobordisms. 
	Building upon work of Abouzaid \cite{Abouzaid08} and Perrier \cite{Perrier19},
	we compute these cobordism groups and show that they are isomorphic to the Grothendieck group of the derived Fukaya category of the surface.
	The proofs rely on techniques from two-dimensional topology to construct cobordisms
	that do not bound certain types of holomorphic polygons.
\end{abstract}

\maketitle

\thispagestyle{empty}

\tableofcontents

\section{Introduction}
\label{section:intro}

Lagrangian cobordism is a relation between Lagrangian submanifolds of a symplectic manifold $(M, \omega)$
that was introduced by Arnold \cite{Arnold}.
A consequence of the Gromov-Lees h\nobreakdash-principle is that
the study of general immersed Lagrangian cobordisms essentially reduces to algebraic topology,
see \cite{Eliashberg, Audin_thesis}.
In contrast, cobordisms satisfying suitable geometric constraints display remarkable rigidity phenomena.
A fundamental example of this is the proof by Biran and Cornea \cite{BC13, BC14}
that monotone, embedded Lagrangian cobordisms 
give rise to cone decompositions in the derived Fukaya category $\DFuk(M)$.
See also \cite{BC-lefschetz,BCS, Pictionary} for further developments, 
as well as the work of Nadler and Tanaka \cite{NadlerTanaka}, who obtained related results from a different perspective.

A natural problem is to establish to what extent Lagrangian cobordisms determine the structure of $\DFuk(M)$.
In this paper, we consider the following formalization of this question.
Fix classes of Lagrangians in $M$ and of Lagrangian cobordisms in $\C \times M$, possibly equipped with extra structures
(the precise classes considered in this paper will be specified shortly).
The associated \emph{Lagrangian cobordism group} $\Gcob(M)$ is the abelian group
whose elements are formal sums of Lagrangians in $M$, modulo relations coming from Lagrangian cobordisms.
On the side of $\DFuk(M)$, part of the information about cone decompositions is encoded
in the \emph{Grothendieck group} $\Kgrp(M)$.
(See Section \ref{section:preliminaries} for precise definitions of $\Gcob(M)$ and $\Kgrp(M)$.)

Whenever the cone decompositions results of \cite{BC14} can be extended to the classes of Lagrangians under consideration,
there is an induced morphism
\[
\Theta: \Gcob(M) \to \Kgrp(M),
\]
see \cite[Corollary 1.2.1]{BC14}.
Biran and Cornea posed the following question.

\begin{main_question}
\label{main_question}
When is the map $\Theta$ an isomorphism?
\end{main_question}
There are only two cases for which $\Theta$ is known to be an isomorphism
(for appropriate classes of Lagrangians):
the torus $T^2$ by work of Haug \cite{Haug15} (see also \cite{Hensel} for a refinement of this result),
and Liouville surfaces by work of Bosshard \cite{Bosshard}.

In this paper, we consider this problem in the case of a closed surface $\Sigma$ of genus $g \geq 2$.
In this case, the group $\Kgrp(\Sigma)$ was computed by Abouzaid \cite{Abouzaid08}, who showed that there is an isomorphism
\begin{equation}
\label{eq:computation_Kgrp}
\Kgrp(\Sigma) \iso H_1(S \Sigma; \Z) \oplus \R,
\end{equation}
where $S\Sigma$ is the unit tangent bundle of $\Sigma$.
The main goal of this paper is to answer Question­~\ref{main_question}
by exhibiting a Lagrangian cobordism group $\Gunob(\Sigma)$ whose relations are given by \emph{unobstructed} immersed cobordisms,
such that the associated map $\Theta: \Gunob(\Sigma) \to \Kgrp(\Sigma)$ is an isomorphism.

\subsection{Main results}

\subsubsection{Unobstructed cobordisms and cone decompositions}

Our first main result is an extension of the cone decomposition results of \cite{BC14} to a class of immersed Lagrangian cobordisms
that are \emph{unobstructed}, in the sense that they have a well-behaved Floer theory.
There exist many different mechanisms to achieve unobstructedness in the litterature, at varying
levels of generality and technical complexity.
The precise notion of unobstructedness considered in this work is the following:
we say that an immersed Lagrangian cobordism is \emph{quasi-exact} if it does not bound $J$-holomorphic disks and teardrops 
for suitable almost complex structures $J$ 
(see Section \ref{subsection:unob_cobordisms} for the precise definition).
We will use the terms \emph{quasi-exact} and \emph{unobstructed} interchangeably.

To state the first theorem, we let $(M, \omega)$ be a closed symplectic manifold which is symplectically aspherical, 
in the sense that $\omega|_{\pi_2(M)} = 0$. 
Recall that a Lagrangian submanifold $L \subset M$ is called weakly exact if $\omega|_{\pi_2(M, L)}=0$.
In this paper, we assume that all Lagrangians and Lagrangian cobordisms are oriented and equipped with a spin structure.

\begin{maintheorem}
\label{thm:cone_decomposition}
Let $V: L \cob (L_1, \ldots, L_k )$ be a quasi-exact Lagrangian cobordism between weakly exact, embedded Lagrangian submanifolds of $M$.
Then $L$ admits a cone decomposition in $\DFuk(M)$ with linearization $(L_1, \ldots, L_k)$. 
\end{maintheorem}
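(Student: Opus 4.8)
The plan is to adapt the strategy of Biran and Cornea \cite{BC14}, which proves this statement for monotone embedded cobordisms, to the quasi-exact immersed setting. Two features of the hypotheses make this possible: quasi-exactness of $V$ rules out the $J$-holomorphic disks and teardrops with boundary on $V$ that would obstruct $\del^2 = 0$ and the $A_\infty$-relations, while symplectic asphericity of $M$ together with weak exactness of the embedded Lagrangians $L, L_1, \dots, L_k$ rules out sphere and disk bubbling on all of the auxiliary embedded Lagrangians used in the argument. Consequently no bounding cochains are needed anywhere, and the Floer-theoretic moduli spaces that appear behave as in the embedded monotone case.

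\emph{Setting up the Floer theory.} After a $C^\infty$-small perturbation supported away from the cylindrical ends, I would arrange that $V \subset \C \times M$ has only transverse double points, all lying in a compact set, and coincides outside that set with a disjoint union of horizontal cylindrical ends — one over the outgoing end associated to $L$ and $k$ incoming ones associated to $L_1, \dots, L_k$. Fix an almost complex structure $J$ on $\C \times M$ that is suitable in the sense of the definition of quasi-exactness (Section~\ref{subsection:unob_cobordisms}) and for which the projection $\pi \colon \C \times M \to \C$ is $(J,i)$-holomorphic near infinity. Then the immersed Floer complex of $V$ in the sense of Akaho--Joyce is well defined, with $\del^2 = 0$ and trivial bounding cochain, as are the Floer complexes of all pairs $(\gamma \times N, V)$ with $\gamma \subset \C$ a curve and $N \subset M$ embedded and weakly exact. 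I would then enlarge the Fukaya category of cobordisms $\FukCob(\C \times M)$ of \cite{BC14} to a category $\FukCobHalf(\C \times M)$ containing both the quasi-exact immersed cobordisms and the cylindrical objects $\gamma \times N$, checking that under the standing hypotheses all moduli spaces of holomorphic polygons with boundary on such objects have the expected dimension, are compact up to breaking, and glue correctly, so that the $A_\infty$-structure and the functors relating $\FukCobHalf(\C \times M)$ to $\DFuk(M)$ are defined just as in \cite{BC14}.

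\emph{The filtration argument.} The core of the proof is the computation of $CF(\gamma \times N, V)$ for a curve $\gamma \subset \C$ chosen to meet each cylindrical end of $\pi(V)$ exactly once, in order, and $N$ an arbitrary test Lagrangian. Because $\pi$ is $J$-holomorphic, the open mapping theorem forces every holomorphic polygon contributing to the relevant $A_\infty$-operations to project to a holomorphic polygon in $\C$ with boundary on $\gamma$ and on $\pi(V)$; an action estimate then confines the low-energy part of the structure maps to neighbourhoods of the individual intersection points of $\gamma$ with $\pi(V)$. This produces a filtration of $CF(\gamma \times N, V)$ whose associated graded is
\[
\bigoplus_{i=1}^{k} CF(N, L_i),
\]
up to degree shifts, which — after ranging over all test Lagrangians $N$ and invoking the Yoneda-type argument of \cite{BC14} — exhibits $V$ as an iterated cone on cylindrical lifts of $(L_1, \dots, L_k)$ in $\FukCobHalf(\C \times M)$. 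Transporting this iterated-cone relation to $\DFuk(M)$ along the functors of \cite{BC14} that relate a cobordism to the Lagrangians at its ends then gives the desired cone decomposition of $L$ with linearization $(L_1, \dots, L_k)$.

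\emph{Main obstacle.} I expect the principal difficulty to be analytic bookkeeping: achieving transversality for all moduli spaces of polygons with boundary on the immersed $V$ — including those with corners at its self-intersection points — while preserving the holomorphicity of $\pi$ near infinity, and excluding degenerations in which a polygon escapes to infinity along a cylindrical end or sheds a corner at a self-intersection point that is invisible to the filtration. Granting quasi-exactness and asphericity, the compactness and gluing inputs should reduce to those of \cite{BC14}, with the immersed points handled by the standard Akaho--Joyce perturbation scheme; the genuinely new point is to verify that the immersed Fukaya-category machinery and the projection/maximum-principle control of \cite{BC14} are mutually compatible.
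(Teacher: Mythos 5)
Your plan follows essentially the same route as the paper: extend the Biran--Cornea cobordism category of \cite{BC14, BCS} to quasi-exact immersed cobordisms, use quasi-exactness (together with asphericity of $M$ and weak exactness of the ends) to get compactness with no bounding cochains, and then run the filtration/Yoneda argument of \cite{BC14} to obtain the iterated cone and transport it to $\DFuk(M)$ -- exactly the steps the paper defers to \cite{BC14}, plus Haug's verification of signs and gradings. The one point where you diverge is the treatment of the self-intersection points of $V$: you propose to handle corners there via the ``standard Akaho--Joyce perturbation scheme'', whereas the paper's construction is designed precisely to avoid Akaho--Joyce. In the category $\FukCob(\C \times M)$ all Floer polygons are taken to be smooth up to the boundary, with no corners at double points (the Floer data are even required to keep Hamiltonian chords away from the double points); what must then be excluded is only bubbling of configurations with at most one corner, and this is exactly what quasi-exactness together with the adapted almost complex structures gives (Lemmas \ref{lemma:no_stable_polygons} and \ref{lemma:no_stable_polygons_bis}). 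With corners excluded, transversality is classical, as in \cite{BC14, BCS}, because the extra constraints on the data concern only the boundary of the domain and interior perturbations remain free. This is not merely stylistic: the Akaho--Joyce framework rests on virtual perturbations (Kuranishi structures and multisections), hence rational coefficients, which sits awkwardly with the $\Z$-linear Novikov coefficients and spin-structure signs used here, and it is far heavier than what the quasi-exact hypothesis requires. Apart from this, your outline -- including the computation of $CF(\gamma \times N, V)$ via the open mapping theorem and the resulting filtration with associated graded $\bigoplus_i CF(N, L_i)$ -- matches the paper's proof.
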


\begin{remark}
In this work, we will only apply Theorem \ref{thm:cone_decomposition} to the case of curves on surfaces of genus $g \geq 2$.
However, since the weakly exact case requires no additional ingredients, we include it for the sake of generality.
\end{remark}

\subsubsection{Computation of $\Gunob(\Sigma)$ and comparison with $K$-theory}

In the remainder of this work, $\Sigma$ denotes a closed symplectic surface of genus $g \geq 2$.
We denote by $\Gunob(\Sigma)$ the Lagrangian cobordism group whose generators are non-contractible, oriented, embedded closed curves in $\Sigma$, 
and whose relations are given by quasi-exact cobordisms.
Our second main result is a computation of $\Gunob(\Sigma)$.

\begin{maintheorem}
\label{thm:computation_Gunob}
There is an isomorphism
\[
\Gunob(\Sigma) \iso H_1(S\Sigma; \Z) \oplus \R,
\]
where $S \Sigma$ is the unit tangent bundle of $\Sigma$. 
\end{maintheorem}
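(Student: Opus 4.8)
The plan is to establish the isomorphism in two halves, corresponding to the two summands $H_1(S\Sigma;\Z)$ and $\R$. The real number factor should be detected by the flux (or area) homomorphism: an oriented curve $\gamma$, together with a choice of primitive of $\omega$ near it, determines an area-type invariant, and the fact that quasi-exact cobordisms do not bound holomorphic disks will force any cobordism relation to preserve a suitable ``total area'' quantity. Concretely, I would construct a homomorphism $\Gunob(\Sigma) \to \R$ by a flux/area count that is manifestly invariant under quasi-exact immersed cobordism (the monotonicity-type argument showing that a nontrivial relation would produce a holomorphic disk of controlled area, contradicting quasi-exactness). For the $H_1(S\Sigma;\Z)$ factor, I would use the Gauss map: an oriented immersed curve lifts canonically to the unit tangent bundle $S\Sigma$, and its homology class in $H_1(S\Sigma;\Z)$ is the natural target. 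The composite $\Gunob(\Sigma)\to H_1(S\Sigma;\Z)\oplus\R$ then needs to be shown well-defined (invariance under relations) and bijective.

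The cleanest route to bijectivity is to factor the comparison through the already-known computation of $\Kgrp(\Sigma)$ by Abouzaid \cite{Abouzaid08}, using Theorem \ref{thm:cone_decomposition}. By that theorem, every quasi-exact cobordism between the embedded curves under consideration induces a cone decomposition in $\DFuk(\Sigma)$, hence the corresponding relation descends to $\Kgrp(\Sigma)$; this yields the Biran--Cornea-type morphism $\Theta:\Gunob(\Sigma)\to\Kgrp(\Sigma)\iso H_1(S\Sigma;\Z)\oplus\R$. So \emph{surjectivity} of $\Theta$ plus \emph{injectivity} of $\Theta$ would together give Theorem \ref{thm:computation_Gunob}. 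Surjectivity amounts to realizing every class in $H_1(S\Sigma;\Z)\oplus\R$ by (a formal difference of) non-contractible embedded curves: the $H_1$ part is handled by choosing curves with prescribed free-homotopy data and prescribed winding of the tangent framing (using that on a genus $g\geq 2$ surface one has enough embedded curves, and the index/rotation number can be adjusted by isotopy and by passing to formal sums), and the $\R$ part by scaling areas, e.g. by a Hamiltonian-type deformation or by taking parallel copies enclosing a region of prescribed area.

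The main obstacle, and the technical heart of the paper, will be \emph{injectivity} of $\Theta$ — equivalently, showing that the kernel of the map $\Gunob(\Sigma)\to H_1(S\Sigma;\Z)\oplus\R$ is trivial, i.e. that whenever two formal sums of curves have the same homology-in-$S\Sigma$ and the same total area, they are in fact related by a quasi-exact cobordism. This is where the ``techniques from two-dimensional topology'' advertised in the abstract enter: one must explicitly build immersed Lagrangian cobordisms in $\C\times\Sigma$ realizing all the needed relations (in particular the relation identifying a curve with its image under an area-preserving isotopy, the relations coming from splitting a curve along an intersection point / smoothing a self-intersection, and the relations that trivialize any homologically-and-area-trivial combination), \emph{and} verify that these cobordisms are quasi-exact, i.e. bound no $J$-holomorphic disks or teardrops. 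The disk/teardrop exclusion is the delicate point: for curves on a surface this should follow from a careful open-mapping / index argument controlling holomorphic polygons with boundary on the (suspension of the) cobordism, using that $\Sigma$ has genus $g\geq 2$ so that $\pi_2$ vanishes and incompressibility of the relevant curves obstructs disks — but making the traces of the surgeries ``thin'' enough, and positioning them in $\C$ so that no unexpected teardrop appears, requires the explicit two-dimensional constructions. I would organize this step as: (i) reduce to a generating set of relations via a normal-form argument for formal sums of curves up to homology and area; (ii) construct the elementary cobordisms (surgery traces, isotopy traces, Lagrangian-suspension-type cobordisms) realizing each generating relation; (iii) prove quasi-exactness of each by a surface-specific holomorphic-curve argument; (iv) assemble these to conclude $\ker\Theta=0$, hence that $\Theta$ is an isomorphism, which combined with \eqref{eq:computation_Kgrp} gives the stated description of $\Gunob(\Sigma)$.
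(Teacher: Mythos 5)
Your overall architecture matches the paper's (Gauss-map class in $H_1(S\Sigma;\Z)$ plus a holonomy/area invariant, surjectivity by explicit curves, and then the hard step of showing that any formal sum with vanishing invariants is killed by quasi-exact cobordisms), and routing the comparison through $\Kgrp(\Sigma)$ via Theorem \ref{thm:cone_decomposition} is legitimate, though note it merely repackages the problem: the kernel statement you must prove lives entirely on the cobordism side, and in fact the paper proves Theorem \ref{thm:computation_Gunob} directly and only afterwards deduces Corollary \ref{cor:isomorphism_Kgroup} from Abouzaid's computation. The genuine gap is that your steps (i)--(iv) for injectivity are a wish list rather than an argument: you do not supply the mechanisms that actually make the reduction to ``elementary relations'' work, and without them the normal-form step (i) has no content. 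Concretely, the paper (a) first splits off the $\R$ factor by proving that the subgroup $\mathcal{I}$ generated by differences of isotopic curves maps isomorphically to $\R$ under holonomy --- this already requires a nontrivial surgery construction (Proposition \ref{prop:isotopy_relation}) and is what lets one pass to the reduced group $\Gred(\Sigma)$ where curves may be moved by arbitrary isotopies; (b) computes the action of the mapping class group on $\Gred(\Sigma)$, proving the Dehn twist relation $T_{\alpha}[\beta]=[\beta]+(\alpha\cdot\beta)[\alpha]$ by realizing twists as iterated surgery cobordisms, with a Lickorish-type induction reducing to curves meeting in one or two points, which is precisely where the unobstructedness criteria (no $s$-marked teardrops, incompressibility via free subgroups of $\pi_1$) can be verified; and (c) uses this action plus the pair-of-pants relation and the class $T$ of torus-bounding curves (shown to have order $\chi(\Sigma)$ via the winding number) to exhibit generators of $\Gred(\Sigma)$ and identify it with $H_1(S\Sigma;\Z)$. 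None of these ideas --- the isotopy-subgroup splitting, the Dehn twist formula, the induction that confines the delicate teardrop analysis to the minimal-intersection cases --- appear in your proposal, and they are exactly the content separating the outline from a proof.

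Two smaller points. First, your surjectivity sketch contains an error: the rotation/winding number of an embedded curve is an invariant of isotopy (indeed of oriented cobordism), so it cannot be ``adjusted by isotopy''; the fiber direction of $H_1(S\Sigma;\Z)$ is instead hit by formal sums such as oriented boundaries of subsurfaces, whose winding number is $\chi(P) \bmod \chi(\Sigma)$ (Lemma \ref{lemma:holonomy+winding_bounding_curves}). Second, invariance of the $\R$-valued invariant under immersed cobordisms is not obtained by a monotonicity or holomorphic-disk argument as you suggest; it is the purely topological statement that the holonomy $\int \wtilde{\gamma}^*\lambda$, with $\lambda$ a primitive of the pullback of $\omega$ to $S\Sigma$, extends over the Gauss map of any oriented cobordism (Proposition \ref{prop:invariance_holonomy}), so quasi-exactness plays no role there.
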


In Theorem \ref{thm:computation_Gunob}, the map to $H_1(S\Sigma; \Z)$
is given by the homology class of lifts of curves to $S\Sigma$.
The map to $\R$ is a generalization to closed surfaces of the symplectic area of plane curves.
At the level of generators, these invariants are the same as those considered by Abouzaid in their computation of $\Kgrp(\Sigma)$.

It follows from Theorem \ref{thm:cone_decomposition} that the Yoneda embedding induces
a well-defined morphism $\Theta: \Gunob(\Sigma) \to \Kgrp(\Sigma)$.
By combining Theorem \ref{thm:computation_Gunob} and Abouzaid's result \eqref{eq:computation_Kgrp}, we immediately obtain
the following corollary,
which gives an answer to Question \ref{main_question} for higher genus surfaces.

\begin{maincor}
\label{cor:isomorphism_Kgroup}
The map $\Theta : \Gunob(\Sigma) \longrightarrow \Kgrp(\Sigma)$ is an isomorphism.
\end{maincor}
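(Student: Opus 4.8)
The plan is to realize $\Theta$ as the composite of the two isomorphisms already in hand, fitting it into a commutative triangle, so that the corollary becomes a formal consequence of Theorem \ref{thm:computation_Gunob} together with Abouzaid's computation \eqref{eq:computation_Kgrp}.

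First I would record why $\Theta$ is a well-defined homomorphism at all. Every non-contractible embedded curve in $\Sigma$ is weakly exact, so Theorem \ref{thm:cone_decomposition} applies: a quasi-exact cobordism $V : \gamma \cob (\gamma_1, \dots, \gamma_k)$ yields a cone decomposition of $\Yon(\gamma)$ in $\DFuk(\Sigma)$ with linearization $(\Yon(\gamma_1), \dots, \Yon(\gamma_k))$. By \cite[Corollary 1.2.1]{BC14}, the assignment $\gamma \mapsto [\Yon(\gamma)]$ then descends to a group homomorphism $\Theta : \Gunob(\Sigma) \to \Kgrp(\Sigma)$ on the abelianized free group generated by non-contractible oriented embedded curves.

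Next I would compare the two explicit identifications on generators. Write $\Phi_{\mathrm{cob}} : \Gunob(\Sigma) \xrightarrow{\sim} H_1(S\Sigma; \Z) \oplus \R$ for the isomorphism of Theorem \ref{thm:computation_Gunob} and $\Phi_K : \Kgrp(\Sigma) \xrightarrow{\sim} H_1(S\Sigma; \Z) \oplus \R$ for Abouzaid's isomorphism \eqref{eq:computation_Kgrp}. On a generating curve $\gamma$, $\Phi_{\mathrm{cob}}$ returns the homology class of a canonical lift $\widehat{\gamma}$ to the unit tangent bundle together with the $\R$-valued area invariant, and these are exactly the data out of which $\Phi_K$ is built on the object $\Yon(\gamma)$ — as flagged in the paragraph following Theorem \ref{thm:computation_Gunob}. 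Hence $\Phi_K \circ \Theta$ and $\Phi_{\mathrm{cob}}$ agree on every generator; since the classes of curves generate $\Gunob(\Sigma)$ and both maps are homomorphisms, $\Phi_K \circ \Theta = \Phi_{\mathrm{cob}}$, and therefore $\Theta = \Phi_K^{-1} \circ \Phi_{\mathrm{cob}}$ is an isomorphism.

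The only step with genuine content is this comparison of invariants: I would need to unwind Abouzaid's construction of \eqref{eq:computation_Kgrp} and verify that its $H_1(S\Sigma; \Z)$- and $\R$-components match, under compatible choices of orientation, of the lift to $S\Sigma$, of spin/local-system data, and of the normalization of the area term, with the invariants produced in the proof of Theorem \ref{thm:computation_Gunob}. I expect this to be a bookkeeping exercise in aligning conventions rather than a source of real difficulty, precisely because both computations are organized around the same pair of geometric quantities.
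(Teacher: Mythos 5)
Your proposal is correct and is essentially the paper's argument: the paper also fits $\Theta$ into a commutative triangle over $H_1(S\Sigma;\Z)\oplus\R$, using the isomorphism of Theorem \ref{thm:computation_Gunob} on one side and the fact (Proposition 6.1 of \cite{Abouzaid08}) that the same invariants of Section \ref{section:invariants} descend to $\Kgrp(\Sigma)$ on the other, which is precisely the generator-level comparison you defer as bookkeeping. The only cosmetic difference is that you invert Abouzaid's isomorphism explicitly, whereas the paper reads off injectivity of $\Theta$ from the triangle and surjectivity from the fact that the curve classes generate $\Kgrp(\Sigma)$.
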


\subsubsection{The immersed cobordism group}

Let $\Gimm(\Sigma)$ be the Lagrangian cobordism group whose generators are oriented immersed curves in $\Sigma$,
and whose relations are given by all oriented, immersed Lagrangian cobordisms. 
There is an obvious forgetful morphism $\Gunob(\Sigma) \to \Gimm(\Sigma)$.
As a byproduct of our computation of $\Gunob(\Sigma)$, we also obtain a computation of $\Gimm(\Sigma)$.

\begin{corollary}
\label{cor:computation_Gimm}
The natural map $\Gunob(\Sigma) \to \Gimm(\Sigma)$ is an isomorphism.
\end{corollary}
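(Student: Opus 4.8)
The plan is to derive the corollary from Theorem~\ref{thm:computation_Gunob} together with the flexibility of arbitrary immersed Lagrangian cobordisms. Write $\iota \colon \Gunob(\Sigma) \to \Gimm(\Sigma)$ for the forgetful morphism and $\Phi \colon \Gunob(\Sigma) \xrightarrow{\ \sim\ } H_1(S\Sigma;\Z) \oplus \R$ for the isomorphism of Theorem~\ref{thm:computation_Gunob}, recalling that on a generator $\gamma$ the map $\Phi$ records the homology class of a Gauss lift of $\gamma$ to $S\Sigma$ together with the area-type invariant. I will prove (i) that $\iota$ is surjective, and (ii) that $\Phi$ factors through $\iota$, say $\Phi = \overline{\Phi} \circ \iota$ for a homomorphism $\overline{\Phi} \colon \Gimm(\Sigma) \to H_1(S\Sigma;\Z) \oplus \R$. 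Since $\Phi$ is an isomorphism, (ii) forces $\iota$ to be injective, and with (i) this proves the claim (and incidentally identifies $\Gimm(\Sigma)$ with $H_1(S\Sigma;\Z)\oplus\R$).

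For (i), let $\gamma$ be an oriented immersed curve. After a regular homotopy, which can be realised by an immersed Lagrangian cobordism, we may assume $\gamma$ has only transverse double points. Resolving these one at a time by the orientation-compatible smoothing, the trace of each smoothing being an immersed Lagrangian cobordism (a local surgery model suffices, since every curve on $\Sigma$ is automatically Lagrangian), we see that $[\gamma]$ equals in $\Gimm(\Sigma)$ the class of an embedded multicurve $\beta_1 \sqcup \cdots \sqcup \beta_m$. It remains to express each contractible component $\beta_j$ in terms of non-contractible embedded curves: choosing a non-contractible embedded curve $\delta$ disjoint from $\beta_j$ and band-summing it with $\beta_j$ along an embedded band produces an embedded curve $\delta'$, freely homotopic to $\delta$ but differing from it by a local curl, and the trace of the band-sum is an immersed Lagrangian cobordism $\delta' \cob (\delta, \beta_j)$; hence $[\beta_j] = [\delta'] - [\delta]$ in $\Gimm(\Sigma)$ with $\delta$ and $\delta'$ non-contractible and embedded. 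Every generator of $\Gimm(\Sigma)$ therefore lies in the image of $\iota$.

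For (ii), it is enough to check that the two invariants defining $\Phi$ on generators are preserved by \emph{all} oriented immersed Lagrangian cobordisms, not merely the quasi-exact ones. The $S\Sigma$-class is a formal, homotopy-theoretic invariant: an immersed Lagrangian cobordism $V \subset \C \times \Sigma$ carries a Gauss map into the bundle of Lagrangian planes of $\C \times \Sigma$, which along each cylindrical end $\R \times L_i$ restricts to the Gauss lift of $L_i$ to $S\Sigma$, and the corresponding relation among homology classes follows by applying $H_1$ to the inclusion of the ends into $V$. The area-type invariant is preserved by exactly the symplectic-topological (flux / Stokes-type) argument already used to show that it descends to $\Gunob(\Sigma)$: it relies only on $\omega$ being closed and $V$ being Lagrangian, and uses nothing about $J$-holomorphic curves or quasi-exactness. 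This yields the factorization $\Phi = \overline{\Phi} \circ \iota$ and completes the argument. I expect the main obstacle to be the verification in (i) that the smoothing and band-sum traces are genuinely realised by immersed \emph{Lagrangian} cobordisms with the prescribed cylindrical ends and with compatible orientation and spin data; once the requisite local Lagrangian models are recorded, the remainder is bookkeeping.
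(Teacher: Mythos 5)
Your step (ii) (the invariants descend to $\Gimm(\Sigma)$, forcing injectivity) and the first half of step (i) (resolving double points by surgery cobordisms to reduce to embedded multicurves) agree with the paper. The genuine gap is the band-sum trick used to absorb a contractible embedded component $\beta_j$: the claimed immersed Lagrangian cobordism $\delta' \cob (\delta, \beta_j)$ does not exist, and in fact its existence is ruled out by the very invariant you establish in step (ii). Indeed, Proposition \ref{prop:invariance_homology_class} applies to \emph{all} oriented immersed Lagrangian cobordisms, so such a cobordism would force $[\wtilde{\delta'}] = [\wtilde{\delta}] + [\wtilde{\beta_j}]$ in $H_1(S\Sigma;\Z)$. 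But an embedded curve $\delta'$ freely homotopic to $\delta$ is isotopic to it, hence $[\wtilde{\delta'}] = [\wtilde{\delta}]$, while the Gauss lift of a contractible embedded curve is homologous to the fiber class of $S\Sigma$ (Lemma \ref{lemma:holonomy+winding_bounding_curves} with $\chi(D)=1$), which is a nonzero element of order $|\chi(\Sigma)| \geq 2$. So the relation $[\beta_j] = [\delta'] - [\delta]$ is simply false in $\Gimm(\Sigma)$, and the smooth pair-of-pants trace of the band sum cannot be realized as a Lagrangian cobordism with those cylindrical ends (the formal Gauss-map obstruction is exactly this fiber class). Relatedly, the phrase ``embedded curve $\delta'$ \dots differing from $\delta$ by a local curl'' is self-contradictory: a curl creates a double point, and it is precisely this curl (which shifts the winding number by $\pm 1$) that would be needed to make the Gauss-lift bookkeeping work, at the price of $\delta'$ no longer being embedded, which defeats the purpose of the reduction.

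The paper handles contractible components differently: it takes two simple curves $\alpha, \beta$ that are homologically independent, have geometric intersection number zero but intersect in two points of opposite sign, and performs surgery at both points; the surgery trace is a cobordism $(\alpha,\beta) \cob (\gamma,\delta)$ with $\gamma$ non-contractible and $\delta$ a \emph{small} contractible curve, so $[\delta]$ is a sum of non-contractible simple curves without violating the winding invariant (the fiber class is absorbed by $\gamma$). It then upgrades this to arbitrary contractible curves using Lemma \ref{lemma:ham_isotopic_embedded_case} (contractible simple curves with equal holonomy are Hamiltonian isotopic), the freedom to make $\area(\delta)$ arbitrarily small, and a decomposition of a general contractible curve into curves of small area. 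Some argument of this kind, rather than a band sum with a disjoint curve, is needed to complete your surjectivity step.
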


\begin{remark}
\label{rmk:immersed_group}
Corollary \ref{cor:computation_Gimm} corrects an error in Theorem 1.5 of \cite{Perrier19}, where it is claimed that
$\Gimm(\Sigma) \iso H_1(\Sigma; \Z) \oplus \Z_{\chi(\Sigma)}$, the latter group being isomorphic to $H_1(S\Sigma; \Z)$.
The error originates from a mistaken application of the h-principle for Lagrangian immersions in the proof of Lemma 2.1 of \cite{Perrier19}.
\end{remark}

\begin{remark}
The group $\Gimm(\Sigma)$ can also be computed using the 
Gromov-Lees h-principle and homotopy-theoretic methods; 
see \cite[Theorem 1.7]{DRF-immersions}.
\end{remark}

\subsubsection{Further remarks on the main results}

\begin{remark}
In the case of embedded Lagrangians, 
the notion of quasi-exactness was previously considered in \cite{BCS}, as well as in
\cite{SheridanSmith} under the name of
\emph{unobstructed Lagrangian branes}.
The definition given here is a straightforward generalization to the case of immersed 
Lagrangians, where one must
also have control over holomorphic disks with corners at self-intersection points,
of which teardrops are a special case.
One of the results of \cite{BCS} is a cone decomposition theorem for embedded quasi-exact cobordisms;
Theorem \ref{thm:cone_decomposition} is thus an extension of this result to the immersed case.
\end{remark}

\begin{remark}
	Although the definition of quasi-exactness involves holomorphic curves,
	the Lagrangian surgery cobordisms that will be constructed
	in the proof of Theorem \ref{thm:computation_Gunob}
	are unobstructed for \emph{topological reasons}, i.e.
	they are topologically unobstructed
	in the sense of Section \ref{section:top_unob_cobordisms}.
	The reason we consider the larger class of quasi-exact cobordisms
	is that it is closed under concatenations; see Remark \ref{rmk:def_unob_cob} and Remark \ref{rmk:def_unob_perrier}.
\end{remark}

\begin{remark}
	It is expected that Theorem \ref{thm:cone_decomposition} should
	generalize to immersed Lagrangian cobordisms that may bound disks and teardrops, but are nevertheless
	\emph{Floer-theoretically unobstructed} in a suitable sense.
	See for example the work of Biran-Cornea \cite{Pictionary} and Hicks \cite{Hicks-wall_crossing,Hicks-surgery} 
	for progress in this direction.
	One advantage of the class of quasi-exact cobordisms 
	is that it requires much less technical machinery to set up Floer theory and prove Theorem \ref{thm:cone_decomposition}.
	On the other hand,
	avoiding all disks and teardrops
	is the main source of difficulty in the computation of $\Gunob(\Sigma)$.
\end{remark}

\begin{remark}

The reason we consider immersed Lagrangian cobordisms in this paper is that Lagrangian surgery
-- which is our main source of Lagrangian cobordisms --
generally produces cobordisms that have self-intersections.
For this reason, embedded cobordisms are much harder to construct.
A natural question is whether one could obtain results analogous to Theorem \ref{thm:computation_Gunob} and Corollary \ref{cor:isomorphism_Kgroup}
using only embedded cobordisms.
Namely, is there a Lagrangian cobordism group $\Gemb(\Sigma)$
whose relations are given by unobstructed, oriented, embedded cobordisms,
such that the map $\Gemb(\Sigma) \to \Kgrp(\Sigma)$ is an isomorphism?
\end{remark}

\subsection{Outline of the proofs of the main results}
\label{subsection:outline}

\subsubsection{Outline of the proof of Theorem \ref{thm:cone_decomposition}}

The proof of Theorem \ref{thm:cone_decomposition} follows the same scheme as the proof of the cone decomposition results of \cite{BC14}
and the extension to embedded quasi-exact cobordisms developed in \cite{BCS}.
The main technical argument is the construction of a Fukaya category $\FukCob(\C \times M)$
whose objects are quasi-exact cobordisms.
The main difference between our setup and that of \cite{BC14} and \cite{BCS} is that we consider Lagrangian cobordisms that may have self-intersections.
The Floer theory of Lagrangian immersions was developed in its most general form by Akaho-Joyce \cite{AkahoJoyce}.
Restricting to Lagrangians that do not bound holomorphic disks and teardrops allows
us to use the transversality and compactness arguments of \cite{BC14} and \cite{BCS} with minimal modifications, 
rather than the more complicated virtual perturbation techniques of \cite{AkahoJoyce}.
The extension of the results of \cite{BC14} to this case thus
poses no new technical challenges.

\subsubsection{Outline of the proof of Theorem \ref{thm:computation_Gunob}}
\label{subsubsection:outlineB}

The general structure of the computation of $\Gunob(\Sigma)$ is similar to the computation
of $\Kgrp(\Sigma)$ by Abouzaid \cite{Abouzaid08}. 
In fact, many of our arguments can be seen as translations to the language of Lagrangian cobordisms of arguments appearing in \cite{Abouzaid08}.
The main idea of the proof is to use the action of the mapping class group $\Mcg(\Sigma)$ of $\Sigma$
on (a quotient of) $\Gunob(\Sigma)$ to obtain a simple set of generators of $\Gunob(\Sigma)$.
The computation of the action of $\Mcg(\Sigma)$ is done by realizing Dehn twists as iterated Lagrangian surgeries.
The key technical difficulty in doing this -- and the main contribution of the present work -- is to show that one can 
perform these surgeries in such a way 
that the associated surgery cobordisms are unobstructed, i.e. do not bound disks and teardrops.

In Section \ref{section:obstruction_dim2}, we give topological criteria that ensure that such cobordisms are
unobstructed, which rely on techniques from surface topology.
Roughly speaking, these criteria state that surgery cobordisms are unobstructed provided that the curves being surgered do not bound
certain types of polygons.
These conditions may be hard to ensure in practice for curves intersecting in many points.
To bypass this difficulty, we use an inductive argument, based on a technique of Lickorish \cite{Lick64},
that reduces the computation of the action of Dehn twists
to the simplest cases of curves which have only one or two intersections points,
where unobstructedness can be achieved easily.
This method may have interesting applications to the study of $\DFuk(\Sigma)$.

\subsection{Relation to previous work}

Lagrangian cobordism groups of higher genus surfaces
were studied previously by Perrier in their thesis \cite{Perrier19}.
Here, we make some remarks to clarify the relationship between the present work and \cite{Perrier19}.

The present work is heavily inspired by the results and methods of Perrier.
In particular, the general structure of the computation of $\Gunob(\Sigma)$
and many of the surgery procedures that we use
are the same as in \cite{Perrier19}.
However, it appears that the proofs of the key unobstructedness results in \cite{Perrier19} contain significant gaps,
which the present author was unable to fix in a straightforward way.
It is therefore one of the main purposes of this paper to clarify and correct some of the results and proofs of \cite{Perrier19}.

The main novel contributions of the present work are as follows.
First, we provide a proof that unobstructed cobordisms give rise to cone decompositions, filling a gap in \cite{Perrier19}.
It should be noted that the class of unobstructed cobordisms studied in the present work
differs from the class considered in \cite{Perrier19}.
This allows us to deal with some technical issues with the definition used in \cite{Perrier19}; see Remark \ref{rmk:def_unob_perrier}.

Our second main contribution concerns the proofs that the cobordisms constructed in \cite{Perrier19} are unobstructed.
In Section \ref{section:obstruction_dim2}, we formulate a precise criterion for the existence of teardrops on $2$-dimensional surgery cobordisms,
fixing a gap in the proof of the main obstruction criterion used in \cite{Perrier19} (see Proposition 5.11 therein).
We also treat the case of disks with boundary on surgery cobordisms, which is overlooked in \cite{Perrier19}.
We then apply these obstruction criteria to verify that the cobordisms appearing in the computation of $\Gunob(\Sigma)$
are unobstructed.
As explained in Section \ref{subsubsection:outlineB}, we use an inductive argument that 
allows us to circumvent the delicate combinatorial arguments used in \cite{Perrier19},
greatly simplifying the proofs.

Finally, we give a direct proof that holonomy is an invariant of oriented Lagrangian cobordism,
and use this to correct the computation of $\Gimm(\Sigma)$ appearing in \cite{Perrier19}, see Remark \ref{rmk:immersed_group}.

\subsection{Organization of the paper}

This paper is organized as follows.
In Section \ref{section:preliminaries}, we recall some well-known facts about Lagrangian cobordisms and Fukaya categories.
In Section \ref{section:fukaya_cob}, we construct the Fukaya category of unobstructed cobordisms $\FukCob(\C \times M)$
and use it to prove Theorem \ref{thm:cone_decomposition}.

The rest of the paper is devoted to the proof of Theorem \ref{thm:computation_Gunob}.
In Sections \ref{section:top_unob_cobordisms} and \ref{section:obstruction_dim2}, we establish 
the unobstructedness criteria that we will use repeatedly throughout the computation of $\Gunob(\Sigma)$.
In Section \ref{section:invariants}, we define the morphism $\Gunob(\Sigma) \to H_1(S\Sigma; \Z) \oplus \R$
appearing in Theorem \ref{thm:computation_Gunob}.
In Section \ref{section:isotopies}, we determine relations between isotopic curves and reduce the computation of $\Gunob(\Sigma)$
to that of a quotient, the reduced group $\Gred(\Sigma)$.
In Section \ref{section:action_mcg}, we compute the action of Dehn twists on $\Gred(\Sigma)$.
Finally, in Section \ref{section:computation}, we combine the results of Sections \ref{section:top_unob_cobordisms} -- \ref{section:action_mcg} to
complete the computation of $\Gunob(\Sigma)$.

This paper has three appendices. 
In Appendix \ref{appendix:action}, 
we define cobordism invariants of Lagrangians in monotone symplectic manifolds,
which generalize the holonomy of curves on higher genus surfaces (as defined by Abouzaid \cite{Abouzaid08}).
In Appendix \ref{appendix:isotopy}, we prove a minor modification of a result appearing in \cite{Perrier19},
which we state as Proposition \ref{prop:isotopy_relation}.
Appendix \ref{appendix:teardrops} contains
results on marked teardrops that are not used in the rest of the paper but may be of independent interest.

\subsection{Acknowledgements}

This work is part of the author's PhD thesis at the University of Montreal under the supervision of Octav Cornea and François Lalonde.
The author would like to thank Octav Cornea for his invaluable support and patience throughout this project.
The author would also like to thank Jordan Payette for helpful discussions in the early stages of this project.
The author acknowledges the financial support of NSERC Grant \#{}504711 and FRQNT Grant \#{}300576.

\section{Preliminaries}
\label{section:preliminaries}

\subsection{Conventions and terminology}

By a \emph{curve} in a surface $\Sigma$, we mean an equivalence class of smooth immersions $S^1 \to \Sigma$,
where two immersions are equivalent if they differ by an orientation-preserving diffeomorphism of $S^1$.
By convention, the circle $S^1$ is always equipped with its standard orientation
as the boundary of the disk.
A curve is \emph{simple} if it is an embedding.
We will often identify simple curves with their image in $\Sigma$.

We say that a Lagrangian immersion has \emph{generic self-intersections} if its only self-inter\-sections
are transverse double points.
Likewise, two Lagrangian immersions $\alpha$ and $\beta$ are in \emph{general position}
if the union $\alpha \cup \beta$ has generic self-intersections.
As in the case of curves, we will generally not distinguish between two
Lagrangian immersions that differ by a diffeomorphism of the domain isotopic to the identity.

\subsection{Lagrangian cobordisms}
\label{subsection:cobordisms}

Let $(M, \omega)$ be a symplectic manifold. 
We equip the product $\wtilde{M} = \C \times M$ with the symplectic form 
$\wtilde{\omega} = \omega_{\C} \oplus \omega$, where $\omega_{\C}$ is the standard symplectic form on $\C$. 
We denote by $\pi_{\C}$ and $\pi_{M}$ the projections on the first and the second factor, respectively.

Let $(\alpha_i: L_i \to M)_{i=1}^{r}$ and $(\beta_j: L_j' \to M)_{j=1}^{s}$ be two finite collections of 
Lagrangian immersions of compact manifolds into $M$. 
We recall the definition of a Lagrangian cobordism between these collections, following Biran and Cornea \cite{BC13}. 

\begin{definition}
\label{def:lagrangian_cobordism}
A \Def{Lagrangian cobordism} from $(\alpha_i)_{i=1}^{r}$ to $(\beta_j)_{j=1}^{s}$ 
is a compact cobordism $V$ with positive end $\coprod L_i$ 
and negative end $\coprod L_j'$, 
along with a proper 
Lagrangian immersion $\iota_V: V \to [0,1] \times \R \times M$ which 
is cylindrical near $\bdry V$ in the following sense. 

For each positive end $L_i$, there is a collar embedding $(1-\eps, 1] \times L_i \to V$ 
over which the immersion $\iota_V$ is given by
$(t, p) \mapsto (t, i, \alpha_i(p))$. 
Likewise, for each negative end $L_j'$ there is a collar embedding $[0, \eps) \times L_j' \to V$ 
over which $\iota_V$ is given by
$(t, p) \mapsto (t , j, \beta_j(p))$.
\end{definition}

We will use the notation $\iota_V: (\alpha_i) \cob (\beta_j)$
to denote a Lagrangian cobordism, or more succintly $V: (L_i) \cob (L_j')$ when
the immersions are clear from the context.

In this work, we will always assume that the Lagrangians are equipped with
an orientation and spin structure.
Likewise, we always assume that cobordisms are equipped with an orientation and spin structure that restrict 
compatibly to the ends, in the sense that the collar embeddings in Definition \ref{def:lagrangian_cobordism} can be chosen
to preserve the orientations and spin structures.

The basic examples of Lagrangian cobordisms are Lagrangian suspensions and Lagrangian surgeries, which
we recall below.

\begin{figure}
	\centering
	\includegraphics[width = 0.5\textwidth]{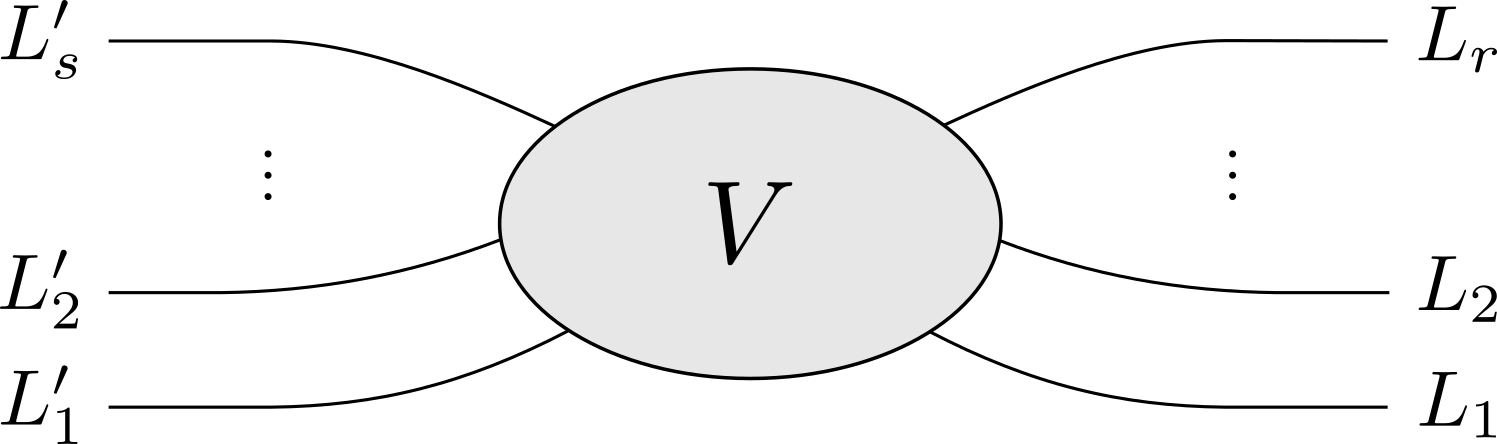}
	\caption{The projection to $\C$ of a cobordism $V: (L_i) \cob (L_j')$.}
\end{figure}

\subsubsection{Lagrangian suspension}
\label{subsection:suspension}

Recall that a Lagrangian regular homotopy $\phi: L \times I \to M$
is called \emph{exact} if for each $t \in I$ the form $\beta_t = \phi_t^* \iota_{\xi_t} \omega$ is exact, where 
$\xi_t$ is the vector field along $\phi$ that generates the homotopy.
A function $H : L \times I \to \R$ such that 
$d H_t = \beta_t$ is called a Hamiltonian generating $\phi$.
We call $\phi$ a \emph{Hamiltonian isotopy} if each map $\phi_t: L \to M$ is an embedding.
Note that in this case $\phi$ extends to an isotopy of Hamiltonian diffeomorphisms of $M$.

To an exact homotopy $\phi$ generated by a Hamiltonian $H$, we can associate a Lagrangian cobordism $\eta: L \times I \to \C \times M$ called the \emph{Lagrangian suspension} of $\phi$.
This cobordism is defined by the following formula:
\[
\eta(x, t) = (t, H_t( \phi_t(x)), \phi_t(x)).
\]
It is easily checked that this is a Lagrangian immersion, and that it is cylindrical provided that $\phi$ is stationary
near the endpoints.

\subsubsection{Lagrangian surgery}
\label{subsection:surgery}

Let $L$ and $L'$ be immersed Lagrangians in general position that intersect at $s \in M$.
One can perform the Lagrangian surgery of $L$ and $L'$ at $s$ to eliminate this intersection point, 
obtaining a new Lagrangian $L \#_s L'$ (which depends on additional choices).
Moreover, there is a Lagrangian cobordism with ends $L$, $L'$ and $L \#_s L'$.
This operation was described by several authors at various levels of generality: see for example the works of Arnold \cite{Arnold},
Audin \cite{Audin_remarques}, Lalonde-Sikorav \cite{LalondeSikorav},
Polterovich \cite{Polterovich91} and Biran-Cornea \cite{BC13}.

The Lagrangian surgery construction will be our main source of Lagrangian cobordisms between curves.
We describe it in more details in Section \ref{subsection:def_surgery}.

\subsubsection{Lagrangian cobordism groups}
\label{subsection:cobordismgroups}

We recall the general definition of the Lagrangian cobordism groups of a symplectic manifold $M$,
following \cite[Section 1.2]{BC14}.
Let $\Lag$ be a class of Lagrangian submanifolds of $M$ and $\Cob$ be a class of Lagrangian cobordisms.
The associated cobordism group $\Gcob(M)$ is the quotient of the free abelian group generated by $\Lag$ by the relations
\[
L_1 + \ldots + L_r = 0
\]
whenever there is a cobordism $(L_1, \ldots, L_r) \cob \emptyset$ that belongs to $\Cob$.

In this paper, we are interested in the Lagrangian cobordism groups of a closed symplectic surface $\Sigma$.
In this case, we take the class $\Lag$ to consist of all oriented, non-contractible, simple curves in $\Sigma$. 
Following the setting of \cite{Abouzaid08}, we assume that all curves 
are equipped with the bounding spin structure on the circle.
Note that
the class $\Lag$ coincides with the objects of $\Fuk(\Sigma)$, as defined in Section \ref{subsection:fukaya_cat_prelim} below.

There are several interesting choices for the class $\Cob$. 
In this paper, we focus on the case where $\Cob$ consists of all
\emph{quasi-exact} cobordisms, which are defined in Section \ref{subsection:unob_cobordisms}.
We denote by $\Gunob(\Sigma)$ the associated cobordism group.

The inverse in $\Gunob(\Sigma)$ is given by reversing the orientation and spin structure of curves.
Note also that any oriented cobordism between oriented curves admits
a spin structure that restricts to the bounding spin structure on its ends (this is easily seen in the case
of the cylinder and the pair of pants; in the general case, consider a pants decomposition of the cobordism).
This means that dropping the assumption that cobordisms carry compatible spin structures
leads to the same cobordism group, so that
we may ignore spin structures in the computation of $\Gunob(\Sigma)$.\footnote{Note that in the context of Theorem \ref{thm:cone_decomposition}
the cone decomposition induced by a cobordism may depend on the choice of spin structure, but
this dependence is not seen at the level of K-theory.}

\subsection{Fukaya categories}
\label{subsection:fukaya_cat_prelim}

In this section, we define the version of the Fukaya category
that will be considered in this paper.
The purpose of this section is mainly to fix our conventions, hence most details will be omitted.
A comprehensive exposition of the material presented here can be found in the book of Seidel \cite{SeidelBook}.
See also \cite{SeidelHMS} and \cite{Abouzaid08} for the specific case of surfaces.

In the following, we assume that $(M, \omega)$ is a closed symplectic manifold which is \emph{symplectically aspherical},
in the sense that $\omega|_{\pi_2(M)} = 0$.

\subsubsection{Coefficients.}

The version of $\Fuk(M)$ that we will consider
is a $\Z_2$-graded $A_{\infty}$-category which is linear over the universal Novikov ring over $\Z$
\[
\Lambda = \Big\{ \sum_{i \geq 0} a_i \, q^{t_i} : a_i \in \Z, t_i \in \R, t_i \nearrow \infty \Big\}.
\]

\subsubsection{Objects.}

Recall that a Lagrangian submanifold $L \subset M$ is \emph{weakly exact} if $\omega|_{\pi_2(M, L)} = 0$. 
The objects of $\Fuk(M)$ are weakly exact, spin, connected, closed Lagrangian submanifolds 
$L \subset M$, equipped with an orientation and spin structure.

\begin{remark}
In the case where $M$ is a surface, we will follow the setting of \cite{Abouzaid08} and restrict
our attention
to the subcategory of $\Fuk(M)$ whose objects are equipped with the \emph{bounding} spin structure on $S^1$.
Note that there is no obstruction to incorporating curves equipped with the trivial spin structure.
However, this would require minor adjustments to some of the statements and proofs from \cite{Abouzaid08}, and we shall not do so here.

\end{remark}

\subsubsection{Morphisms.}

Given two objects $L_0$ and $L_1$, the morphism space $\Hom(L_0, L_1)$ is the Floer cochain
complex $CF(L_0, L_1; H, J)$,
which is defined as follows.

The complex depends on a choice of Floer datum $\DD(L_0, L_1) = (H, J)$, 
which consists of 
a Hamiltonian $H : [0,1] \times M \to \R$ and 
a time-dependent compatible almost complex structure
$(J_t)_{t \in [0,1]}$ on $M$.
We require that $\phi_H^1(L_0)$ be transverse to $L_1$, where $(\phi_H^t)_{t \in [0,1]}$ denotes
the isotopy generated by the Hamiltonian vector field $X_H$.
Moreover, we assume that the pair $(H,J)$ is \emph{regular} in the sense of \cite[Section (8i)]{SeidelBook}.

Let $\chord(L_0, L_1; H)$ be the set of orbits $y: [0,1] \to M$ of the Hamiltonian flow $\phi_H^t$ that satisfy $y(0) \in L_0$ and $y(1) \in L_1$.
Recall that here is a bijection $\chord(L_0, L_1; H) \to \phi_H^1(L_0) \cap L_1$, which associates to $y$ the endpoint $y(1)$.
The \emph{Floer complex} $CF(L_0, L_1; H, J)$ is defined as the free $\Lambda$-module generated by $\chord(L_0, L_1; H)$.

\subsubsection{Grading.}
\label{section:grading}

The Floer complex has a $\Z_2$-grading, which is defined as follows.
Take $y \in \chord(L_0, L_1; H)$ and see it as an intersection point $p \in \phi_{H}^1(L_0) \cap L_1$. 
Let $\lambda_{\operatorname{can}}$ be a canonical short path from $T_p \phi_H^{1}(L_0)$ to $T_p L_1$ in the Lagrangian Grassmannian of $T_p M$.
Then we set $|y| = 0$ if $\lambda_{\operatorname{can}}$ lifts to a path in the oriented Lagrangian Grassmannian of $T_p M$
from $T_p \phi_H^{1}(L_0)$ to $T_p L_1$ (equipped with their given orientations).
Otherwise, we set $|y| = 1$.

Equivalently, we have $|y| = 0$ if the contribution of $p$
to the intersection number $\phi^1_{H}(L_0) \cdot L_1$ is $(-1)^{\frac{n(n+1)}{2}}$. Otherwise, $|y| = 1$.

\subsubsection{Differential.}
\label{prelim:differential}

The Floer differential is defined by counting rigid solutions $u: \R \times [0,1] \to M$ to the \emph{Floer equation}
\begin{equation}
\label{eq:Floer_strip}
\begin{cases}
&\del_s u + J_t(u) (\del_t u  - X_H(t,u)) = 0\\
&u(s, 0) \in L_0, \; u(s,1) \in L_1.
\end{cases}
\end{equation}

For $x, y \in \chord(L_0, L_1; H)$, let $\wtilde{\M}(y, x)$ be
the set of solutions to \eqref{eq:Floer_strip} with asymptotics $\lim_{s \to - \infty} u(s, - ) = y$, $\lim_{s \to \infty} u(s,-) = x$.
For a regular Floer datum $(H, J)$, $\wtilde{\M}(y,x)$ is a manifold,
which may have several connected components of different dimensions.
The dimension of the component containing $u$ is given by the Maslov index $\mu(u)$.
Let $\M(y, x)$ be the quotient of $\wtilde{\M}(y, x)$ by the action of $\R$ by translation in the $s$ variable.

The Floer differential $\mu^1 : CF^{*}(L_0, L_1; H, J) \to CF^{*+1}(L_0, L_1; H,J)$ is
defined by setting
\begin{equation}
\label{eq:definition_Floer_differential}
\mu^1(x) = (-1)^{|x|} \, \sum_{y \in \chord(L_0, L_1; H)} \; \sum_{\substack{u \in \M(y,x) \\ \mu(u) = 1}} 
\eps(u) \, q^{\omega(u)} \, y.
\end{equation}
Here, $\eps(u) = \pm 1$ is a sign which is described in Section \ref{subsubsection:signs}.

\subsubsection{$A_{\infty}$-operations.}
\label{prelim:ainfinity_operations}

More generally, the higher $A_{\infty}$-operations $(\mu^d)_{d=2}^{\infty}$ are defined by counting solutions to
inhomogeneous Cauchy-Riemann equations for maps $u: S \to M$, where $S$ is a disk
with $d+1$ punctures on the boundary.

More precisely, one considers the Deligne-Mumford moduli space $\mathcal{R}^{d+1}$ of
disks with \mbox{$d+1$} boundary punctures $\zeta_0, \ldots, \zeta_{d}$, with the convention that $\zeta_0$ is incoming, 
$\zeta_1, \ldots, \zeta_d$
are outgoing, and the punctures are ordered anti-clockwise along the boundary.
Let $\mathcal{S}^{d+1} \to \mathcal{R}^{d+1}$ be the universal curve over $\mathcal{R}^{d+1}$.
For $r \in \mathcal{R}^{d+1}$, denote by $S_r \subset \mathcal{S}^{d+1}$ the fiber over $r$.
Let $C_0, \ldots, C_d$ be the boundary components of $S_r$ ordered anti-clockwise, with the convention
that $\zeta_0$ is adjacent to $C_0$ and $C_d$.

Fix a consistent universal choice of strip-like ends as in \cite[Section (9g)]{SeidelBook}.
For each family of Lagrangians $(L_0, \ldots, L_d)$, choose a \emph{perturbation datum}
$\DD(L_0, \ldots, L_d) = (\mathbf{\Theta}, \mathbf{J})$. 
Here, $\mathbf{\Theta} = (\Theta^r)_{r \in \mathcal{R}^{d+1}}$ is a smooth family
of $1$-forms  $\Theta^r \in \Omega^1(S^r ; C^{\infty}(M))$.
Moreover, $\mathbf{J} = (J_z)_{z \in \mathcal{S}^{d+1}}$ is a smooth family of compatible almost complex structures on $M$ 
parametrized by $\mathcal{S}^{d+1}$.
The perturbation data is required to restrict to the previously chosen Floer data over the 
strip-like ends. Moreover, it is required to be \emph{consistent} with respect to breaking and gluing of disks 
(see \cite[Section (9i)]{SeidelBook} for the precise definitions).

The perturbation datum $\DD(L_0, \ldots, L_d)$ gives rise to the following inhomogeneous Cauchy-Riemann equation for maps $u: S_r \to M$
\begin{equation}
\label{eq:Floer_polygon}
\begin{cases}
Du + J(z, u) \circ Du \circ j = Y(z,u) + J(z,u) \circ Y(z,u) \circ j\\
u(C_k) \subset L_k.\\
\end{cases}
\end{equation}
Here, $Y \in \Omega^1(S_r; C^{\infty}(TM))$ is the $1$-form with values in Hamiltonian vector fields of $M$ induced by $\Theta^{r}$.
Moreover, $j$ denotes the complex structure of $S_r$.

Fix orbits $y_0 \in \chord(L_0, L_d)$ and $y_k \in \chord(L_{k-1}, L_k)$ for $k = 1, \ldots, d$.
Let $\M^{d+1}(y_0, \ldots, y_{d})$ be the moduli space of pairs $(r,u)$, where $r \in \mathcal{R}^{d+1}$ and 
$u: S_r \to M$ is a solution of \eqref{eq:Floer_polygon} which is asymptotic to
$y_k$ at the puncture $\zeta_k$ in strip-like coordinates.

Under suitable regularity assumptions, $\M^{d+1}(y_0, \ldots, y_{d})$ is a manifold 
whose local dimension near $u$ is $\mu(u) + d - 2$, where $\mu$ is the Maslov index.
Write $\M_0^{d+1}( y_0, \ldots,  y_{d})$
for the $0$-dimensional part of $\M^{d+1}(y_0, \ldots, y_{d})$.
The operation
\[
\mu^d: \Hom(L_{d-1}, L_{d}) \tens \cdots \tens \Hom(L_0, L_1) \to \Hom(L_0, L_d)[2-d]
\]
is now defined by setting
\begin{equation}
\label{eq:definition_mu}
\mu^d(y_d, \ldots, y_1) = (-1)^{\dagger} \; \sum_{y_0 \in \chord(L_0, L_d)}  \; \sum_{u \in \M_0^{d+1}(y_0, y_1, \ldots, y_{d})} 
\eps(u) \, q^{\omega(u)} \, y_0.
\end{equation}
Here, $\eps(u) = \pm 1$ is a sign which is described in Section \ref{subsubsection:signs}. 
Moreover, $(-1)^{\dagger}$ is an additional sign given by $\dagger = \sum_{k=1}^{d} k |y_k|$.

\subsubsection{Orientations and signs.}
\label{subsubsection:signs}

The signs appearing in the definition of the operations $\mu^d$ depend 
on choices of orientations of the moduli spaces of inhomogeneous polygons $\M^{d+1}(y_0, \ldots, y_d)$.
We orient these moduli spaces using the arguments of \cite[Chapter 8]{FOOO} (see also \cite[Section 11]{SeidelBook} for a closely related approach).

The orientations of the moduli spaces depend on the orientation and spin structures of the Lagrangians involved,
as well as some extra data which we now describe.
Fix a pair of objects $(L_0, L_1)$ and a generator of the Floer complex $y \in \chord(L_0, L_1; H)$. 
Up to replacing $L_0$ by $\phi^1_H(L_0)$ in what follows, 
we may assume that $y$ is a constant trajectory at an intersection point $p \in L_0 \cap L_1$.

We introduce the following notations. 
Let $\GrassOr(T_p M)$ be the oriented Lagrangian Grassmannian of $T_p M$. 
Let $\lambda$ be a path in $\GrassOr(T_p M)$
with $\lambda(0) = T_p L_0$ and $\lambda(1) = T_p L_1$.
The path $\lambda$ defines an \emph{orientation operator} $D_{\lambda}$ as follows.
Let $S_{+}$ be the unit disk with one incoming boundary puncture.
Let $E \to S_{+}$ be the trivial vector bundle with fiber $T_p M$.
The path $\lambda$ defines a totally real subbundle $F \subset E|_{\bdry S_{+}}$.
Define $D_{\lambda}$ as the standard Cauchy-Riemann operator on $E$ with boundary conditions given by $F$.
Denote by $\wtilde{\lambda}$ the vector bundle over $[0,1]$ given by
\[
\wtilde{\lambda} = \bigcup_{t \in [0,1]} \{ t \} \times \lambda(t).
\]

For each pair $(L_0, L_1)$ and for each $y \in \chord(L_0, L_1; H)$, we fix the following \emph{orientation data}:
\begin{enumerate}
	\item A path $\lambda$ in $\GrassOr(T_p M)$ from $T_p L_0$ to $T_p L_1$.
	\item An orientation of the determinant line $\det(D_{\lambda})$.
	\item A spin structure on $\wtilde{\lambda}$ that extends the given spin structures of $T_p L_0$ and $T_p L_1$.
\end{enumerate}

By standard gluing arguments (see for example Chapter 8 of \cite{FOOO} or Section 12 of \cite{SeidelBook}),
the above data determine orientations of the 
moduli spaces $\M^{d+1}(y_0, \ldots, y_d)$.
In particular, each isolated element $u \in  \M^{d+1}_0(y_0, \ldots, y_d)$ carries a sign
$\eps(u) = \pm 1$ which determines its contribution to $\mu^d$ (see Equation \eqref{eq:definition_Floer_differential} and Equation \eqref{eq:definition_mu}).

\begin{remark}

In the case where $M$ is a surface, the signs appearing in the definition of $\mu^d$ can also be defined
in a purely combinatorial way. This is described in \cite[Section 7]{SeidelHMS} and is also the definition used in \cite{Abouzaid08}.

\begin{figure}
	\centering
	\includegraphics[width=0.6\textwidth]{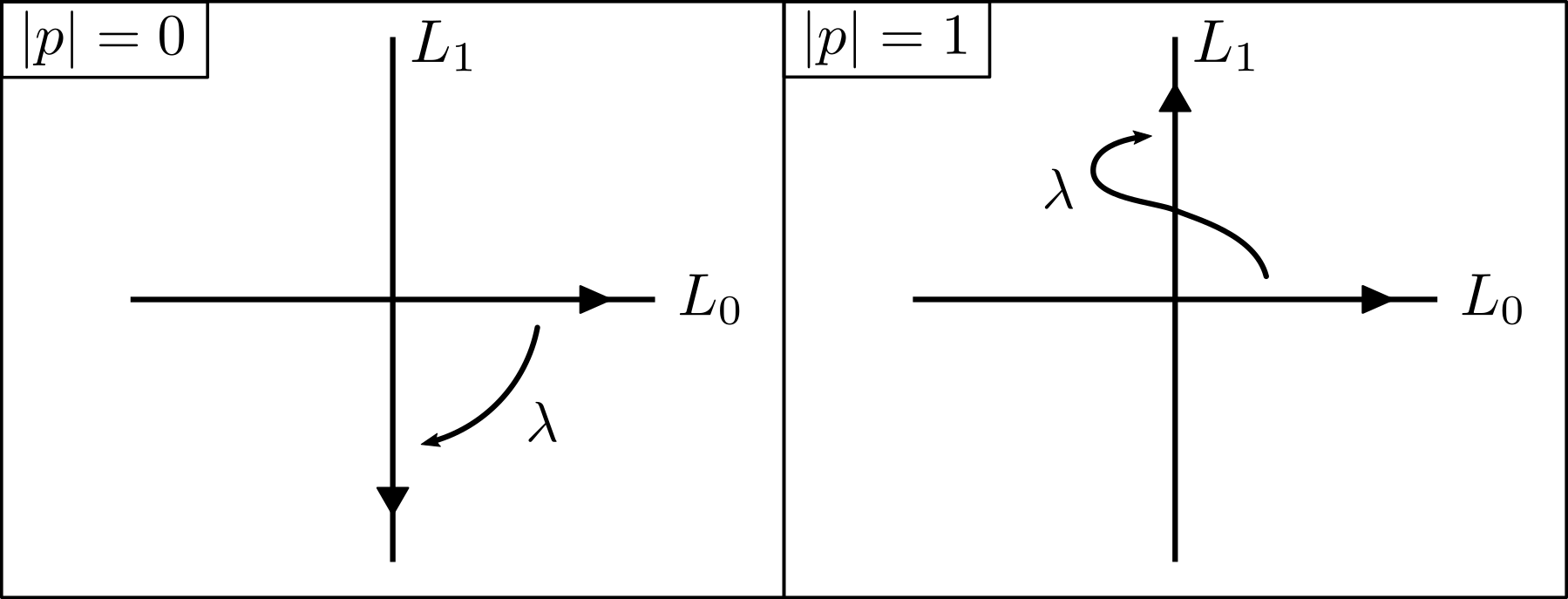}
	\caption{Choices of orientation paths for intersection points between curves.}
	\label{fig:orientation_path}
\end{figure}

The combinatorial definition of the signs in \cite{SeidelHMS} is related to the one described above in the following way.
In \cite{SeidelHMS}, 
each Lagrangian is equipped with a marked point and a trivialization of the spin structure on the complement
of the marked point.
These extra choices determine canonical choices of the orientation data (1)--(3) as follows.
For an intersection point $p \in L_0 \cap L_1$, one takes the path $\lambda$ to be as in Figure \ref{fig:orientation_path}.
More precisely, if $|p|=0$ then $\lambda$ is the canonical short path from $L_0$ to $L_1$.
If $|p|=1$, one takes the reverse of the canonical short path from $L_1$ to $L_0$ and perturbs it to add a positive crossing.
The spin structure on $\wtilde{\lambda}$ is uniquely determined (up to isomorphism relative to the boundary) by the trivializations of the spin structures of $T_p L_0$ and $T_p L_1$.
Finally, $\det(D_{\lambda})$ is oriented as follows.
If $|p|=0$, then $D_{\lambda}$ is invertible, so that $\det(D_{\lambda})$ is canonically oriented.
If $|p|=1$, there is an isomorphism
$
\det(D_{\lambda}) \iso T_p L_1
$ 
given by the spectral flow (see \cite[Lemma 11.11]{SeidelBook}).
As $T_p L_1$ is oriented, this determines an orientation of $\det(D_{\lambda})$.

With these compatible choices of data, it can be checked that the combinatorial definition of the signs given in \cite{SeidelHMS} agrees with the one obtained from gluing arguments, at least up to a global sign (see the discussion following Example 13.5 in \cite{SeidelBook}).

\end{remark}

\subsubsection{Invariance.}

The Fukaya category depends on several auxiliary choices, such as choices of strip-like ends, Floer data, perturbation data and orientation data.
It follows from the constructions in \cite[Section (10a)]{SeidelBook} that the resulting category is independent of these choices up to quasi-isomorphism. With this in mind, we will omit the choice of auxiliary data from the notation and simply write $\Fuk(M)$ for any
of these categories.

\subsubsection{Derived Fukaya categories and $K$-theory.}

We consider the following model for the derived Fukaya category $\DFuk(M)$.
First, consider the Yoneda embedding $\mathcal{Y}: \Fuk(M) \to \Mod( \Fuk(M) )$. 
Then, let $\Fuk(M)^{\wedge}$ be the triangulated closure of the image of $\mathcal{Y}$ inside $\Mod(\Fuk(M))$.
Finally, define $\DFuk(M) = H^0(\Fuk(M)^{\wedge})$.
Note that we do not complete with respect to idempotents.

The derived category $\DFuk(M)$ is a triangulated category (in the usual sense).
Geometrically, the shift functor is realized by the operation of reversing the orientation and spin structure of objects of $\Fuk(M)$.

We let $\Kgrp(M)$ be the Grothendieck group of $\DFuk(M)$.
Recall that for a triangulated category $\mathcal{C}$, the Grothendieck group $K_0 \mathcal{C}$
is the free abelian group generated by the objects of $\mathcal{C}$, quotiented by the relations $A + B - C = 0$
whenever there is an exact triangle $A \to B \to C$ in $\mathcal{C}$.


\section{Unobstructed cobordisms and the proof of Theorem \ref{thm:cone_decomposition}}
\label{section:fukaya_cob}

In this section, we construct a Fukaya category $\FukCob(\C \times M)$ whose objects are quasi-exact cobordisms,
and use it to prove Theorem \ref{thm:cone_decomposition}. 
The proof closely follows the scheme formulated in \cite{BC14}, and its extension to embedded quasi-exact cobordisms in \cite{BCS}. 
As a complete proof is outside of the scope of this paper,
we will content ourselves with describing the 
small modifications that are needed to adapt the framework developed in \cite{BC14} and \cite{BCS} to the present case.
The main differences are as follows:

\begin{enumerate}[label = (\roman*)]
	\item We consider Lagrangian cobordisms that may have self-intersections. 
	In this case, holomorphic curves that appear through bubbling may have corners at self-intersection points.
	We show that ruling out holomorphic curves with at most $1$ corner is sufficient to ensure 
	the compactness
	of the moduli spaces relevant to the definition of $\FukCob(\C \times M)$.	
	
	\item The Fukaya category defined in \cite{BC14} is linear over $\Z_2$ and ungraded.
	The version of the Fukaya category that we consider is linear over the Novikov ring over $\Z$ and is $\Z_2$-graded.
	In our setting, the adjustments needed to deal with signs and gradings were described by Haug \cite[Section 4]{Haug15}.
\end{enumerate}

Following the setting of \cite{Abouzaid08} and \cite{Haug15}, we use cohomological conventions 
for complexes, which leads to some superficial differences with the homological conventions used in \cite{BC14}.

\subsection{Holomorphic maps with boundary on immersed Lagrangians}
\label{subsection:holomorphic_disks}

The Floer theory of Lagrangian immersions was originally developed by Akaho \cite{Akaho} and Akaho-Joyce \cite{AkahoJoyce}.
The main difference with the embedded case is that holomorphic curves
with boundary on an immersed Lagrangian
may have branch jumps at self-intersection points of the Lagrangian.
To describe this behaviour, holomorphic curves are equipped with the data of \emph{boundary lifts}
that record the branch jump type.

We recall the following definitions from \cite{AkahoJoyce}. 
Let $S$ be a nodal disk, that is a compact nodal Riemann surface 
of genus $0$ with $1$ boundary component. 
In order to define boundary lifts, $S$ is equipped with a boundary parametrization, i.e. a continuous orientation-preserving map 
$\ell: S^1 \to \bdry S$
such that 
\begin{itemize}
	\item the preimage of a boundary node consists of two points,
	\item the preimage of a smooth point of $\bdry S$ consists of one point.
\end{itemize}
Note that $\ell$ is unique up to reparametrization.

Let $N$ be a symplectic manifold and let $\iota: L \looparrowright N$ be a Lagrangian immersion of a manifold $L$ 
(which we do not assume to be connected or compact). 
Fix a compatible almost complex structure $J$ on $N$.

\begin{definition}
\label{def:stable_disk_with_corners}
A (genus $0$) \Def{$J$-holomorphic map with corners} with boundary on \mbox{$\iota: L \looparrowright N$}
is a tuple $(S, \ell, u, \Delta, \overline{u})$, where
\begin{enumerate}[font=\normalfont]
	\item $S$ is a nodal disk with boundary parametrization $\ell$,
	\item $u: (S, \bdry S) \to (N, \iota(L))$ is a continuous map,
	\item $\Delta \subset \bdry S$ is a finite set of marked points distinct from the nodes,
	\item $\overline{u} : S^1 \setminus \ell^{-1}(\Delta) \to L$ is a continuous map,
\end{enumerate}
which satisfies the following conditions:
\begin{enumerate}[label = (\roman*), font=\normalfont]
	\item $u$ is $J$-holomorphic on $S \setminus \Delta$,
	\item $u$ has finite energy, i.e. $\int_{S \setminus \Delta} u^* \omega < \infty$,
	\item $u \circ \ell = \iota \circ \overline{u}$ on $S^1 \setminus \ell^{-1}(\Delta)$,
	\item for each $\zeta \in \ell^{-1}(\Delta)$, the one-sided limits of $\overline{u}$ at $\zeta$ are distinct.
\end{enumerate}
\end{definition}

The elements of $\Delta$ are called the \emph{corner points} of $u$ and the map $\overline{u}$ is called the \emph{boundary lift} of $u$. 
Condition (iv) in Definition \ref{def:stable_disk_with_corners} means that the boundary of $u$ has a branch jump at each corner point. 
Note that $u$ cannot have branch jumps at the nodes of $S$. 
However, the individual components of $u$, seen as maps defined on a disk, may have branch jumps at the nodal points.

We introduce the following terminology. We denote by $\D$ the closed unit disk in $\C$.

\begin{definition}
\label{def:disks+teardrops}
\hfill
\begin{enumerate}[label = (\roman*), font=\normalfont]
	\item A \Def{$J$-holomorphic disk} is a $J$-holomorphic map with domain $\D$ and no corners, i.e. $\Delta = \emptyset$.
	\item A \Def{$J$-holomorphic teardrop} is a $J$-holomorphic map with domain $\D$ and $1$ corner.
\end{enumerate}
\end{definition}

A $J$-holomorphic map $u$ is \Def{stable} if its automorphism group is finite.
Equivalently, if $S_{\alpha}$ is a component of $S$ such that $u|_{S_{\alpha}}$ is constant, then 
$S_{\alpha}$ carries at least $3$ special points (with the convention that interior nodes count twice).

\subsection{Definition of quasi-exact cobordisms}
\label{subsection:unob_cobordisms}

Given a Lagrangian cobordism $V \subset [0,1] \times \R \times M$, we can
extend its ends towards $\pm \infty$ by gluing appropriate cylinders of the form $(-\infty, 0] \times \{ k \} \times L$
or $[1, \infty) \times \{k \} \times L$.
This defines a non-compact Lagrangian $\overline{V}$, called the extension of $V$.
When discussing the Floer theory of $V$, we make the convention that $V$ is always to be replaced by its extension.

Recall that we write $\wtilde{M} = \C \times M$. 
We say that a compatible almost complex structure $J$ on $\wtilde{M}$
is \Def{admissible} if there is a compact set $K \subset (0,1) \times \R$ such that the projection $\pi_{\C}: \wtilde{M} \to \C$ 
is $(J, i)$-holomorphic outside $K \times M$.

We now define quasi-exact cobordisms,
which generalize to the immersed case Definition 4.2 of \cite{BCS} and 
the unobstructed Lagrangian brane cobordisms of \cite{SheridanSmith}. 

\begin{definition}[Quasi-exact cobordisms]
\label{def:unob_cobordisms}
Let $V \subset \wtilde{M}$ be an immersed Lagrangian cobordism with generic self-intersections
and $J$ be an admissible almost complex structure.
We say that the pair $(V, J)$ is \Def{quasi-exact} if $V$ does not bound non-constant $J$-holomorphic disks and teardrops.
\end{definition}

\begin{remark}
\label{rmk:def_unob_cob}
\hfill
\begin{enumerate}[label = (\roman*)]
	\item By definition, quasi-exact cobordisms have embedded ends. 
	It is possible in some situations to work with cobordisms having immersed ends; see for example \cite{Pictionary} for an implementation. 
	However, doing so would involve additional technical difficulties and is not necessary for our purpose.
	
	One drawback of this restriction is that surgeries that produce immersed Lagrangians are \emph{never} unobstructed in the sense of Definition \ref{def:unob_cobordisms}. 
	However, concatenations of such surgeries may give rise to unobstructed cobordisms after perturbing their self-intersection locus; a special case of this procedure is explained in Section \ref{subsection:concatenations}.

	\item It follows from the open mapping theorem that the concatenation
	of two quasi-exact cobordisms along two matching ends is quasi-exact (for a suitable choice of $J$); 
	see Proposition 6.2 of \cite{BCS} for a detailed proof.
	In contrast, classes of cobordisms satisfying topological constraints such as weak exactness or monotonicity
	are typically not closed under concatenations.
\end{enumerate}
\end{remark}

\begin{remark}[Comparison with the definition in \cite{Perrier19}]
\label{rmk:def_unob_perrier}
The class of unobstructed cobordisms considered in \cite{Perrier19} consists
of cobordisms that do not bound any \emph{continuous} tear\-drops.
This definition is thus closer to the definition of a topologically unobstructed cobordism which we introduce
in Section \ref{section:top_unob_cobordisms}.

There are two problems with the definition used in \cite{Perrier19}.
The first is that there is no condition on disks with boundary on the cobordism, 
which poses technical issues in the setup of Floer theory and the proofs of cone decompositions.
These issues are not addressed in \cite{Perrier19}.
The second problem is that the non-existence of continuous teardrops is generally not preserved under concatenations of cobordisms.
The use of holomorphic maps in Definition \ref{def:unob_cobordisms} aims to fix this issue.
Indeed, the class of quasi-exact cobordisms is well-behaved with respect
to concatenations; see Remark \ref{rmk:def_unob_cob} and Proposition \ref{prop:concatenation_unobstructed}.
\end{remark}

\begin{lemma}
\label{lemma:no_stable_polygons}
Let $(V,J)$ be a quasi-exact cobordism. 
Then $V$ does not bound stable $J$-holomorphic maps with at most $1$ corner.
\end{lemma}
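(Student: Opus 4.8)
The plan is to argue by contradiction: suppose $V$ bounds a stable $J$-holomorphic map $u$ with at most one corner, and extract from $u$ a nonconstant $J$-holomorphic disk or teardrop with boundary on $V$, contradicting quasi-exactness. The first step is to understand the structure of a stable $J$-holomorphic map with corners on $V$ in the sense of Definition \ref{def:stable_disk_with_corners}. Its domain $S$ is a nodal disk, hence a tree of disk components (and possibly sphere components) glued at interior and boundary nodes. Since $\wtilde M = \C \times M$ is symplectically aspherical (as $M$ is aspherical and $\C$ is aspherical), every sphere component has zero symplectic area and hence, being $J$-holomorphic, must be constant; by stability such a constant sphere component would carry at least three special points, and one can prune these. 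So after discarding constant components we may assume $S$ is a tree of non-constant disk components joined along boundary nodes, together with at most one corner point $\Delta$ lying on one of these components.

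The heart of the argument is then a combinatorial/energy bookkeeping on this tree. Each disk component $S_\alpha$ of $S$ inherits a boundary lift $\overline{u}|_{S_\alpha}$, and at each boundary node the two one-sided limits of the ambient boundary lifts of the two adjacent components land at the same point of $\iota(V)$; the node is either a \emph{smooth} node (the two lifts agree in $V$) or a \emph{branch} node (the two lifts disagree, i.e. the node is a self-intersection point of $V$ seen from the two sides). A component $S_\alpha$ viewed on its own as a map $\D \to \wtilde M$ acquires corners precisely at the branch nodes attached to it, plus possibly the genuine corner point in $\Delta$ if it lies on $S_\alpha$. The total number of ``corner-type'' incidences is controlled: there is exactly one corner point in $\Delta$, and the branch nodes of the tree can be counted against the tree structure. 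I would next observe that a tree with $N$ non-constant disk components has $N-1$ nodes, and I want to find a \emph{leaf} component — one attached to the rest of the tree by a single node — which, as a standalone disk, carries at most one corner. A leaf has at most one internal node incident to it, and if the single corner point $\Delta$ does not also lie on that same leaf we are done immediately; the only thing to rule out is that \emph{every} leaf carries both its attaching node (as a branch node) and the corner point, which is impossible as soon as there are at least two leaves, since $\Delta$ is a single point. When $N=1$ the statement is immediate. Thus there is always a leaf component $u|_{S_\alpha}$ which is a nonconstant $J$-holomorphic map from $\D$ with at most one corner — that is, a nonconstant disk or teardrop on $V$ — contradicting quasi-exactness.

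The main obstacle I expect is the careful treatment of the node types and of condition (iv) of Definition \ref{def:stable_disk_with_corners}: one has to be sure that when a leaf component is detached and regarded as a disk-with-corners in its own right, a branch node genuinely becomes a corner point with distinct one-sided boundary lifts (so that it qualifies as a teardrop corner), while a smooth node becomes an interior boundary point of a genuine disk and contributes no corner. A secondary subtlety is the pruning of constant components: one must check that removing a constant sphere or a constant disk bubble and re-gluing does not create new corners or destroy stability in a way that obstructs the induction, so it is cleanest to phrase the leaf-extraction directly on the (already pruned) tree of nonconstant components rather than inducting on an ill-behaved object. Once the tree is reduced to nonconstant disk components with a single marked corner and the node-type analysis is in place, the leaf-counting argument is elementary.
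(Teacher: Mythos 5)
Your proposal is correct, and its core is the same leaf-extraction argument as the paper's: in the tree of components of the nodal domain, at most one component can be contaminated by the single corner point, so some extremal component is a non-constant $J$-holomorphic disk or teardrop with boundary on $V$, contradicting quasi-exactness. The one place you diverge is the handling of constant components: you prune them first and then look for a leaf among the non-constant components, which is exactly what forces the corner-migration bookkeeping you flag at the end (a constant subtree carrying $\Delta$ turns, upon removal, into a branch jump at the node where it was attached to its non-constant neighbour, so ``$\Delta$ lies on one of the remaining components'' is not literally how the corner survives pruning). The paper avoids all of this by invoking stability at the outset: a component with exactly one boundary node and no marked point has fewer than three special points, hence cannot be constant; since the single corner lies on at most one component, such a leaf exists whenever there is more than one component, and viewed on its own it is immediately a non-constant disk or teardrop. (Sphere components are handled the same way in both treatments: they are constant by asphericity and then unstable.) So your argument is sound and completable, but using stability directly lets you skip the pruning step and the attendant bookkeeping.
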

\begin{proof}
Suppose that $V$ bounds a stable $J$-holomorphic map $u$ with at most $1$ corner.
If $u$ has one component, then this contradicts that $(V,J)$ is quasi-exact.
If $u$ has more than one component,
then there is a component $\D_{\alpha}$ that has $1$ boundary node and no marked point.
By stability, the restriction $u_{\alpha} = u|_{\D_{\alpha}}$ is non-constant.
Moreover, it can only have a corner at the unique nodal point of $\D_{\alpha}$.
Hence $u_{\alpha}$ is a non-constant $J$-holomorphic disk with at most $1$ corner, which is again a contradiction.
\end{proof}

For the purpose of defining suitable classes of Floer and perturbation data, we will
need a version of Lemma \ref{lemma:no_stable_polygons} that holds for a larger class of almost complex structures, which we now define.

\begin{definition}
\label{def:adapted_J}
Let $(V, J_V)$ be a quasi-exact cobordism.
A compatible almost complex structure $J$ on $\wtilde{M}$ is \Def{adapted} to $(V, J_V)$ if
\begin{enumerate}[label = (\roman*), font=\normalfont]
	\item $J = J_V$ on $[0,1] \times \R \times M$.
	\item The projection $\pi_{\C}: \wtilde{M} \to \C$ is $(J, i)$-holomorphic on $U \times M$, where $U$
	is a neighborhood of $\pi_{\C}(V)$.
\end{enumerate}
\end{definition}

\begin{lemma}
\label{lemma:no_stable_polygons_bis}
Suppose that $(V, J_V)$ is a quasi-exact cobordism and that $J$ is adapted to $(V, J_V)$.
Then $V$ does not bound stable $J$-holomorphic maps with at most $1$ corner.
\end{lemma}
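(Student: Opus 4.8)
The plan is to reduce Lemma \ref{lemma:no_stable_polygons_bis} to Lemma \ref{lemma:no_stable_polygons} by showing that any stable $J$-holomorphic map with at most $1$ corner and boundary on $V$ is in fact a $J_V$-holomorphic map. Since $J$ is adapted to $(V,J_V)$, it agrees with $J_V$ on the slab $[0,1]\times\R\times M$, so it suffices to prove that the image of such a map is contained in $[0,1]\times\R\times M$ (more precisely in the region where $\overline V$ lives and $J$ coincides with $J_V$). Once the image is confined there, the map is a stable $J_V$-holomorphic map bounding $V$ with at most $1$ corner, and Lemma \ref{lemma:no_stable_polygons} gives a contradiction.

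The key step is therefore a confinement argument for the projection to $\C$. Let $u:S\to\wtilde M$ be such a map and consider $v = \pi_\C \circ u : S \to \C$. By condition (ii) in Definition \ref{def:adapted_J}, $\pi_\C$ is $(J,i)$-holomorphic on $U\times M$ where $U$ is a neighborhood of $\pi_\C(V)=\pi_\C(\overline V)$. Wherever $u$ takes values in $U\times M$, the composite $v$ is $i$-holomorphic. The boundary condition $u(\partial S)\subset \overline V$ forces $v(\partial S)\subset \pi_\C(\overline V)\subset U$. The plan is to invoke the open mapping theorem: if $v$ were non-constant and its image left $U$, one could find an interior point of $S$ where $|v|$ or the distance to $\pi_\C(\overline V)$ attains a local maximum in the region where $v$ is holomorphic, contradicting the maximum principle for holomorphic functions. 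More carefully, since $\overline V$ is cylindrical at its ends, $\pi_\C(\overline V)$ consists of a compact piece together with finitely many horizontal rays; one chooses $U$ to be a neighborhood that deformation retracts onto $\pi_\C(\overline V)$, and runs the standard argument (as in the proof of Proposition 6.2 of \cite{BCS} and the open mapping theorem arguments in \cite{BC14}) to conclude that $v(S)\subset \pi_\C(\overline V)\subset [0,1]\times\R$ — actually the relevant conclusion is $u(S)\subset U\times M$, on which $J=J_V$ after possibly shrinking, but the cleanest statement is that $v$ is forced into the region where holomorphicity of $v$ holds, hence $u(S)$ lies in $[0,1]\times\R\times M$ where $J=J_V$. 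One must handle each connected component of $S$ separately, and note that corner points do not affect this argument since $v$ remains continuous across them and holomorphic away from them.

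Having confined the image, the conclusion is immediate: $u$ is a stable $J_V$-holomorphic map with boundary on $V$ and at most $1$ corner, so Lemma \ref{lemma:no_stable_polygons} applies and yields the desired contradiction. I expect the main obstacle to be the careful formulation of the confinement/open mapping argument near the non-compact ends of $\overline V$ — in particular choosing the neighborhood $U$ and the auxiliary function (distance to $\pi_\C(\overline V)$, or a suitable subharmonic function) so that the maximum principle applies cleanly even though $\pi_\C(\overline V)$ is neither compact nor convex. This is a standard type of argument in the Lagrangian cobordism literature, so it should mostly be a matter of citing \cite{BC14} and \cite{BCS} and pointing out that the presence of at most one corner introduces no new difficulty, since branch jumps occur on the boundary and the projection $v$ is still continuous there and holomorphic on the complement of the finite corner set.
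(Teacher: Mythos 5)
Your overall strategy coincides with the paper's: show that each component of the stable map is forced to be $J_V$-holomorphic (or constant) and then invoke Lemma \ref{lemma:no_stable_polygons}. However, the pivotal confinement claim in your argument is not correct. The standing convention is that $V$ is replaced by its extension $\overline{V}$, so $\pi_{\C}(\overline{V})$ is \emph{not} contained in $[0,1]\times\R$: it contains the horizontal rays over which the cylindrical ends lie. Hence the chain ``$v(S)\subset\pi_{\C}(\overline{V})\subset[0,1]\times\R$, so $u(S)$ lies where $J=J_V$'' cannot be the conclusion. What the open mapping theorem (in the form of Proposition 3.3.1 of \cite{BC14}) actually yields is only that $v$ avoids the \emph{unbounded} components of $\C\setminus\pi_{\C}(V)$; the image may still sweep bounded complementary components and may meet the rays. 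The paper therefore proves a dichotomy rather than confinement: either $v(\D)\subset[0,1]\times\R$, in which case condition (i) of Definition \ref{def:adapted_J} makes $u$ $J_V$-holomorphic; or $v$ meets a ray point outside the slab, in which case condition (ii) plus openness of non-constant holomorphic maps force $v$ to be constant at that point. In this second branch $u$ lies in a single fiber $\{z\}\times M$ over a ray, where $J$ need \emph{not} agree with $J_V$, and one still has to argue that $u$ is constant; this uses that the end sitting over that ray is embedded (so no corners can occur there) and weakly exact (so the disk has zero area). This branch, and the extra hypothesis it consumes, is entirely missing from your proposal, and without it the statement ``every non-constant component maps into the slab'' is not established (it would in fact fail without weak exactness of the ends).

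A second, related problem is the maximum-principle sketch itself: $v$ is only known to be holomorphic where $u$ takes values in $U\times M$, i.e.\ precisely where $v$ is \emph{close} to $\pi_{\C}(\overline{V})$. At a hypothetical interior point realizing a positive local maximum of $|v|$ or of the distance to $\pi_{\C}(\overline{V})$, the image has already left $U$, so there is no holomorphicity to contradict. The correct argument (as in \cite{BC14} and the paper) applies the open mapping principle only at points whose image lies near $\pi_{\C}(V)$, concludes that $v$ cannot enter the unbounded complementary components, and then runs the dichotomy above. So the skeleton of your reduction is right, but the analytic core needs to be replaced by the two-case analysis, together with the zero-area argument for fiber components over the ends.
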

\begin{proof}
We prove the any $J$-holomorphic map $u: \D \to \wtilde{M}$ with boundary on $V$ is actually $J_V$-holomorphic.
Let $v = \pi_{\C} \circ u$. 
By the open mapping theorem for holomorphic functions (see Proposition 3.3.1 of \cite{BC14} for the precise version used here), 
$v$ cannot meet the unbounded components of $\C \setminus \pi_{\C}(V)$.
Therefore, either $v(\D) \subset [0,1] \times \R$, or $v(\Int \D)$ meets \mbox{$\pi_{\C}(V) \cap( [0,1] \times \R)^c$}. 
In the first case, it follows from condition (i) in Definition \ref{def:adapted_J} that $u$ is $J_V$-holomorphic.
In the second case, it follows from condition (ii) and the open mapping theorem that $v$ is constant, so that
$u$ is also constant.
\end{proof}

\subsection{Definition of $\FukCob(\wtilde{M})$}
\label{subsection:def_category}

We now define the Fukaya category of quasi-exact cobordisms $\FukCob(\wtilde{M})$,
closely following the construction described in \cite{BC14}.
We shall only describe the modifications required to adapt this construction to the present case,
and refer to \cite{BC14} for further details.

\subsubsection{Objects}
\label{subsection:objects}

An object of $\FukCob(\wtilde{M})$ 
is a quasi-exact Lagrangian cobordism $(V,J)$ with weakly exact ends,
equipped with an orientation and spin structure on $V$.
For notational convenience, we will most of the time
write such an object by only specifying the cobordism $V$ and keeping the other parts of the data implicit.

\begin{remark}
For a given cobordism $V$, each choice of $J$ such that $(V,J)$ is quasi-exact defines a different object of $\FukCob(\wtilde{M})$.
A priori, these objects need not be quasi-isomorphic. 
As a consequence, the cone decomposition of Theorem \ref{thm:cone_decomposition} may depend on the choice of $J$.
Note, however, that the induced relation on the level of $K$-theory does not depend on $J$.
\end{remark}

\subsubsection{Floer data}
\label{subsection:floer_data}

Recall the following notion from \cite{BC14}.
To define the class of admissible Floer data, we fix a \emph{profile function} $h: \R^2 \to \R$,
whose role is to specify the form of the Hamiltonian perturbations near infinity.
The definition is the same as that of \cite[Section 3.2]{BC14},
except that the sign of $h$ is switched, i.e. $h$ is such that $-h$ satisfies the properties i-iv on p.1761 of \cite{BC14}.
The switch in the sign of $h$ is due to our use of cohomology rather than homology.

For each pair of objects $(V_0, V_1)$, 
we fix a choice of Floer datum $\data(V_0, V_1) = (\wtilde{H}, \wtilde{J})$, where $\wtilde{H}: [0,1] \times \wtilde{M} \to \R$
is a Hamiltonian and $\wtilde{J}(t)$ is a time-dependent compatible almost complex structure on $\wtilde{M}$.
The Floer data are required to satisfy the following conditions:

\begin{enumerate}[label = (\roman*), font=\normalfont]
\item $\phi^1_{\tilde{H}}(V_0)$ is transverse to $V_1$ and their intersections are not double points.
\item There exists a compact set $K \subset (-\frac{5}{4}, \frac{9}{4}) \times \R \subset \C$ and a Hamiltonian 
\mbox{$H: [0,1] \times M \to \R$} such that $\wtilde{H}(t, x+iy, p) = h(x,y) + H(t, p)$ for $x + i y \in \C \setminus K$.
\item For all $t \in [0,1]$, the projection $\pi_{\C}: \wtilde{M} \to \C$ is $( \wtilde{J}(t), (\phi_h^t)_* i )$-holomorphic outside $K \times M$.
\item[(iv*)] For $k \in \{0, 1 \}$, $\wtilde{J}(k)$ is adapted to $J_{k}$ in the sense of Definition \ref{def:adapted_J}.
\end{enumerate}

Note that conditions (ii)--(iii) are the same as in \cite{BC14} (see p.1792), and
that condition (i) is also the same except for the restriction concerning the double points.
The purpose of condition (iv*) is to ensure that there is no bubbling of holomorphic disks with boundary on $V_k$.
Note that condition (iv*) does not interfere with condition (iii) since the profile function satisfies
$(\phi_h^t)_* i  = i$ over $[0,1] \times \R$ and
over the projections of the ends of the cobordisms.
We remark however that it is generally not possible to impose that $\wtilde{J}(k)$ 
satisfies condition (i) of Definition \ref{def:unob_cobordisms}.

It follows from condition (ii) and the definition of the profile function $h$ that $\phi^1_{\tilde{H}}(V_0)$ and $V_1$ are distinct at infinity.
Combined with condition (i), this implies that the set of Hamiltonian chords $\chord(V_0, V_1; \wtilde{H})$ is finite.

\subsubsection{Perturbation data}
\label{subsection:perturbation_data}

To define the admissible class of perturbation data, we fix the following choices.
First, fix a consistent universal choice of strip-like ends as in \cite[Section (9g)]{SeidelBook}.
Secondly, fix a family of \emph{transition functions} $\mathbf{a} : \mathcal{S}^{d+1} \to [0,1]$ for $d \geq 1$ as in Section 3.1 of \cite{BC14}.
These transition functions are required to satisfy several conditions; we refer to \cite{BC14} for the precise definitions.

For each collection of objects $(V_0, \ldots, V_d)$, we fix a perturbation datum $\DD(V_0, \ldots, V_d) = (\mathbf{\Theta}, \mathbf{J})$.
Here, $\mathbf{\Theta} = (\Theta^r)_{r \in \mathcal{R}^{d+1}}$ is a smooth family of $1$-forms $\Theta^r \in \Omega^1(S_r; C^{\infty}(\wtilde{M}))$
and $\mathbf{J} = (J_z)_{z \in \mathcal{S}^{d+1}}$ is a smooth family of compatible almost complex structures on $\wtilde{M}$.

The perturbation data are required to satisfy the conditions (i)--(iii) stated
on pp.1763-1764 of \cite{BC14}.
Moreover, in order to rule out bubbling off of holomorphic disks, we impose the following additional condition:
\begin{enumerate}
\item[(iv*)] For $0 \leq k \leq d$ and $z \in C_k$, $J_z$ is adapted to $J_{V_k}$ in the sense of Definition \ref{def:adapted_J}.
\end{enumerate}

As usual, the perturbation data are required to be consistent with breaking and gluing as in \cite[Section (9i)]{SeidelBook}.

\subsubsection{Transversality and compactness}

Having made choices of Floer and perturbation data as described above,
the moduli spaces of Floer polygons are defined as in Section \ref{prelim:differential} and Section \ref{prelim:ainfinity_operations}.
The main point here is that Floer polygons are assumed to have \emph{no corners},
i.e. they are required to be smooth up to the boundary.

Before going further, we need to justify that the Floer and perturbation data can be chosen to
achieve the transversality and compactness up to breaking of the
moduli spaces of Floer polygons.

For transversality, the argument is the same as in Section 4.2 of \cite{BCS}.
The main point is that the additional constraints imposed on the Floer and perturbation data (i.e. condition (iv*) above)
only concern
their restriction to the boundary of
$S_r$, so that arbitrary perturbations are allowed in the interior of $S_r$.

We now address the compactness issues.
The first point is that we must have $C^0$ bounds for Floer polygons of bounded energy. 
In our setting, the proof is the same as in \cite[Section 3.3]{BC14}.
The only difference is that there is no uniform bound on the energy of Floer polygons;
instead the areas of curves are encoded using Novikov coefficients.

The second point is that we must show that no bubbling of holomorphic disks
can occur for sequences of Floer polygons with bounded energy.
This is a consequence of Lemma \ref{lemma:no_stable_polygons_bis}.
Indeed, since Floer polygons are assumed to have no corners, it follows from the Gromov compactness
theorem for curves with boundary lifts (see \cite{IS})
that a sequence of such polygons with bounded energy
converges to a stable map with no corners.
If the limit stable map contains a bubble tree $u$
with boundary on a cobordism $V_k$, then by condition (iv*) in the definition of the Floer and perturbation data
$u$ must be $J$-holomorphic for an almost complex structure $J$ which is adapted to the base structure $J_{k}$ associated to $V_k$.
Moreover, since the limit map has no corners, $u$ has at most $1$ corner.
By restricting to a subtree if necessary, we can assume that $u$ is stable.
Hence, $V_k$ bounds a stable $J$-holomorphic map with at most $1$ corner,
which contradicts Lemma \ref{lemma:no_stable_polygons_bis}.

\subsubsection{Summing it up}

Having made the choices described above, the definition of $\FukCob(\wtilde{M})$ now follows the same
recipe as the definition of $\Fuk(M)$ outlined in Section \ref{subsection:fukaya_cat_prelim}.
For objects $V_0$ and $V_1$, the morphism space $\Hom(V_0, V_1)$ is the
complex generated by Hamiltonian orbits for the chosen Floer datum.
The grading of $\Hom(V_0, V_1)$ is defined as in Section \ref{section:grading}.
The $A_{\infty}$-operations $\mu^d$ are defined by counting rigid Floer polygons asymptotic to Hamiltonian orbits.
The signs appearing in the definition of $\mu^d$ are
defined by fixing orientation data as in Section \ref{subsubsection:signs}.

\subsection{End of the proof of Theorem \ref{thm:cone_decomposition}}

In the previous sections, we described the technical adjustments
that are needed to extend the theory
developed in \cite{BC14, BCS} to immersed quasi-exact cobordisms.
Having made these modifications,
the proof of Theorem \ref{thm:cone_decomposition}
now follows the same arguments as the proof Theorem A of \cite{BC14}.
The only missing ingredient 
is the verification that the $A_{\infty}$-functors defined in \cite{BC14}
are compatible with gradings and signs.
In the present setting,
these verifications were carried out by Haug, see \cite[Section 4]{Haug15}.

\section{Topologically unobstructed cobordisms}
\label{section:top_unob_cobordisms}

In this section, we introduce a class of cobordisms that are
\emph{topologically unobstructed},
in the sense that they do not bound any homotopically non-trivial continuous disks and teardrops.
This condition will play an important role in this paper because it is easier to check than unobstructedness in the sense of 
Section \ref{section:fukaya_cob}.
Moreover, as we will see in subsequent sections,
all the cobordisms appearing in the computation of $\Gunob(\Sigma)$ can be
chosen to either satisfy this stronger property, or at least be concatenations of topologically unobstructed cobordisms.

In Section \ref{subsection:def_top_unob}, we define topologically unobstructed cobordisms.
In Section \ref{subsection:obstruction_suspensions}, we characterize topological obstruction for Lagrangian suspensions.
In Section \ref{subsection:stability}, we show
that topological unobstructedness is an open condition in the $C^1$ topology.

In Section \ref{subsection:concatenations},
we consider concatenations of cobordisms along possibly immersed ends.
There are two issues with
such concatenations: the first is that their
self-intersections are not generic and therefore must be perturbed;
the second is that topological unobstructedness is generally not preserved under
concatenations.
The main result of Section \ref{subsection:concatenations} is that 
appropriate concatenations of topologically unobstructed cobordisms
become unobstructed in the sense of Section \ref{section:fukaya_cob} after a suitable perturbation.

\subsection{Definitions}
\label{subsection:def_top_unob}

In Section \ref{subsection:holomorphic_disks}, we considered holomorphic disks with corners with boundary on Lagrangian immersions.
In this section, we consider \emph{continuous} polygons with boundary on immersions,
which are defined in a similar way. 

\begin{definition}
\label{def:continuous_polygon}
Let $\iota: L \to M$ be an immersion.
A \Def{continuous polygon} with boundary on $\iota$ consists of a continuous map $u : (\D, \bdry \D) \to (M, \iota(L))$,
a finite set $\Delta \subset \bdry \D$
and a continuous map $\overline{u}: \bdry \D \setminus \Delta \to L$
such that 
\begin{enumerate}[label = (\roman*), font=\normalfont]
	\item $\iota \circ \overline{u} = u$ over $\bdry \D \setminus \Delta$,
	\item for each $\zeta \in \Delta$, the one-sided limits of $\overline{u}$ at $\zeta$ are distinct.
\end{enumerate}
\end{definition}
In the rest of this paper, by a \emph{polygon} we will always mean a \emph{continuous polygon} (not necessarily holomorphic).
As before, we call a polygon a \emph{disk} if it has $0$ corners, and a \emph{teardrop} if it has $1$ corner.

Note that it follows from the definition that an immersion $\iota: L \to M$ 
bounds a teardrop if and only if there is a path $\gamma$ in $L$ with
distinct endpoints such that $\iota \circ \gamma$ is a contractible loop in $M$.

We will need to consider homotopy classes of disks with boundary on an immersion.
For this purpose, recall that to a based map of based spaces $f: X \to Y$, 
we can associate relative homotopy groups $\pi_n(f)$,
which are a straightforward generalization of the relative homotopy groups of a pair.
The elements of $\pi_n(f)$ are
homotopy classes of diagrams of based maps
\begin{equation*}
\begin{tikzcd}
\D^n \arrow{r} & Y \\
\bdry \D^n \arrow{u}{\text{inc}} \arrow{r} &X \arrow{u}[swap]{f}
\end{tikzcd}
\end{equation*}
where the homotopies are also required to factor through $f$ over the boundary of the disk.
Equivalently, we may define $\pi_n(f)$ as the relative homotopy group $\pi_n(M_f, X)$,
where $M_f$ is the mapping cylinder of $f$ and $X$ is embedded in $M_f$ in the usual way.
There is then a long exact sequence of homotopy groups
\begin{equation}
\label{eq:LES_relative_homotopy}
\begin{tikzcd}
\ldots \arrow{r} &\pi_n(X) \arrow{r}{f_*} &\pi_n(Y) \arrow{r} & \pi_n(f) \arrow{r} &\pi_{n-1}(X) \arrow{r} &\ldots
\end{tikzcd}
\end{equation}
\begin{definition}
\label{def:incompressible}
A map $f: X \to Y$ is called \Def{incompressible} if the
maps \mbox{$f_*: \pi_1(X) \to \pi_1(Y)$} induced by $f$ are injective (for any choices of basepoints).

A subspace $X \subset Y$ is called \Def{incompressible} if the inclusion $X \to Y$ is incompressible.
\end{definition}

Note that by the long exact sequence \eqref{eq:LES_relative_homotopy}, the induced map $f_*: \pi_1(X) \to \pi_1(Y)$ 
is injective if and only 
the boundary map $\pi_2(f) \to \pi_1(X)$ vanishes.

We can now define topologically unobstructed Lagrangian immersions.
\begin{definition}
\label{def:top_unob_lagrangian}
A Lagrangian immersion $\iota: L \to M$ is \Def{topologically unobstructed} if
$\iota$ is incompressible and does not bound continuous teardrops.
\end{definition}

\begin{remark}
\hfill
\begin{enumerate}[label = (\roman*)]
	\item We emphasize that in Definition \ref{def:top_unob_lagrangian} we do not
assume that the immersion $\iota$ has generic self-intersections.
The reason is that it will be convenient for us to allow cobordisms 
with immersed ends to belong to the class of topologically unobstructed cobordisms.

	\item The cases of interest for this paper are $M = \Sigma$ and $M = \C \times \Sigma$,
	where $\Sigma$ is a closed surface of genus $g \geq 2$.
	In these cases we have $\pi_2(M) = 0$, hence incompressibility of an immersion $\iota$ is equivalent to $\pi_2(\iota) = 0$.
	
	In the case of a surface $\Sigma$, it follows from Lemma 2.2 of \cite{Abouzaid08} that an immersion $S^1 \to \Sigma$
	is topologically unobstructed if and only if it is \emph{unobstructed} in the sense of Definition 2.1 of \cite{Abouzaid08},
	that is if its lifts to the universal cover $\wtilde{\Sigma}$ are proper embeddings. See Lemma \ref{lemma:criterion_top_unob} for
	a generalization.
	Note also that, since $\pi_1(\Sigma)$ has no torsion, an immersion $S^1 \to \Sigma$ is incompressible if and only if it is non-contractible.
\end{enumerate}
\end{remark}

\begin{proposition}
Let $M$ be a symplectically aspherical manifold.
Let $\iota_V: V \to \C \times M$ be a topologically unobstructed Lagrangian cobordism
with embedded ends and generic self-intersections.
Then $(V,J)$ is quasi-exact for any admissible $J$.
\end{proposition}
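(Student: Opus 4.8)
The plan is to show that any non-constant $J$-holomorphic disk or teardrop with boundary on $V$ would, after projecting away the Novikov-type area data, give a continuous disk or teardrop representing a nontrivial homotopy class, contradicting topological unobstructedness. First I would recall that a $J$-holomorphic disk $u : \D \to \C \times M$ with boundary on $V$ is in particular a continuous disk with boundary on $V$ in the sense of Definition \ref{def:continuous_polygon}, with the same corner set $\Delta$ and boundary lift $\overline{u}$: indeed the existence of a boundary lift $\overline{u}$ satisfying conditions (iii)--(iv) of Definition \ref{def:stable_disk_with_corners} is exactly the data required by Definition \ref{def:continuous_polygon}. So a holomorphic teardrop is automatically a continuous teardrop, and since $V$ is topologically unobstructed it bounds no continuous teardrops; this rules out holomorphic teardrops outright.

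Next I would handle disks ($\Delta = \emptyset$). A $J$-holomorphic disk with boundary on $V$ defines an element of $\pi_2(\iota_V)$ (the relative homotopy group of the immersion, as in Section \ref{subsection:def_top_unob}); its image under the boundary map $\pi_2(\iota_V) \to \pi_1(V)$ is the class of the boundary loop $\overline{u}|_{\bdry \D}$. Since $M$ is symplectically aspherical and $\pi_2(\C \times M) = \pi_2(M)$, the long exact sequence \eqref{eq:LES_relative_homotopy} for $\iota_V$ shows that $\pi_2(\iota_V) \to \pi_1(V)$ is injective: its kernel is the image of $\pi_2(\C\times M) = \pi_2(M)$, and one argues this vanishes in $\pi_2(\iota_V)$ because $\omega$ vanishes on $\pi_2(M)$ and $V$ is Lagrangian, so any such spherical class has zero symplectic area. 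Concretely: if the boundary loop $\overline{u}|_{\bdry\D}$ is null-homotopic in $V$, then incompressibility of $\iota_V$ forces the image loop $\iota_V \circ \overline{u}|_{\bdry\D}$ to be null-homotopic in $\C \times M$, so $u$ represents a class coming from $\pi_2(\C\times M)$; but then $\int_\D u^*\wtilde\omega = 0$ by asphericity, and since $J$ is $\wtilde\omega$-compatible, $\int_\D u^*\wtilde\omega$ equals the energy of $u$, forcing $u$ to be constant. If instead $\overline{u}|_{\bdry\D}$ is \emph{not} null-homotopic in $V$, then incompressibility is not immediately violated — but here I use the stronger structure: the projection $\pi_{\C} \circ u$ is a $J$-holomorphic (hence genuinely holomorphic, by admissibility of $J$ away from a compact set, and more carefully via an adapted structure as in Lemma \ref{lemma:no_stable_polygons_bis}) map $\D \to \C$ whose boundary lies in $\pi_\C(V)$, which by the open mapping theorem (Proposition 3.3.1 of \cite{BC14}) must be constant or avoid the unbounded components — as in the proof of Lemma \ref{lemma:no_stable_polygons_bis} — so that $u$ has image in a region where incompressibility applies and the boundary loop does lift to a null-homotopy, returning us to the previous case.

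Assembling this: I would structure the proof as (1) reduce to an adapted $J_0$ using the open-mapping argument of Lemma \ref{lemma:no_stable_polygons_bis} so that without loss of generality holomorphic disks with boundary on $V$ are $J_V$-holomorphic and their $\C$-projections are constant or confined to $[0,1]\times\R$; (2) observe a holomorphic teardrop is a continuous teardrop, contradiction; (3) for a disk, show the boundary loop lifts to a contractible loop in $\C\times M$ via incompressibility, hence $u$ represents a class in $\pi_2(M)$, hence has zero area, hence is constant. The main obstacle I anticipate is step (1) together with the bookkeeping needed to guarantee that the open mapping theorem genuinely traps the $\C$-projection of $u$ in a region where incompressibility can be applied — i.e. ensuring that ``topologically unobstructed'' as defined for $V$ (with its non-generic ends allowed) interacts correctly with the admissibility of $J$, since a priori $J$ need only be admissible, not adapted; one must either first invoke Lemma \ref{lemma:no_stable_polygons_bis}-style reasoning to replace $J$, or argue directly that admissibility already forces $\pi_\C \circ u$ to be holomorphic on the preimage of the region where $\pi_\C$ is $(J,i)$-holomorphic, which is enough since $\pi_\C(V)$ lies in $(0,1)\times\R$ away from the ends. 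The remaining verifications (that energy equals $\wtilde\omega$-area for compatible $J$, that asphericity kills the area of spherical classes, that the relative homotopy long exact sequence gives injectivity of the boundary map) are routine.
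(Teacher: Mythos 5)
Your treatment of teardrops and of the main disk case is exactly the paper's argument: a $J$-holomorphic teardrop is in particular a continuous teardrop, which topological unobstructedness forbids; and a disk whose boundary lift is null-homotopic in $V$ represents a class in the image of $\pi_2(\C\times M)\iso\pi_2(M)$, hence has zero $\wtilde{\omega}$-area by asphericity, hence is constant because energy equals area for any compatible $J$ (admissibility is not even used). One small imprecision: incompressibility says the boundary map $\pi_2(\iota_V)\to\pi_1(V)$ in \eqref{eq:LES_relative_homotopy} is \emph{zero}, not injective, and the image of $\pi_2(M)$ in $\pi_2(\iota_V)$ need not vanish as homotopy classes --- what vanishes is its symplectic area; your ``concretely'' sentence has the correct version.

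The genuine problem is your second case and your step (1). The case in which $\overline{u}|_{\bdry\D}$ is not null-homotopic in $V$ simply cannot occur: the disk $u$ itself exhibits $\iota_V\circ\overline{u}|_{\bdry\D}$ as a contractible loop in $\C\times M$, so injectivity of $\pi_1(V)\to\pi_1(\C\times M)$ (which is part of topological unobstructedness) forces $\overline{u}|_{\bdry\D}$ to be null-homotopic in $V$ --- incompressibility is a global statement about fundamental groups, not something that ``applies in a region'' after trapping the $\C$-projection. Hence the open mapping theorem and adapted almost complex structures play no role here, and the reduction you propose is in any case not legitimate: the proposition claims quasi-exactness of $(V,J)$ for an \emph{arbitrary} admissible $J$, so you may not replace $J$ by an adapted one, and Lemma \ref{lemma:no_stable_polygons_bis} presupposes a quasi-exact pair $(V,J_V)$, which is precisely what is being proved, so invoking it would be circular. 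Deleting the vacuous case and step (1) from your write-up leaves exactly the paper's short proof.
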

\begin{proof}
By assumption, $V$ does not bound any continuous teardrops.
Moreover, since the map $\pi_2(\iota_V) \to \pi_1(V)$ is trivial and
$M$ is symplectically aspherical, disks with boundary on $V$ have zero symplectic area.
Hence, there are no non-constant $J$-holomorphic disks with boundary on $V$.
\end{proof}

We now give a useful reformulation of topological unobstructedness,
which is a straightforward generalization of Lemma 2.2 of \cite{Abouzaid08}.
Let $\iota: L \to M$ be an immersion and denote by $\pi: \wtilde{L} \to L $ and $\rho: \wtilde{M} \to M$
the universal covers of $L$ and $M$.
By a \emph{lift} of $\iota$ to $\wtilde{M}$, we mean a lift of $\iota \circ \pi$ to $\wtilde{M}$.

\begin{lemma}
\label{lemma:criterion_top_unob}
Let $L$ be a compact manifold.
An immersion $\iota: L \to M$ is topologically unobstructed if and only if its lifts to $\wtilde{M}$
are proper embeddings.
\end{lemma}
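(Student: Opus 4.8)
The plan is to prove both implications by relating continuous disks and teardrops with boundary on $\iota$ to the behavior of lifts of $\iota$ to $\wtilde{M}$. Throughout, fix a lift $\wtilde{\iota}: \wtilde{L} \to \wtilde{M}$ of $\iota \circ \pi$; since $M$ is aspherical in the cases of interest (but we only need $\wtilde{M}$ to be the universal cover, so it suffices to argue in general), $\wtilde{M}$ is simply connected, which is the feature that makes lifting disks possible. The key dictionary is: a continuous disk $u: (\D, \bdry \D) \to (M, \iota(L))$ with boundary lift $\overline{u}: \bdry\D \to L$ (no corners) lifts, because $\pi_1(\D) = 0$, to a map $\wtilde{u}: \D \to \wtilde{M}$; its boundary $\wtilde{u}|_{\bdry\D}$ lands in $\wtilde{\iota}(\wtilde{L})$ after composing $\overline{u}$ with a suitable lift $\bdry\D \to \wtilde{L}$ (again possible since $\bdry\D = S^1$ but we need the loop $\overline{u}$ to lift, i.e. $(\overline{u})_* \pi_1(S^1) \subset \pi_*\pi_1(\wtilde{L}) = 0$ — this is exactly where incompressibility enters). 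For teardrops, the boundary with one corner is an arc $\gamma$ in $L$ with distinct endpoints whose image $\iota \circ \gamma$ is a contractible loop in $M$; contractibility means it lifts to a loop in $\wtilde{M}$, while the arc $\gamma$ lifts to an arc in $\wtilde{L}$ whose endpoints are distinct points of $\wtilde{L}$ — failure of injectivity of $\wtilde{\iota}$ at those two points.

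First I would prove the forward direction by contraposition: suppose a lift $\wtilde{\iota}$ fails to be a proper embedding. If it fails to be injective, there are distinct $\wtilde{x}, \wtilde{y} \in \wtilde{L}$ with $\wtilde{\iota}(\wtilde{x}) = \wtilde{\iota}(\wtilde{y})$; if $\wtilde{x}, \wtilde{y}$ lie in the same component of $\wtilde{L}$ — which we may arrange since $L$ is connected in the curve case, and in general handle componentwise — join them by a path $\wtilde{\gamma}$ in $\wtilde{L}$ and push down to get an arc $\gamma = \pi \circ \wtilde{\gamma}$ in $L$ with $\iota\circ\gamma$ a loop (since the endpoints agree in $M$) which is nullhomotopic in $M$ (it lifts to the non-closed-but-endpoint-matching path, hence bounds); this is a teardrop, contradicting topological unobstructedness. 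If instead $\wtilde{\iota}$ is injective but not proper, then $L$ being compact forces $\wtilde{L}$ non-compact, so $\pi: \wtilde{L} \to L$ is an infinite cover and $\pi_1(L)$ is infinite; however, non-properness of an injective continuous map from a manifold to $\wtilde{M}$ means the image is not closed, so there is a sequence $\wtilde{\iota}(\wtilde{z}_n)$ converging in $\wtilde{M}$ to a point $\wtilde{\iota}(\wtilde{z})$ with $\wtilde{z}_n \not\to \wtilde{z}$; pushing down, this produces a self-tangency / accumulation phenomenon for $\iota$ in $M$ that again yields either a teardrop or (by closing up a short path downstairs against its lift) a contractible loop detecting a failure of incompressibility. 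I would formulate this carefully using that a proper injective immersion of a manifold into a manifold is an embedding onto a closed subset.

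For the converse, assume all lifts of $\iota$ are proper embeddings and derive both conditions in Definition \ref{def:top_unob_lagrangian}. Incompressibility: if some loop $c$ in a component $L_0$ of $L$ had $\iota_* [c] = 0$ in $\pi_1(M)$, then lifting $\iota|_{L_0}$ to $\wtilde{M}$, the loop $c$ would lift to a loop in $\wtilde{L_0}$ (since $\iota \circ c$ bounds upstairs) while simultaneously $\iota\circ c$ lifting to a loop forces the lift of $c$ to be closed only if $[c]$ is in the kernel of $\pi_1(L_0)\to\pi_1(\wtilde L_0)$, which is trivial — so $c$ itself is nullhomotopic in $L_0$; thus $\iota_*$ is injective. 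No teardrops: a teardrop gives an arc $\gamma$ in $L$ with distinct endpoints $p \neq q$ and $\iota\circ\gamma$ contractible; contractibility lets us lift $\iota \circ \gamma$ to a loop in $\wtilde{M}$, and $\gamma$ lifts to an arc $\wtilde\gamma$ in $\wtilde{L}$ (a path in a cover), whose two endpoints $\wtilde p, \wtilde q$ are distinct (they map to $p \neq q$ under $\pi$) yet satisfy $\wtilde\iota(\wtilde p) = \wtilde\iota(\wtilde q)$ since the lifted boundary loop is closed — contradicting injectivity of $\wtilde\iota$. Hence $\iota$ is topologically unobstructed.

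The main obstacle is the properness half of the forward direction: translating "$\wtilde\iota$ is not a proper embedding" into an honest continuous teardrop or incompressibility failure. Non-injectivity is clean, but non-properness (with injectivity) requires care — one must argue that a sequence escaping to the "boundary" of an immersed compact manifold's lift produces a genuine loop downstairs that is contractible in $M$, which ultimately reduces to the standard fact that an injective proper immersion is a closed embedding, so non-properness of an injective immersion of a \emph{compact} $L$'s universal cover only happens when $\wtilde L$ is noncompact and the map genuinely accumulates — and then a diagonal/limiting argument extracts the required small loop. Since Abouzaid's Lemma 2.2 \cite{Abouzaid08} is exactly the curve case, I expect the cleanest writeup to reduce to, or closely mirror, that argument, handling the general compact $L$ componentwise and replacing "properly embedded curve in $\wtilde\Sigma$" with "proper embedding into $\wtilde M$" verbatim.
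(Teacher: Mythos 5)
The main gap is the properness half of your forward direction, which you yourself flag as ``the main obstacle'' and never actually carry out. You do not need to extract a teardrop or an incompressibility failure from an injective non-proper lift: that case simply cannot occur, and the efficient route (the one the paper takes) is to note that since $L$ is compact, $\iota$ is proper, and an injective lift of a proper immersion to the universal covers is again proper, hence a proper embedding. Establishing this (or, equivalently, your claim that an injective non-proper lift ``produces a contractible small loop downstairs'') requires a genuine argument --- e.g.\ take $\wtilde z_n$ escaping every compact set with $\wtilde\iota(\wtilde z_n)$ convergent, pass to a limit $z$ of $\pi(\wtilde z_n)$ in the compact $L$, work in an evenly covered simply connected chart around $z$ on which $\iota$ embeds, and use injectivity of $\wtilde\iota$ together with discreteness of the fiber $\rho^{-1}(\iota(z))$ to reach a contradiction --- and your sketch (``a diagonal/limiting argument extracts the required small loop'') does not supply it. As written, what you actually establish is only the equivalence ``topologically unobstructed $\Leftrightarrow$ lifts injective'', which is the easy part of the lemma; the passage from injectivity to proper embedding is precisely the remaining content.

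Two smaller points. In the non-injectivity case of the forward direction, the projected path $\gamma=\pi\circ\wtilde\gamma$ is a teardrop only when $\pi(\wtilde x)\neq\pi(\wtilde y)$; if the two preimages lie over the same point of $L$, then $\gamma$ is a loop which is non-contractible in $L$ (its lift is not closed) but whose image is contractible in $M$, i.e.\ you get a failure of incompressibility rather than a teardrop, and this case must be included. In the converse, the incompressibility argument is garbled: $c$ lifts only to a path $\wtilde c$ with endpoints $\wtilde x$ and $[c]\cdot\wtilde x$; nullhomotopy of $\iota\circ c$ in $M$ gives $\wtilde\iota(\wtilde x)=\wtilde\iota([c]\cdot\wtilde x)$, and it is injectivity of $\wtilde\iota$ together with freeness of the deck action that forces $[c]=1$. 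The assertions that ``$c$ would lift to a loop since $\iota\circ c$ bounds upstairs'' and that the kernel of $\pi_1(L_0)\to\pi_1(\wtilde L_0)$ ``is trivial'' are incorrect as stated (that kernel is all of $\pi_1(L_0)$). These two items are fixable, but together with the missing properness step the proposal does not yet prove the lemma.
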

\begin{proof}
The case of immersed curves on surfaces is Lemma 2.2 of \cite{Abouzaid08}. The general case follows from the same argument.
Since the proof is short, we include it here.

Observe that $\iota$ is topologically obstructed if and only if there is a path $\gamma$ in $L$ such that
\begin{enumerate}[label = (\roman*)]
	\item either $\gamma$ has distinct endpoints (in the case of a teardrop) or is a non-contractible loop (in the case of a disk), and
	\item $\iota \circ \gamma$ is a contractible loop in $M$.
\end{enumerate} 
By lifting to the universal covers, paths in $L$ satisfying conditions (i) and (ii) correspond to paths $\wtilde{\gamma}$ in $\wtilde{L}$ with
distinct endpoints 
such that $\wtilde{\iota} \circ \wtilde{\gamma}$ is a loop.
Such paths exist if and only if $\wtilde{\iota}$ has self-intersections.

Hence, we have proved that $\iota$ is topologically unobstructed if and only if its lifts are injective.
To finish the proof, we observe that an injective lift of a proper immersion is also a proper immersion, hence a proper embedding.
\end{proof}

\subsection{Topological obstruction of suspensions}
\label{subsection:obstruction_suspensions}

We now prove a criterion for the topological unobstructedness of Lagrangian suspensions.

\begin{lemma}
\label{lemma:obstruction_suspension}
Let $\iota: L \times [0,1] \to \C \times M$ be the suspension of an exact Lagrangian homotopy $(\phi_t)_{t \in [0,1]}$.
If $\phi_t$ is topologically unobstructed for every $t \in [0,1]$, then $\iota$ is topologically unobstructed.
\end{lemma}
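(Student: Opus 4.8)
\textit{The plan is to use the characterization of topological unobstructedness in terms of lifts to universal covers (Lemma \ref{lemma:criterion_top_unob}), so that the statement becomes a question about embeddedness of the lifted suspension.} First I would set up notation: write $\wtilde M$ for the universal cover of $M$ and recall that the universal cover of $\C \times M$ is $\C \times \wtilde M$, while the universal cover of $L \times [0,1]$ is $\wtilde L \times [0,1]$ where $\wtilde L \to L$ is the universal cover of $L$. A Hamiltonian $H_t$ generating $\phi$ lifts to a function on $\wtilde L$ (pull back along $\wtilde L \to L$), and the homotopy $\phi_t \colon L \to M$ lifts to a homotopy $\wtilde\phi_t \colon \wtilde L \to \wtilde M$ once a lift of $\phi_0$ is chosen; different choices differ by deck transformations of $\wtilde M$. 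The lift of the suspension $\iota$ is then, in the obvious coordinates, the map $\wtilde\iota(\wtilde x, t) = \bigl(t, H_t(\wtilde\phi_t(\wtilde x)), \wtilde\phi_t(\wtilde x)\bigr)$, where I abuse notation and write $H_t$ also for the lifted Hamiltonian. By Lemma \ref{lemma:criterion_top_unob}, it suffices to show each such $\wtilde\iota$ is a proper embedding.

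\textit{Properness} is easy: $L \times [0,1]$ is compact, so the original suspension $\iota$ is proper; properness of the lift then follows from properness of $\iota$ together with the fact that $\wtilde L \times [0,1] \to \C \times \wtilde M$ is a lift of a proper map (an injective lift of a proper immersion is proper, exactly as in the last line of the proof of Lemma \ref{lemma:criterion_top_unob}) — so this reduces to injectivity. For \textit{injectivity}, suppose $\wtilde\iota(\wtilde x, s) = \wtilde\iota(\wtilde y, t)$. Reading off the first coordinate gives $s = t$, and reading off the $\wtilde M$-coordinate gives $\wtilde\phi_s(\wtilde x) = \wtilde\phi_s(\wtilde y)$. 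But $\phi_s \colon L \to M$ is topologically unobstructed by hypothesis, so by Lemma \ref{lemma:criterion_top_unob} its lift $\wtilde\phi_s \colon \wtilde L \to \wtilde M$ is an embedding, in particular injective; hence $\wtilde x = \wtilde y$. (One should be slightly careful that $\wtilde\phi_s$ is genuinely \emph{a} lift of $\iota_s := \phi_s$ in the sense of the lemma, i.e. a lift of $\phi_s \circ \pi$; this is immediate from the construction, and the freedom in the choice of lift of $\iota$ corresponds exactly to the freedom in the choice of lift of $\phi_0$, all of which are covered by the hypothesis applied to each $\phi_s$.) This proves $\wtilde\iota$ is injective, hence a proper embedding, hence $\iota$ is topologically unobstructed.

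\textit{The main point to get right} — the only place where there is anything to check — is the compatibility between "lifts of the suspension $\iota$" and "lifts of the homotopy $\phi$", i.e. verifying that every lift of $\iota$ in the sense of Lemma \ref{lemma:criterion_top_unob} arises from the formula above for some lift of the homotopy, and that the second coordinate (the Hamiltonian term $H_t$) causes no trouble. Since $\C$ is simply connected, the universal cover of $\C \times M$ really is $\C \times \wtilde M$ and the first two coordinates of any lift are forced to agree with $(t, H_t(\wtilde\phi_t(\wtilde x)))$ up to the deck action on the $\wtilde M$ factor; so the whole question genuinely collapses to injectivity of $\wtilde\phi_s$ for each fixed $s$, which is the hypothesis. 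I do not expect any serious obstacle here; the lemma is essentially a bookkeeping consequence of Lemma \ref{lemma:criterion_top_unob} applied fiberwise in $t$.
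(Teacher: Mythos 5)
Your proof is correct, but it takes a different route from the paper. The paper argues directly from the definition of topological unobstructedness: it uses the strong deformation retraction of $L \times [0,1]$ onto a slice $L \times \{t\}$ to identify incompressibility of $\iota$ with incompressibility of the $\phi_t$'s, and for teardrops it observes (as you do, via the real part of the $\C$-coordinate) that the two endpoints of a boundary lift must lie in a single slice, then retracts the boundary lift into that slice to produce a teardrop on $\phi_t$. You instead route everything through Lemma \ref{lemma:criterion_top_unob}: the universal cover of $\C \times M$ is $\C \times \wtilde{M}$, any lift of the suspension restricts on each slice to a lift of $\phi_t \circ \pi_L$, and equality of two lifted points forces equal $t$-parameters, so injectivity of the lifted suspension reduces to injectivity of the lifted slice maps, which is exactly the hypothesis via the criterion; properness comes for free as in the last line of that criterion's proof. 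The shared kernel of both arguments is that double points of the suspension only occur within slices. What your approach buys is that incompressibility and teardrops are treated in one stroke (injectivity of lifts covers both), and no retraction-of-boundary-lifts bookkeeping is needed; the mild cost is that you apply Lemma \ref{lemma:criterion_top_unob} to the compact manifold with boundary $L \times [0,1]$ (harmless, since its proof only uses compactness) and you must check, as you do, that every lift of the suspension arises slice-wise from lifts of the $\phi_t$'s, so that the hypothesis really covers all lifts.
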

\begin{proof}
This follows easily from the fact the each slice $L \times \{ t \}$ is a strong deformation retract of $L \times [0,1]$.
Indeed, this readily implies that $\iota$ is incompressible if and only if $\phi_t$ is incompressible for all $t$.
Moreover, if $\iota$ bounds a teardrop $u$, then by definition of the suspension the endpoints of the boundary lift of $u$
must be in the same slice $L \times \{ t \}$. Hence, applying the deformation retraction to the 
boundary lift yields a teardrop with boundary on $\phi_t$.
\end{proof}

\subsection{Stability under perturbations}
\label{subsection:stability}

The incompressibility of an immersion $\iota$ is obviously invariant under homotopies of $\iota$.
On the other hand, the non-existence of teardrops with boundary on an immersion is generally not preserved by homotopies or even regular homotopies.
Nevertheless, this property is preserved by sufficiently small deformations, as we now show.

\begin{lemma}
\label{lemma:invariance_teardrops}
Let $L$ be a compact manifold, possibly with boundary.
Suppose that the immersion $\iota: L \to M$ does not bound teardrops. 
Then any immersion sufficiently close to $\iota$
in the $C^1$ topology does not bound teardrops.
\end{lemma}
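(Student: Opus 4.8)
The plan is to argue by contradiction using a Gromov-type compactness argument for continuous teardrops, exploiting that the domain is a fixed disk with a single corner. Suppose the conclusion fails: then there is a sequence of immersions $\iota_n \to \iota$ in the $C^1$ topology, each bounding a teardrop $(u_n, \Delta_n = \{\zeta_n\}, \overline{u}_n)$ with boundary on $\iota_n$. After reparametrizing the boundary by a rotation of $\D$ (which we absorb into the data), we may assume the corner point $\zeta_n = \zeta_0$ is fixed. The key geometric input is that a continuous teardrop is the same as a path $\gamma_n$ in $L$ with distinct endpoints such that $\iota_n \circ \gamma_n$ is a contractible loop in $M$ (this is recorded right after Definition~\ref{def:continuous_polygon}); so it suffices to rule out, for $n$ large, a path in $L$ with distinct endpoints whose image under $\iota_n$ is a null-homotopic loop.

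First I would pass to the universal cover, in the spirit of Lemma~\ref{lemma:criterion_top_unob}. Since $\iota$ does not bound teardrops, its lift $\wtilde{\iota}: \wtilde{L} \to \wtilde{M}$ is injective — in fact a proper embedding. Fix a compact set $K \subset L$ large enough that the relevant paths $\gamma_n$ (which are reparametrized boundary circles, hence have uniformly bounded image once we control $u_n$) stay inside $K$; here one uses that $L$ is compact, so in fact $K = L$ works. Choose a compact fundamental-domain-type set $\wtilde{K} \subset \wtilde{L}$ covering $L$. The existence of a teardrop for $\iota_n$ gives, after lifting, two distinct points $\wtilde{x}_n, \wtilde{y}_n \in \wtilde{L}$ lying in the $\pi_1(L)$-orbit of $\wtilde{K}$ with $\wtilde{\iota}_n(\wtilde{x}_n) = \wtilde{\iota}_n(\wtilde{y}_n)$ and $\wtilde{x}_n, \wtilde{y}_n$ joined by a path staying in a fixed compact region. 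Translating by deck transformations, I may assume $\wtilde{x}_n \in \wtilde{K}$ and $\wtilde{y}_n$ in a fixed enlargement $\wtilde{K}'$ (again using that the connecting path is short, controlled by the $C^1$-bound on $\iota_n$, hence the diameter of its lift is bounded). Passing to a subsequence, $\wtilde{x}_n \to \wtilde{x}_\infty$, $\wtilde{y}_n \to \wtilde{y}_\infty$ in $\wtilde{L}$.

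Now the $C^1$-convergence $\iota_n \to \iota$ lifts (locally, and then globally on the compact set $\wtilde{K}'$) to $C^1$-convergence $\wtilde{\iota}_n \to \wtilde{\iota}$, so $\wtilde{\iota}(\wtilde{x}_\infty) = \wtilde{\iota}(\wtilde{y}_\infty)$. Since $\wtilde{\iota}$ is injective, $\wtilde{x}_\infty = \wtilde{y}_\infty =: \wtilde{p}$. But $\wtilde{\iota}$ is an immersion, so it is injective on a neighborhood $U$ of $\wtilde{p}$; by $C^1$-convergence, for $n$ large $\wtilde{\iota}_n$ is also injective on $U$ (local injectivity of $C^1$-close immersions is stable, via the implicit function theorem applied to the difference map, or simply because the derivative bounds force a positive lower bound on the displacement of distinct nearby points). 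Since $\wtilde{x}_n, \wtilde{y}_n \in U$ for $n$ large and $\wtilde{\iota}_n(\wtilde{x}_n) = \wtilde{\iota}_n(\wtilde{y}_n)$, we get $\wtilde{x}_n = \wtilde{y}_n$, hence the endpoints of $\gamma_n$ coincide in $L$ — contradicting that a teardrop has distinct boundary-lift limits at its corner.

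The main obstacle is the bookkeeping in the second paragraph: ensuring that the lift of the connecting path $\gamma_n$ stays in a \emph{fixed} compact subset of $\wtilde{L}$ independent of $n$, so that the subsequential limit argument applies. This is where one genuinely uses the $C^1$ (not merely $C^0$) hypothesis: a $C^1$-bound on $\iota_n$ near $\iota$ bounds the length, hence the image diameter in $\wtilde{M}$, of the lifted boundary loops, and properness of $\wtilde{\iota}$ then confines the lifts of the $\gamma_n$ to a bounded region upstairs; combined with the (uniformly bounded, since $L$ is compact) length of $\gamma_n$ in $L$ this pins down the lift. Everything else is a standard compactness-plus-local-injectivity argument, and no holomorphicity is needed since continuous teardrops carry no energy constraint.
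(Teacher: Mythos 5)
There is a genuine gap, and it sits at the very first step. You claim that ``since $\iota$ does not bound teardrops, its lift $\wtilde{\iota}:\wtilde{L}\to\wtilde{M}$ is injective --- in fact a proper embedding.'' That is not what Lemma~\ref{lemma:criterion_top_unob} says: injectivity (and properness) of the lifts is equivalent to \emph{topological unobstructedness}, i.e.\ the absence of teardrops \emph{and} incompressibility. The present lemma assumes only the absence of teardrops, and it must apply to compressible immersions (this is exactly why it is stated this way: incompressibility is automatically homotopy-invariant, as noted just before the lemma, while the teardrop condition is not). A concrete counterexample to your claim: an embedded contractible circle in $\Sigma$ bounds no teardrops, yet its lift to the universal cover is neither injective nor proper. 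Consequently your final step (``$\wtilde{\iota}$ injective $\Rightarrow \wtilde{x}_\infty=\wtilde{y}_\infty$'') and your rescue of the confinement step via properness of $\wtilde{\iota}$ both use a hypothesis you do not have. A second, independent problem is the compactness bookkeeping itself: compactness of $L$ bounds the diameter of $L$, not the length of the boundary paths $\gamma_n$, and a \emph{continuous} teardrop carries no length or energy bound (as you yourself observe at the end); so there is no a priori reason the lifted endpoints $\wtilde{y}_n$ stay in a fixed compact subset of $\wtilde{L}$ after normalizing $\wtilde{x}_n$.

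The correct argument never needs the universal cover. Work downstairs: let $p_n\neq q_n$ be the endpoints of $\gamma_n$ with $\theta_n(p_n)=\theta_n(q_n)$, pass to limits $p_n\to p$, $q_n\to q$, so $\iota(p)=\iota(q)$. The $C^1$-convergence plus the fact that $\iota$ is an immersion forces $p\neq q$ (your local-injectivity observation, applied in $L$ rather than in $\wtilde{L}$). Then replace $\gamma_n$ by a path $\wtilde{\gamma}_n$ from $p$ to $q$ that is $C^0$-close to $\gamma_n$; since $\theta_n\to\iota$ in $C^0$, the loops $\theta_n\circ\gamma_n$ and $\iota\circ\wtilde{\gamma}_n$ are $C^0$-close, hence freely homotopic, so $\iota\circ\wtilde{\gamma}_n$ is contractible and $\iota$ bounds a teardrop --- the desired contradiction. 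If you insist on lifting, the case $\wtilde{x}_\infty\neq\wtilde{y}_\infty$ with equal projections to $L$ is a \emph{disk}, not a teardrop, and cannot be excluded by the hypothesis; only the dichotomy ``projections distinct $\Rightarrow$ teardrop on $\iota$'' versus ``projections equal $\Rightarrow$ contradiction with local injectivity of the immersion'' closes the argument, at which point the covering space adds nothing.
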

\begin{proof}
Assume, in view of a contradiction, that there is a sequence of immersions $\theta_n: L \to M$
converging to $\iota$ in the $C^1$ topology, such that each $\theta_n$ bounds a teardrop.
This means that there exist sequences $(p_n)$ and $(q_n)$ in $L$ and a sequence of paths $\gamma_n$ from $p_n$ to $q_n$
with the following properties:
\begin{enumerate}[label = (\roman*)]
	\item $\theta_n(p_n) = \theta_n(q_n)$ for all $n$.
	\item $q_n \neq p_n$ for all $n$.
	\item The loop $\theta_n \circ \gamma_n$ is contractible in $M$.
\end{enumerate}
By compactness, we can assume that $p_n \to p$ and $q_n \to q$ for some points $p,q \in L$.
Then $\iota(p) = \iota (q)$. Moreover, since $\theta_n \to \iota$ in the $C^1$ topology, we must have $p \neq q$ (otherwise the
existence of the sequences $p_n$ and $q_n$
would contradict that $\iota$ is an immersion).

For every $n$, choose a path $\wtilde{\gamma}_n$ from $p$ to $q$ so that $d_{C^0}(\gamma_n, \wtilde{\gamma}_n) \to 0$
as $n \to \infty$ (where the $C^0$ distance is computed with respect to some fixed Riemannian metric).
Then we also have $d_{C^0}(\theta_n \circ \gamma_n, \iota \circ \wtilde{\gamma}_n) \to 0$ as $n \to \infty$
since $\theta_n \to \iota$ in the $C^0$ topology.
In particular, for $n$ large enough the loops $\theta_n \circ \gamma_n$ and $\iota \circ \wtilde{\gamma}_n$
are freely homotopic.
Therefore the loop $\iota \circ \wtilde{\gamma}_n$ is contractible in $M$ and we conclude that $\iota$ bounds a teardrop.
\end{proof}

\begin{corollary}
\label{cor:top_unob_open_condition}
Let $\iota: L \to M$ be a topologically unobstructed immersion, where $L$ is compact.
Then any immersion sufficiently close to $\iota$
in the $C^1$ topology is topologically unobstructed.
\end{corollary}

Recall that a Lagrangian immersion
has \emph{generic self-intersections} if 
it has no triple points and each double point is transverse.
It is well known that immersions $L \to M$ with generic self-intersections
are dense in the space of all Lagrangian immersions.
As a consequence, we obtain the following approximation result.

\begin{corollary}
\label{cor:generic_perturbation}
Let $\iota: L \to M$ be a topologically unobstructed immersion, where $L$ is compact.
Then there is a $C^1$-close Lagrangian immersion which is topologically unobstructed
and has generic self-intersections.
\end{corollary}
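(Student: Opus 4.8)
The plan is to combine the openness result of Corollary~\ref{cor:top_unob_open_condition} with a standard genericity statement for Lagrangian immersions. The point is that topological unobstructedness is a $C^1$-open condition, while the Lagrangian immersions with generic self-intersections form a $C^1$-dense subset of the space of all Lagrangian immersions of $L$ into $M$; the intersection of an open neighborhood with a dense set is nonempty, which gives exactly the immersion we want.

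More precisely, I would first invoke the well-known fact (due to Lees, building on the Gromov--Lees $h$-principle, but really an elementary transversality argument once a Lagrangian immersion is fixed) that Lagrangian immersions $L \to M$ with generic self-intersections --- no triple points, all double points transverse --- are $C^1$-dense in the space of all Lagrangian immersions. This is already quoted in the paragraph preceding the statement, so I would simply cite it. Then, given the topologically unobstructed immersion $\iota$, Corollary~\ref{cor:top_unob_open_condition} provides a $C^1$-neighborhood $\mathcal{U}$ of $\iota$ consisting entirely of topologically unobstructed immersions. (Here one should note that topological unobstructedness is a property of the immersion up to homotopy for the incompressibility part, and $C^1$-open for the no-teardrops part by Lemma~\ref{lemma:invariance_teardrops}; both are subsumed in the corollary.) By density, $\mathcal{U}$ contains a Lagrangian immersion $\iota'$ with generic self-intersections, and $\iota'$ is the desired immersion: it is $C^1$-close to $\iota$, has generic self-intersections, and is topologically unobstructed.

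There is essentially no obstacle here --- the statement is a formal consequence of the two preceding results. The only mild subtlety worth a sentence is to make sure the density statement is for Lagrangian immersions $C^1$-close to a given one (not merely smooth immersions), so that the perturbation stays within the Lagrangian condition; this is standard, and one can either cite the relevant reference or observe that a small $C^1$-perturbation supported near the self-intersection locus can be taken Lagrangian by a Weinstein-neighborhood argument. I would keep the proof to two or three sentences, as it is purely a matter of assembling Corollary~\ref{cor:top_unob_open_condition} with the cited density result.
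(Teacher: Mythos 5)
Your proposal is correct and is exactly how the paper obtains this corollary: it is stated as an immediate consequence of Corollary~\ref{cor:top_unob_open_condition} (openness of topological unobstructedness in the $C^1$ topology) combined with the quoted $C^1$-density of Lagrangian immersions with generic self-intersections, with no further argument given. Your added remark about keeping the perturbation Lagrangian is a reasonable clarification but does not change the argument.
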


In the case of a cobordism $V$ with immersed ends, this corollary allows us to perturb $V$
so that its ends are generic, and so that the only non-generic self-intersections of $V$ are the intervals of double points corresponding
to the double points of its ends.
The double points over the ends of $V$ will be handled by a more careful choice of perturbations,
which are described in the next section.

\subsection{Concatenations of cobordisms along immersed ends}
\label{subsection:concatenations}

Let $V: L \cob (L_1, \ldots, L_r)$ be a cobordism and let $V'$ be a cobordism that has a negative end modelled over $L$.
Then $V$ and $V'$ can be concatenated by gluing them along the ends corresponding to $L$, producing
a cobordism $V \# V'$.
Assume that $V$ and $V'$ are topologically unobstructed.
Then $V \# V'$ needs not be topologically unobstructed,
since a disk or teardrop on $V \# V'$ needs not correspond to a disk or teardrop on $V$ or $V'$.
Despite this, we will prove
that, after a suitable perturbation, the concatenation $V \# V'$ 
does not bound $J$-holomorphic disks or teardrops for appropriate choices of almost complex structures $J$.
The precise statement is as follows.

\begin{proposition}
\label{prop:concatenation_unobstructed}
Let $V$ be a Lagrangian cobordism with embedded ends which is the concatenation
of topologically unobstructed cobordisms. 
Then $V$ is exact homotopic relative to its boundary to a quasi-exact cobordism.
\end{proposition}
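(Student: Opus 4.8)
The plan is to reduce the statement to a local analysis near the immersed ends along which the concatenation is performed, using the $C^1$-stability results of Section \ref{subsection:stability} together with the open mapping theorem argument already used in the proof of Lemma \ref{lemma:no_stable_polygons_bis}. Write $V = V_1 \# \cdots \# V_m$ as a concatenation of topologically unobstructed cobordisms $V_i$, glued along (possibly immersed) ends. By Corollary \ref{cor:generic_perturbation} we may assume each $V_i$ is topologically unobstructed with generic self-intersections, so that the only non-generic self-intersections of $V$ are the intervals of double points created at the gluing regions, coming from the double points of the matched ends $L$. First I would fix, for each gluing end $L$, a small Weinstein-type neighborhood of the double points of $L$ and perform an explicit local perturbation supported there — essentially the immersed analogue of the ``curving'' of a matching end used when concatenating cylindrical cobordisms — producing from $V$ an exact homotopic (relative to $\partial V$) cobordism $\widehat V$ whose self-intersection locus is now a finite set of transverse double points, all contained in compact subsets of the gluing regions projecting to small bounded regions of $\C$.

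The next step is to choose the almost complex structure. On $[0,1]\times\R\times M$ take $J$ to agree with the base almost complex structures $J_{V_i}$ on the respective pieces (these can be chosen to agree on the overlap where the $V_i$ are cylindrical, since there they are products), and impose condition (ii) of Definition \ref{def:adapted_J}, i.e. require $\pi_\C$ to be $(J,i)$-holomorphic on a neighborhood $U$ of $\pi_\C(\widehat V)$. Such $J$ exists because $\pi_\C(\widehat V)$ is a compact subset of $\C$ and the condition is a pointwise constraint that can be interpolated. Now suppose $u:\D\to\widetilde M$ is a non-constant $J$-holomorphic disk or teardrop with boundary on $\widehat V$. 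Exactly as in Lemma \ref{lemma:no_stable_polygons_bis}, the open mapping theorem applied to $v=\pi_\C\circ u$ forces either $v(\D)\subset$ (the region where the $V_i$ are ``straight'' in $\C$) or $v$ constant; in the second case $u$ is constant, and in the first case $u$ is $J_{V_i}$-holomorphic for a single $i$, with boundary on (a translate of) $V_i$. But $V_i$ is topologically unobstructed, $M$ is symplectically aspherical, and thus — by the Proposition preceding Lemma \ref{lemma:criterion_top_unob} — $V_i$ bounds no non-constant $J$-holomorphic disks or teardrops, a contradiction. Hence $(\widehat V, J)$ is quasi-exact.

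The subtle point, and the one I expect to be the main obstacle, is the first step: arranging the local perturbation near the immersed matched ends so that simultaneously (a) it is an exact homotopy relative to $\partial V$, (b) it genuinely removes all the non-generic (interval) self-intersections, replacing them by transverse double points without creating new teardrops or disks \emph{locally} (i.e. within the neighborhood where the perturbation is supported), and (c) the resulting cobordism still has $\pi_\C$-image a compact set on which one can demand $(J,i)$-holomorphicity without conflicting with the cylindrical structure at the genuine ends of $V$. Point (b) is where Lemma \ref{lemma:invariance_teardrops} and Corollary \ref{cor:top_unob_open_condition} enter: because each $V_i$ is topologically unobstructed and this is a $C^1$-open condition, a sufficiently $C^1$-small perturbation cannot create a teardrop or compressing disk \emph{on $V_i$ itself}; the only new curves that could appear are ones that ``see'' the gluing, and those are precisely the ones excluded by the holomorphicity of $\pi_\C$ in the second step. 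So the argument is really a two-part trap: topological unobstructedness of the pieces kills curves that stay within one piece, and the $\pi_\C$-holomorphicity constraint kills curves that straddle the gluing. I would organize the write-up so that the local model for the perturbation is stated first (perhaps citing the corresponding local surgery/suspension normal forms from Section \ref{subsection:surgery} and Section \ref{subsection:suspension}), then invoke the two stability corollaries for the $V_i$, and finally run the open mapping theorem dichotomy verbatim as in Lemma \ref{lemma:no_stable_polygons_bis}.
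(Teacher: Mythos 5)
There is a genuine gap, and it sits exactly where you predicted the main obstacle would be, but the fix is not the one you propose. Your confinement step claims that, once $\pi_\C$ is $(J,i)$-holomorphic on a neighborhood of all of $\pi_\C(\widehat V)$, the open mapping theorem ``as in Lemma \ref{lemma:no_stable_polygons_bis}'' forces any holomorphic disk or teardrop to be constant or to have boundary on a single piece $V_i$. That dichotomy in Lemma \ref{lemma:no_stable_polygons_bis} works only because of the structure at infinity: the ends project to horizontal rays adjacent to \emph{unbounded} components of $\C\setminus\pi_\C(V)$, so a nonconstant projection meeting a ray would have open image meeting an unbounded component, contradicting the first half of the argument. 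At an \emph{interior} gluing region this fails. The matched end projects to a segment joining the two blobs, and in the configurations actually used in the paper (concatenations of surgery traces with several ends) the other ends of $V_0$ and $V_1$ extend as rays across the middle strip, so the planar regions directly above and below the connecting segment can be \emph{bounded} components of the complement, or even be overlapped by the projections of other ends. A holomorphic curve's projection is perfectly allowed to cover a bounded complementary component and cross the segment, so nothing prevents a disk, or a teardrop with corner at one of your new transverse double points, from having boundary straddling both pieces; such a curve contradicts neither the topological unobstructedness of $V_0$ nor that of $V_1$, and your ``two-part trap'' does not catch it. (A secondary point: demanding both $J=J_{V_i}$ over the pieces and holomorphicity of $\pi_\C$ over a neighborhood of the whole projection is either overconstrained or unnecessary -- the $V_i$ carry no preferred almost complex structures, since topological unobstructedness is a $J$-free condition -- but this is cosmetic compared with the confinement issue.)

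The paper's proof resolves exactly this by using the bottleneck perturbation of the matched end (Section \ref{subsection:concatenations}, following \cite{Pictionary} and \cite{MakWu}, Figure \ref{fig:bottleneck_perturbation}): the profile function $h$ with an interior critical point makes the perturbed end dip, so the projection acquires two isolated transverse crossing points $\zeta_\pm$ that separate the plane picture into three regions (Figure \ref{fig:perturbed_concatenation}), and the intervals of double points are traded for transverse double points sitting over the bottlenecks and near the original end. One then only needs $\pi_\C$ to be $(J,i)$-holomorphic near $\zeta_\pm$ and at infinity; the open mapping theorem applied \emph{locally at the bottleneck} shows that no $J$-holomorphic disk or teardrop can cross it, independently of the global planar configuration, so its boundary lies on one of the three pieces $V_\ell$, $V_c$, $V_r$, each a $C^1$-small perturbation of $V_0$, $V_1$ or a product $I\times L$ and hence topologically unobstructed by Lemma \ref{lemma:invariance_teardrops}. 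If you want to salvage your write-up, replace your generic local perturbation near the double points by this bottleneck normal form and run the confinement argument at the bottleneck points rather than along the whole gluing segment; the rest of your scheme (stability of topological unobstructedness under small perturbation, plus asphericity to kill disks) then goes through as in the paper.
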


Before proving the proposition, we describe the class of perturbations that will be used in the proof.
We will use a construction from \cite[Section 3.2.1]{Pictionary}
which replaces the self-intersections of an immersed end of a cobordism by transverse double points.
The outcome of this perturbation is a \emph{cobordism with bottlenecks} in the sense of \cite{MakWu}.
The presence of these bottlenecks is the key feature that allows to control the behaviour of $J$-holomorphic maps.

Let $\iota_V: V \to \C \times M$ be an immersed cobordism.
For notational convenience, we replace $V$ by its extension as in Section \ref{subsection:unob_cobordisms}.
Consider a positive end of $\iota_V$, which for definiteness we assume is lying
over the interval $I = [1 - \eps, \infty) \times \{ 0 \} \subset \C$ for some small $\eps > 0$.
This means that there is a Lagrangian immersion $\iota_L: L \to M$ and a proper embedding 
$j: [0, \infty) \times L \to V$ such that
$V|_{I} = j( [1 - \eps, \infty) \times L)$ and
\[
\iota_V \circ j (x, p) = (x, 0 , \iota_L(p))
\] 
for $x \geq 1 - \eps$. From now on, we identify $[0, \infty) \times L$ with its image in $V$.
Moreover, we assume that $\iota_L$ has generic self-intersections
and that $\iota_V$ does not have double points in $[0, 1-\eps) \times L$.

The perturbations we consider are of the following form. 
Extend $\iota_V$ to a symplectic immersion $\Psi: U \to \C \times M$, where $U$ is a neighborhood of the zero-section in $T^* V$.
The perturbed immersion will be obtained by composing $\Psi$ with a Hamiltonian isotopy of the zero-section inside $U$.

To describe the relevant Hamiltonian isotopy, consider a smooth function $h: [0,\infty) \to \R$
that has the profile shown in Figure \ref{fig:bottleneck_perturbation}.
More precisely, $h(x)$ vanishes on $[0, 1 - 2 \eps]$, has a unique non-degenerate critical point in $(1 - 2 \eps, \infty)$
located at $x = 1$, and is affine with positive slope for $x > 2$.

\begin{figure}
	\centering
	\includegraphics[width = 0.8\textwidth]{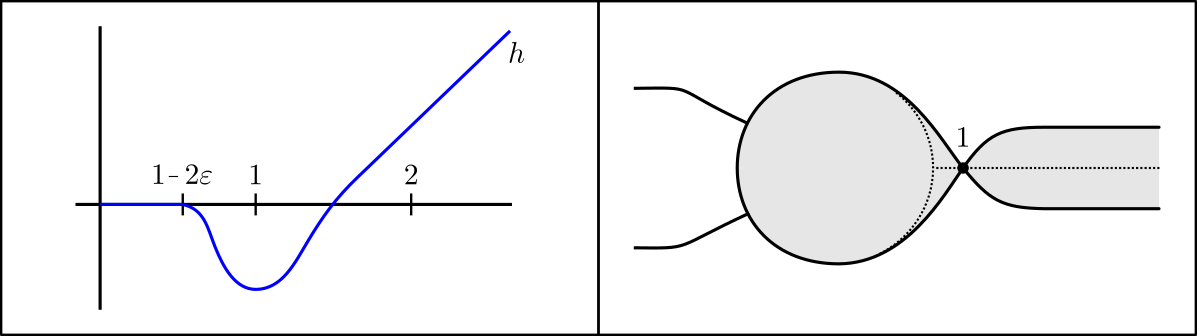}
	\caption{Left: the behaviour of the profile function $h$. Right: the projection to $\C$ of a cobordism after
	perturbation of a positive end.}
	\label{fig:bottleneck_perturbation}
\end{figure}

Fix a double point of $\iota_L$ and pick an ordering $(P_{-}, P_{+})$ of the two preimages. 
Let $D_{\pm} \subset L$ be disjoint open disks centered at $P_{\pm}$.
Fix bump functions $\chi_{\pm}$ on $D_{\pm}$ such that $\chi_{\pm} \geq 0$, $\chi \equiv 1$ near $P_{\pm}$ and $\chi \equiv 0$ near $\bdry D_{\pm}$.
We now define functions $H_{\pm} : [0, \infty) \times D_{\pm} \to \R$
by setting $H_{\pm}(x, p) = \pm  h(x)\, \chi_{\pm}(p)$.
Doing this for each pair of double points and extending by zero, we obtain a function $H$ on $V$.
Finally, extend $H$ to a neighborhood of $V$ in $T^*V$ by pulling back by the projection $T^*V \to V$.

Consider the family of Lagrangian immersions $\iota_t = \Psi \circ \phi^{t}_H |_V$ for small $t$.
By our choice of $h$, for $t > 0$ the projection of $\iota_t$ to $\C$ has the shape shown in Figure \ref{fig:bottleneck_perturbation}.
The key feature is that for $t$ small enough each pair of intervals of double points of $\iota_V$ inside $[0, \infty) \times L$ 
has been replaced by two pairs of transverse double points.
One of these pairs projects to the "bottleneck" at $x = 1$, while 
the other pair is a small perturbation of the double points $(1 - \eps, P_{\pm})$ of $\iota_V$.

Assuming that $\iota_V$ does not bound teardrops, then by Lemma \ref{lemma:invariance_teardrops}, for $t$ small enough $\iota_t$ also
does not bound teardrops. We will always assume that $t$ is chosen small enough so that this is the case.

In the case of a negative end of $V$, the perturbation is defined in a similar way, with the profile function $h$
replaced by $x \mapsto -h(-x)$.

We now proceed to the proof of Proposition \ref{prop:concatenation_unobstructed}.

\begin{proof}[Proof of Proposition \ref{prop:concatenation_unobstructed}.]
We only provide details for the case where $V$ is the concatenation of two cobordisms; the general case is similar.

Let $V$ be the concatenation of topologically unobstructed cobordisms $V_0$ and $V_1$ along a matching end modelled over the immersed Lagrangian $L$.
After generic perturbations, we may assume that $L$ has generic self-intersections and 
that the only 
non-generic self-intersections of $V_0$ and $V_1$ are along the end corresponding to $L$.
By Lemma \ref{lemma:invariance_teardrops}, for a small enough perturbation the resulting cobordisms are still topologically unobstructed.

Next, perturb $V$ by splicing together compatible perturbations of the
ends of $V_0$ and $V_1$ which were concatenated, as described in the beginning of this section.
The outcome is a cobordism (which we still call $V$) with generic self-intersections
and two bottlenecks $\zeta_{\pm} \in \C$ as in Figure \ref{fig:perturbed_concatenation}. 
The bottlenecks separate the cobordism in three parts which we label $V_{\ell}$, $V_{c}$ and $V_{r}$.
Note that $V_{\ell}$, $V_r$ and $V_c$ are topologically unobstructed
since they are small perturbations of 
$V_0$, $V_1$ and a product cobordism $I \times L$, respectively.

\begin{figure}
	\centering
	\includegraphics[width = 0.6\textwidth]{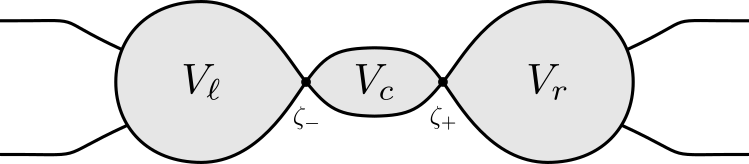}
	\caption{The projection to $\C$ of the cobordism obtained by perturbing the concatenation of $V_0$ and $V_1$.}
	\label{fig:perturbed_concatenation}
\end{figure}

Let $J$ be a compatible almost complex structure on $\C \times M$ with the following properties:
\begin{enumerate}[label = (\roman*)]
	\item The projection $\pi_{\C}$ is $(J,i)$-holomorphic outside $K \times M$ for some compact set $K \subset \C$.
	\item The projection $\pi_{\C}$ is $(J,i)$-holomorphic on $U_{\pm} \times M$, where $U_{\pm}$ is a neighborhood of the bottleneck $\zeta_{\pm}$.
\end{enumerate}
We claim that $(V,J)$ is quasi-exact.
Indeed, if there is a $J$-holomorphic teardrop with boundary on $V$, then by the open mapping theorem this teardrop cannot cross the bottlenecks, 
i.e. it must have boundary on one of the three parts $V_{\ell}$, $V_c$ or $V_r$. This contradicts
that these immersions are topologically unobstructed.
By the same argument, $V$ does not bound non-constant $J$-holomorphic disks.
\end{proof}

\section{Topological obstruction of surgery cobordisms in dimension 2}
\label{section:obstruction_dim2}

We now turn to the computation of $\Gunob(\Sigma)$ for a closed surface $\Sigma$ of genus $g \geq 2$,
which will take up the rest of this paper.
As a first step, in this section we prove topological unobstructedness results
for the cobordisms associated to the surgery of immersed curves in $\Sigma$.
These results will form the basis of the proofs
that the cobordisms appearing in the computation of $\Gunob(\Sigma)$ are unobstructed.
We note that the proofs in this section make heavy use of 
special features of the topology of surfaces.

Throughout this section, we fix immersed curves $\alpha$ and $\beta$ in $\Sigma$ 
that are in general position and intersect at a point $s \in \Sigma$.
In Section \ref{subsection:def_surgery}, we give a precise description of the Lagrangian surgery of $\alpha$ and $\beta$ at $s$ and of the associated
surgery cobordism, which we denote $S(\alpha, \beta ; s)$.
In Section \ref{subsection:teardrops} we consider teardrops with boundary $S(\alpha, \beta; s)$,
and in Section \ref{subsection:disks} we characterize the existence of non-trivial disks with boundary on $S(\alpha, \beta; s)$.

\subsection{Construction of the surgery cobordism}
\label{subsection:def_surgery}

We recall the construction of the surgery cobordism $S(\alpha, \beta; s)$ 
in order to fix notations for the proofs of the obstruction results of 
the next sections.
We closely follow the construction given by Biran and Cornea \cite[Section 6.1]{BC13},
although our presentation differs slightly since we will need some 
control over the double points of the surgery cobordism
in order to investigate teardrops.

We start with a local model, which is the surgery of the Lagrangian subspaces $\R$ and $i \R$ in $\C$.
Fix an embedding $c: \R \to \C$ with the following properties.
Writing $c = c_1 + ic_2$, we have for some $\eps >0$
\begin{itemize}
	\item $c(t) = t$ for $t \leq -\frac{\eps}{2}$ and $c(t) = i t$ for $t \geq \frac{\eps}{2}$,
	\item $c_1'(t) >0$ and $c_2'(t) >0$ for $t \in (- \frac{\eps}{2}, \frac{\eps}{2})$,
	\item $c_1''(t) < 0$ and $c_2''(t) > 0$ for $t \in (- \frac{\eps}{2}, \frac{\eps}{2})$.
\end{itemize}
The local model for the surgery is then the Lagrangian $\R \# i\R = c(\R) \cup -c(\R)$.

\begin{figure}[h]
	\centering
	\includegraphics[width=0.3\textwidth]{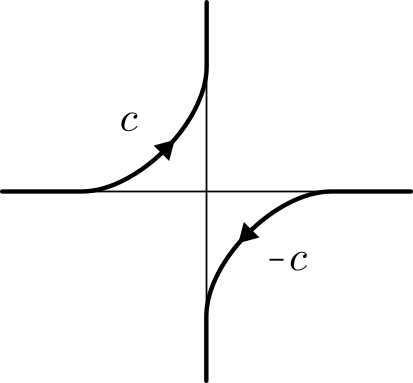}
	\caption{The shape of the curve $c$ used to define the local surgery model.}
	\label{fig:surgery_curve}
\end{figure}

To define the local model for the surgery cobordism, consider the Lagrangian embedding
\begin{align*}
\psi: \R \times S^1 &\longrightarrow \C^2 \\
(t, (x, y)) &\longmapsto (c(t) x, c(t) y).
\end{align*}
The model for the surgery cobordism is the Lagrangian $1$-handle $\psi(W)$, where
\begin{equation}
\label{eq:local_model_surgery}
W = \{ (t,(x,y)) : x \geq 0 , |tx| \leq \eps \}.
\end{equation}
Note that $\psi(W)$ is a submanifold of $\C^2$ with boundary 
$(\{ -\eps \} \times \R )\cup (\{i \eps \} \times i \R ) \cup (\{0 \} \times \R \# i\R)$.

Suppose now that $\alpha$ and $\beta$ are immersed curves in $\Sigma$ in general position.
Let $s$ be a positive intersection\footnote{According to the grading convention of Section \ref{section:grading}, this means that $s$ has degree $1$ in $CF(\alpha, \beta)$.} 
for the pair $(\alpha, \beta)$.
Let $B(0, \eps)$ be the open disk of radius $\eps$ in $\C$. 
For a small enough $\eps >0$, 
fix a Darboux chart $\Phi: B(0, \eps) \to U \subset \Sigma$ centered at $s$ with the property that 
$\Phi^{-1} \circ \alpha$ parametrizes $\R$ in the positive direction and 
$\Phi^{-1} \circ \beta$ parametrizes $i\R$.
The surgery $\alpha \#_s \beta$ is defined by taking the union $\alpha \cup \beta$ and replacing its intersection 
with $U$ by the local model $\R \# i \R$. 
Note that the resulting immersion is independent of the above choices up to Hamiltonian isotopy.
Moreover, since $s$ is positive the surgery has a canonical orientation
which is compatible with those of $\alpha$ and $\beta$.

To construct the surgery cobordism, start with the product cobordisms 
$\what{\alpha} = [-1,0] \times \alpha$
and
$\what{\beta} = i[0,1] \times \beta$.
These cobordisms are oriented so that $[-1,0]$ is oriented from $-1$ to $0$, and $i[0,1]$ is oriented from $i$ to $0$.
Let $\what{\iota}: S \to \C \times \Sigma$ be the Lagrangian immersion
obtained from $\alpha \cup \beta$ 
by removing its intersection with $B(0, \eps) \times U$ and gluing the handle $\psi(W)$
using the Darboux chart $\id \times \Phi: B(0, \eps) \times B(0, \eps) \to B(0, \eps) \times U$. 
Then $\what{\iota}$ is a Lagrangian immersion of a pair of pants whose restriction to the boundary coincides with the immersions
$\{ -1 \} \times \alpha$, $\{ i \} \times \beta$ and $\{ 0 \} \times (\alpha \#_s \beta)$.
Note that the handle $\psi(W)$ is compatible with the orientations of $\what{\alpha}$ and $\what{\beta}$, 
so that $S$ is canonically oriented.

The immersion $\what{\iota}$ is almost the required cobordism $\alpha \#_s \beta \cob (\alpha, \beta)$, except
that it is not cylindrical near the boundary component that projects to $0 \in \C$. 
We perturb $\what{\iota}$ to make it cylindrical using the argument of \cite[Section 6.1]{BC13}.

To describe the perturbation, we first introduce some notations. 
Write 
\[
P = S - \what{\iota}^{-1}(B(0, \eps) \times U),
\]
so that $\what{\iota}$ coincides over $P$ with restrictions of the products $\what{\alpha}$ and $\what{\beta}$.
Let $C$ be the boundary component of $S$ that projects to $0 \in \C$.
Let $\mathcal{U}$ be a collar neighborhood of $C$.
Taking $\mathcal{U}$ smaller if necessary, assume
that there is an isomorphism $\mathcal{U} \iso(-\delta, 0] \times S^1$ that extends
the obvious identification over $P$.

Extend the immersion $\what{\iota}: S \to \C \times \Sigma$
to a symplectic immersion $\vphi: \mathcal{N} \to \C \times \Sigma$, 
where $\mathcal{N}$ is a convex neighborhood of the zero-section in $T^*S$.
We make the following assumptions on $\vphi$.
First, taking $\mathcal{N}$ smaller if necessary, we may assume that $\vphi$ is an embedding on each fiber
and that $\vphi$ does not have self-intersections that project to the subset $\what{\iota}^{-1}( \C \times \overline{U})$.
Secondly, 
we choose $\vphi$ so that over $P$ it is
compatible with the splitting of $\what{\iota}$ into a product.

Write $\gamma = \alpha \#_{s} \beta$ and consider the immersed Lagrangian submanifold
$I \times \gamma$, where $I = \{ x + i y  \in \C : y = -x \}$.
Taking $\mathcal{U}$ smaller if necessary, we may assume that $\vphi^{-1}(I \times \gamma)$
is the graph of a closed $1$-form $\eta$ on $\mathcal{U}$ that vanishes on $C$.
Since $\mathcal{U}$ deformation retracts onto $C$, $\eta$ is exact. 
Write $\eta = d G$ for a function $G$ on $\mathcal{U}$.
Let $r$ be the collar coordinate on $\mathcal{U}$ and let $\chi(r)$ be an increasing smooth function such that $\chi(r) = 0$
near $-\delta$
and $\chi(r) = 1$ on a neighborhood of $0$.
Extend the function $\chi(r) G$ to $S$ by $0$.
Finally, the desired immersion $\iota: S \to \C \times \Sigma$ 
is the composition of $\vphi$ with the graph of $d(\chi G)$, which is then isotoped 
to make its ends horizontal.

\begin{notation*}
In the remainder of this section, we will use the following notations to describe $S(\alpha, \beta; s)$.
We denote by $S$ the domain of the cobordism and by $\iota: S \to \C \times \Sigma$
the immersion of $S$.
We denote by $A$, $B$ and $C$ the boundary components of $S$ which correspond, respectively, to the curves $\alpha$, $\beta$ and 
$\gamma := \alpha \#_{s} \beta$.
We call $H = \iota^{-1}(\C \times U)$ the handle region, where $U$ is the Darboux chart around $s$ used to define the surgery.
We call $K = \iota^{-1}( \C \times \{ s \})$
the core of the handle.
See Figure \ref{fig:surgery_domain} for a schematic representation of $S$.

\end{notation*}
\begin{figure}
	\centering
	\includegraphics[width=0.5\textwidth]{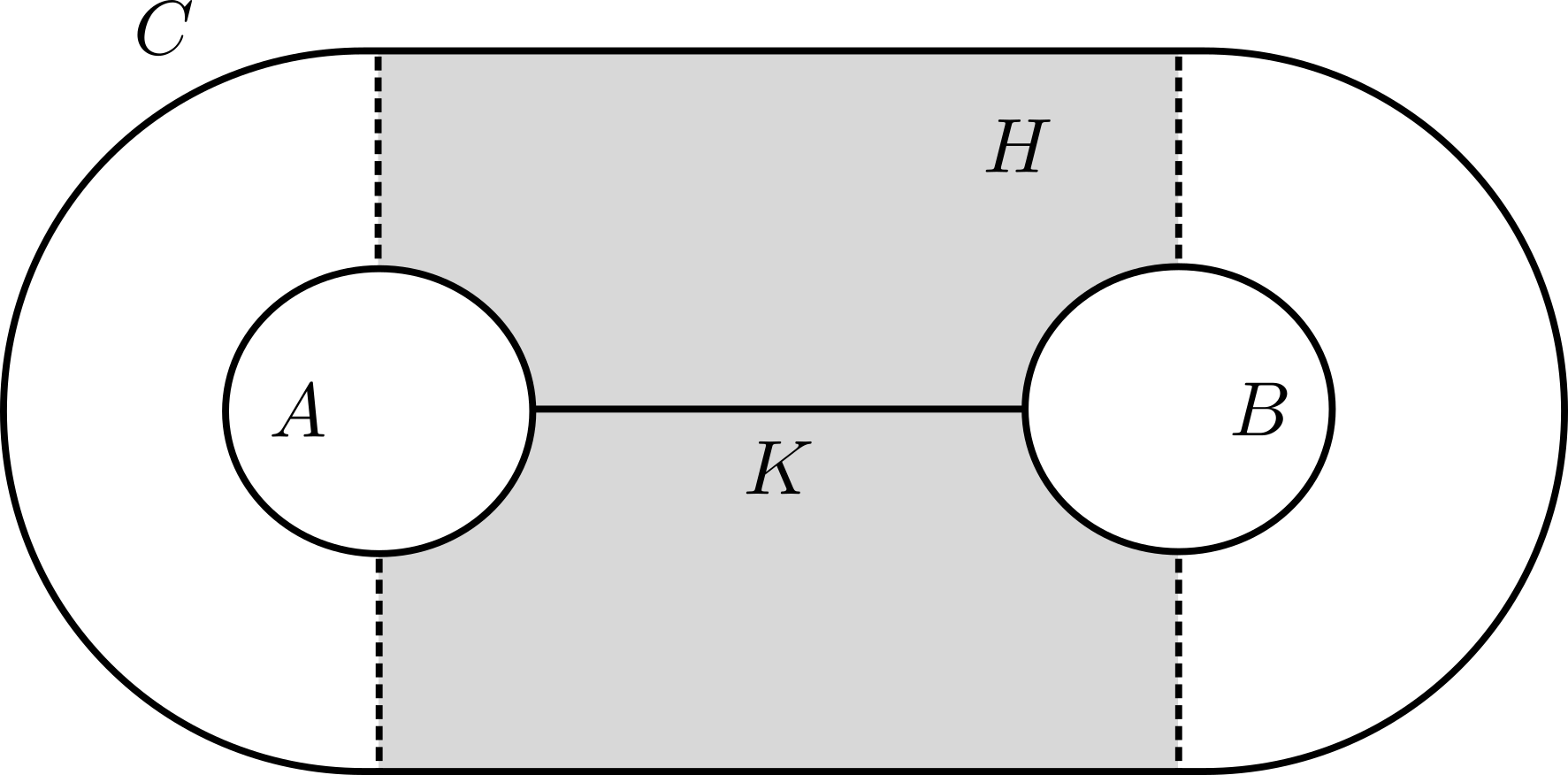}
	\caption{Schematic representation of the domain of a surgery cobordism. The handle $H$ is shaded. On the complement of $H$,
	the cobordism is equivalent to a product.}
	\label{fig:surgery_domain}
\end{figure}

The following properties of $S(\alpha, \beta; s)$ are straightforward consequences of the construction.

\begin{lemma}
The surgery cobordism $S(\alpha, \beta; s)$ satisfies the following properties:
\begin{enumerate}[label = (\roman*), font=\normalfont]
	\item All the double points of $\iota$ belong to $S \setminus H$.
	\item Over $S \setminus H$, $\iota$ is equivalent to restrictions of the
	product immersions $\lambda \times \alpha$ and $\lambda' \times \beta$,
	where $\lambda$ and $\lambda'$ are embedded paths in $\C$.
\end{enumerate}
\end{lemma}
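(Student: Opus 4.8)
The plan is to verify both properties directly from the explicit construction of $S(\alpha,\beta;s)$ recalled above, unpacking what happens on the handle region $H$ versus its complement $S \setminus H$. Recall that $S$ is assembled from the product cobordisms $\what\alpha = [-1,0]\times\alpha$ and $\what\beta = i[0,1]\times\beta$ by excising the part lying over $B(0,\eps)\times U$ and gluing in the Lagrangian $1$-handle $\psi(W)$, and then perturbing near the boundary component $C$ by the graph of $d(\chi G)$ to make the ends cylindrical. The key point for (i) is that the handle $\psi(W) \subset \C^2$ is an \emph{embedded} submanifold (it is the image of $\psi$ restricted to the set $W$, and $\psi$ is a Lagrangian embedding), and moreover $\psi(W)$ projects under $\pi_M$ into the Darboux chart $U$ around $s$. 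Since $\iota^{-1}(\C\times U) = H$ by definition, any self-intersection of $\iota$ occurring over $\C\times U$ would have to come from the handle part, but that part is embedded; one must also rule out self-intersections between the handle part and the product parts lying over $\C\times U$, but over $U$ the product parts coincide with (portions of) $\alpha$ and $\beta$ meeting only at $s$, and the handle was glued in precisely to replace that intersection. So all double points of $\iota$ lie in $S\setminus H$. One must also check that the final perturbation by $d(\chi G)$ does not create new self-intersections over $\C\times U$: this is guaranteed by the assumption made in the construction that the fiberwise-embedding $\vphi: \mathcal N \to \C\times\Sigma$ "does not have self-intersections that project to the subset $\what\iota^{-1}(\C\times\overline U)$," so the graph perturbation inside $\mathcal N$ stays self-intersection-free over that region.

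For (ii), I would argue as follows. Off the handle region, i.e. over $P = S - \what\iota^{-1}(B(0,\eps)\times U)$, the immersion $\what\iota$ was defined to literally coincide with restrictions of the product cobordisms $\what\alpha = [-1,0]\times\alpha$ and $\what\beta = i[0,1]\times\beta$; equivalently, the handle glued in touches only the part of $S$ lying over $B(0,\eps)\times U$, so outside of that it is untouched product. Then the cylindrification perturbation composes $\vphi$ with the graph of $d(\chi G)$, and $\chi G$ is supported in the collar $\mathcal U$ of $C$; since $\vphi$ was chosen to be "compatible with the splitting of $\what\iota$ into a product" over $P$, this perturbation acts within each product factor, so over $S\setminus H$ (which is contained in $P$ away from the glued handle) $\iota$ remains equivalent to a restriction of $\lambda\times\alpha$ and $\lambda'\times\beta$, where $\lambda,\lambda'$ are the images in $\C$ of $[-1,0]$ and $i[0,1]$ after the final isotopy making the ends horizontal — these are embedded paths in $\C$. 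I should be slightly careful about what "$S\setminus H$" means: $H = \iota^{-1}(\C\times U)$ with $U = \Phi(B(0,\eps))$, which is a subset of $\what\iota^{-1}(B(0,\eps)\times U)$ together with where the perturbed handle still projects into $U$; in any case $S\setminus H$ sits inside the region $P$ where the product structure is manifest, so the claimed equivalence holds there.

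The main obstacle, and the only place requiring genuine care rather than bookkeeping, is (i): one has to be sure that no self-intersections are introduced by the three successive modifications of the product cobordisms — the handle gluing, the extension to $\vphi$ on $T^*S$, and the graph perturbation $d(\chi G)$ — in the region lying over $\C \times U$. Each of these is controlled by an explicit choice made in the construction (embeddedness of $\psi(W)$; the hypothesis that $\vphi$ has no self-intersections projecting to $\what\iota^{-1}(\C\times\overline U)$; and the fact that $\chi G$ is a small Hamiltonian perturbation supported near $C$), so the proof amounts to assembling these facts. I would organize the write-up as: first establish that the handle part is embedded and disjoint from the product parts over $\C\times U$, giving (i) for $\what\iota$; then note the perturbations preserve this by the stated properties of $\vphi$; then deduce (ii) by tracking the product structure over $P$ through each step. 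No holomorphic-curve input is needed — this is purely a statement about the smooth/symplectic geometry of the handle attachment.
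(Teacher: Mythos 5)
Your proposal is essentially the paper's own argument: property (i) is reduced to the fact that $\what{\iota}$ has no double points over $\C \times U$ together with the stipulated property of the Weinstein immersion $\vphi$, and property (ii) is read off from the product form of $\what{\iota}$ over $P$ and the compatibility of the perturbation with that splitting. The one place where your write-up is looser than the paper's is the last step of (ii): "$\vphi$ is compatible with the splitting, so the perturbation acts within each product factor" does not by itself give that the $\Sigma$-factor stays \emph{equal} to $\alpha$ or $\beta$ — a graph of a general closed $1$-form over $[-1,0]\times\alpha$ need not be a product. The paper supplies the missing observation: over $P$ the $\Sigma$-component of $\what{\iota}$ already agrees with that of $I\times\gamma$, so the $1$-form $\eta$ (hence $d(\chi G)$) is of the form $f(r)\,dr$ in the collar coordinate, and the graph perturbation therefore moves points only in the $\C$-direction, which is exactly why the perturbed immersion over $S\setminus H$ remains of the form $\lambda\times\alpha$, $\lambda'\times\beta$ with $\lambda,\lambda'$ embedded paths in $\C$. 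Adding that one sentence closes the gap; otherwise your outline matches the paper.
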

\begin{proof}
Property (i) follows from the fact that $H$ does not contain double points of the original immersion $\what{\iota}$.
Our assumptions about the Weinstein neighborhood $\vphi$ imply that this is still true after perturbation.

Property (ii) follows from the fact that the $\Sigma$ component of $\what{\iota}$
agrees with the $\Sigma$ component of $I \times \gamma$ over $P$, and
that the Weinstein immersion $\vphi$ is chosen to respect
the splitting of $\iota$ into a product over $P$.
It follows that over $P$ the $1$-form $\eta$ used to define the perturbation is given by $\eta = f(r)dr$, 
where $r$ is the collar coordinate on $\mathcal{U}$.
This implies that the perturbed immersion has the claimed form over $S \setminus H$.

\end{proof}

\subsection{Teardrops on surgery cobordisms}
\label{subsection:teardrops}

By construction, the projection of the surgery cobordism $S(\alpha, \beta; s)$ to $\Sigma$
consists of $\alpha \cup \beta$ and a small neighborhood of $s$.
In particular, the projection deformation retracts onto $\alpha \cup \beta$. 
Therefore, the projection of a polygon with boundary on $S(\alpha, \beta; s)$
should correspond, via this deformation, to a polygon with boundary on $\alpha \cup \beta$
that may have additional corners at $s$.
This leads to the following definition.

\begin{definition}
\label{def:marked_teardrop}
Let $\alpha$ and $\beta$ be immersed curves intersecting at $s$. 
A polygon with boundary on $\alpha$ and $\beta$ is called an 
\Def{$s$-marked teardrop} 
if it has one corner at a point $c$ distinct from $s$, and all remaining corners at $s$. 
\end{definition}

See Figure \ref{fig:marked_teardrops} for some examples of marked teardrops.
Note that in the preceding definition we allow the case where there is no corner at $s$;
the polygon is then an ordinary teardrop on one of the curves.

\begin{figure}
	\centering
	\includegraphics[width= \textwidth]{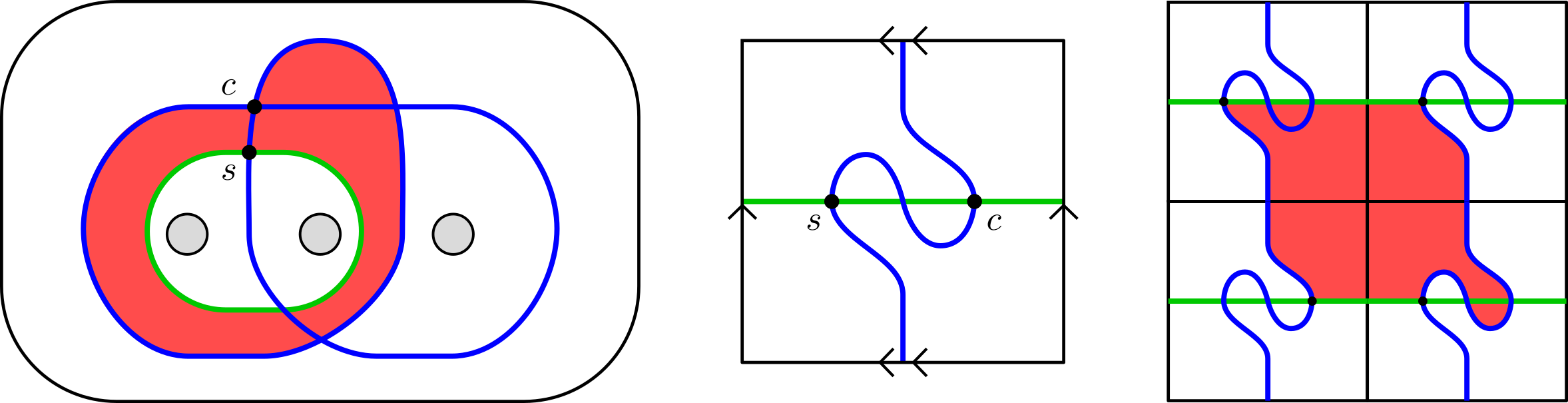}
	\caption{Left: curves on the four-holed sphere that bound an $s$-marked triangle. Center and right: curves on the torus that bound an $s$-marked square, with the square drawn in the universal cover of the torus.}
	\label{fig:marked_teardrops}
\end{figure}

\begin{remark}
In Appendix \ref{appendix:teardrops},
we show that curves that bound marked teardrops necessarily bound
teardrops or bigons.
We shall not make use of this result in the present paper, though it may be of independent interest.
\end{remark}

To relate teardrops on $S(\alpha, \beta; s)$ with $s$-marked teardrops on $\alpha \cup \beta$,
we will use the following lemma, which makes the above deformation argument precise.

\begin{lemma}
\label{lemma:deformation_surgery}
There is a strong deformation retraction $r: S \times [0,1] \to S$
of $S$ onto $A \cup B \cup K$ that satisfies
\begin{equation}
\label{eq:condition_double_points}
\pi_{\Sigma} \circ \iota \circ r_s (p) = \pi_{\Sigma} \circ \iota(p)	
\end{equation}
for all $s \in [0,1]$ and for all $p \in S \setminus H$.
\end{lemma}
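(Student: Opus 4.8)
The plan is to build the deformation retraction separately over three overlapping regions of $S$ — the handle region $H$, the collar neighborhood $\mathcal{U}$ of the boundary component $C$, and the product region $P = S \setminus \iota^{-1}(B(0,\eps) \times U)$ — and then glue these pieces using a partition of unity adapted to the collar coordinate. The target $A \cup B \cup K$ is the union of the two boundary components coming from $\alpha$ and $\beta$ together with the core $K$ of the handle; topologically this is a deformation retract of the pair of pants $S$, since collapsing the handle onto its core and then retracting each leg of the pants onto its end produces exactly $A \cup B \cup K$. The content of the lemma is not the existence of \emph{some} retraction — that is automatic from the homotopy type — but the compatibility condition \eqref{eq:condition_double_points}, which demands that over the product region $S \setminus H$ the retraction move points only in the $\C$-direction, never in the $\Sigma$-direction.

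First I would treat the product region. By the second property of the preceding Lemma, over $S \setminus H$ the immersion $\iota$ is a restriction of product immersions $\lambda \times \alpha$ and $\lambda' \times \beta$, where $\lambda, \lambda'$ are embedded paths in $\C$; concretely, $S \setminus H$ is (two copies of) a strip of the form (interval in $\C$) $\times$ (arc in the curve), and I can retract the $\C$-interval factor onto its endpoint lying over $\{-1\}$ or $\{i\}$ — i.e. onto $A$ or $B$ — while keeping the $\Sigma$-coordinate fixed. This manifestly satisfies \eqref{eq:condition_double_points}, and it visibly fixes $A$ and $B$ pointwise. Near the inner boundary of $P$ (where $P$ meets the handle region) this retraction is the identity in a neighborhood, so it extends by the identity; I will arrange the $\C$-retraction to be stationary near that inner edge so the pieces match.

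Next I would handle the handle region $H = \iota^{-1}(\C \times U)$. Here $\iota$ is modelled on the Lagrangian $1$-handle $\psi(W)$ with $W = \{(t,(x,y)) : x \ge 0, |tx| \le \eps\}$ (perturbed by the graph of $d(\chi G)$ near $C$), and $H$ deformation retracts onto the core $K = \iota^{-1}(\C \times \{s\})$, which in the local coordinates is the locus $\{x = 0\}$ (the image of $\R \# i\R$'s singular point, swept out over $t$). A linear retraction in the $(x,y)$-variables of $W$ toward $x=0$ does the job inside $H$; on the part of $\del H$ that meets $P$ this retraction is again the identity in a neighborhood (it only moves the fiber coordinates transverse to the core), so it glues to the product-region retraction. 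Over the collar $\mathcal{U} \cong (-\delta,0] \times S^1$ near $C$ I must also be careful that the retraction restricts to $C$ fixing it pointwise; since $K$ meets $C$ and the handle retraction toward the core is compatible with the collar structure (the perturbation $d(\chi G)$ is tangent to the fibers and vanishes on $C$), this is not an obstruction, but it does require choosing the retraction of $\mathcal U$ to commute with projection to the collar coordinate.

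The main obstacle is the matching across the boundary between $H$ and $P$: I need the $\C$-direction-only retraction on $P$ and the fiber-direction retraction on $H$ to agree on the overlap and to glue into a \emph{single} smooth homotopy that is a strong deformation retraction (fixes the target $A \cup B \cup K$ for all $s \in [0,1]$, not just at $s=1$). The clean way to arrange this is to write $S$ as $P \cup \mathcal{N}'$ where $\mathcal{N}'$ is a slightly enlarged handle neighborhood, choose both retractions to be \emph{stationary} (equal to the identity) on a common collar between the two regions, and then simply concatenate — performing the handle retraction first (which does nothing on $P$) and then the product retraction (which does nothing on $H$, since after the first stage the handle part already lies in $K$ and the second retraction fixes a neighborhood of $K$ in $\C$-direction-only fashion, hence fixes $K$). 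The verification that the composite is continuous, is the identity on $A \cup B \cup K$ throughout, and satisfies \eqref{eq:condition_double_points} over all of $S \setminus H$ — where only the second, $\C$-direction-only stage acts — is then routine. I would also remark that since all the double points of $\iota$ lie in $S \setminus H$ (property (i) of the preceding Lemma), condition \eqref{eq:condition_double_points} guarantees that $\pi_\Sigma \circ \iota$ matches branches consistently under the retraction, which is exactly what is needed to turn a teardrop on $S$ into an $s$-marked teardrop on $\alpha \cup \beta$ in the next lemma.
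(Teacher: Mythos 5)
Your overall strategy — move points of the product region only in the $\C$-direction and collapse the handle onto its core — is the same geometric idea that underlies the paper's proof, and your verification of \eqref{eq:condition_double_points} on $S \setminus H$ is the right observation. The gap is in the gluing, which is precisely the non-routine part of the statement. If both partial retractions are taken to be the identity on a common collar around the interface between $H$ and the product region and are then concatenated, the points of that collar are never moved into $A \cup B \cup K$, so the time-one map is not a retraction onto $A \cup B \cup K$ at all; if instead the second (product) stage is allowed to move that collar, it cannot simultaneously be the identity on $H$ and be continuous along $\bdry H$. The difficulty is not an artifact of your bookkeeping: the fibers of $\pi_{\Sigma} \circ \iota$ over points $q \neq s$ are arcs running from $A \cup B$ all the way to $C$, and as $q \to s$ they accumulate onto (half of) the core $K$, so a map that pushes each such fiber to its endpoint on $A \cup B$ while fixing $K$ is discontinuous at $K$. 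Some genuine interpolation near the core is required, and declaring the verification "routine" hides exactly this point.

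The paper resolves it differently: it first reduces to the unperturbed immersion $\what{\iota}$ (legitimate because $\pi_{\Sigma} \circ \iota = \pi_{\Sigma} \circ \what{\iota}$ on $S \setminus H$), then notes that $(x+iy,p) \mapsto x-y$ restricts to a Morse function on $S$ with a single index-$1$ critical point lying on $K$, and invokes the standard handle-attachment retraction of \cite[Theorem 3.14]{Milnor-hcobordism}: negative gradient flow away from $K$ together with a straight-line homotopy near the critical point. This is continuous by construction, fixes $A \cup B \cup K$, and over $S \setminus H$ moves points only in the $\C$-direction since there the function is pulled back from $\C$ and the immersion splits as a product, which gives \eqref{eq:condition_double_points}. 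If you want to salvage a hands-on construction, you must supply an interpolation of this kind near the core rather than a "stationary collar". Two further slips to correct: in the local model the core $K = \iota^{-1}(\C \times \{s\})$ is the locus $\{y=0\}$ (equivalently $x=1$), while $\{x=0\}$ maps to the outgoing boundary $C$, so "a linear retraction toward $x=0$" pushes toward $C$, which is not in the target; and the retraction cannot (and need not) fix $C$ pointwise, since $C \not\subset A \cup B \cup K$.
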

\begin{proof}
Recall that the cobordism $\iota$
is obtained by perturbing an immersion $\what{\iota}$.
Moreover, over $S \setminus H$ 
we have $\pi_{\Sigma} \circ \iota = \pi_{\Sigma} \circ \what{\iota}$.
Hence, it suffices to construct the required deformation retraction for $\what{\iota}$.

For the immersion $\what{\iota}$,
the claim follows from a standard argument,
using the fact that $\what{\iota}$ is obtained from the product cobordisms $\what{\alpha}$
and $\what{\beta}$ by attaching the index $1$ handle $H$.
This can be made precise by the following general argument. 
Observe that the function $(x+iy, p) \mapsto x-y$ on $\C \times \Sigma$
restricts to a Morse function $f$ on $S$ with a single critical point of index $1$, which lies on $K$.
The required deformation retraction can then be obtained by using the negative gradient flow of $f$ away from
$K$ and a straight-line homotopy near $K$, as explained in \cite[Theorem 3.14]{Milnor-hcobordism}.

\end{proof}

The purpose of condition \eqref{eq:condition_double_points} in the previous lemma is to control
the behaviour of the double points of $\iota$ throughout the deformation.

\begin{proposition}
\label{prop:teardropcriterion}
Let $\alpha$ and $\beta$ be immersed curves intersecting at $s$. 
Then the surgery cobordism $S(\alpha, \beta; s)$ bounds a teardrop 
if and only if
$\alpha$ and $\beta$ bound an $s$-marked teardrop.
\end{proposition}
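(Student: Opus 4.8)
The plan is to prove both implications by translating polygons across the deformation retraction of Lemma \ref{lemma:deformation_surgery}, keeping careful track of what happens over the handle region $H$.

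For the ``only if'' direction, suppose $S(\alpha,\beta;s)$ bounds a teardrop, i.e. there is a continuous map $u: (\D, \bdry\D) \to (\C \times \Sigma, \iota(S))$ with boundary lift $\overline u$ having a single corner at some point $\zeta \in \bdry\D$, where the one-sided limits $\overline u(\zeta^{\pm})$ are distinct points $p_{\pm} \in S$ with $\iota(p_-) = \iota(p_+)$. First I would compose with the projection $\pi_{\Sigma}$ to get a continuous disk $v = \pi_{\Sigma} \circ u$ with boundary on $\pi_{\Sigma}(\iota(S))$, which deformation retracts onto $\alpha \cup \beta$ (since the handle only adds a neighborhood of $s$). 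Using the retraction $r$ of Lemma \ref{lemma:deformation_surgery}, I would homotope the boundary lift $\overline u$ into $A \cup B \cup K$; the crucial point is that $p_{\pm}$ are double points of $\iota$, hence lie in $S \setminus H$ by part (i) of the structure lemma, so condition \eqref{eq:condition_double_points} guarantees that $\pi_{\Sigma}\iota(r_t(p_{\pm}))$ stays constant during the deformation --- in particular $r_1(p_-)$ and $r_1(p_+)$ still map to the same point of $\Sigma$ and remain distinct as points of $A \sqcup B$. After the deformation, the boundary lands in $A \cup B \cup K$; composing with $\iota$ and projecting, and then collapsing the core $K$ (which projects to the single point $s$), I would obtain a polygon with boundary on $\alpha \cup \beta$ whose corners are: the original corner at $p_{\pm}$ (now a genuine corner away from $s$, since distinct branches of $\alpha \cup \beta$ meet there), plus possibly several corners at $s$ arising wherever the boundary lift crosses the core $K$. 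That is exactly an $s$-marked teardrop.

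For the ``if'' direction, suppose $\alpha$ and $\beta$ bound an $s$-marked teardrop $w$: a continuous disk in $\Sigma$ with boundary on $\alpha \cup \beta$, one corner at $c \neq s$, and all other corners at $s$. The idea is to lift $w$ to a teardrop on $S(\alpha,\beta;s)$ by running the construction in reverse. Over the complement of a neighborhood of $s$, the surgery cobordism $S$ is (the graph of a perturbation of) a product $\lambda \times \alpha$ or $\lambda' \times \beta$ by part (ii) of the structure lemma, so the portions of $\bdry w$ lying on $\alpha$ (resp. $\beta$) away from $s$ lift canonically to the components $A$ (resp. $B$) of $\bdry S$. Near $s$, each corner of $w$ at $s$ corresponds to a branch jump between $\alpha$ and $\beta$; in the surgered curve $\gamma = \alpha \#_s \beta$ this branch jump is resolved, and the relevant arc of $\bdry w$ passing through $s$ lifts into the boundary component $C \cong \gamma$ (traversing part of the handle/core $K$). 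Concatenating these lifted boundary arcs gives a loop in $A \cup B \cup C \subset S$ --- note the corner at $c$ persists as an honest corner of the lift since $c \neq s$ is unaffected by the surgery. Since $\pi_1(\C \times \Sigma) = \pi_1(\Sigma)$ and $w$ is a disk, the composed loop $\iota \circ(\text{lift})$ is contractible in $\C \times \Sigma$, so it bounds a continuous disk; equipping it with the boundary lift just constructed and the single corner at $c$ exhibits a teardrop on $S(\alpha,\beta;s)$. (One should check the lift is well-defined and continuous across the transitions between product regions and the handle; this uses the explicit local model of Section \ref{subsection:def_surgery}.)

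The main obstacle I expect is bookkeeping the corners correctly in both directions --- specifically, ensuring in the forward direction that no \emph{spurious} corner is created at $p_{\pm}$ beyond the original one (handled by condition \eqref{eq:condition_double_points}, which pins the branches down) and that every crossing of the core $K$ genuinely produces a corner \emph{at} $s$ (rather than a removable point where the boundary lift is continuous), and in the reverse direction that the lift of $\bdry w$ through the handle is continuous and has exactly the prescribed corner structure. A secondary subtlety is that an $s$-marked teardrop is allowed to have \emph{no} corner at $s$, in which case it is an ordinary teardrop on $\alpha$ or $\beta$ alone; this degenerate case should be checked to correspond, under the retraction, to a teardrop on $S$ whose boundary lift misses the core entirely, which is immediate from the product structure over $S \setminus H$.
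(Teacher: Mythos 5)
Your ``only if'' direction is essentially the paper's argument (project, apply the retraction of Lemma \ref{lemma:deformation_surgery}, use \eqref{eq:condition_double_points} to keep the two branches of the double point pinned, and read off corners at $s$ from crossings of the core $K$), and it is fine. The gap is in the ``if'' direction. After lifting the boundary arcs of the $s$-marked teardrop and gluing through the handle, the resulting path in $S$ ends at the \emph{two distinct preimages of the extra corner} $c$. When $c$ is an intersection point of $\alpha$ with $\beta$ (the typical case), these preimages lie on the two different boundary components $A$ and $B$, and $\iota$ sends them to points with different $\C$-coordinates (the ends $A$, $B$, $C$ sit over distinct points of $\C$). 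So your concatenation is \emph{not} a loop, $\iota\circ(\mathrm{lift})$ is not a loop either, and the final step ``contractible, hence bounds a disk'' does not apply. The paper repairs exactly this: when the endpoints lie on different components, it appends to each endpoint a path inside the fiber $(\pi_{\Sigma}\circ\iota)^{-1}(c)$ (the vertical segments of the product regions) carrying both endpoints to the boundary component $C$, where they map to the same point of $\C\times\Sigma$ while remaining distinct preimages; only then does one get a closed, null-homotopic loop with a genuine branch jump at $c$. This case distinction, and the closing-up construction, are missing from your proposal.

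A second, related problem is your prescription that an arc of $\bdry w$ passing through a corner at $s$ ``lifts into the boundary component $C\cong\gamma$''. The surgery resolves the crossing at $s$ in one specific way, so only the branch jumps compatible with that resolution can be followed along $\gamma$ inside the handle; Definition \ref{def:marked_teardrop} allows corners at $s$ of either type, and for the incompatible type there is no such arc of $C$ through the handle region (going around $\gamma$ outside the chart would change the homotopy class of the boundary and destroy the null-homotopy argument). The paper avoids this by gluing the lifted arcs with a path along the core $K=\iota^{-1}(\C\times\{s\})$, which lies in the interior of $S$ (a boundary lift is not required to stay on $\bdry S$) and projects to the constant point $s$, so it works for every corner at $s$ regardless of its type. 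You mention $K$ only parenthetically and conflate it with $C$; distinguishing them, and adding the endpoint-closing step over $c$, is what your argument needs to become a proof.
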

\begin{proof}
Suppose first that $S(\alpha, \beta; s)$ bounds a teardrop $u$.
We see the boundary lift of $u$ as a path $\delta:[0,1] \to S$ with distinct endpoints and with $\iota(\delta(0)) = \iota(\delta(1))$.
Since the handle $H$ does not contain double points of $\iota$, the endpoints of $\delta$ lie in $S \setminus H$.
By applying the retraction of Lemma \ref{lemma:deformation_surgery}, we obtain a new path 
$\delta' := r_1 \circ \delta$ 
on $A \cup B \cup K$
whose endpoints are distinct 
and belong to $A \cup B$.
By a further homotopy, we can assume that this path is locally embedded.
By condition \eqref{eq:condition_double_points} 
of Lemma \ref{lemma:deformation_surgery}, the path $\lambda:= \pi_{\Sigma} \circ \iota \circ \delta'$ is a loop
which is homotopic relative endpoints to the loop $\pi_{\Sigma} \circ \iota \circ \delta$.
It follows that $\lambda$ is null-homotopic, hence is the boundary of a disk $v$ with boundary on $\alpha \cup \beta$.
The disk $v$ is an $s$-marked teardrop; indeed, it has one corner corresponding to projection of the corner of $u$,
and the remaining corners at $s$ correspond to the (finitely many) times when $\lambda$
crosses $K$.

Conversely, suppose that $\alpha \cup \beta$ bounds an $s$-marked teardrop $v$
with corner at $c$.
Identify $A$ and $B$ with the domains of $\alpha$ and $\beta$, respectively. 
Then the boundary lifts of $v$ can be seen as paths in $A \cup B$ with endpoints at the preimages of $s$ and $c$.
Whenever $v$ has a corner that maps to $s$, the boundary lifts on each side of the corner can be glued together by concatenating them with a path going along the core $K$. 
Since all the corners of $v$ except one are mapped to $s$, by doing this we obtain a single path $\delta$ in $A \cup B \cup K$ with endpoints on the two preimages of the other corner $c$.

If the endpoints of $\delta$ lie on the same component of $A \cup B$, then $\iota \circ \delta$ is a loop. 
This loop is null-homotopic in $\C \times \Sigma$ since by construction $\pi_{\Sigma} \circ \iota \circ \delta$ 
is a reparametrization of $v|_{\bdry \D}$.
Hence $\iota \circ \delta$ extends to a disk $u: \D \to \C \times \Sigma$ with boundary on $\iota(S)$. 
This disk is a teardrop since the path $\delta$ is a boundary lift of $u$.

If the endpoints of $\delta$ de not lie on the same component of $A \cup B$, we close up $\iota \circ \delta$
into a loop in the following way. By Lemma \ref{lemma:deformation_surgery}, 
for each endpoint of $\delta$,
there is a path in $(\pi_{\Sigma}\circ \iota)^{-1}(c)$ that connects that endpoint to the boundary component $C$.
Concatenating $\delta$ with these paths yields a path $\delta'$ with endpoints on $C$ with the property that $\iota \circ \delta'$ is a 
loop.
By the same argument as in the previous case, $\delta'$ is the boundary lift of a teardrop on $\iota$.
\end{proof}

Next, we give a simple algebraic obstruction to representing a class in $\pi_2(\Sigma, \alpha \cup \beta, s)$ by an $s$-marked teardrop.
Since $\alpha$ and $\beta$ are in general position,
we can see the union $\alpha \cup \beta$ as a $4$-valent oriented graph (i.e. a $1$-dimensional CW complex) embedded in $\Sigma$, 
whose vertices are the intersections and self-intersections of the curves.
For each vertex $v$ of this graph, there is an integral cellular $1$-cocycle $\rho_v$ 
supported on the edges incident to $v$, whose values
are prescribed by Figure \ref{fig:corner_cocycle}. 
This cocycle can be thought of as giving an algebraic count of the number of corners that a cycle has at $v$.

\begin{figure}
	\centering
	\includegraphics[width=0.3\textwidth]{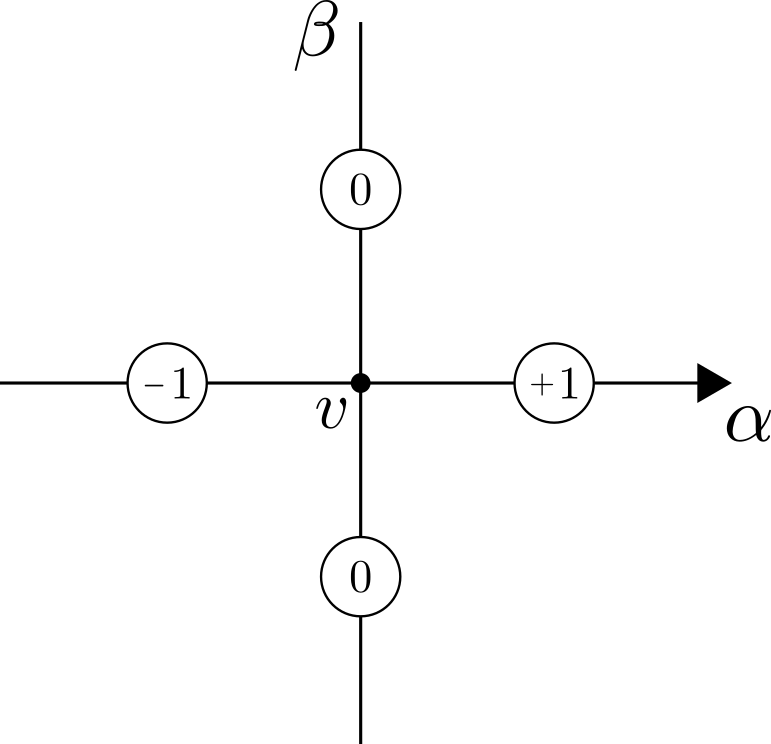}
	\caption{The definition of the cocycle $\rho_v$ that counts the algebraic number of corners of a cycle at $v$.}
	\label{fig:corner_cocycle}
\end{figure}

For a class $A \in \pi_2(\Sigma, \alpha \cup \beta, s)$, we denote by $\bdry A \in H_1(\alpha \cup \beta ;\Z)$ the image of its boundary by the Hurewicz morphism.

\begin{lemma}
\label{lemma:countingcorners}
Suppose that the class $A \in \pi_2(\Sigma, \alpha \cup \beta, s)$ can be represented 
by an $s$-marked teardrop with corner at $c$.
Then $\rho_c (\bdry A) = \pm 1$ and $\rho_v(\bdry A) =0$ for every 
vertex $v$ distinct from $c$ and $s$.
\end{lemma}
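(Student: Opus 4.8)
The plan is to read both statements directly off the boundary of the teardrop, using that $\rho_v$ is by construction an algebraic count of corners.

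First I would fix an $s$-marked teardrop $u \colon (\D, \bdry \D) \to (\Sigma, \alpha \cup \beta)$ representing $A$, with its unique corner at $c$, and let $\overline{u}$ be its boundary lift to the disjoint union of the domain circles of $\alpha$ and $\beta$. By Definition \ref{def:marked_teardrop} and Definition \ref{def:continuous_polygon}, $\overline{u}$ jumps branches exactly at the preimages of $c$ (one such point) and of $s$ (finitely many). Pushing $\overline{u}$ forward to $\Sigma$ and inserting a constant arc at the vertex ($c$ or $s$) where each jump occurs, one obtains a loop $\lambda \colon S^1 \to \alpha \cup \beta$ representing $\bdry A \in H_1(\alpha \cup \beta; \Z)$. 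Between consecutive jumps, $\lambda$ traces honest arcs of $\alpha$ or of $\beta$; after a small homotopy rel neighborhoods of the jump points we may assume $\lambda$ meets the vertex set of the graph $\alpha \cup \beta$ in only finitely many times, so that $\lambda$ is a cellular $1$-cycle representing $\bdry A$.

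Next I would invoke the defining property of $\rho_v$ prescribed by Figure \ref{fig:corner_cocycle}: for a cellular $1$-cycle $z$ in $\alpha \cup \beta$, the pairing $\rho_v(z)$ equals the signed number of corners of $z$ at $v$, where a passage of $z$ through $v$ that continues along the same curve — in particular any passage coming from an arc of $\alpha$ or of $\beta$, including along either branch at a self-intersection — contributes $0$, while a genuine turn contributes $+1$ or $-1$ according to its sense. Applying this to $z = \lambda$: at any vertex $v \neq c, s$ the loop $\lambda$ has no corner at all, since all of its branch jumps occur at $c$ or $s$; hence $\rho_v(\bdry A) = \rho_v(\lambda) = 0$. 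At $v = c$ the loop $\lambda$ has exactly one corner (the jump at $c$) and otherwise only straight passages, and since a genuine turn always contributes $\pm 1$ and never $0$, we conclude $\rho_c(\bdry A) = \pm 1$.

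The one point deserving care — and essentially the only content of the argument — is the claim that between its jumps $\lambda$ never corners at a vertex. This holds because, away from the jump points, $\overline{u}$ is a \emph{continuous} path into the domains of $\alpha$ and $\beta$, so its image in $\Sigma$ can cross a vertex of the graph $\alpha \cup \beta$ only by running along the corresponding immersed curve through that vertex, i.e.\ entering and leaving along opposite half-edges; by the prescription in Figure \ref{fig:corner_cocycle} such a passage is weighted $0$. The remaining verifications — that the perturbation can be made transverse to all vertices simultaneously and that inserting the constant arcs at the jumps leaves the homology class of $\lambda$ unchanged — are routine.
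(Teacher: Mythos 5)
Your argument is correct in substance, but it runs on a different engine than the paper's proof, and the difference is worth noting. You evaluate $\rho_v$ by straightening the boundary of the marked teardrop to a cellular loop and then summing local contributions over its passages through $v$, taking as given that the prescription of Figure \ref{fig:corner_cocycle} makes a same-branch passage contribute $0$ and a branch jump contribute $\pm 1$. The paper instead avoids this passage-by-passage bookkeeping: for a vertex $v\neq c,s$ it observes that a representative with no corner at $v$ factors through the graph $G$ obtained by pulling the two branches at $v$ apart, and that the pullback of $\rho_v$ to $G$ is exact, so $\rho_v(\bdry A)=0$ by pure cohomological formalism; for the corner at $c$ it decomposes the boundary loop into the two edges adjacent to the corner plus a corner-free remainder and evaluates directly, using that one of those edges carries $\rho_c=\pm 1$ and the other $\rho_c=0$. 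The trade-off is that your route makes the ``algebraic corner count'' interpretation explicit but places all the weight on the steps you label routine: that the boundary lift can be straightened to a cellular representative without creating or destroying corners (in particular without degenerating the arcs adjacent to the jump at $c$), and that the evaluation of $\rho_v$ on that cycle really regroups as a sum over passages with the asserted local values (including loop edges at $v$ and traversals against the curve orientations). These points are all fixable, and your sketch of why non-jump passages stay on a single branch is the right reason, but they are exactly the parametrization issues the paper's factorization/exactness argument is designed to sidestep; conversely, your version gives a slightly more hands-on picture of why the cocycle deserves its name.
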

\begin{proof}
Suppose that $A$ can be represented by a polygon $u$ that has no corners at the vertex $v$.
Let $G$ be the graph obtained by pulling apart the two branches of $\alpha \cup \beta$ meeting at $v$. 
To be precise, $G$ is obtained from $\alpha \cup \beta$ by adding a vertex $v'$ and has the same set of edges, except that the two $\beta$ edges incident to $v$ in $\alpha \cup \beta$ are incident to $v'$ in $G$.
That $u$ has no corner at $v$ means that $\bdry u$ factors through the quotient map $G \to \alpha \cup \beta$ that identifies $v'$ and $v$. 
Since the pullback of $\rho_v$ to $G$ is exact, we conclude that $\rho_v(\bdry A) =0$.

Suppose now that $A$ can be represented by a polygon $u$ that has one corner at the vertex $c$. 
Let $E $ and $E'$ be the edges of $\alpha \cup \beta$ visited by $\bdry u$ before and after passing through the corner $c$, and suppose that their endpoints are $\{ c, p \}$ and $\{ c, q \}$, respectively. 
Since $\bdry u$ has a corner at $c$, one of these edges has $\rho_c = \pm 1$, and the other one has $\rho_c = 0$.
The loop $\bdry u$ is homotopic to the concatenation of a simple path going from $p$ to $c$ along $E$, a simple path going from $c$ to $q$ along $E'$, and a path going from $q$ back to $p$ that has no corners at $c$. Hence $\rho_c([\bdry u]) = \rho_c(E) \pm \rho_c(E') = \pm 1$.
\end{proof}

\subsection{Disks on surgery cobordisms}
\label{subsection:disks}

As a consequence of Lemma \ref{lemma:deformation_surgery}, we also obtain
the following criterion for the existence of non-trivial disks on surgery cobordisms.

\begin{proposition}
\label{prop:relativepi2}
Let $\alpha$ and $\beta$ be immersed curves intersecting at $s$. 
Then the surgery cobordism $S(\alpha, \beta; s)$ is incompressible
if and only if  
$\alpha$ and $\beta$ span a free group of rank $2$ in $\pi_1(\Sigma, s)$. 
\end{proposition}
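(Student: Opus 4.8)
The plan is to use Proposition \ref{prop:teardropcriterion} as a prototype: I would deduce the statement from the deformation retraction of Lemma \ref{lemma:deformation_surgery}, which identifies the relative homotopy theory of the surgery cobordism $S = S(\alpha, \beta; s)$ with that of the graph $A \cup B \cup K$. Recall from Section \ref{subsection:def_top_unob} that incompressibility of the immersion $\iota: S \to \C \times \Sigma$ is equivalent to injectivity of $\iota_*: \pi_1(S) \to \pi_1(\C \times \Sigma) = \pi_1(\Sigma)$, since $\pi_2(\Sigma) = 0$. So the goal is to show $\iota_*: \pi_1(S) \to \pi_1(\Sigma, s)$ is injective if and only if the subgroup of $\pi_1(\Sigma, s)$ generated by (the loops) $\alpha$ and $\beta$ is free of rank $2$.

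First I would compute $\pi_1(S)$ together with the map $\iota_*$ in terms of the ends. The domain $S$ is a pair of pants, hence homotopy equivalent to $A \cup B \cup K$, which is a wedge of two circles: the core $K$ together with the two collar loops retracts onto a figure-eight whose two loops we may take to be $A$ (the $\alpha$-boundary) and $B$ (the $\beta$-boundary), based at a point near where $K$ meets them. Thus $\pi_1(S)$ is free of rank $2$ on generators $a, b$, and by construction of the surgery cobordism — where $\iota$ restricts over the $A$ and $B$ ends to the product immersions $\{-1\} \times \alpha$ and $\{i\} \times \beta$, and over $K$ projects to the point $s$ — the composite $\pi_1(S) \to \pi_1(\Sigma, s)$ sends $a \mapsto [\alpha]$ and $b \mapsto [\beta]$ (after choosing basepoints and connecting paths compatibly; the core $K$ contributes trivially since it projects to $s$). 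Here the key point, exactly as in the proof of Proposition \ref{prop:teardropcriterion}, is that the handle region $H$ deformation retracts (rel the complement) onto the core $K$, which projects to the single point $s$, so no new $\pi_1$-classes or relations are introduced by the handle.

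With this in hand, the statement is essentially tautological: $\iota_*$ is the homomorphism from the free group $F(a,b)$ to $\pi_1(\Sigma, s)$ sending $a \mapsto [\alpha]$, $b \mapsto [\beta]$, and such a homomorphism is injective precisely when its image is free of rank $2$ on the two generators — which is the definition of $[\alpha]$ and $[\beta]$ spanning a free group of rank $2$. (If the image had rank $\leq 1$, or the two elements satisfied a nontrivial relation, $\iota_*$ would have nontrivial kernel; conversely a subgroup of $\pi_1(\Sigma)$ — which is itself free or a surface group, in any case torsion-free and with the property that rank-$2$ subgroups generated by two elements are free exactly when there is no relation — being free of rank $2$ on $[\alpha], [\beta]$ forces $\iota_*$ injective.)

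The main obstacle I anticipate is not conceptual but bookkeeping: making the identification $\iota_*(a) = [\alpha]$, $\iota_*(b) = [\beta]$ precise. One must fix basepoints on $A$, $B$, and in $\Sigma$ near $s$, choose connecting paths inside $S$ that run along the core $K$, and verify that under $\iota$ these connecting paths map to null-homotopic paths near $s$ (they do, since $K$ projects to $s$), so that the conjugation ambiguities in passing from "free loop" to "based loop" are absorbed and one genuinely recovers the based classes $[\alpha], [\beta] \in \pi_1(\Sigma, s)$ rather than merely their conjugacy classes. Once this is set up carefully — using the deformation retraction of Lemma \ref{lemma:deformation_surgery} to pass between $S$ and $A \cup B \cup K$ while controlling the $\Sigma$-projection — the equivalence is immediate, and no holomorphic or symplectic input beyond the construction of Section \ref{subsection:def_surgery} is needed.
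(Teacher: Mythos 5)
Your overall strategy is the same as the paper's: retract $S$ onto $A \cup B \cup K$ via Lemma \ref{lemma:deformation_surgery}, collapse the core, and identify $\iota_*$ on $\pi_1$ with the map induced by $\alpha \vee \beta \colon S^1 \vee S^1 \to \Sigma$, so that incompressibility becomes injectivity of the homomorphism $F(a,b) \to \pi_1(\Sigma,s)$, $a \mapsto [\alpha]$, $b \mapsto [\beta]$. The forward direction and the basepoint bookkeeping you describe are fine and match the paper.

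The gap is in the converse, which you declare ``essentially tautological'': it is not. The hypothesis is that the subgroup $\langle [\alpha],[\beta] \rangle \leq \pi_1(\Sigma,s)$ is abstractly free of rank $2$; what you need for injectivity is the stronger-looking statement that $[\alpha]$ and $[\beta]$ form a \emph{free basis} of that subgroup, i.e.\ satisfy no nontrivial relation. Your parenthetical (``being free of rank $2$ on $[\alpha],[\beta]$ forces $\iota_*$ injective'') quietly substitutes the basis property for the hypothesis, which is exactly the point at issue. The implication ``a two-element generating set of a free group of rank $2$ is a free basis'' is true, but it is a genuine group-theoretic input: it amounts to the surjection $F(a,b) \twoheadrightarrow \langle [\alpha],[\beta]\rangle \iso F_2$ being an isomorphism, i.e.\ to finitely generated free groups being Hopfian (Nielsen's theorem), which is precisely what the paper invokes at this step. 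So your argument is correct in outline but needs this one citation (or an equivalent argument, e.g.\ via residual finiteness) to close the ``if'' direction; without it the claim of tautology is unjustified.
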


\begin{proof}
We equip $S$ with the basepoint $x$ which is the intersection of $A$ and $K$.
By retracting $S$ onto $A \cup B \cup K$ as in Lemma \ref{lemma:deformation_surgery} and then collapsing $K$ to a point, 
we obtain a homotopy equivalence $S \to S^1 \vee S^1$ that makes the following diagram commute up to homotopy:
\begin{equation}
\label{cd:homotopy_eq}
\begin{tikzcd}
(S, x) \arrow{d}[swap]{\simeq} \arrow{r}{\iota} 	&(\C \times \Sigma \arrow{d}{\pi_{\Sigma}}, \iota(x)) \\
(S^1 \vee S^1, x_0) \arrow{r}{\alpha \vee \beta} 	&(\Sigma,s).
\end{tikzcd}
\end{equation}
Here, $\alpha$ and $\beta$ are seen as loops based at $s$, the wedge sum identifies the two preimages of $s$, and $x_0$ is the wedge point.

Since the vertical maps in Diagram \eqref{cd:homotopy_eq} are homotopy equivalences, we
deduce that $\iota$ is incompressible if and only if the map 
$\mu: \pi_1(S^1 \vee S^1, x_0) \to \pi_1(\Sigma, s)$ induced by $\alpha \vee \beta$
is injective.

The image of $\mu$ is the subgroup $\langle [\alpha], [\beta] \rangle$ of $\pi_1(\Sigma, s)$ generated by $\alpha$ and $\beta$.
Therefore, if $\mu$ is injective then $\langle [\alpha], [\beta] \rangle \iso \pi_1( S^1 \vee S^1 ,x_0)$, which is a free group of rank $2$.

Conversely, suppose that $\langle [\alpha], [\beta] \rangle$ is a free group of rank $2$. 
Then $\mu$ maps a free group of rank $2$ onto a free group of rank $2$.
By a theorem of Nielsen (see e.g. \cite[Proposition 3.5]{Lyndon-Schupp}), free groups of finite rank are Hopfian\footnote{Recall that a group $G$ is \emph{Hopfian} if any surjective morphism $G \to G$ is an isomorphism.}.
Hence $\mu$ is an isomorphism on its image,
so that $\mu$ is injective.
\end{proof}

For a surface of genus $g \geq 2$, the subgroup of $\pi_1$ generated by two elements is always free. 
This is a special case of the following result of Jaco.
\begin{theorem}[\protect{\cite[Corollary 2]{Jaco70}}]
\label{thm:Jaco}
Let $\Sigma$ be a surface with $\chi(\Sigma) \leq 0$. 
Then any subgroup of $\pi_1(\Sigma)$ generated by $k$ elements, where $k < 2 - \chi(\Sigma)$, is a free group.
\end{theorem}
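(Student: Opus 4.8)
The plan is to pass to covering spaces and split according to whether the subgroup has finite or infinite index in $\pi_1(\Sigma)$. First I would dispose of the easy case: if $\Sigma$ has non-empty boundary, then $\pi_1(\Sigma)$ is free, and the statement is immediate from the Nielsen--Schreier theorem (with no hypothesis on $k$ needed). So from now on I would assume $\Sigma$ is closed.

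Let $H \leq \pi_1(\Sigma)$ be a subgroup generated by $k$ elements, and let $p : \Sigma_H \to \Sigma$ be the connected covering space corresponding to $H$ under the Galois correspondence, so that $p_* : \pi_1(\Sigma_H) \to \pi_1(\Sigma)$ is injective with image $H$; thus $\pi_1(\Sigma_H) \cong H$. Now I would distinguish two cases. If $[\pi_1(\Sigma):H] = \infty$, then $\Sigma_H$ is a connected non-compact surface without boundary. The classical fact that such a surface admits a $1$-dimensional spine (equivalently, has a handle decomposition with no top-dimensional handle) shows that $\Sigma_H$ is homotopy equivalent to a graph, hence $\pi_1(\Sigma_H) \cong H$ is free, and we are done. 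Again, this case uses nothing about $k$.

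If instead $n := [\pi_1(\Sigma):H] < \infty$, then $\Sigma_H$ is a closed surface, and multiplicativity of the Euler characteristic under finite covers gives $\chi(\Sigma_H) = n\,\chi(\Sigma)$. Since $\chi(\Sigma)\leq 0$ and $n\geq 1$, we get $\chi(\Sigma_H) = n\,\chi(\Sigma) \leq \chi(\Sigma) \leq 0$. The quantitative input I would use here is that a closed surface $F$ with $\chi(F)\leq 0$ has $\pi_1(F)$ requiring at least $2-\chi(F)$ generators: this is detected on $H_1(F;\Z)$, which is $\Z^{2g}$ for the orientable genus-$g$ surface (needing $2g = 2-\chi$ generators) and $\Z^{g-1}\oplus\Z/2$ for the non-orientable genus-$g$ surface (needing $g = 2-\chi$ generators, the bound seen over $\mathbb{F}_2$). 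Applying this to $F = \Sigma_H$, the group $H$ cannot be generated by fewer than $2-\chi(\Sigma_H) = 2 - n\,\chi(\Sigma) \geq 2-\chi(\Sigma)$ elements; since $k < 2-\chi(\Sigma)$ by hypothesis, the finite-index case cannot occur.

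Combining the two cases, under the assumption $k < 2-\chi(\Sigma)$ the subgroup $H$ is forced to have infinite index, hence is free. The step that requires the most care is the assertion that a non-compact surface has free fundamental group; I would make this precise by the spine argument or by invoking the classification of non-compact surfaces, rather than reproving it, as the remaining ingredients (the covering-space translation and the minimal-generator count via $H_1$) are elementary.
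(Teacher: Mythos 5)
Your argument is correct. There is nothing in the paper to compare it against: the statement is imported verbatim from Jaco (\cite[Corollary 2]{Jaco70}) and never proved in the text, and your covering-space argument is exactly the standard proof behind that corollary — an infinite-index subgroup corresponds to a non-compact cover, whose fundamental group is free, while a finite-index subgroup of index $n$ would be the fundamental group of a closed surface of Euler characteristic $n\,\chi(\Sigma)\leq\chi(\Sigma)\leq 0$, hence would need at least $2-\chi(\Sigma)$ generators, as you correctly detect on $H_1$ with $\Z$ (orientable) or $\mathbb{F}_2$ (non-orientable) coefficients. One cosmetic point: your opening dichotomy ``non-empty boundary versus closed'' omits non-compact boundaryless surfaces, which are neither; they are handled by the same spine argument you invoke for the infinite-index case, and in this paper's application $\Sigma$ is closed anyway, so this is not a genuine gap.
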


In the setting of Proposition \ref{prop:relativepi2}, this implies that the subgroup of $\pi_1(\Sigma,s)$ generated by $\alpha$ and $\beta$ is a free group of rank $2$ or less.
The rank is $0$ or $1$ if and only if there is some non-trivial relation $[\alpha]^p = [\beta]^q$ in $\pi_1(\Sigma,s)$. 
Hence we can reformulate Proposition \ref{prop:relativepi2} as follows.

\begin{proposition}
\label{prop:relativepi2version2}
Let $\alpha$ and $\beta$ be immersed curves intersecting at $s$. 
Then the surgery cobordism $S(\alpha, \beta; s)$ is incompressible if and only if 
there are no integers $p$ and $q$ with $(p,q) \neq (0,0)$ such that $[\alpha]^p = [\beta]^q$ in $\pi_1(\Sigma,s)$. 
\end{proposition}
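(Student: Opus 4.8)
The plan is to derive Proposition \ref{prop:relativepi2version2} directly from Proposition \ref{prop:relativepi2} together with the rank bound supplied by Theorem \ref{thm:Jaco}. Since $\Sigma$ has genus $g \ge 2$ we have $2 - \chi(\Sigma) = 2g \ge 4 > 2$, so Theorem \ref{thm:Jaco} applies with $k = 2$: the $2$-generated subgroup $\langle [\alpha], [\beta] \rangle \le \pi_1(\Sigma, s)$ is free, and being generated by two elements it has rank $0$, $1$, or $2$. By Proposition \ref{prop:relativepi2}, $S(\alpha, \beta; s)$ is incompressible if and only if this rank is exactly $2$. Hence the statement reduces to the purely group-theoretic dichotomy: $\operatorname{rank}\langle [\alpha], [\beta]\rangle \le 1$ \emph{if and only if} there exist integers $(p,q) \neq (0,0)$ with $[\alpha]^p = [\beta]^q$.

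For the forward implication, if $\langle[\alpha],[\beta]\rangle$ is trivial then $[\alpha] = [\beta] = 1$ and $(p,q) = (1,1)$ works; if it is infinite cyclic on a generator $c$ (necessarily of infinite order, since $\pi_1(\Sigma)$ is torsion-free), write $[\alpha] = c^a$ and $[\beta] = c^b$ with $(a,b) \neq (0,0)$ (as $\langle c\rangle$ is nontrivial) and take $(p,q) = (b,a)$, so that $[\alpha]^p = c^{ab} = [\beta]^q$. For the reverse implication I would argue contrapositively: if $\langle[\alpha],[\beta]\rangle$ is free of rank $2$, then since $[\alpha]$ and $[\beta]$ generate it, the natural surjection from a free group of rank $2$ onto $\langle[\alpha],[\beta]\rangle$ (which is again free of rank $2$) is an isomorphism because free groups of finite rank are Hopfian (the fact already invoked in the proof of Proposition \ref{prop:relativepi2}); thus $\{[\alpha],[\beta]\}$ is a free basis. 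Applying the abelianization $\langle[\alpha],[\beta]\rangle \to \Z^2$ to a putative relation $[\alpha]^p = [\beta]^q$ then gives an equality $p\,e_1 = q\,e_2$ of elements of $\Z^2$ with $e_1, e_2$ a basis, forcing $p = q = 0$. Combining the two implications with the trichotomy above finishes the proof.

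This argument is essentially bookkeeping around Theorem \ref{thm:Jaco} and Proposition \ref{prop:relativepi2}. The only step requiring a little care is the reverse implication, where one needs to know that two generators of a rank-$2$ free group automatically form a basis; this is exactly where Hopfianness enters (alternatively one could cite the standard fact that the centralizer of a nontrivial element in a free group is cyclic). I do not anticipate any serious obstacle beyond this routine observation.
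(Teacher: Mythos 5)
Your proof is correct and follows essentially the same route as the paper: deduce the statement from Proposition \ref{prop:relativepi2} together with Theorem \ref{thm:Jaco}, reducing everything to the observation that the rank of $\langle [\alpha],[\beta]\rangle$ is $\le 1$ exactly when a non-trivial relation $[\alpha]^p=[\beta]^q$ holds. The paper treats that last equivalence as immediate, whereas you spell out both directions (including the Hopfian argument showing two generators of a rank-$2$ free group form a basis), which is a fine, slightly more detailed version of the same argument.
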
 

We will often make use of the following special case of the preceding results.
\begin{corollary}
\label{cor:nonzerointersectionimpliesnodisks}
Let $\alpha$ and $\beta$ be immersed curves intersecting at $s$. 
If $\alpha \cdot \beta \neq 0$, then the surgery cobordism $S(\alpha, \beta; s)$ is incompressible.
\end{corollary}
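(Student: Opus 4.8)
The plan is to deduce the statement from Proposition \ref{prop:relativepi2version2} by a short homological argument, so I would prove the contrapositive. Suppose that $S(\alpha,\beta;s)$ is not incompressible. By Proposition \ref{prop:relativepi2version2}, there exist integers $p,q$ with $(p,q) \neq (0,0)$ such that $[\alpha]^p = [\beta]^q$ in $\pi_1(\Sigma,s)$. The goal is then to conclude that $\alpha \cdot \beta = 0$.

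First I would push this relation into homology: applying the abelianization (Hurewicz) map $\pi_1(\Sigma,s) \to H_1(\Sigma;\Z)$ yields $p[\alpha] = q[\beta]$, where now $[\alpha]$ and $[\beta]$ denote the homology classes of the curves. Next, recall that the algebraic intersection number $\alpha \cdot \beta$ is computed by the intersection pairing $\langle\,\cdot\,,\,\cdot\,\rangle$ on $H_1(\Sigma;\Z)$, which is skew-symmetric; in particular $\langle [\alpha],[\alpha]\rangle = \langle [\beta],[\beta]\rangle = 0$. Pairing the relation $p[\alpha] = q[\beta]$ with $[\beta]$ gives $p(\alpha \cdot \beta) = q\langle[\beta],[\beta]\rangle = 0$, and pairing it with $[\alpha]$ gives $q(\alpha \cdot \beta) = p\langle[\alpha],[\alpha]\rangle = 0$. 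Since at least one of $p,q$ is nonzero, this forces $\alpha \cdot \beta = 0$, which is exactly the contrapositive of the claim.

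There is no genuine obstacle in this argument once Proposition \ref{prop:relativepi2version2} is available; the substance was already invested in that proposition (the homotopy equivalence of Diagram \eqref{cd:homotopy_eq}, Jaco's Theorem \ref{thm:Jaco}, and the Hopfian property of free groups). The only point worth recording is that the computation remains informative when one of $p,q$ vanishes: if, say, $p = 0$ then $q \neq 0$ and $q[\beta] = 0$ in $H_1(\Sigma;\Z) \cong \Z^{2g}$, which is torsion-free, so $[\beta] = 0$ and hence $\alpha \cdot \beta = 0$ directly. The pairing computation above subsumes this uniformly, so no case analysis is actually required.
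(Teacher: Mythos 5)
Your proposal is correct and is essentially the paper's argument in contrapositive form: the paper notes directly that $\alpha\cdot\beta\neq 0$ forces $\langle[\alpha],[\beta]\rangle$ to be free of rank $2$ (via Theorem \ref{thm:Jaco}) and applies Proposition \ref{prop:relativepi2}, while you invoke the equivalent reformulation Proposition \ref{prop:relativepi2version2} and make explicit the skew-symmetry computation showing that a relation $[\alpha]^p=[\beta]^q$ kills the intersection number. The mathematical content is the same, so no further comment is needed.
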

\begin{proof}
The hypothesis implies that the intersection pairing restricted to $\langle [\alpha], [\beta] \rangle$ is non-trivial. 
Hence, $\langle [\alpha], [\beta] \rangle$ cannot be a free group of rank $0$ or $1$. 
By Theorem \ref{thm:Jaco}, it must be a free group of rank $2$, and the conclusion follows from Proposition \ref{prop:relativepi2}.
\end{proof}

\section{Lagrangian cobordism invariants of curves}
\label{section:invariants}

In this section, we define the Lagrangian cobordism invariants of curves on symplectic surfaces that give rise to the morphism
\[
\Gunob(\Sigma) \to H_1( S\Sigma; \Z) \oplus \R
\]
of Theorem \ref{thm:computation_Gunob}. 
These invariants are the same as those considered by Abouzaid \cite{Abouzaid08},
who showed that they induce well-defined morphisms on $\Kgrp(\Sigma)$.
Our aim in this section is to show that these invariants also descend to morphisms on $\Gimm(\Sigma)$ (and hence on $\Gunob(\Sigma)$).
Moreover, in Section \ref{subsection:bounding_curves}, we compute these invariants in some cases which are relevant to the proof of Theorem \ref{thm:computation_Gunob}.

\subsection{Discrete invariants}
\label{subsection:discrete_invariants}

For a symplectic manifold $M^{2n}$, we write $\GrassOr(M)$ for the Grassmannian of oriented Lagrangian subspaces of $TM$.
This is a fiber bundle with fiber $\Lambda_n$, the oriented Lagrangian Grassmannian of $\C^n$. 
Recall that $\Lambda_n \iso U(n)/SO(n)$.
In the case of a surface $\Sigma$, we identify $\GrassOr(\Sigma)$ with the unit tangent bundle of $\Sigma$, which we denote $S\Sigma$.
We associate to an immersed curve $\gamma: S^1 \to \Sigma$ the class $[\wtilde{\gamma}] \in H_1(S \Sigma ; \Z)$,
where $\wtilde{\gamma}: S^1 \to S\Sigma$ is the Gauss map of $\gamma$.
We will show that this gives rise to a morphism $\Gimm(\Sigma) \to H_1(S \Sigma; \Z)$.

To see this, consider the stabilization map $i: \GrassOr(\Sigma) \to \GrassOr(\C \times \Sigma)$
that sends an oriented Lagrangian subspace $U \subset T_p \Sigma$ to the Lagrangian subspace 
$\R  \oplus U$ of $T_{(z,p)}(\C \times \Sigma)$, equipped with the product orientation. 
Here, $z \in \C$ is fixed, but the map does not depend on this choice up to homotopy.

\begin{lemma}
\label{lemma:stable_homology}
The stabilization map $i$ induces an isomorphism on $H_1(- ; \Z)$.
\end{lemma}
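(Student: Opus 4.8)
The plan is to identify both sides explicitly and check that the stabilization map $i$ corresponds to the obvious isomorphism. First I would analyze the source. The unit tangent bundle $S\Sigma$ of a closed surface of genus $g \geq 2$ fits into the Gysin sequence of the circle bundle $S^1 \to S\Sigma \to \Sigma$ with Euler number $\chi(\Sigma) = 2 - 2g$; this gives $H_1(S\Sigma;\Z) \iso \Z^{2g} \oplus \Z/(2g-2)$, where the $\Z^{2g}$ comes from $H_1(\Sigma;\Z)$ (via a section, e.g. pulling back by $\pi$) and the torsion factor is generated by the fiber class. For the target, $\GrassOr(\C\times\Sigma)$ is the bundle over $\C\times\Sigma$ with fiber $\Lambda_2 \iso U(2)/SO(2)$; since $\C$ is contractible, it is homotopy equivalent to the pullback to $\Sigma$ of this bundle, call it $E\to\Sigma$, with fiber $\Lambda_2$.

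Next I would compute $H_1$ of the fiber $\Lambda_2$. One has $\pi_1(\Lambda_n) \iso \Z$ for all $n$, detected by the (squared) determinant / Maslov class, and the stabilization $\Lambda_1 = S^1 \hookrightarrow \Lambda_2$ induces multiplication by... here is the one subtle point: the map $\Lambda_1 \to \Lambda_2$, $U \mapsto \R \oplus U$, on $\pi_1 \iso \Z$ is an isomorphism, since the Maslov index of a loop is preserved under direct sum with a constant Lagrangian. So on fibers $i$ induces an iso on $H_1$. Then I would run the Serre spectral sequence (or the Wang/Gysin-type exact sequence for a fibration with simply-connected... no, $\Sigma$ is not simply connected, so I use the Serre spectral sequence with local coefficients, or better, the exact sequence of low-degree terms) for both bundles $S^1 \to S\Sigma \to \Sigma$ and $\Lambda_2 \to E \to \Sigma$. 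The map $i$ is a morphism of fibrations covering $\id_\Sigma$ (after the homotopy equivalence $\GrassOr(\C\times\Sigma)\simeq E$) and inducing an iso on fibers, so by naturality of the spectral sequence and the five lemma applied to the exact sequences
\[
H_1(\text{fiber}) \to H_1(\text{total}) \to H_1(\Sigma) \to 0
\]
(the $d_2$ differential $H_2(\Sigma)\to H_1(\text{fiber})$ is the same for both, being determined by the fiber inclusion on $\pi_1$ which $i$ respects), I conclude $i_*$ is an isomorphism on $H_1$.

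The main obstacle is the bookkeeping around the fiber inclusion on $\pi_1$ and making sure the differential $d_2 \colon H_2(\Sigma;\Z) \to H_1(\Lambda_n;\Z)$ is natural under stabilization — i.e. that the Euler-class-type obstruction for $\GrassOr(\C\times\Sigma)$ pulls back correctly from that of $S\Sigma$. Concretely, this amounts to the statement that the composite $\Lambda_1 \hookrightarrow \Lambda_2$ sends the generator of $\pi_1(\Lambda_1)=\pi_1(S^1)$ to the generator of $\pi_1(\Lambda_2)$ (not to a multiple), which one checks directly from the determinant-square description $\det^2 \colon \Lambda_n \to S^1$: for $n=1$ this is $z \mapsto z^2$ up to the identification, but the point is that the relative comparison $\det^2_{\Lambda_1}$ versus $\det^2_{\Lambda_2}\circ(\text{stab})$ agree, so the induced map on $\pi_1\iso\Z$ is the identity. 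Alternatively, and perhaps more cleanly, one avoids spectral sequences entirely: pick an explicit section $\Sigma \to S\Sigma$ away from a disk, use the resulting presentation of $H_1(S\Sigma)$ as $H_1(\Sigma) \oplus \Z$ modulo the single relation that the fiber bounds $\chi(\Sigma)$ times itself near the disk, and observe that the analogous presentation for $\GrassOr(\C\times\Sigma)$ is identical because the fiber $\Lambda_2$ has $\pi_1\iso\Z$ with the same Euler number (the Maslov class of the Gauss lift of a curve on $\Sigma$, stabilized, equals the original Maslov class). I would present the section-based argument as the main proof and relegate the spectral sequence version to a remark, since it makes the correspondence with Abouzaid's invariant $[\wtilde\gamma]$ most transparent.
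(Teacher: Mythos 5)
Your proposal is correct and follows essentially the same route as the paper: both arguments reduce to showing that the fiberwise stabilization $\Lambda_1 \to \Lambda_2$ is an isomorphism on $\pi_1$ (detected by the determinant, which commutes with stabilization) and then compare the two Grassmannian bundles over $\Sigma$ by naturality of the associated exact sequences. The paper's bookkeeping is lighter: it observes that it suffices to prove the isomorphism on $\pi_1$ (since $H_1$ is the abelianization) and compares the long exact sequences of homotopy groups of the two fiber bundles, thereby avoiding the Serre spectral sequence, local coefficients, and the explicit Gysin computation of $H_1(S\Sigma;\Z)$ that your version carries along.
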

\begin{proof}
It suffices to prove that stabilization induces an isomorphism on $\pi_1$.
To see this, recall that the determinant map $\Lambda_n \to S^1$ coming from the identification $\Lambda_n \iso U(n)/SO(n)$ 
induces an isomorphism on $\pi_1$. 
The stabilization map $\Lambda_n \to \Lambda_{n+1}$ commutes with the determinant, hence it also induces an isomorphism on $\pi_1$.
The result now follows from comparison of the long exact sequences of homotopy groups associated to the fiber bundles $\GrassOr(\Sigma)$ and 
$\GrassOr(\C \times \Sigma)$.
\end{proof}

It follows from Lemma \ref{lemma:stable_homology} that the class $[\wtilde{\gamma}]$ is invariant under Lagrangian cobordisms in the following sense.

\begin{proposition}
\label{prop:invariance_homology_class}
Let $V: (\gamma_1, \ldots, \gamma_r) \cob \emptyset$ be an oriented Lagrangian cobordism between immersed curves in $\Sigma$. 
Let $\wtilde{\gamma}_1, \ldots, \wtilde{\gamma}_r$ be the Gauss maps of the ends of $V$.
Then
\begin{equation*}
\sum_{k=1}^{r} \; [\widetilde{\gamma}_k] =0 \qquad \text{in } H_1(S\Sigma; \Z).
\end{equation*}
\end{proposition}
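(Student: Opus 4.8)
The statement asserts that the sum of the Gauss-map homology classes of the ends of a cobordism $V$ to $\emptyset$ vanishes in $H_1(S\Sigma;\Z)$. The natural strategy is to lift everything to the Lagrangian Grassmannian bundle over the ambient manifold $\C\times\Sigma$ and use the fact that the cobordism itself provides a bounding chain. Concretely, $V$ is a compact oriented surface with boundary $\coprod \gamma_k$, immersed in $\C\times\Sigma$, and its Gauss map $\widetilde{\iota_V}: V \to \GrassOr(\C\times\Sigma)$ sends a point of $V$ to the tangent space of the immersion at that point (a Lagrangian subspace of $T(\C\times\Sigma)$, oriented by the orientation of $V$). The restriction of $\widetilde{\iota_V}$ to the boundary component $\gamma_k$ is, by the cylindrical-near-the-boundary condition in Definition \ref{def:lagrangian_cobordism}, exactly the stabilized Gauss map $i\circ\widetilde{\gamma}_k$ (the tangent space splits as $\R\,\partial_t \oplus T\gamma_k$ over the collar). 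Since $\partial V = \coprod_k \gamma_k$ as oriented manifolds, pushing the fundamental class forward gives
\[
\sum_{k=1}^{r} (i\circ\widetilde{\gamma}_k)_*[\gamma_k] \;=\; (\widetilde{\iota_V})_* \,\partial[V,\partial V] \;=\; 0 \quad \text{in } H_1(\GrassOr(\C\times\Sigma);\Z),
\]
because the left side is a boundary in the singular chain complex of $\GrassOr(\C\times\Sigma)$. Then, since each $(i\circ\widetilde\gamma_k)_*[\gamma_k] = i_*[\widetilde\gamma_k]$, and $i_*: H_1(S\Sigma;\Z)\to H_1(\GrassOr(\C\times\Sigma);\Z)$ is an isomorphism by Lemma \ref{lemma:stable_homology}, applying $i_*^{-1}$ yields $\sum_k [\widetilde\gamma_k] = 0$ in $H_1(S\Sigma;\Z)$, as desired.

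First I would set up the Gauss map of an immersed oriented manifold carefully: for an immersion of an oriented $n$-manifold $W$ into a symplectic $2n$-manifold $N$, each point $p$ gets the Lagrangian plane $d\iota(T_pW)$ with its transported orientation; this is a map $W \to \GrassOr(N)$ covering $\iota$. I would note this is well defined even at (transverse) self-intersection points of $V$, since the Gauss map depends only on the domain point, not the image point — which is why it is important to work with the abstract cobordism $V$ rather than its image. Next I would verify the boundary identification: over the collar $(1-\eps,1]\times L_i$ where $\iota_V(t,p) = (t, i, \alpha_i(p))$, the differential is $\partial_t \oplus d\alpha_i$, so the tangent plane is $\R\partial_t \oplus T\alpha_i(p)$, which is precisely $i(\text{Gauss map of }\alpha_i)$ up to the contractible choice of basepoint $z\in\C$; the orientations match by the compatibility of collar orientations required in Section \ref{subsection:cobordisms}. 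Then the chain-level computation above finishes it.

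**Main obstacle.** The only genuinely delicate point is orientations: one must check that the orientation $V$ induces on its ends $\gamma_k$ (via the "outward normal first" or whatever convention Definition \ref{def:lagrangian_cobordism} implicitly uses for $\partial V$) matches the given orientations of the $\gamma_k$ up to signs that are consistent with the statement — i.e. that the boundary of $[V,\partial V]$ really is $\sum_k [\gamma_k]$ with the correct signs, and that the Gauss map of $V$ restricted to a collar genuinely equals $i\circ\widetilde\gamma_k$ rather than $i\circ\overline{\widetilde\gamma_k}$ (the orientation-reversed lift). This is essentially bookkeeping, but it is where an error would hide. A secondary, purely technical point is that $V$ is only an immersion, so $(\widetilde{\iota_V})_*$ is the pushforward on singular homology by the continuous map $\widetilde{\iota_V}$ — no issue, since we never need $V$ embedded and the Gauss map is continuous (indeed smooth) on all of $V$. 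I would also remark that one should use a single fixed $z_0\in\C$ when defining $i$ on all ends simultaneously, which is harmless since different choices of $z_0$ give homotopic stabilization maps; alternatively, observe that the retraction $\C\times\Sigma\to\{z_0\}\times\Sigma\cong\Sigma$ identifies $\GrassOr(\C\times\Sigma)$ up to homotopy with $\GrassOr(\Sigma)$ compatibly with $i$, so one can just work downstairs from the start.
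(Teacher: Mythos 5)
Your proposal is correct and is essentially the paper's own argument: the Gauss map of the cobordism $V$ into $\GrassOr(\C\times\Sigma)$ extends the stabilized Gauss maps of the ends, so the sum of the classes $i_*[\wtilde{\gamma}_k]$ bounds and vanishes, and Lemma \ref{lemma:stable_homology} transports this back to $H_1(S\Sigma;\Z)$. The extra care you take with orientations, collars, and the immersed (rather than embedded) nature of $V$ is exactly the bookkeeping the paper leaves implicit.
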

\begin{proof}
The cobordism has a Gauss map $V \to \GrassOr(\C \times \Sigma)$
which extends the stabilizations of the Gauss maps of its ends.
This yields the relation
\[
\sum_{k=1}^{r} \; i_*[\wtilde{\gamma}_k] = 0 \qquad \text{in } H_1( \GrassOr(\C \times \Sigma); \Z).
\]
The conclusion now follows from Lemma \ref{lemma:stable_homology}.
\end{proof}

It follows from Proposition \ref{prop:invariance_homology_class} that the homology class of the Gauss map 
descends to a morphism $\Gimm(\Sigma) \to H_1(S\Sigma; \Z)$. 

Since $S\Sigma$ is an oriented circle bundle, its homology can be computed from the Gysin sequence
\begin{equation}
\label{cd:homology_unit_bundle}
\begin{tikzcd}
0 \arrow{r} &\Z_{\chi(\Sigma)} \arrow{r} &H_1(S\Sigma; \Z) \arrow{r}{p_*} &H_1(\Sigma; \Z) \arrow{r} &0,
\end{tikzcd}
\end{equation}
where the first map sends $1$ to the class of a fiber and $p: S \Sigma \to \Sigma$ is the projection. 
This sequence splits since $H_1(\Sigma; \Z)$ is a free abelian group.
It will be useful for us to fix a choice of splitting of \eqref{cd:homology_unit_bundle}.
A splitting map $\mu: H_1(S\Sigma; \Z) \to \Z_{\chi(\Sigma)}$ will be called
a \Def{winding number morphism}.

\begin{remark}
The set of splittings of \eqref{cd:homology_unit_bundle} is a torsor over 
\[
\Hom(H_1(\Sigma; \Z), \Z_{\chi(\Sigma)} ) \iso H^1(\Sigma; \Z_{\chi(\Sigma)} ).
\]
In the terminology of Seidel \cite{Seidel2000}, they correspond to the $\Z_{2\chi(\Sigma)}$-gradings of $\Sigma$ that factor through the 
oriented Lagrangian Grassmannian. 
Winding numbers of curves on closed surfaces first appeared in the works of Reinhart \cite{Reinhart} and Chillingworth \cite{Chillingworth}. 
\end{remark}

\subsection{Holonomy}
\label{subsection:holonomy}

We recall the definition of the holonomy of an immersed curve in a surface of genus $g \neq 0$, following \cite{Abouzaid08}.
First, fix a primitive $\lambda$ of $p^*\omega$, where as before $p: S \Sigma \to \Sigma$
denotes the projection of the unit tangent bundle. 
Such a primitive exists whenever
$\chi(\Sigma) \neq 0$ (see Corollary \ref{cor:pullback_omega_exact} for a proof of a more general claim).
The holonomy of an immersed curve $\gamma$ is then defined as
\[
\hol_{\lambda}(\gamma) := \int_{S^1} \wtilde{\gamma}^* \lambda,
\]
where $\wtilde{\gamma}$ is the lift of $\gamma$ to $S\Sigma$.

The holonomy of curves admits a straightforward generalization to a class of real-valued cobordism invariants of Lagrangian immersions
in \emph{monotone} symplectic manifolds. 
For the benefit of the reader, this theory is developed in Appendix \ref{appendix:action}.
As in the case of surfaces, the basic observation that leads to these invariants is that
for a monotone manifold $M$
the form $p^*\omega$ is exact, 
where $p: \Grass M \to M$ is the Lagrangian Grassmannian bundle (see Corollary \ref{cor:pullback_omega_exact}).
As a special case of the general theory developed in Appendix \ref{appendix:action}, 
we prove that holonomy is invariant under oriented 
Lagrangian cobordisms. The precise statement is as follows.

\begin{proposition}
\label{prop:invariance_holonomy}
Suppose that there is an oriented Lagrangian cobordism $(\gamma_1, \ldots, \gamma_r) \cob \emptyset$. 
Then
\begin{equation*}
\sum_{k=1}^{r} \; \hol_{\lambda}(\gamma_k) = 0.
\end{equation*}
\end{proposition}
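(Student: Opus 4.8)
The plan is to mimic the argument for Proposition~\ref{prop:invariance_homology_class}, replacing the Gauss map into the Lagrangian Grassmannian bundle with the primitive $\lambda$ of $p^*\omega$. First I would recall that the holonomy $\hol_{\lambda}(\gamma) = \int_{S^1} \wtilde{\gamma}^*\lambda$ only depends on the lift $\wtilde{\gamma}: S^1 \to S\Sigma$ of $\gamma$, i.e.\ on the Gauss map. The key point is that, just as in Appendix~\ref{appendix:action}, one has a stabilized primitive on the Grassmannian bundle of $\C \times \Sigma$: pull back $\omega_{\C} \oplus \omega$ by $p: \GrassOr(\C \times \Sigma) \to \C \times \Sigma$, note that $\omega_{\C}$ is exact on $\C$, and combine a primitive of $p^*(\pi_{\Sigma}^*\omega)$ (which can be taken compatible with $\lambda$ via the stabilization map $i$ from Section~\ref{subsection:discrete_invariants}) with a primitive of $p^*(\pi_{\C}^*\omega_{\C})$ to obtain a $1$-form $\wtilde{\lambda}$ on $\GrassOr(\C \times \Sigma)$ with $d\wtilde{\lambda} = p^*\wtilde{\omega}$ and $i^*\wtilde{\lambda}$ cohomologous to $\lambda$ on $S\Sigma$ (after adjusting by a closed form; see Lemma~\ref{lemma:stable_homology}, which guarantees $i$ is an isomorphism on $H_1$, hence the ambiguity is controlled).

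Next I would run the cobordism argument. Given an oriented Lagrangian cobordism $\iota_V: V \to \C \times \Sigma$ with $\bdry V = \coprod_{k=1}^r \gamma_k$, compose the Gauss map of $V$ with $\wtilde{\lambda}$. Since $V$ is Lagrangian, $\iota_V^* \wtilde{\omega} = 0$, so the pullback of $\wtilde{\lambda}$ by the Gauss map $G_V: V \to \GrassOr(\C \times \Sigma)$ is a closed $1$-form on $V$ (because $d(G_V^*\wtilde{\lambda}) = G_V^* p^* \wtilde{\omega} = \iota_V^*\wtilde{\omega} = 0$). Integrating over $\bdry V$ and using Stokes gives $\sum_k \int_{\bdry_k V} G_V^*\wtilde{\lambda} = 0$. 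On the $k$-th cylindrical end, $V$ is a product $[0,\eps) \times L_k$ mapping by $(t,p) \mapsto (k, \alpha_k(p))$ (or the analogous negative-end formula), so the Gauss map there is the stabilization $i \circ \wtilde{\gamma}_k$, and hence $\int_{\bdry_k V} G_V^*\wtilde{\lambda} = \int_{S^1} \wtilde{\gamma}_k^*(i^*\wtilde{\lambda}) = \hol_{\lambda}(\gamma_k)$, where the last equality uses that $i^*\wtilde{\lambda}$ and $\lambda$ differ by an exact form (integrals of exact forms over the circle vanish). This yields $\sum_{k=1}^r \hol_{\lambda}(\gamma_k) = 0$.

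One technical point I would address carefully is orientations: the boundary orientation of $V$ should match the orientation of each end $\gamma_k$ so that the signs in $\sum_k$ come out as stated; this is exactly the compatibility built into Definition~\ref{def:lagrangian_cobordism}. Another is that $V$ is only immersed, not embedded — but since all the objects ($G_V$, $\wtilde{\lambda}$, Stokes' theorem) are defined on the abstract manifold $V$ and only pushed forward by the immersion at the very end, immersedness causes no difficulty; self-intersections of $\iota_V$ are irrelevant because holonomy is computed upstairs on $V$. The main obstacle, such as it is, is the bookkeeping in the first paragraph: namely verifying that the stabilized primitive $\wtilde{\lambda}$ can genuinely be chosen so that $i^*\wtilde{\lambda} - \lambda$ is exact rather than merely closed. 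Since this is precisely the content of the general construction in Appendix~\ref{appendix:action} (of which this proposition is stated to be a special case), I would simply invoke that appendix; in the self-contained surface case it also follows directly from $H^1$ considerations and Lemma~\ref{lemma:stable_homology}.
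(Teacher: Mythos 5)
Your argument is correct and is essentially the paper's own: the proposition is proved there as the special case $x=1$ of Corollary \ref{cor:characteristic_numbers}, whose proof in Appendix \ref{appendix:action} follows exactly your scheme --- a stabilized primitive $\wtilde{\lambda}$ with $i^*\wtilde{\lambda}-\lambda$ exact (Lemma \ref{lemma:compatible_primitive}, via the isomorphism on $H^1$ induced by stabilization), then pulling back along the Gauss map of the cobordism and applying Stokes on $V$ (Lemma \ref{lemma:invariance_action}). The only cosmetic differences are that the appendix works with the unoriented Grassmannian (Remark \ref{rmk:oriented_vs_unoriented} handles the oriented/unoriented translation) and phrases the Stokes step cohomologically, and that the exactness of the $\Sigma$-summand of the stabilized form is justified there by monotonicity of $\C \times M$ rather than assumed, which is precisely the point you defer to the appendix.
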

\begin{proof}
This is a special case of Corollary \ref{cor:characteristic_numbers}.
\end{proof}

By Proposition \ref{prop:invariance_holonomy}, holonomy descends to a morphism $\hol_{\lambda}: \Gimm(\Sigma) \to \R$.
From now on, we fix a choice of primitive $\lambda$ and suppress it from the notation.

We will often make use of the following relationship between holonomy and regular homotopies.
Recall that a regular homotopy $\phi:S^1 \times I \to \Sigma$ is a Lagrangian homotopy and therefore has
a well-defined flux class $\flux(\phi) \in H^1(S^1; \R)$, which by definition is the class that satisfies
\[
\flux(\phi) \cdot [S^1] = \int_{S^1 \times I} \phi^*\omega.
\]

\begin{lemma}
\label{lemma:holonomy_primitive_of_flux}
Let $\phi:S^1 \times I \to \Sigma$ be a regular homotopy from $\alpha$ to $\beta$. Then
\[
\hol(\beta) - \hol(\alpha) = \int_{S^1 \times I} \phi^*\omega = \flux(\phi) \cdot [S^1]. 
\]
\end{lemma}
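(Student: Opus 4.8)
The plan is to show that the difference $\hol(\beta) - \hol(\alpha)$ can be computed by pulling back the primitive $\lambda$ of $p^*\omega$ over the lifted homotopy and applying Stokes' theorem. First I would lift the regular homotopy $\phi: S^1 \times I \to \Sigma$ to a homotopy $\widetilde{\phi}: S^1 \times I \to S\Sigma$ between the Gauss maps $\widetilde{\alpha}$ and $\widetilde{\beta}$; such a lift exists because a regular homotopy comes with a continuous choice of tangent direction (equivalently, one lifts the map $\phi$ together with its vertical derivative along $S^1$), and $\widetilde{\phi}_0 = \widetilde{\alpha}$, $\widetilde{\phi}_1 = \widetilde{\beta}$ by construction. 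Then $\widetilde{\phi}^*(p^*\omega) = \widetilde{\phi}^*(d\lambda) = d(\widetilde{\phi}^*\lambda)$ on the cylinder $S^1 \times I$.

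The key computation is Stokes' theorem on $S^1 \times I$:
\[
\int_{S^1 \times I} d(\widetilde{\phi}^*\lambda) = \int_{\partial(S^1 \times I)} \widetilde{\phi}^*\lambda = \int_{S^1 \times \{1\}} \widetilde{\phi}^*\lambda - \int_{S^1 \times \{0\}} \widetilde{\phi}^*\lambda = \hol(\beta) - \hol(\alpha),
\]
where I use that $\widetilde{\phi}$ restricted to the two boundary circles is exactly $\widetilde{\beta}$ and $\widetilde{\alpha}$, and that the definition of holonomy is $\hol(\gamma) = \int_{S^1} \widetilde{\gamma}^*\lambda$. On the other hand, since $p \circ \widetilde{\phi} = \phi$, we have $\widetilde{\phi}^*(p^*\omega) = \phi^*\omega$, so the left-hand side equals $\int_{S^1 \times I} \phi^*\omega$. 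This gives the first equality of the lemma, and the second equality $\int_{S^1 \times I}\phi^*\omega = \flux(\phi)\cdot[S^1]$ is simply the definition of the flux class recalled just before the statement.

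I do not expect any serious obstacle here; the only point requiring a little care is the existence and compatibility of the lift $\widetilde{\phi}$ of the regular homotopy to the unit tangent bundle, which is where the hypothesis that $\phi$ is a \emph{regular} homotopy (rather than an arbitrary homotopy) is used. This is routine: the Gauss map construction is continuous in families, so a regular homotopy of immersions induces a homotopy of Gauss maps covering it. The remainder is Stokes' theorem and unwinding definitions.
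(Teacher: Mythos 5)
Your proposal is correct and follows exactly the paper's argument: lift the regular homotopy to $S\Sigma$ via the Gauss maps and apply Stokes' theorem to the primitive $\lambda$ of $p^*\omega$. The extra details you supply (existence of the lift from regularity, the identity $\wtilde{\phi}^*(p^*\omega)=\phi^*\omega$, and the definition of flux) are just the unwinding the paper leaves implicit.
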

\begin{proof}
The regular homotopy $\phi$ lifts to a homotopy from $\wtilde{\alpha}$ to $\wtilde{\beta}$ in $S\Sigma$. 
The result then follows from the Stokes Theorem.
\end{proof}

\subsection{Winding and holonomy of bounding curves}
\label{subsection:bounding_curves}

In this section, we compute the winding number and holonomy of curves that bound surfaces in $\Sigma$.
We start with the following lemma.

\begin{lemma}
\label{lemma:holonomy_fiber}
Suppose that $\sigma$ is a positively oriented fiber of $S\Sigma$. 
Then
\[
\int_{\sigma} \lambda = -\frac{\area(\Sigma)}{\chi(\Sigma)}.
\]
\end{lemma}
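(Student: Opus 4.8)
The plan is to compute the integral of $\lambda$ over a fiber by using the defining property $d\lambda = p^*\omega$ together with the Gysin/Euler class relation for the circle bundle $p : S\Sigma \to \Sigma$. First I would recall that since $\lambda$ is a primitive of $p^*\omega$, integrating $\lambda$ over the fiber $\sigma$ measures, up to sign and normalization, the "defect" of $\lambda$ from being a pullback; more precisely, the $1$-form $\lambda$ restricted to a fiber is closed (because $p^*\omega$ vanishes on fibers), so $\int_\sigma \lambda$ depends only on the homology class $[\sigma] \in H_1(S\Sigma;\Z)$, and in fact defines a constant $c$ independent of which fiber we pick.

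The key computation is to pin down this constant. I would take a small embedded disk $D \subset \Sigma$ and consider $p^{-1}(D)$, which is a trivial $S^1$-bundle $D \times S^1$; over $D \setminus \{pt\}$ (or after removing a point) one can compare the restriction of $\lambda$ with an explicit trivializing primitive. The cleanest approach is a Stokes argument on the whole of $S\Sigma$: pick a $2$-cycle, or better, use that the Euler number of $S\Sigma \to \Sigma$ is $\chi(\Sigma)$, so that $\chi(\Sigma)[\sigma] = p_*^{-1}$ of... more concretely, choose a section $s_0$ of $S\Sigma$ over $\Sigma \setminus B$ for a small disk $B$; then $S\Sigma = (\Sigma\setminus B)\times S^1 \cup (\text{solid torus over } B)$ glued with clutching degree $\chi(\Sigma)$. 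Applying Stokes to $\int_{S\Sigma} (p^*\omega)\wedge(\text{something})$ is awkward since $p^*\omega$ is degree $2$ on a $3$-manifold; instead I would integrate $d\lambda = p^*\omega$ over the image of a section. Concretely: let $s_0 : \Sigma \setminus B \to S\Sigma$ be a section, $\Sigma_0 = \Sigma \setminus B$. Then $\int_{s_0(\Sigma_0)} d\lambda = \int_{s_0(\Sigma_0)} p^*\omega = \int_{\Sigma_0}\omega = \area(\Sigma) - \area(B)$. On the other hand, by Stokes $\int_{s_0(\Sigma_0)} d\lambda = \int_{\partial s_0(\Sigma_0)} \lambda = \int_{s_0(\partial B)}\lambda$. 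Now $\partial B$ is a small loop in $\Sigma$, and $s_0(\partial B)$ is homologous in $S\Sigma$ to (a loop projecting to $\partial B$) $= \partial B$ pushed into a fiber-trivial region, minus $\chi(\Sigma)$ times a fiber $\sigma$ — this is exactly the statement that the section cannot be extended over $B$ and has "winding defect" equal to the Euler number $\chi(\Sigma)$. Since $\partial B$ bounds the disk $B$ in $\Sigma$ (over which the bundle is trivial, so $\partial B$ lifts to a nullhomologous loop up to fibers), we get $\int_{s_0(\partial B)}\lambda = -\chi(\Sigma)\int_\sigma \lambda + \int_B\omega = -\chi(\Sigma)\int_\sigma\lambda + \area(B)$. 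Combining, $\area(\Sigma) - \area(B) = -\chi(\Sigma)\int_\sigma\lambda + \area(B)$... so I must be careful with the disk contributions; taking $\area(B)\to 0$ (shrinking $B$) gives $\area(\Sigma) = -\chi(\Sigma)\int_\sigma\lambda$, i.e. $\int_\sigma\lambda = -\area(\Sigma)/\chi(\Sigma)$, as claimed.

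The main obstacle will be making the homological identity "the boundary of a section over $\Sigma \setminus B$ equals a small loop minus $\chi(\Sigma)$ fibers" completely precise with correct signs and orientation conventions — this is exactly the content of the Euler number being $\chi(\Sigma)$, but matching it against the orientation conventions for $S\Sigma$, for the fiber $\sigma$, and for $\lambda$ requires care. I would handle this by working in the explicit local trivialization over $B$ and over $\Sigma \setminus B$, writing the clutching function of degree $\chi(\Sigma)$ explicitly, and tracking the fiber coordinate; alternatively one can invoke the Gysin sequence \eqref{cd:homology_unit_bundle} directly, which already encodes that $\chi(\Sigma)[\sigma] = 0$ in $H_1(S\Sigma;\Z)$ is \emph{not} the relevant statement — rather that $[\sigma]$ has order $\chi(\Sigma)$ — and feed this into a Stokes computation as above. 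Once the orientations are fixed, the rest is the short Stokes argument with a shrinking disk.
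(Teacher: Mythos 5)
Your plan is essentially the paper's proof: both arguments take a section of $S\Sigma$ over the complement of a small disk, apply Stokes to $d\lambda = p^*\omega$ over the image of the section, and identify the boundary loop, inside the solid torus over the disk, with the fiber traversed $-\chi(\Sigma)$ times --- the winding defect being the Euler number. The paper pins down this winding (and hence the overall sign) in exactly the way you say you would like to, but more cleanly than via a clutching function: it takes the section to be $\tilde v = v/|v|$ for a vector field $v$ with a single zero, so Poincar\'e--Hopf gives the degree $\chi(\Sigma)$ directly, and the radial contraction of $\tilde v|_{\partial D}$ to $\sigma^{-\chi(\Sigma)}$ inside $p^{-1}(D)$ contributes exactly $\area(D)$ of $p^*\omega$-area. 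One bookkeeping point: with consistent boundary orientations the two disk contributions combine rather than conflict --- Stokes gives $\int_{\tilde v|_{\partial D}}\lambda = \area(\Sigma\setminus D)$ and the contraction gives $-\chi(\Sigma)\int_\sigma\lambda = \int_{\tilde v|_{\partial D}}\lambda + \area(D) = \area(\Sigma)$ --- so the identity is exact for every disk and no shrinking limit is needed. The imbalance you ran into ($\area(\Sigma)-\area(B)$ on one side versus $+\area(B)$ on the other) is a sign slip in one of your two disk terms, not a defect of the method, and your fix of letting $\area(B)\to 0$ is legitimate since both correction terms are bounded by $\area(B)$; it just cannot by itself certify the sign of the main term $-\chi(\Sigma)\int_\sigma\lambda$, which is where the vector-field/Poincar\'e--Hopf device earns its keep. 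Finally, your justification that $\int_\sigma\lambda$ is the same for all fibers (``depends only on the homology class'') is not valid as stated, since $\lambda$ is not closed; the correct reason is that two fibers cobound a cylinder of fibers whose $p^*\omega$-area vanishes.
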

\begin{proof}
Suppose that $\sigma$ is the fiber over $x$.
Let $v$ be a vector field on $\Sigma$ that has a unique zero at $x$.
By the Poincaré-Hopf Theorem, the index of this zero is $\chi(\Sigma)$.

Let $D$ be a smoothly embedded closed disk containing $x$.
Consider the section $\wtilde{v}: \Sigma \setminus \Int D \to S\Sigma$ given by $\wtilde{v} = v/|v|$.
By choosing a trivialization of $S\Sigma$ over $D$, we can find a homotopy in $p^{-1}(D)$ from the loop $\wtilde{v}|_{\bdry D}$ 
to the loop $\sigma$ iterated $-\chi(\Sigma)$ times.
Moreover, the area of this homotopy with respect to $p^*\omega$ is $\area(D)$.
Hence, by the Stokes Theorem, we have
\[
-\chi(\Sigma) \, \int_{\sigma} \lambda = \int_{\wtilde{v}|_{\bdry D}} \lambda + \area(D) = \area(\Sigma \setminus D) + \area(D) = \area(\Sigma).
\]
\end{proof}

\begin{lemma}
\label{lemma:holonomy+winding_bounding_curves}
Suppose that the curves $\gamma_k$, $k= 1, \ldots, r$, form the oriented boundary of a compact surface $P \subset \Sigma$. 
Then,
\begin{align*}
&\sum_{k=1}^{r} \hol(\gamma_k) = \area(P) - \frac{\chi(P)}{\chi(\Sigma)} \, \area(\Sigma), \\
&\sum_{k=1}^{r} \mu(\gamma_k) = \chi(P) \mod \chi(\Sigma).
\end{align*}
\end{lemma}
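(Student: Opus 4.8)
The statement should follow by combining the holonomy result for fibers (Lemma \ref{lemma:holonomy_fiber}) and the homological/differential-geometric bookkeeping of the Gauss map of a boundary of a surface. The key point is that if $\gamma_1, \ldots, \gamma_r$ bound a compact surface $P \subset \Sigma$, then the Gauss maps $\wtilde\gamma_k$ do not bound a surface in $S\Sigma$, but rather a surface together with some number of fibers, and that number is governed by $\chi(P)$ (by a relative Poincaré–Hopf count). I would make this precise as follows.

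\smallskip

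First, extend the tangent framing along $\bdry P$ to a vector field $v$ on $P$ with isolated zeros; by the Poincaré–Hopf theorem for manifolds with boundary (using that $v$ is outward- or inward-pointing-free and agrees with the given boundary data, i.e. is tangent to the $\gamma_k$), the sum of the indices of $v$ equals $\chi(P)$. Normalizing, $\wtilde v = v/|v|$ gives a section of $S\Sigma$ over $P$ minus small disks $D_i$ around the zeros. The boundary of this section, as a $1$-cycle in $S\Sigma$, is $\sum_k [\wtilde\gamma_k] - \sum_i (\ind_i)\,[\sigma_i]$, where $\sigma_i$ is a positively oriented fiber; hence $\sum_k [\wtilde\gamma_k]$ is homologous in $S\Sigma$ to $\chi(P)$ times the class of a fiber. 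Pushing this relation through the winding number morphism $\mu$ and using that $\mu$ sends the fiber class to $1 \in \Z_{\chi(\Sigma)}$ (by the definition of the splitting in \eqref{cd:homology_unit_bundle}) gives the second identity $\sum_k \mu(\gamma_k) = \chi(P) \bmod \chi(\Sigma)$.

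\smallskip

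For the holonomy, I would integrate $\lambda$ over the image of the section $\wtilde v$ restricted to $P \setminus \bigcup_i D_i$, and apply Stokes' theorem. Since $d\lambda = p^*\omega$ pulls back under the section to $\omega|_{P \setminus \bigcup D_i}$, the left side contributes $\area(P) - \sum_i \area(D_i)$. The boundary term splits as $\sum_k \hol(\gamma_k)$ minus the contributions of the loops $\wtilde v|_{\bdry D_i}$. Each such loop is homotopic in $p^{-1}(D_i)$ to $\ind_i$ iterates of the fiber $\sigma_i$, with the homotopy having $p^*\omega$-area $\area(D_i)$ as in the proof of Lemma \ref{lemma:holonomy_fiber}; so $\int_{\wtilde v|_{\bdry D_i}} \lambda = \ind_i \int_{\sigma_i}\lambda + \area(D_i) = -\ind_i\,\area(\Sigma)/\chi(\Sigma) + \area(D_i)$. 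Summing over $i$, the $\area(D_i)$ terms cancel against those from the Stokes computation, and $\sum_i \ind_i = \chi(P)$, yielding $\sum_k \hol(\gamma_k) = \area(P) - \frac{\chi(P)}{\chi(\Sigma)}\area(\Sigma)$.

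\smallskip

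The main obstacle is getting the boundary-Poincaré–Hopf count and the orientation conventions exactly right: one must check that extending the boundary framing (the tangent directions of the oriented $\gamma_k$) to a vector field on $P$ produces total index precisely $\chi(P)$ and not $\chi(P)$ shifted by a boundary correction, and that the induced orientations on $\wtilde\gamma_k$ match the Gauss-map orientations used to define $[\wtilde\gamma_k]$ and $\mu(\gamma_k)$. This is the same subtlety already present in Lemma \ref{lemma:holonomy_fiber}, so I would reuse that lemma's normalization and cite it directly for the fiber contributions rather than re-deriving the homotopy; the rest is Stokes plus linearity of $\mu$.
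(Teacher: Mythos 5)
Your proposal is correct and follows essentially the same route as the paper: extend the boundary tangent framing to a vector field on $P$ (the paper takes one with a single zero, you allow several, which changes nothing), apply Poincar\'e--Hopf to get total index $\chi(P)$, view the normalized field as a section of $S\Sigma$ over the complement of small disks, and combine Stokes with Lemma \ref{lemma:holonomy_fiber} for the holonomy identity and the resulting homology relation $\sum_k[\wtilde\gamma_k]=\chi(P)[\sigma]$ with $\mu(\sigma)=1$ for the winding identity.
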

\begin{proof}
Let $v$ be a vector field on $P$ that agrees with the derivatives $\gamma_k'$ over $\bdry P$ and has a unique zero at a point $x \in \Int P$.
By the Poincaré-Hopf Theorem, the index of $v$ at $x$ is $\chi(P)$.

Let $\sigma$ be the fiber of $S\Sigma$ over $x$, with the positive orientation.
Arguing as in the proof of Lemma \ref{lemma:holonomy_fiber}, 
the chain $\sum_k \wtilde{\gamma}_k - \chi(P)\sigma$ 
is the boundary of a surface whose area with respect to $p^*\omega$ equals $\area(P)$.
As a consequence, using Lemma \ref{lemma:holonomy_fiber} we obtain
\[
\sum_{k=1}^r \hol(\gamma_k) = \area(P) + \chi(P) \, \int_{\sigma} \lambda = \area(P) - \frac{\chi(P)}{\chi(\Sigma)} \area(\Sigma).
\]
Likewise, since $\mu(\sigma) = 1$ by definition, we have
\[
\sum_{k=1}^r \mu(\gamma_k)= \chi(P) \, \mu(\sigma) = \chi(P).
\]
\end{proof}

As a consequence, we obtain the following

\begin{corollary}
\label{cor:homology_map_surjective}
The morphism $\Gunob(\Sigma) \to H_1(S\Sigma; \Z)$ is surjective.
\end{corollary}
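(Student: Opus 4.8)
The morphism $\Gunob(\Sigma) \to H_1(S\Sigma; \Z)$ is surjective.

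**Proof proposal.** The plan is to exhibit enough curves in the image to generate $H_1(S\Sigma;\Z)$, using the description of $H_1(S\Sigma;\Z)$ coming from the Gysin sequence \eqref{cd:homology_unit_bundle}. Concretely, a splitting of \eqref{cd:homology_unit_bundle} identifies $H_1(S\Sigma;\Z)$ with $\Z_{\chi(\Sigma)} \oplus H_1(\Sigma;\Z)$, where the first summand is generated by the class $[\sigma]$ of a positively oriented fiber. So it suffices to show that $[\sigma]$ lies in the image, and that the composite $\Gunob(\Sigma) \to H_1(S\Sigma;\Z) \xrightarrow{p_*} H_1(\Sigma;\Z)$ is surjective.

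For the $H_1(\Sigma;\Z)$ part: every primitive homology class on $\Sigma$ is represented by a simple closed curve $\gamma$, which (being non-separating, hence non-contractible) is a generator of $\Gunob(\Sigma)$; under $p_* \circ (\Gamma \to H_1(S\Sigma))$ it maps to $[\gamma] \in H_1(\Sigma;\Z)$ since $p \circ \wtilde{\gamma}$ is freely homotopic to $\gamma$. As primitive classes generate $H_1(\Sigma;\Z)$, the composite $p_* \circ (-)$ is surjective. For the fiber class: apply Lemma \ref{lemma:holonomy+winding_bounding_curves} to a configuration of curves bounding a compact subsurface $P \subset \Sigma$ with $\chi(P) \equiv 1 \pmod{\chi(\Sigma)}$. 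For instance, take a genus-one subsurface with one boundary component, or a disk with $|\chi(\Sigma)|$ handles removed — any oriented subsurface with Euler characteristic congruent to $1$ modulo $\chi(\Sigma)$ whose boundary components $\gamma_1, \dots, \gamma_r$ are non-contractible simple curves (which we can always arrange, e.g. by taking a subsurface filling a suitable handle). Then $\sum_k \mu(\gamma_k) \equiv \chi(P) \equiv 1 \pmod{\chi(\Sigma)}$, so $\sum_k [\wtilde{\gamma}_k]$ projects to $\sum_k [\gamma_k] = 0$ in $H_1(\Sigma;\Z)$ and has winding number $1$; i.e. $\sum_k [\wtilde{\gamma}_k] = [\sigma]$ up to an element of $p_*^{-1}$-kernel already handled. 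Hence $[\sigma]$ is in the image.

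The one genuine point requiring an argument — and the step I expect to be the main obstacle — is that these classes actually lie in the image of $\Gunob(\Sigma)$, not merely of $\Gimm(\Sigma)$: for the boundary configuration $\gamma_1 + \dots + \gamma_r$ to define the zero element of $\Gunob(\Sigma)$ (so that its image under the map to $H_1(S\Sigma;\Z)$ is well-controlled), one needs a \emph{quasi-exact} cobordism $(\gamma_1, \dots, \gamma_r) \cob \emptyset$. A natural candidate is the cobordism obtained by capping off with the subsurface $P$ viewed as a Lagrangian — but since $\dim \Sigma = 2$, a subsurface is not Lagrangian in $\Sigma$. Instead one should use the suspension/trace construction: realize the relation by a concatenation of Lagrangian suspensions of isotopies and topologically unobstructed surgery cobordisms, and invoke Proposition \ref{prop:concatenation_unobstructed} together with the topological unobstructedness criteria of Section \ref{section:obstruction_dim2} to conclude quasi-exactness. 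In fact, since the surjectivity claim only concerns the \emph{image} of $\Theta$-type maps, it may be cleanest to observe that the relevant relations in $\Gunob(\Sigma)$ are established later in Sections \ref{section:isotopies}--\ref{section:computation} (so this corollary can legitimately cite, e.g., the isotopy relations and surgery relations proved there), and here simply record surjectivity as a formal consequence: the generators $[\gamma]$ for simple closed curves already surject onto $H_1(\Sigma;\Z)$ via $p_*$, and one further relation among bounding curves — valid in $\Gunob(\Sigma)$ by the constructions to come — produces the fiber class. I would phrase the proof to make that logical dependency explicit rather than attempt a self-contained construction here.
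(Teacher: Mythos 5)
Your core argument is the paper's: the corollary follows from the Gysin sequence \eqref{cd:homology_unit_bundle} together with Lemma \ref{lemma:holonomy+winding_bounding_curves} — non-separating simple curves give lifts whose projections generate $H_1(\Sigma;\Z)$, and a bounding configuration supplies a class in $\ker p_*$ with winding number $\pm 1$, hence a generator of the fiber summand $\Z_{\chi(\Sigma)}$. Up to the harmless sign slip (a one-holed torus has $\chi(P)=-1$, not $\chi(P)\equiv 1$; either sign generates $\Z_{\chi(\Sigma)}$), this is exactly what the paper intends when it states the corollary as an immediate consequence of Lemma \ref{lemma:holonomy+winding_bounding_curves}.

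The final paragraph, however, contains a genuine misconception. Surjectivity requires no cobordism relation at all: $\Gunob(\Sigma)$ is a quotient of the free abelian group on non-contractible simple curves, the morphism is defined on generators by $\gamma \mapsto [\wtilde{\gamma}]$, and so its image is simply the subgroup of $H_1(S\Sigma;\Z)$ generated by these classes. The formal sum $[\gamma_1]+\cdots+[\gamma_r]$ of the boundary curves of $P$ is already an element of $\Gunob(\Sigma)$ (each $\gamma_k$ is a non-contractible simple curve, hence a generator), and Lemma \ref{lemma:holonomy+winding_bounding_curves} computes its image purely topologically — no quasi-exact cobordism, no appeal to Proposition \ref{prop:concatenation_unobstructed}, and no forward reference to Sections \ref{section:isotopies}--\ref{section:computation} is needed. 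Worse, the step you propose would be self-defeating: if $\gamma_1+\cdots+\gamma_r$ were zero in $\Gunob(\Sigma)$ (i.e.\ bounded a quasi-exact cobordism $(\gamma_1,\ldots,\gamma_r)\cob\emptyset$), then by well-definedness of the morphism (Proposition \ref{prop:invariance_homology_class}) its image in $H_1(S\Sigma;\Z)$ would vanish, whereas the whole point is that this image has winding number $\chi(P)\neq 0$ and realizes the fiber class. So the "main obstacle" you identify does not exist, and attempting to resolve it as stated would contradict the computation you rely on; the proof should simply stop after the generator computation.
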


\section{Relations between isotopic curves}
\label{section:isotopies}

In this section, we describe how isotopic curves are related in $\Gunob(\Sigma)$.
We let $\mathcal{I}$ be the subgroup of $\Gunob(\Sigma)$
generated by all the elements of the form $[\wtilde{\alpha}] - [\alpha]$, where $\wtilde{\alpha}$ and $\alpha$ are isotopic curves.
The main result of this section is the following computation of $\mathcal{I}$.

\begin{proposition}
\label{prop:holonomy_iso}
The holonomy morphism restricts to an isomorphism $\mathcal{I} \iso \R$.
\end{proposition}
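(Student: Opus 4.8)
The plan is to show that $\hol: \mathcal{I} \to \R$ is both surjective and injective. For surjectivity, I would produce a family of isotopic pairs $(\wtilde\alpha, \alpha)$ whose holonomy differences sweep out all of $\R$. By Lemma \ref{lemma:holonomy_primitive_of_flux}, if $\phi$ is a regular homotopy from $\alpha$ to $\wtilde\alpha$ through embedded curves (i.e. an isotopy), then $\hol(\wtilde\alpha) - \hol(\alpha) = \flux(\phi)\cdot[S^1]$. So it suffices to exhibit, starting from any fixed non-contractible simple curve $\alpha_0$, an ambient isotopy carrying $\alpha_0$ to a curve $\wtilde\alpha_0$ with prescribed nonzero flux; pushing $\alpha_0$ across an embedded annulus of area $r$ (for instance a tubular neighborhood swept by a Hamiltonian supported near $\alpha_0$) realizes flux $r$, and scaling $r$ over $\R_{>0}$, together with the inverse isotopy for $r<0$, shows the image of $\hol|_{\mathcal{I}}$ contains a neighborhood of $0$, hence all of $\R$ since it is a subgroup. (One must check the curves stay non-contractible, which is automatic for a small push-off, and that the relation $[\wtilde\alpha_0] - [\alpha_0]$ genuinely lies in $\mathcal{I}$, which holds by definition.)

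For injectivity, I need to show that any element of $\ker(\hol|_{\mathcal{I}})$ is zero in $\Gunob(\Sigma)$. The key input is Proposition \ref{prop:isotopy_relation} (proved in Appendix \ref{appendix:isotopy}, a variant of a result of \cite{Perrier19}), which should say that if $\alpha$ and $\wtilde\alpha$ are isotopic then the difference $[\wtilde\alpha] - [\alpha]$ in $\Gunob(\Sigma)$ is detected entirely by a single real parameter — concretely, that $[\wtilde\alpha] - [\alpha]$ depends only on $\flux$ of the isotopy, equivalently only on $\hol(\wtilde\alpha) - \hol(\alpha)$. Granting this, the generators of $\mathcal{I}$ all lie on a single "line": fixing one generator $g_0 = [\wtilde\alpha_0] - [\alpha_0]$ with $\hol(g_0) = r_0 \neq 0$, every generator $[\wtilde\beta] - [\beta]$ equals $\tfrac{\hol(\wtilde\beta)-\hol(\beta)}{r_0}\, g_0$ in a suitable sense — more carefully, the relations provided by Proposition \ref{prop:isotopy_relation} force $\mathcal{I}$ to be generated, as an abelian group, by elements that $\hol$ maps injectively into $\R$. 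Then $\hol|_{\mathcal{I}}$ is injective and, combined with surjectivity, an isomorphism onto $\R$.

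Concretely the argument I would write is: (1) use Proposition \ref{prop:isotopy_relation} to show that for isotopic $\alpha,\wtilde\alpha$, the class $[\wtilde\alpha]-[\alpha]$ is determined by the flux of the isotopy and is additive under concatenation of isotopies (so that the assignment flux $\mapsto [\wtilde\alpha]-[\alpha]$ respects the group structure); (2) deduce that there is a well-defined homomorphism $\R \to \mathcal{I}$ sending $r$ to the class of an isotopy of flux $r$ applied to a fixed curve, and that this is a two-sided inverse to $\hol|_{\mathcal{I}}$ after checking it hits every generator $[\wtilde\alpha]-[\alpha]$ (again by Proposition \ref{prop:isotopy_relation}, since any isotopy between isotopic curves can be compared to one of the standard push-off isotopies of the same flux).

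The main obstacle is step (1) / injectivity: surjectivity is essentially formal once one has the flux formula of Lemma \ref{lemma:holonomy_primitive_of_flux}, but showing that the isotopy relations in $\Gunob(\Sigma)$ are "no finer than flux" requires the full strength of Proposition \ref{prop:isotopy_relation}, i.e. one must know that two isotopies of the same flux between the same endpoints give the same relation in the cobordism group. This is where the surgery constructions and the unobstructedness criteria of Sections \ref{section:top_unob_cobordisms}--\ref{section:obstruction_dim2} enter, since the relation is realized by explicit (topologically unobstructed, or concatenations thereof) suspension and surgery cobordisms whose existence and unobstructedness must be verified. I expect the proof to cite Proposition \ref{prop:isotopy_relation} for precisely this content and then assemble the isomorphism formally.
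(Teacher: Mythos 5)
Your proposal is correct and follows essentially the same route as the paper: surjectivity comes from explicit isotopies realizing prescribed holonomy differences (the paper's Lemma \ref{lemma:isotopy_arbitrary_holonomy}, which wraps a non-separating curve around an annulus to hit any $x\in\R$ directly), and injectivity rests entirely on Proposition \ref{prop:isotopy_relation}, which the paper uses to telescope an arbitrary sum $\sum_k([\wtilde{\beta}_k]-[\beta_k])$ into a single difference $[\alpha_N]-[\alpha_0]$ — the same content as your inverse homomorphism $\R\to\mathcal{I}$. One small slip: a Hamiltonian push-off has zero flux, so the annulus push-off realizing flux $r$ must be a non-Hamiltonian symplectic isotopy (as in Lemma \ref{lemma:pushoffwithgivenholonomy}); with that correction your surjectivity argument goes through.
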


As an immediate consequence, we deduce that
\[
\Gunob(\Sigma) \iso \R \oplus \Gunob(\Sigma)/\mathcal{I}.
\]
Therefore, the computation of $\Gunob(\Sigma)$ reduces to the computation of the quotient
$\Gunob(\Sigma)/ \mathcal{I}$.

\begin{definition}
\label{def:reduced_group}
The quotient 
\[
\Gred(\Sigma) = \Gunob(\Sigma) / \mathcal{I}
\]
is called the \Def{reduced unobstructed cobordism group} of $\Sigma$.
\end{definition}

We emphasize that, by definition, isotopic curves become equal in $\Gred(\Sigma)$.
This will play a crucial role in the proofs of Section \ref{section:action_mcg} and Section \ref{section:computation} by allowing us
to put curves in minimal position by isotopies.

\subsection{Holonomy and Hamiltonian isotopies}
\label{subsection:ham_isotopies}

As a first step
towards the proof of Proposition \ref{prop:holonomy_iso},
we show that holonomy completely determines whether two isotopic curves are Hamiltonian isotopic.

\begin{lemma}
\label{lemma:ham_isotopic_embedded_case}
Let $\alpha$ and $\beta$ be isotopic simple curves with $\hol(\alpha) = \hol(\beta)$. 
Then $\alpha$ and $\beta$ are Hamiltonian isotopic.
\end{lemma}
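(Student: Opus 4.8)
The plan is to reduce the statement to a vanishing-flux computation, using Lemma~\ref{lemma:holonomy_primitive_of_flux}, and then to quote the standard fact that a symplectic isotopy with trivial flux is homotopic rel endpoints to a Hamiltonian one. Since $\alpha$ and $\beta$ are smoothly isotopic simple closed curves, fix an isotopy $\{\gamma_t\}_{t\in[0,1]}$ through embedded curves with $\gamma_0=\alpha$ and $\gamma_1=\beta$; as any embedded curve in a surface is Lagrangian, this is a Lagrangian isotopy. First I would extend it to an ambient symplectic isotopy $\{\Psi^t\}_{t\in[0,1]}$ of $\Sigma$ with $\Psi^0=\id$ and $\Psi^t(\alpha)=\gamma_t$; this is standard (Weinstein's neighbourhood theorem together with Moser's argument, which is especially transparent for curves on a surface).

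Next I would compute the flux $\flux(\Psi)\in H^1(\Sigma;\R)$. Because $\Psi^t$ is isotopic to the identity it acts trivially on $H^1(\Sigma;\R)$, and restricting closed $1$-forms along $\alpha$ one sees that the image of $\flux(\Psi)$ under the restriction $H^1(\Sigma;\R)\to H^1(\alpha;\R)\cong\R$ is exactly the Lagrangian flux of the path $\{\gamma_t\}$. By Lemma~\ref{lemma:holonomy_primitive_of_flux} this Lagrangian flux equals $\hol(\beta)-\hol(\alpha)$, which is $0$ by hypothesis; hence $\flux(\Psi)|_\alpha=0$.

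The final step is to correct $\Psi$ to a symplectic isotopy of vanishing flux that still sends $\alpha$ to $\beta$. Put $c=-\flux(\Psi)$; since $c|_\alpha=0$, choose a closed $1$-form $\kappa$ with $[\kappa]=c$ and $\kappa|_\alpha=0$ (subtract from an arbitrary representative an exact form extending a primitive of its restriction to $\alpha$). Let $Z$ be the symplectic vector field with $\iota_Z\omega=\kappa$; as $\iota_Z\omega$ vanishes on $T\alpha$ and $\alpha$ is Lagrangian, $Z$ is tangent to $\alpha$, so its (time-independent) flow $\{\Theta^t\}$ fixes $\alpha$ setwise. Then $\wtilde{\Psi}^t:=\Psi^t\circ\Theta^t$ has $\wtilde{\Psi}^0=\id$, $\wtilde{\Psi}^1(\alpha)=\Psi^1(\alpha)=\beta$, and $\flux(\wtilde{\Psi})=\flux(\Psi)+c=0$ (the composition formula for flux again uses the trivial action on $H^1$). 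By the standard characterisation of Hamiltonian isotopies among symplectic isotopies via vanishing flux (Banyaga; see also McDuff--Salamon's treatment of the flux homomorphism), $\wtilde{\Psi}$ is homotopic rel endpoints to a Hamiltonian isotopy, so $\wtilde{\Psi}^1\in\Ham(\Sigma)$. Since $\wtilde{\Psi}^1(\alpha)=\beta$, this exhibits $\alpha$ and $\beta$ as Hamiltonian isotopic.

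The part that needs the most care is this correction step: one must keep track of the difference between the global flux in $H^1(\Sigma;\R)$ and the Lagrangian flux in $H^1(S^1;\R)$, and the argument hinges on the observation that a symplectic vector field whose contraction with $\omega$ restricts to zero on a Lagrangian is tangent to that Lagrangian — this is precisely what guarantees that the correction $\Theta$ does not disturb $\alpha$. (An alternative, less uniform route when $\alpha$ is separating is a direct Moser argument comparing the areas of the two complementary regions, which are equal by Lemma~\ref{lemma:holonomy+winding_bounding_curves}; but the flux argument treats both cases at once.) Everything else is an assembly of standard facts.
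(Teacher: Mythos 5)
Your argument for non-separating curves is essentially sound and essentially the paper's route (extend to an ambient symplectic isotopy, kill the flux, then quote the flux criterion for Hamiltonian isotopies); your explicit correction by the flow of the symplectic vector field $Z$ with $\iota_Z\omega=\kappa$, $\kappa|_{T\alpha}=0$, is a nice self-contained substitute for citing a zero-flux extension lemma, and the tangency observation $Z(p)\in (T_p\alpha)^{\omega}=T_p\alpha$ is correct. However, there is a genuine gap in your very first step, and it is exactly where the statement has content: the claim that any smooth isotopy $\{\gamma_t\}$ of embedded curves extends to an \emph{ambient symplectic} isotopy $\Psi^t$ with $\Psi^t(\alpha)=\gamma_t$ is not ``standard Weinstein plus Moser,'' and it is false in general when $\alpha$ is separating. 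An area-preserving ambient isotopy forces the area enclosed by $\Psi^t(\alpha)$ to be constant in $t$, whereas the intermediate curves $\gamma_t$ of an arbitrary isotopy need not bound constant area (only the endpoints are controlled, via $\hol$ and Lemma \ref{lemma:holonomy+winding_bounding_curves}). Equivalently, in a Weinstein neighbourhood the isotopy is generated by closed $1$-forms on $S^1$, and extending these to closed $1$-forms on $\Sigma$ requires surjectivity of $H^1(\Sigma;\R)\to H^1(S^1;\R)$; this holds precisely when $\alpha$ is non-separating and fails (the map is zero) when $\alpha$ is separating. So your parenthetical remark that the flux argument ``treats both cases at once'' is wrong: for separating curves the flux of $\Psi$ restricted to $\alpha$ is automatically zero and carries no information, and the real work is producing \emph{some} symplectomorphism isotopic to the identity carrying $\alpha$ to $\beta$, which is where the equality of enclosed areas must be used.

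This is also why the paper splits the proof: in the non-separating case it uses the vanishing of the Lagrangian flux (Lemma \ref{lemma:holonomy_primitive_of_flux}) together with the surjectivity of $H^1(\Sigma;\R)\to H^1(S^1;\R)$ to extend the isotopy to an ambient symplectic isotopy with zero flux, and then the flux criterion; in the separating case it runs a Moser-type argument (following Akveld--Salamon) comparing the areas of the regions bounded by $\alpha$ and $\beta$, which agree by Lemma \ref{lemma:holonomy+winding_bounding_curves}. To repair your proposal you must either restrict the flux argument to non-separating curves and state the cohomological surjectivity you are using, or insert a genuine Moser-type correction making the enclosed area constant along the isotopy before attempting any symplectic extension in the separating case; as written, the separating case is unproved.
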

\begin{proof}
Suppose first that the curves are non-separating. 
By Lemma \ref{lemma:holonomy_primitive_of_flux}, 
the holonomy condition implies that any
isotopy from $\alpha$ to $\beta$ has zero flux.
Moreover, since the curves are non-separating, they induce surjective maps $H^1(\Sigma; \R) \to H^1(S^1; \R)$.
By a standard argument, this condition implies that an isotopy from $\alpha$ to $\beta$
extends to an ambient symplectic isotopy with zero flux; see Lemma 6.6 of \cite{Solomon} for a detailed proof.
The result now follows from the well-known fact that a symplectic isotopy with zero flux
is homotopic relative to its endpoints to a Hamiltonian isotopy; see Theorem 10.2.5 of \cite{McDuffSalamon17}.

Suppose now that the curves are separating.
Then, by Lemma \ref{lemma:holonomy+winding_bounding_curves},
the surfaces bounded by the curves have the same area.
The conclusion then follows from a standard Moser isotopy argument.
In the case of contractible curves, a detailed proof is given in \cite[Appendix A]{AkveldSalamon}.
The same proof applies to the case of separating curves; for completeness, we sketch the argument from \cite{AkveldSalamon}.

Choose a diffeomorphism $\phi$ isotopic to the identity such that $\phi(\alpha) = \beta$.
Let $S$ and $S'$ be the surfaces bounded by $\alpha$ and $\beta$, respectively.
Then $\phi(S) = S'$, hence
\[
\int_S \phi^* \omega = \int_{S'} \omega = \int_{S} \omega
\]
since $S$ and $S'$ have the same area.
By a Moser isotopy argument (see \cite[Lemma A.3]{AkveldSalamon}), 
we can find an isotopy $\psi_t$ such that $\psi_1^*( \phi^* \omega ) = \omega$
and $\psi_t(S) = S$.
Then the composition $\theta = \phi \circ \psi_1$
is symplectic, isotopic to the identity and satisfies $\theta(S) = S'$.
It follows again from Moser isotopy that the inclusion $\Symp(\Sigma) \hookrightarrow \Diff^+(\Sigma)$
is an isomorphism on $\pi_0$, hence there is a path $(\theta_t)$ in $\Symp(\Sigma)$
with $\theta_0 = \id$ and $\theta_1 = \theta$.
The restriction of $(\theta_t)$ to $\alpha$ is an exact Lagrangian path,
hence $\alpha$ and $\beta$ are Hamiltonian isotopic.

\end{proof}

For future use, we also record a version of Lemma \ref{lemma:ham_isotopic_embedded_case} that applies to topologically unobstructed curves.

\begin{lemma}
\label{lemma:ham_isotopic_unob_case}
Let $\alpha$ and $\beta$ be homotopic topologically unobstructed curves with $\hol(\alpha) = \hol(\beta)$.
Then there is an exact homotopy $(\gamma_t)_{t \in [0,1]}$ such that $\gamma_0 = \alpha$, $\gamma_1 = \beta$
and each curve $\gamma_t$ is topologically unobstructed.
\end{lemma}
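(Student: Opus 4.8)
The plan is to reduce Lemma \ref{lemma:ham_isotopic_unob_case} to the already-established embedded case, Lemma \ref{lemma:ham_isotopic_embedded_case}, by first perturbing $\alpha$ and $\beta$ to nearby simple curves and then interpolating. Concretely, I would first observe that since $\alpha$ is topologically unobstructed, Lemma \ref{lemma:criterion_top_unob} says its lift to the universal cover $\wtilde{\Sigma}$ is a proper embedding. Because a non-contractible curve in a surface of genus $g\ge 2$ has a power that is not a proper power and a primitive free homotopy class, and because $\pi_1(\Sigma)$ is torsion-free, every non-contractible unobstructed immersion is freely homotopic to a \emph{simple} closed curve (this is the standard fact that a primitive, non-peripheral class on a surface has a simple representative). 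So I would choose simple curves $\alpha_0$ in the free homotopy class of $\alpha$ and $\beta_0$ in that of $\beta$; since $\alpha$ and $\beta$ are homotopic, $\alpha_0$ and $\beta_0$ are isotopic simple curves.

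The second step is to connect $\alpha$ to $\alpha_0$ (and $\beta$ to $\beta_0$) through an exact homotopy of topologically unobstructed curves with a prescribed change in holonomy. The key tool is Lemma \ref{lemma:holonomy_primitive_of_flux}: the holonomy of a curve changes under a regular homotopy by the flux of that homotopy. Starting from \emph{any} smooth homotopy from $\alpha$ to $\alpha_0$, I would argue it can be taken to be a regular homotopy of immersions (homotopic immersions of $S^1$ in a surface are regularly homotopic once they have the same Gauss map degree, and one can absorb the rotation number discrepancy by a local kink — but here $\alpha$ and $\alpha_0$ are both unobstructed in the same free homotopy class, so by Corollary \ref{cor:top_unob_open_condition} a generic small perturbation keeps unobstructedness; more carefully, I would first perturb within the $C^1$-neighborhood given by Corollary \ref{cor:top_unob_open_condition} to arrange transversality, then homotope in stages staying unobstructed). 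Along the way, by inserting a thin finger move localized in a disk where the curve is embedded — a move that changes the flux by any prescribed real number while preserving incompressibility and not creating teardrops (teardrop-freeness is stable under $C^1$-small perturbations by Lemma \ref{lemma:invariance_teardrops}, and a thin finger supported in an embedded disk can be made to avoid creating contractible branch loops) — I can arrange that the total holonomy picked up along the homotopy $\alpha \rightsquigarrow \alpha_0$ is whatever I want. I do the same for $\beta \rightsquigarrow \beta_0$.

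The third step assembles these: choose the finger moves so that $\hol(\alpha_0) = \hol(\beta_0)$ (possible since we have full freedom in adjusting the holonomy change along each leg, and $\hol(\alpha)=\hol(\beta)$ by hypothesis). Then $\alpha_0$ and $\beta_0$ are isotopic simple curves with equal holonomy, so by Lemma \ref{lemma:ham_isotopic_embedded_case} they are Hamiltonian isotopic; a Hamiltonian isotopy of embedded curves is in particular an exact homotopy through \emph{embedded} — hence topologically unobstructed — curves. Concatenating the three exact homotopies $\alpha \rightsquigarrow \alpha_0 \rightsquigarrow \beta_0 \rightsquigarrow \beta$, each of which stays within the topologically unobstructed locus, gives the required exact homotopy $(\gamma_t)$, after reparametrizing to make it stationary near the endpoints (which does not affect exactness).

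I expect the main obstacle to be the second step: guaranteeing that one can interpolate from $\alpha$ to a simple model $\alpha_0$ \emph{through topologically unobstructed curves} while also controlling the holonomy. Corollary \ref{cor:top_unob_open_condition} only gives local stability, so the argument must either exhibit an explicit path (e.g. via the universal cover, straightening the embedded lift, which does keep the lift embedded throughout) or invoke a connectivity statement for the space of unobstructed immersions in a fixed homotopy class. Using the universal-cover description seems cleanest: an unobstructed immersion is a curve whose lifts are proper embeddings, and any two such lifts in the same $\pi_1$-translation class are properly isotopic in $\wtilde{\Sigma}$ (both being properly embedded lines limiting on the same pair of boundary points of $\wtilde{\Sigma}$), and this isotopy descends to $\Sigma$ — giving the required homotopy through unobstructed curves directly, with the holonomy correction then performed by an extra compactly-supported finger move as above.
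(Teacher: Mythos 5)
Your first reduction already fails: it is not true that every non-contractible topologically unobstructed curve is freely homotopic to a simple closed curve. Topological unobstructedness only says that the lifts to the universal cover are proper embeddings (Lemma \ref{lemma:criterion_top_unob}), and this holds, for instance, for every closed geodesic of a hyperbolic metric, including primitive geodesics with positive self-intersection number (whose classes admit no simple representative, so ``primitive $\Rightarrow$ simple representative'' is false on a surface of genus $g\geq 2$), and even for non-primitive classes such as the curve $z\mapsto\delta(z^2)$ covering a simple curve $\delta$ twice. So there is in general no simple model $\alpha_0$ in the class of $\alpha$, and Lemma \ref{lemma:ham_isotopic_embedded_case} cannot be reached by your interpolation. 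The paper avoids this entirely by passing to the \emph{cyclic} cover $\what{\Sigma}\to\Sigma$ associated to $\langle[\alpha]\rangle$: there the lifts $\what{\alpha},\what{\beta}$ \emph{are} simple closed curves (by the embedded-lift criterion), they are isotopic by Epstein's theorem, the isotopy has zero flux by the holonomy hypothesis, and since $H^1(\what{\Sigma};\R)\to H^1(S^1;\R)$ is onto one gets a Hamiltonian isotopy upstairs whose projection is the desired exact homotopy through unobstructed curves. Your closing suggestion to straighten lifts in the universal cover is in the right spirit, but an isotopy of a single lift in $\wtilde{\Sigma}$ does not descend to $\Sigma$ unless it is equivariant under the deck transformation generated by $[\alpha]$; working in the intermediate cover is exactly how the paper enforces this.

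There is a second, independent gap concerning exactness. Along an exact homotopy the instantaneous flux vanishes, so by Lemma \ref{lemma:holonomy_primitive_of_flux} the holonomy is \emph{constant}; hence your legs $\alpha\rightsquigarrow\alpha_0$ and $\beta\rightsquigarrow\beta_0$ with ``prescribed (nonzero) change in holonomy'' cannot be exact homotopies. What your finger-move construction can deliver is a regular homotopy with prescribed \emph{total} flux, and concatenating such legs with the Hamiltonian middle leg only yields a regular homotopy from $\alpha$ to $\beta$ with zero total flux through unobstructed curves — not an exact homotopy, which is what the lemma asserts and what the application in Lemma \ref{lemma:pair_of_pants_relation} (via Lagrangian suspension) actually requires. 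Upgrading ``zero total flux'' to ``exact'' while staying unobstructed is essentially the content of the lemma itself, and the paper obtains it by producing a genuine Hamiltonian isotopy in the cover and projecting it down.
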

\begin{proof}
Consider the covering map $\rho: \what{\Sigma} \to \Sigma$ associated to the subgroup of $\pi_1(\Sigma, \alpha(0))$ generated by $[\alpha]$.
Equip $\what{\Sigma}$ with the symplectic form $\rho^*\omega$.

By definition, $\alpha$ lifts to a closed curve $\what{\alpha}$ in $\what{\Sigma}$. 
Moreover, a homotopy from $\alpha$ to $\beta$ lifts to a homotopy from 
$\what{\alpha}$ to a closed curve $\what{\beta}$ which is a lift of $\beta$.
Since $\alpha$ and $\beta$ are unobstructed, by Lemma \ref{lemma:criterion_top_unob} their lifts to the universal cover of $\Sigma$ are embedded. 
This implies that $\what{\alpha}$ and $\what{\beta}$ are simple curves.

Since $\what{\alpha}$ and $\what{\beta}$ are homotopic non-contractible simple curves, they are isotopic by Theorem 2.1 of \cite{Epstein66}.
Any isotopy between them has zero flux since its projection to $\Sigma$ is a regular homotopy from $\alpha$ to $\beta$, 
and this homotopy has zero flux by the holonomy assumption and Lemma \ref{lemma:holonomy_primitive_of_flux}.
Moreover, $\what{\alpha}$ represents a generator of $\pi_1(\what{\Sigma}) \iso \Z$, hence the map $H^1(\what{\Sigma}; \R) \to H^1(S^1; \R)$
induced by $\what{\alpha}$ is an isomorphism.
As in the first case of the proof of Lemma \ref{lemma:ham_isotopic_embedded_case}, 
it follows that $\what{\alpha}$ and $\what{\beta}$ are Hamiltonian isotopic.
The projection of a Hamiltonian isotopy from $\what{\alpha}$ to $\what{\beta}$ yields an exact homotopy from $\alpha$ to $\beta$ through unobstructed curves.
\end{proof}

\subsection{Proof of Proposition \ref{prop:holonomy_iso}}
\label{subsection:proof_holonomy_iso}

In this section, we prove Proposition \ref{prop:holonomy_iso}.
First, we show that $\hol|_{\mathcal{I}}$ is surjective, which
is a consequence of the following lemma.

\begin{lemma}
\label{lemma:isotopy_arbitrary_holonomy}
Let $\alpha$ be a non-separating simple curve. 
Then for any $x \in \R$, there exists 
a simple curve $\wtilde{\alpha}$
isotopic to $\alpha$
such that 
$
\hol(\wtilde{\alpha}) - \hol(\alpha) = x.
$
\end{lemma}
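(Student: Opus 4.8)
The plan is to construct $\wtilde{\alpha}$ explicitly as the image of $\alpha$ under a symplectic isotopy obtained by integrating a suitable closed $1$-form, and to control the change in holonomy using Lemma \ref{lemma:holonomy_primitive_of_flux}. Since $\alpha$ is non-separating, it is non-trivial in $H_1(\Sigma;\R)$, so by Poincaré duality there is a closed $1$-form $\sigma$ on $\Sigma$ with $\int_\alpha \sigma \neq 0$; rescaling, we may assume $\int_\alpha \sigma = 1$. The flow $(\psi_t)$ of the symplectic vector field $X_t$ defined by $\iota_{X_t}\omega = \sigma$ is a symplectic isotopy whose flux is $t\,[\sigma]$. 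Set $\wtilde{\alpha}_t = \psi_t(\alpha)$, which is isotopic to $\alpha$ for every $t$, and apply Lemma \ref{lemma:holonomy_primitive_of_flux} to the regular homotopy $t \mapsto \wtilde{\alpha}_t$ from $\alpha$ to $\wtilde{\alpha}_s$: this gives
\[
\hol(\wtilde{\alpha}_s) - \hol(\alpha) = \int_{S^1 \times [0,s]} \phi^*\omega = \flux(\phi)\cdot[S^1] = s \cdot [\sigma]\cdot[\alpha] = s,
\]
where $\phi(x,t) = \psi_t(\alpha(x))$ and we used that the flux of the truncated isotopy is $s[\sigma]$ together with $[\sigma]\cdot[\alpha] = \int_\alpha \sigma = 1$. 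Hence taking $s = x$ and $\wtilde{\alpha} = \wtilde{\alpha}_x$ yields $\hol(\wtilde{\alpha}) - \hol(\alpha) = x$, and $\wtilde{\alpha}$ is simple since it is the image of a simple curve under a diffeomorphism.

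The only point requiring a small amount of care is the identification of the flux of $(\psi_t)$ with $t[\sigma]$ and the pairing computation; this is the standard fact that a symplectic isotopy generated by the $\omega$-dual of a closed form $\sigma$ has flux $t[\sigma]$ (see \cite[Chapter 10]{McDuffSalamon17}), combined with the evaluation $\flux(\phi)\cdot[S^1] = \langle t[\sigma], [\alpha]\rangle$ coming from naturality of flux under the evaluation map $\phi$. I do not expect any genuine obstacle here: everything reduces to linear algebra on $H^1$ and an application of an already-cited lemma. One could alternatively phrase the construction purely in terms of the ambient isotopy and invoke Lemma \ref{lemma:holonomy_primitive_of_flux} directly to $\phi$, bypassing even the flux language, but recording the flux makes the computation transparent.

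If one prefers to avoid choosing $\sigma$ globally, an equivalent approach is to push $\alpha$ across a small annular neighborhood: pick an embedded annulus $A \cong S^1 \times [-1,1]$ with $A \cap \alpha = S^1 \times \{0\}$ and let $\alpha$ sweep to $S^1 \times \{\tau\}$ via an isotopy supported in $A$; the holonomy change is then $\pm$ the symplectic area of the swept subannulus, which ranges over an interval around $0$, and iterating (using that $\alpha$ is non-separating, so one can return to a homotopic position while accumulating area, or simply rescaling the annulus) realizes every real value. I would present the first argument, as it is cleanest and self-contained given the lemmas already available in the paper.
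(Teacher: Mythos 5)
Your main argument is correct, and it is the global, cohomological formulation of the same mechanism the paper uses. The paper's proof is a local model: it picks a curve $\beta$ meeting $\alpha$ transversely once, takes an annulus neighborhood $A$ of $\beta$, and wraps $\alpha$ around $A$ by the flow of $f(r)\,\partial_\theta$ — which is exactly the $\omega$-dual of a closed $1$-form supported in $A$ representing the Poincar\'e dual of $\beta$, so its flux pairs nontrivially with $[\alpha]$. You instead take any closed $1$-form $\sigma$ with $\int_\alpha \sigma = 1$ (available since $\alpha$ is non-separating), flow along its $\omega$-dual vector field, and compute the holonomy change via Lemma \ref{lemma:holonomy_primitive_of_flux}; this is the same idea with the localization step skipped, and it is perfectly valid (your sign $\pm s$ depends only on the convention for $\iota_X\omega$, which is immaterial since both signs of $s$ are allowed). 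The paper's version buys an explicit, compactly supported isotopy with a concrete picture; yours buys a shorter argument at the cost of invoking the standard flux computation from \cite{McDuffSalamon17}. One caution about your second, alternative sketch: pushing $\alpha$ across an annular neighborhood of $\alpha$ itself only realizes holonomy changes in a bounded interval, and the proposed "iteration" does not obviously escape this — indeed, by the corollary at the end of Appendix \ref{appendix:flux}, a loop of curves on a monotone surface has zero flux, so signed area cannot simply be accumulated by sweeping around and returning to a curve isotopic to $\alpha$; the unbounded holonomy change really does come from wrapping around a transverse curve (or equivalently from a form with nonzero period on $\alpha$), as in your first argument and in the paper.
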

\begin{proof}
There is a curve $\beta$ that intersects $\alpha$ transversely in a single point.
Let $A \iso S^1 \times [0,1]$ be a small tubular neighborhood of $\beta$ such that $\alpha \cap A$ is a ray of the annulus.
Then, the required curve $\wtilde{\alpha}$ can be obtained by wrapping $\alpha$ around the annulus by the flow of a vector field of the form
$f(r) \frac{\del}{\del \theta}$, where $f(r) \geq 0$ is a non-constant function that vanishes near $r=0$ and $r=1$.
\end{proof}

It remains to prove that $\hol|_{\mathcal{I}}$ is injective, which
is the hardest part of the proof. 
We will deduce it from the following proposition, which is a generalization of Lemma \ref{lemma:ham_isotopic_embedded_case}.

\begin{proposition}
\label{prop:isotopy_relation}
Suppose that $(\alpha, \wtilde{\alpha})$ and $(\beta, \wtilde{\beta})$ are two pairs of isotopic simple curves with 
$\hol(\wtilde{\alpha}) - \hol(\alpha) = \hol(\wtilde{\beta}) - \hol(\beta)$. 
Then, in $\Gunob(\Sigma)$ we have
\[
[\wtilde{\alpha}] - [\alpha] = [\wtilde{\beta}] - [\beta].
\]
\end{proposition}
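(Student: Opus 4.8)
The plan is to reduce the statement to the Hamiltonian-isotopic case handled by Lemma~\ref{lemma:ham_isotopic_embedded_case}, by "correcting" the holonomy discrepancy of one pair using the other. Since holonomy is the only invariant distinguishing isotopic simple curves up to Hamiltonian isotopy, and since the hypothesis says both pairs have the \emph{same} holonomy difference, the two corrections should cancel in $\Gunob(\Sigma)$.

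First, I would treat the non-separating case. Set $x = \hol(\wtilde\alpha) - \hol(\alpha) = \hol(\wtilde\beta) - \hol(\beta)$. Using Lemma~\ref{lemma:isotopy_arbitrary_holonomy} applied to $\alpha$, produce a curve $\alpha'$ isotopic to $\alpha$ with $\hol(\alpha') - \hol(\alpha) = x$; by Lemma~\ref{lemma:ham_isotopic_embedded_case}, $\alpha'$ and $\wtilde\alpha$ are Hamiltonian isotopic, hence connected by a Lagrangian suspension cobordism, which is topologically unobstructed (Lemma~\ref{lemma:obstruction_suspension}, since every slice of a Hamiltonian isotopy of an embedded non-contractible curve is embedded hence topologically unobstructed, using that $\pi_1(\Sigma)$ is torsion-free). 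Therefore $[\wtilde\alpha] = [\alpha']$ in $\Gunob(\Sigma)$, and similarly $[\wtilde\beta] = [\beta']$ for an analogous $\beta'$. Thus it suffices to show $[\alpha'] - [\alpha] = [\beta'] - [\beta]$, i.e.\ I have reduced to the case where $\wtilde\alpha$ (resp.\ $\wtilde\beta$) is obtained from $\alpha$ (resp.\ $\beta$) by the explicit "annular wrapping" construction of Lemma~\ref{lemma:isotopy_arbitrary_holonomy}. The point of this reduction is that the wrapping construction is \emph{local}: it takes place in a tubular neighborhood $A$ of a transverse curve, and I would want to realize $[\alpha'] - [\alpha]$ as the class of a surgery-type cobordism whose homology class in $H_1(S\Sigma;\Z)$ and whose holonomy are both determined by $x$ alone — independently of which non-separating curve we started with. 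Concretely, wrapping $\alpha$ once around the annulus $A$ creates two new intersection points with a pushoff; resolving one by Lagrangian surgery gives a cobordism $\alpha' \cob (\alpha, T)$ where $T$ is a small (contractible or boundary-parallel) curve. By Abouzaid's computation of the invariants on bounding curves (Lemma~\ref{lemma:holonomy+winding_bounding_curves}) and the fact that $\Gunob$ injects into $H_1(S\Sigma;\Z)\oplus\R$ once one knows Theorem~\ref{thm:computation_Gunob}… but that would be circular. Instead, I would argue directly: the difference $[\alpha']-[\alpha]$ can be written as $[c_x]$ for a curve $c_x$ supported in a disk, whose isotopy class as an element of $\Gunob$ depends only on $x$ (this is essentially the "plane curve" subgroup $\R \hookrightarrow \Gunob$, realized by small curves bounding area $x$ in a Darboux chart, cf.\ the remarks after Theorem~\ref{thm:computation_Gunob}). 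Since this local model is the same whether we work near $\alpha$ or near $\beta$, and since any two Darboux charts in $\Sigma$ are related by a symplectomorphism isotopic to the identity — hence induce the same relations in $\Gunob$ — we get $[\alpha'] - [\alpha] = [c_x] = [\beta'] - [\beta]$.

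For the separating case, one of the two curves in each pair might be separating, but Lemma~\ref{lemma:holonomy+winding_bounding_curves} together with Lemma~\ref{lemma:ham_isotopic_embedded_case} still reduces isotopic-with-equal-holonomy to Hamiltonian-isotopic; for the wrapping step when $\alpha$ is separating one can use a transverse non-separating curve meeting $\alpha$ in an even number of points and wrap in a sub-annulus, or simply invoke the existence (Lemma~\ref{lemma:isotopy_arbitrary_holonomy} adapted) of an isotopic copy with prescribed holonomy shift, which holds for separating curves too because the flux of an isotopy of a separating curve can be any real number. The remaining identification $[\alpha']-[\alpha]=[c_x]$ proceeds as above.

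The main obstacle I anticipate is making precise and rigorous the claim that the "local correction class" $[c_x] \in \Gunob(\Sigma)$ is genuinely independent of the ambient curve and of all the choices — i.e.\ that it depends only on $x\in\R$. The honest way to see this is that the map $\R \to \Gunob(\Sigma)$, $x \mapsto [c_x]$, is well-defined: two curves bounding disks of equal signed area in (possibly different) Darboux charts are Hamiltonian isotopic in $\Sigma$ via a compactly supported Hamiltonian, using that $\Sigma$ is connected; then unobstructedness of the connecting suspension cobordism is automatic once the curves stay embedded (small circles), or one shoves the homotopy through a large contractible region. I would want to isolate this as a preliminary lemma ("there is a well-defined homomorphism $\R\to\Gunob(\Sigma)$ sending $x$ to a small curve of area $x$, and for any non-separating simple curve $\alpha$ and any $x$, $[\alpha']-[\alpha]$ equals the image of $x$, where $\alpha'$ is the $x$-wrapping of $\alpha$") — this lemma is where essentially all the geometric content lives, and it is close in spirit to arguments in Section~\ref{section:isotopies} and Appendix~\ref{appendix:isotopy}, so I would expect the cleanest route is to defer the proof to Appendix~\ref{appendix:isotopy} as the excerpt in fact does (Proposition~\ref{prop:isotopy_relation} is stated there as "a minor modification of a result appearing in \cite{Perrier19}").
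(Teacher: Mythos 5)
Your opening reduction is fine (replace $\wtilde\alpha$, $\wtilde\beta$ by explicit wrapped copies $\alpha'$, $\beta'$ using Lemma~\ref{lemma:isotopy_arbitrary_holonomy} and Lemma~\ref{lemma:ham_isotopic_embedded_case}), but the device that is supposed to carry the whole proof --- the ``local correction class'' $[c_x]$ represented by a small contractible curve --- does not exist in $\Gunob(\Sigma)$. The generators of $\Gunob(\Sigma)$ are \emph{non-contractible} simple curves, so $[c_x]$ is not an element of the group, and a cobordism $\alpha' \cob (\alpha, T)$ with $T$ contractible imposes no relation in $\Gunob(\Sigma)$ (this is a genuine difference with $\Gimm(\Sigma)$, where contractible curves are generators and are handled separately in the proof of Corollary~\ref{cor:computation_Gimm}). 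Moreover, even as a cobordism statement the step is problematic: $\alpha'$ and $\alpha$ are isotopic curves meeting in two points, hence not in minimal position, so they bound a bigon; by Proposition~\ref{prop:teardropcriterion} (via Lemma~\ref{lemma:countingcorners}) the surgery cobordism you propose bounds marked teardrops, and since $\alpha \cdot \alpha' = 0$ you cannot invoke Corollary~\ref{cor:nonzerointersectionimpliesnodisks} either --- unobstructedness, which is the whole point of working in $\Gunob(\Sigma)$, is exactly what is not established. Finally, the assertion that $[\alpha']-[\alpha]$ depends only on $x$ ``because the local model is the same near $\alpha$ and near $\beta$'' is essentially a restatement of the proposition to be proved: to turn it into a proof one must exhibit quasi-exact cobordisms implementing the comparison between wrappings performed at different places on the surface, and no such cobordisms are produced.

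That global comparison is where the paper's actual argument lives, and it takes a different route: for non-separating curves one first treats the configuration where $\beta$, $\wtilde\beta$ are disjoint and $\alpha$ is disjoint from both, using an auxiliary curve $\gamma$ meeting each of $\alpha$, $\beta$, $\wtilde\beta$ exactly once, so that all surgery cobordisms are embedded and incompressible by Corollary~\ref{cor:nonzerointersectionimpliesnodisks} (Lemma~\ref{lemma:disjointcase}); then the general non-separating case is reached by connecting $\alpha$ to $\beta$ through a chain of pairwise disjoint non-separating curves and subdividing the isotopies into pieces of small flux so that disjoint pushoffs with the prescribed holonomy shift exist (Lemmas~\ref{lemma:disjointcase2} and~\ref{lemma:pushoffwithgivenholonomy}); separating pairs are reduced to non-separating ones by the two-intersection-point surgery procedure of Section~\ref{section:DT_specialcase}, whose unobstructedness is checked with the marked-teardrop criteria. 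If you want to salvage your approach, you would have to replace the contractible correction curve by a comparison against a fixed non-separating reference curve, and you would then find yourself needing precisely the disjoint-configuration and chain arguments above.
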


This result appears as
Proposition 5.4 and Lemma 5.15 in \cite{Perrier19}.
As explained in Remark \ref{rmk:def_unob_perrier},
the extension of Perrier's result to 
the present case requires additional verifications to show unobstructedness of the relevant cobordisms.
These verifications are straightforward applications of the obstruction criteria proved in Section \ref{section:top_unob_cobordisms}
and Section \ref{section:obstruction_dim2}.
For the benefit of the reader, we include in Appendix \ref{appendix:isotopy} a proof of Proposition \ref{prop:isotopy_relation}
that closely follows the proof in \cite{Perrier19} and incorporates these verifications.

Using Proposition \ref{prop:isotopy_relation}, we can now finish the proof of Proposition \ref{prop:holonomy_iso}.

\begin{proof}[Proof of Proposition \ref{prop:holonomy_iso}.]
By Lemma \ref{lemma:isotopy_arbitrary_holonomy}, $\hol|_{\mathcal{I}}$ is surjective.
To prove injectivity, we claim that any element of $\mathcal{I}$ 
can be written as $[\wtilde{\alpha}] - [\alpha]$ for some isotopic curves $\wtilde{\alpha}$ and $\alpha$ (in other words, the
subset of elements of the form $[\wtilde{\alpha}] - [\alpha]$ is already a subgroup). 
Indeed, a general element of $\mathcal{I}$ can be written as a sum $\sum_{k=1}^N [\wtilde{\beta}_k] - [\beta_k]$,
where $\wtilde{\beta}_k$ and $\beta_k$ are isotopic. 
Fix a non-separating curve $\alpha_0$. 
By Lemma \ref{lemma:isotopy_arbitrary_holonomy}, 
we can find curves $\alpha_k$ for $k=1, \ldots, N$,
such that $\alpha_{k}$ is obtained from $\alpha_{k-1}$ by an isotopy of area $\hol(\wtilde{\beta}_k) - \hol(\beta_k)$.
Then, by Proposition \ref{prop:isotopy_relation}, we have 
\[
\sum_{k=1}^N [\wtilde{\beta}_k] - [\beta_k] = \sum_{k=1}^{N} [\alpha_k] - [\alpha_{k-1}] = [\alpha_N] - [\alpha_0].
\]
This proves the claim.
The injectivity of $\hol|_{\mathcal{I}}$ now follows directly from Proposition \ref{prop:isotopy_relation}.
\end{proof}

\section{Action of the mapping class group}
\label{section:action_mcg}

In Section \ref{section:isotopies}, we showed how to reduce the computation of $\Gunob(\Sigma)$
to the computation of the reduced cobordism group 
$\Gred(\Sigma)$ (see Definition \ref{def:reduced_group}).
The action of the group of symplectic diffeomorphisms $\Symp(\Sigma)$ on $\Gunob(\Sigma)$ descends
to an action on $\Gred(\Sigma)$.
By definition, 
the component of the identity $\Symp_0(\Sigma) \subset \Symp(\Sigma)$ 
acts trivially on $\Gred(\Sigma)$.
Therefore, the action of $\Symp(\Sigma)$ descends to an action of the \emph{symplectic mapping class group}
$\Smcg(\Sigma) = \Symp(\Sigma)/\Symp_0(\Sigma)$ on $\Gred(\Sigma)$.
The goal of this section is to describe this action.
This will be a key ingredient in the computation of $\Gred(\Sigma)$, as it will allow us to determine a simple set
of generators of this group (see Proposition \ref{prop:generators}).

The main result of this section is the computation of the action of \emph{symplectic Dehn twists}, which are generators of $\Smcg(\Sigma)$.
For a simple curve $\alpha$, we denote by $T_{\alpha} \in \Smcg(\Sigma)$ the Dehn twist around $\alpha$ (the definition of $T_{\alpha}$ is recalled
in Section \ref{section:def_dehn_twists}).

\begin{theorem}
\label{thm:dehntwistaction}
Let $\alpha$ and $\beta$ be two simple curves in $\Sigma$. 
Then in $\Gred(\Sigma)$ there is the relation
\begin{equation}
\label{eq:dehntwistformula}
T_{\alpha}[\beta] = [\beta] + (\alpha \cdot \beta) [\alpha].
\end{equation}
\end{theorem}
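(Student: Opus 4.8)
The plan is to realize the Dehn twist $T_\alpha$ applied to $\beta$ as the result of an iterated sequence of Lagrangian surgeries, and to show that each surgery used in the process is (topologically) unobstructed, so that the resulting surgery cobordisms give relations in $\Gunob(\Sigma)$ — and hence in $\Gred(\Sigma)$ after passing to the quotient. Concretely, if $\alpha$ and $\beta$ intersect transversally in $n = |\alpha \cdot \beta|$ points (after isotopy, which is free in $\Gred(\Sigma)$), then $T_\alpha(\beta)$ is isotopic to a curve obtained by resolving each intersection point of $\alpha$ and $\beta$ in the direction dictated by the sign of the twist. The surgery cobordism $S(\alpha,\beta;s)$ of Section \ref{subsection:def_surgery} yields, at the level of $\Gunob(\Sigma)$, a relation of the form $[\alpha \#_s \beta] = [\alpha] + [\beta]$ whenever that cobordism is unobstructed. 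Iterating, and keeping track of orientations so that the signs $(\alpha \cdot \beta)$ come out correctly, should produce the formula $T_\alpha[\beta] = [\beta] + (\alpha\cdot\beta)[\alpha]$.

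The first main step is to reduce to the simplest geometric configurations. Following the inductive argument of Lickorish \cite{Lick64} alluded to in Section \ref{subsubsection:outlineB}, I would argue that the general case can be reduced to the cases where $\alpha$ and $\beta$ are in minimal position with $|\alpha\cap\beta| \in \{0,1,2\}$: when the geometric intersection number is zero the formula is trivial (the twist fixes $\beta$ up to isotopy, and isotopic curves are equal in $\Gred(\Sigma)$); when it is one or two, one can write $\beta$ as a suitable "sum" of curves meeting $\alpha$ fewer times, using the surgery relations, and induct. The reduction uses that in $\Gred(\Sigma)$ we may always isotope curves into minimal position (Section \ref{section:isotopies}), together with the fact that $T_\alpha$ acts by a group automorphism, so it suffices to verify the formula on a generating set and to check compatibility with the surgery relations.

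The second, and I expect \emph{the} main, step is the unobstructedness verification: one must arrange the iterated surgeries so that every surgery cobordism $S(\alpha,\beta;s)$ appearing is topologically unobstructed, i.e. incompressible and bounding no teardrops. Incompressibility follows from Proposition \ref{prop:relativepi2version2} / Corollary \ref{cor:nonzerointersectionimpliesnodisks} on a genus $\geq 2$ surface once $\alpha$ and $\beta$ have no relation $[\alpha]^p = [\beta]^q$, which can be ensured (e.g. whenever $\alpha \cdot \beta \neq 0$). The absence of teardrops is controlled by Proposition \ref{prop:teardropcriterion}: $S(\alpha,\beta;s)$ bounds a teardrop iff $\alpha\cup\beta$ bounds an $s$-marked teardrop, and here the combinatorial cocycle obstruction of Lemma \ref{lemma:countingcorners} is the tool — in the base cases $|\alpha\cap\beta| \le 2$ one checks directly that no class in $\pi_2(\Sigma,\alpha\cup\beta,s)$ satisfies the corner-counting constraints, hence no $s$-marked teardrop exists. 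For the concatenated cobordisms that arise when iterating, one appeals to Proposition \ref{prop:concatenation_unobstructed}, which guarantees that a concatenation of topologically unobstructed cobordisms is, after a suitable perturbation near the bottlenecks, quasi-exact — so the concatenation still defines a relation in $\Gunob(\Sigma)$.

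Finally, I would assemble the pieces: the surgery relation gives $[\alpha\#_s\beta] = [\alpha] + [\beta]$ in $\Gunob(\Sigma)$ for each unobstructed surgery; combining the $n = |\alpha\cdot\beta|$ surgeries that build $T_\alpha(\beta)$ out of $\beta$ and $n$ parallel copies of $\alpha$ (with orientations chosen so that copies coming from positive intersection points contribute $+[\alpha]$ and those from negative points contribute $-[\alpha] = [\bar\alpha]$), and using that parallel isotopic copies of $\alpha$ are identified in $\Gred(\Sigma)$, yields $T_\alpha[\beta] = [\beta] + (\alpha\cdot\beta)[\alpha]$. The delicate bookkeeping is the sign/orientation matching at each surgery — ensuring the local Darboux model of Section \ref{subsection:def_surgery} is applied at an intersection of the correct degree — and the verification, via Lemma \ref{lemma:countingcorners} and Proposition \ref{prop:teardropcriterion}, that the low-intersection base configurations are genuinely teardrop-free; this is where the argument really has content.
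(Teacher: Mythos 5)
Your overall skeleton --- realizing $T_{\alpha}\beta$ by Lagrangian surgeries, checking topological unobstructedness through Propositions \ref{prop:teardropcriterion}, \ref{prop:relativepi2} and Lemma \ref{lemma:countingcorners}, gluing with Proposition \ref{prop:concatenation_unobstructed}, and reducing the number of intersections in the style of Lickorish --- is the same as the paper's, but two of the steps that carry the actual content are missing. First, the inductive step is never formulated, and your closing paragraph in fact abandons it in favour of the direct assembly ``$\beta$ plus $n$ parallel copies of $\alpha$, one surgery per intersection point''. That direct route is precisely what the induction is designed to avoid: after the first surgeries the ends are immersed curves with many double points, and verifying that each subsequent surgery cobordism bounds no ($s$-marked) teardrops becomes a hard combinatorial problem (this is where \cite{Perrier19} runs into trouble). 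The paper never performs more than two surgeries at a time: for $N \geq 2$ intersections (of equal sign if $N=2$), Lemma \ref{lemma:lickorish_trick} supplies a curve $\gamma$ meeting $\beta$ once or twice with opposite signs, meeting $\alpha$ in at most $N-1$ points, and such that $T_{\gamma}\beta$ is isotopic to a curve $\delta$ meeting $\alpha$ in at most $N-1$ points; the base cases give $[\delta] = [\beta] + (\gamma \cdot \beta)[\gamma]$, and applying $T_{\alpha}$ to this relation and invoking the induction hypothesis for the pairs $(\alpha,\delta)$ and $(\alpha,\gamma)$ yields \eqref{eq:dehntwistformula}. Without this mechanism your ``reduction to $|\alpha\cap\beta|\le 2$'' is an assertion, not an argument; note also that the two-point equal-sign configuration is absorbed by the induction (Lemma \ref{lemma:lickorish_trick} requires equal signs when $N=2$), so the only two-point case needing a direct construction is the one with $\alpha\cdot\beta=0$.

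Second, in that two-point base case your unobstructedness argument does not apply as stated: incompressibility cannot be deduced from $\alpha\cdot\beta\neq 0$, since the algebraic intersection number vanishes there. The paper instead works inside the regular neighborhood $N$ of $\alpha\cup\beta$ (a four-holed sphere), shows that $[\alpha]$ and $[\beta]$ are part of a free basis of $\pi_1(N)$, proves $N$ is incompressible in $\Sigma$ using minimal position and the bigon criterion (Lemma \ref{lemma:incompressible}), and only then applies Proposition \ref{prop:relativepi2}. Moreover, the twist is realized by \emph{two} surgeries, the second being $\wtilde{\alpha}\#_{s_2}\sigma$ where $\sigma = \alpha\#_{s_1}\beta$ is immersed; ruling out teardrops there requires identifying $\pi_2(\Sigma, \wtilde{\alpha}\cup\sigma, s_2) \iso \Z$ as generated by a triangle with three distinct corners and then applying Lemma \ref{lemma:countingcorners} (this is Lemma \ref{lemma:surgeriesareunobstructed}). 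Your proposal only discusses marked teardrops for the original pair $(\alpha,\beta)$, so the verification for the second surgery --- exactly the point the paper identifies as the main difficulty --- is absent.
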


Theorem \ref{thm:dehntwistaction} is proved in two steps.
The first step is to prove the theorem in the special case where the curves intersect twice with zero algebraic intersection.
This is carried out in Section \ref{section:DT_specialcase}.
Then, in Section \ref{section:DT_generalcase}, we show how to deduce the general case of Theorem \ref{thm:dehntwistaction} from
the special case by an inductive argument.

\begin{remark}
On the side of $\Kgrp$, the existence of the Dehn twist relation \eqref{eq:dehntwistformula} is a consequence
of the well-known \emph{Seidel exact triangle} in $\DFuk$
\begin{equation}
\label{eq:seidel_triangle}
\begin{tikzcd}
HF^*(S, L) \tens S \arrow{r} &L \arrow{r} & T_{S} L \arrow{r}{\left[1\right]} &{}
\end{tikzcd}
\end{equation}
where $T_S$ is the Dehn twist around a framed Lagrangian sphere $S$; see \cite[Corollary 17.18]{SeidelBook}.
See also \cite{Abouzaid08} for the existence of the triangle \eqref{eq:seidel_triangle} in the case of curves 
on surfaces that intersect minimally.
We remark that the methods used in the present work
do not recover the exact triangle \eqref{eq:seidel_triangle} on the level of $\DFuk$, but
only the induced relation in $K$-theory.
\end{remark}

\subsection{Dehn twists}
\label{section:def_dehn_twists}

We recall the definition of the Dehn twist $T_{\alpha}$ around a simple curve $\alpha$.
Choose a symplectic embedding $\psi: [-\delta, \delta] \times S^1 \to \Sigma$ such that $\psi|_{ \{ 0 \} \times S^1} = \alpha$,
where the annulus is equipped with the standard symplectic form $dt \wedge d\theta$.
Consider the symplectic diffeomorphism of the annulus given by 
\[
(t, \theta) \longmapsto (t, \theta - f(t)),
\]
where $f: [-\delta, \delta] \to \R$ is a smooth increasing function which is $0$ near $-\delta$ and $1$ near $\delta$. 
Using the embedding $\psi$, this map extends by the identity to a symplectomorphism $T_{\alpha}$ of $\Sigma$, 
called a (right-handed) \emph{symplectic Dehn twist} around $\alpha$. 
The class of this map in $\Smcg(\Sigma)$ does not depend on the choices of $\psi$ and $f$ above. 
We will also denote this class by the symbol $T_{\alpha}$, whenever no confusion arises.

\begin{remark}
We warn the reader that our convention for the direction of the Dehn twist is opposite 
to that of \cite{Abouzaid08} and \cite{Perrier19}.
\end{remark}

By Moser's Theorem, the inclusion of $\Symp(\Sigma)$ inside the group of orientation-preserving diffeomorphisms 
$\Diff^+(\Sigma)$
induces an isomorphism between
$\Smcg(\Sigma)$ and the classical mapping class group 
$\Mcg(\Sigma) = \Diff^+(\Sigma)/\Diff_0(\Sigma)$,
where $\Diff_0(\Sigma)$ is the subgroup of diffeomorphisms isotopic to the identity.
In what follows, we will identify $\Smcg(\Sigma)$ and $\Mcg(\Sigma)$ using this isomorphism.

It is a well-known theorem of Dehn and Lickorish that $\Mcg(\Sigma)$ is generated by the classes of finitely many Dehn twists. 
(An explicit set of generators is introduced in Section \ref{subsection:generators_reduced_group}.)
It follows that $\Smcg(\Sigma)$ is generated by symplectic Dehn twists.

\subsection{Proof of Theorem \ref{thm:dehntwistaction}: curves intersecting twice}
\label{section:DT_specialcase}

In this section, we prove the following special case of Theorem \ref{thm:dehntwistaction}.

\begin{proposition}
\label{prop:dehntwist2pts}
Let $\alpha$ and $\beta$ be simple curves which are in minimal position and intersect in two points of opposite signs.
Then, in $\Gred(\Sigma)$ we have
$
T_{\alpha}[\beta] = [\beta] + (\alpha \cdot \beta) [\alpha].
$
\end{proposition}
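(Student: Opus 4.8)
The plan is to realize the twisted curve $T_\alpha\beta$ as the result of two Lagrangian surgeries carried out on $\beta$ and a single pushoff of $\alpha$, and to deduce the formula from the resulting relation in $\Gunob(\Sigma)$, read off in $\Gred(\Sigma)$. Since $\alpha\cdot\beta=0$, the two intersection points of $\alpha$ and $\beta$ have opposite signs; fix a parallel pushoff $\alpha'$ of $\alpha$, disjoint from $\alpha$, so that $\alpha'$ meets $\beta$ transversely in two points $p'$ and $q'$, also of opposite signs. Working in $\Gred(\Sigma)$ we are free to replace $\beta$ by any isotopic curve, and we may therefore assume that $\alpha\cup\alpha'\cup\beta$ sits in standard position inside a regular neighborhood $P$ of $\alpha\cup\beta$; this reduces the problem to an explicit picture in $P$. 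Orient $\alpha'$ so that $p'$ has degree $1$ for the pair $(\beta,\alpha')$, and form the surgery $\gamma:=\beta\#_{p'}\alpha'$. This is an immersed curve with a single transverse double point, located at $q'$.

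The key geometric input is the behaviour of the Lagrangian self-surgery of $\gamma$ at $q'$. I claim that, for the appropriate smoothing of the double point (determined by the orientation of $\gamma$), this self-surgery \emph{disconnects} $\gamma$ into two embedded curves, one isotopic to $T_\alpha\beta$ and one isotopic to $\alpha$ — geometrically, the two surgeries perform the two oppositely oriented finger moves along $\alpha$ that distinguish $T_\alpha\beta$ from $\beta$, and the second finger move simultaneously severs the inserted copy of $\alpha'$. I expect the verification of this claim, together with the bookkeeping of orientations (which orientation of $\alpha'$ to use at $p'$, and which smoothing is the Lagrangian one at $q'$), to be the main obstacle; it is a direct but somewhat delicate exercise in the topology of $P$, of the kind appearing in Section \ref{section:obstruction_dim2}.

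Granting this, let $S(\beta,\alpha';p')\colon\gamma\cob(\beta,\alpha')$ be the surgery cobordism of Section \ref{subsection:def_surgery}, and let $V'\colon(T_\alpha\beta,\alpha'')\cob\gamma$ be the self-surgery cobordism resolving the double point of $\gamma$ at $q'$, where $\alpha''$ denotes the resulting component isotopic to $\alpha$. Concatenating these along the common immersed end $\gamma$ produces a cobordism $W\colon(T_\alpha\beta,\alpha'')\cob(\beta,\alpha')$ with embedded ends. First I would check that $W$ is quasi-exact: the piece $S(\beta,\alpha';p')$ is topologically unobstructed — incompressibility holds by Proposition \ref{prop:relativepi2}, since $\alpha$ and $\beta$ are non-isotopic simple curves and therefore $\langle[\alpha'],[\beta]\rangle=\langle[\alpha],[\beta]\rangle$ cannot be a cyclic subgroup of $\pi_1(\Sigma)$, and there are no $p'$-marked teardrops, because $\alpha'$ and $\beta$ are simple curves in minimal position meeting only at $p'$ and $q'$, so the only possible such teardrop would be a bigon, which is excluded (here Proposition \ref{prop:teardropcriterion} is applied, with the corner-counting obstruction of Lemma \ref{lemma:countingcorners} ruling out higher marked teardrops). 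The self-surgery cobordism $V'$ is topologically unobstructed by the analogous application of the criteria of Section \ref{section:obstruction_dim2} to the double point of $\gamma$ at $q'$, whose two branches again generate a rank-$2$ free subgroup of $\pi_1(\Sigma)$. By Proposition \ref{prop:concatenation_unobstructed}, applied to the concatenation $W$ along the immersed end $\gamma$, the cobordism $W$ is exact homotopic relative to its boundary to a quasi-exact cobordism, hence defines a relation in $\Gunob(\Sigma)$, namely $[T_\alpha\beta]+[\alpha'']=[\beta]+[\alpha']$. Passing to $\Gred(\Sigma)$, where isotopic curves are identified, we have $[\alpha'']=[\alpha']=[\alpha]$, whence
\[
T_\alpha[\beta]=[\beta]=[\beta]+(\alpha\cdot\beta)[\alpha],
\]
using $\alpha\cdot\beta=0$. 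This is the asserted formula; the only genuinely delicate step is the identification of the two-surgery curve with $T_\alpha\beta$ and the accompanying orientation conventions, the unobstructedness checks being routine given the criteria of Sections \ref{section:top_unob_cobordisms} and \ref{section:obstruction_dim2}.
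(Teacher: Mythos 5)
Your reduction to a relation in $\Gunob(\Sigma)$ and the unobstructedness bookkeeping are in the right spirit, but the step you yourself flag as the main obstacle is not merely delicate — it is false. Resolving the double point $q'$ of $\gamma=\beta\#_{p'}\alpha'$ in the orientation-compatible way does disconnect $\gamma$, but the two resulting components are \emph{not} $T_\alpha\beta$ and a copy of $\alpha$. Trace it in the local model: let $A\iso S^1\times[-1,1]$ be an annulus with core $\alpha$, let $\beta$ cross $A$ along two vertical strands of opposite sign, and let $\alpha'=S^1\times\{1/2\}$. After smoothing both crossings $p',q'$ of $\beta\cup\alpha'$ compatibly with orientations (the order of the two smoothings is immaterial), each of the two components consists of one arc of $\alpha'$ and one arc of $\beta$; one component lies entirely on one side of the core, and the other enters and exits $A$ through the same boundary circle, so both have geometric intersection number $0$ with $\alpha$ after an isotopy. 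Since $i(T_\alpha\beta,\alpha)=i(\beta,\alpha)=2$, neither component can be isotopic to $T_\alpha\beta$ (nor, in general, is either isotopic to $\alpha$: in the four-holed sphere neighborhood they are boundary-parallel curves). The conceptual reason is that $T_\alpha\beta$ wraps the two strands of $\beta$ around the annulus with \emph{opposite} orientations as oriented arcs, and a single coherently oriented pushoff $\alpha'$ can only supply wrapping of one sign via oriented surgery; this cannot be repaired by choosing the other orientation of $\alpha'$ or the other point as $p'$.

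This is exactly why the paper's proof of Proposition \ref{prop:dehntwist2pts} uses \emph{two} oppositely oriented copies of $\alpha$: one first forms $\sigma=\alpha\#_{s_1}\beta$ at the positive crossing, then surgers $\sigma$ with a small pushoff $\wtilde{\alpha}$ of $\alpha^{-1}$ at a point $s_2$ near the negative crossing, choosing the handle large enough to absorb the second intersection point $q_2$, so that the output $\tau$ is embedded and isotopic to $T_\alpha\beta$; the relation then reads $[\tau]=[\wtilde{\alpha}]+[\alpha]+[\beta]=[\beta]$, the two copies of $\alpha$ cancelling because they carry opposite orientations, rather than cancelling across the two sides of the cobordism as in your relation $[T_\alpha\beta]+[\alpha'']=[\beta]+[\alpha']$. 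A secondary point: even granting your splitting, the ``self-surgery cobordism'' $V'$ at a double point of a single immersed curve is not among the constructions of Section \ref{subsection:def_surgery}, and the criteria of Section \ref{section:obstruction_dim2} (Propositions \ref{prop:teardropcriterion}, \ref{prop:relativepi2}, Lemma \ref{lemma:countingcorners}) are stated for $S(\alpha,\beta;s)$ with two curves, so that step would require new justification — but this is moot given the failure of the geometric identification. Your unobstructedness checks for $S(\beta,\alpha';p')$ and the appeal to Proposition \ref{prop:concatenation_unobstructed} are otherwise consistent with how the paper argues (via incompressibility of the regular neighborhood and corner counting).
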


Proposition \ref{prop:dehntwist2pts} is proved by realizing the Dehn twist as an iterated surgery.
This is a well-known procedure; see for example \cite[Section 3.1.1]{FarbMargalit}.
We will follow the description given by Perrier in \cite[Proposition 2.15]{Perrier19}, which we recall in order to fix some notation.

Let $\alpha$ and $\beta$ be curves satisfying the hypothesis of Proposition \ref{prop:dehntwist2pts}.
Let $N$ be a closed regular neighborhood of $\alpha \cup \beta$.\footnote{To be more precise, we mean that $N = A \cup B$, where
$A$ and $B$ are closed tubular neighborhoods of $\alpha$ and $\beta$, respectively. 
Moreover, $A$ and $B$ are chosen small enough so that $A \cap B$ is a disjoint
union of disks, and inside each such disk the curves are in standard position (i.e. look like the coordinate axes of $\R^2$).}
Note that $N$ is a four-holed sphere.
We perform the following surgeries, which are illustrated in Figure \ref{fig:surgery_DT}.
Let $s_1$ be the positive intersection of $(\alpha, \beta)$, and $q_1$ the negative intersection. 
The first step is to form the surgery $\sigma := \alpha \#_{s_1} \beta$.
Next, let $\wtilde{\alpha}$ be a $C^1$-small Hamiltonian perturbation of $\alpha^{-1}$ which intersects $\sigma$ in two points.
This perturbation is chosen so that one of these intersection points, denoted $s_2$, lies on the arc of $\beta$ near $q_1$, and the other, denoted $q_2$, lies on the arc of $\alpha$ near $q_1$.
Finally, we let $\tau := \widetilde{\alpha} \#_{s_2} \sigma$. 
The size of the surgery handle is chosen big enough so that $\tau$ is embedded.

\begin{figure}
	\centering
	\includegraphics[width=\textwidth]{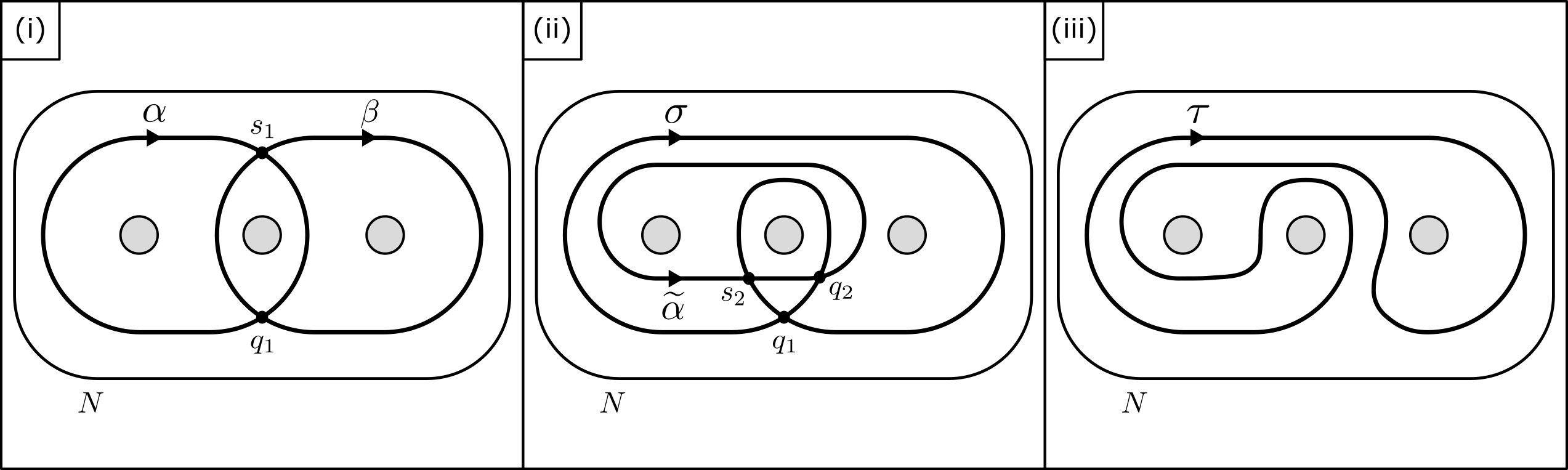}
	\caption{The surgery procedure used in the proof of Proposition \ref{prop:dehntwist2pts}.}
	\label{fig:surgery_DT}
\end{figure}
 
It is easy to check that $\tau$ is isotopic to $T_{\alpha} \beta$.
We denote the traces of the surgeries by $S_1 = S(\alpha, \beta; s_1)$ and $S_2 = S(\widetilde{\alpha}, \sigma; s_2)$. 
Concatenating $S_1$ and $S_2$ produces an immersed cobordism $\tau \cob (\widetilde{\alpha}, \alpha, \beta)$.
To prove Proposition \ref{prop:dehntwist2pts}, we will show that there is an unobstructed cobordism with the same ends.
By Proposition \ref{prop:concatenation_unobstructed}, it suffices to check that the surgery cobordisms
$S_1$ and $S_2$ constructed above are topologically unobstructed. 
Hence, to finish the proof of Proposition \ref{prop:dehntwist2pts}, it suffices to show the following lemma.

\begin{lemma}
\label{lemma:surgeriesareunobstructed}
Assume that $\alpha$ and $\beta$ are in minimal position. 
Then the surgery cobordisms $S_1$ and $S_2$ constructed above are topologically unobstructed.
\end{lemma}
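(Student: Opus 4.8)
The plan is to verify the two conditions defining topological unobstructedness — incompressibility and the absence of continuous teardrops — for each of the surgery cobordisms $S_1 = S(\alpha,\beta;s_1)$ and $S_2 = S(\wtilde{\alpha},\sigma;s_2)$, using the obstruction criteria established in Section \ref{section:obstruction_dim2}.

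For incompressibility I would invoke Proposition \ref{prop:relativepi2} (equivalently Proposition \ref{prop:relativepi2version2}): $S(\alpha,\beta;s)$ is incompressible if and only if $[\alpha]$ and $[\beta]$ generate a free group of rank $2$ in $\pi_1(\Sigma,s)$. For $S_1$, since $\alpha$ and $\beta$ are in minimal position and meet in exactly two points, they are non-isotopic; an essential simple closed curve on $\Sigma$ represents a primitive conjugacy class, and abelian subgroups of $\pi_1(\Sigma)$ are cyclic when $g\geq 2$, so if $\langle[\alpha],[\beta]\rangle$ were abelian then $\alpha$ and $\beta^{\pm1}$ would be freely homotopic, hence isotopic (Epstein), hence disjoinable — contradicting minimal position. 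Thus $\langle[\alpha],[\beta]\rangle$ is non-abelian, and by Jaco's theorem (Theorem \ref{thm:Jaco}, applicable since $2<2-\chi(\Sigma)$) it is free; being $2$-generated and non-abelian it has rank exactly $2$. For $S_2$, $[\wtilde{\alpha}]$ is conjugate to $[\alpha]^{\mp1}$ and $[\sigma]$ to $[\alpha][\beta]$, so $[\wtilde{\alpha}]$ and $[\sigma]$ generate, up to conjugacy, the same subgroup as $[\alpha]$ and $[\beta]$; hence it too is free of rank $2$ and $S_2$ is incompressible.

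For the absence of teardrops I would use Proposition \ref{prop:teardropcriterion} to reduce to showing that the relevant pairs of curves bound no marked teardrop. For $S_1$, the graph $\alpha\cup\beta$ has exactly two vertices $s_1$ and $q_1$ (both curves being simple), so by Lemma \ref{lemma:countingcorners} any $s_1$-marked teardrop has its unique non-$s_1$ corner at $q_1$. I would then argue that, after discarding superfluous corners, such a marked teardrop is bounded by one sub-arc of $\alpha$ and one sub-arc of $\beta$ joining $s_1$ to $q_1$ — an embedded circle — so it is an immersed bigon for $(\alpha,\beta)$, which by the bigon criterion contradicts minimal position. (Equivalently, smoothing the $s_1$-corners turns a marked teardrop into a teardrop on the immersed curve $\gamma=\alpha\#_{s_1}\beta$ with corner at its self-intersection over $q_1$; by Lemma \ref{lemma:criterion_top_unob} the absence of such a teardrop amounts to the lifts of $\gamma$ to $\wtilde{\Sigma}$ being embedded, which follows from the fact that distinct lifts of $\alpha$ and $\beta$ meet at most once when $\alpha$ and $\beta$ are in minimal position.) For $S_2$, the curve $\sigma=\alpha\#_{s_1}\beta$ carries one additional self-intersection $r$ over $q_1$, so $\wtilde{\alpha}\cup\sigma$ has three vertices and Lemma \ref{lemma:countingcorners} leaves two candidate corner locations; translating the possible marked teardrops back through the surgery at $s_1$ and the perturbation $\wtilde{\alpha}\simeq\alpha^{-1}$, each again produces an embedded circle built from arcs of $\alpha$ and $\beta$ bounding a bigon, contradicting minimal position.

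The main obstacle will be the step ruling out marked teardrops: one must rule out that a marked teardrop wraps around $\alpha$ or $\beta$ extra times, and must carefully apply the bigon criterion (or the universal-cover criterion of Lemma \ref{lemma:criterion_top_unob}) to immersed rather than embedded disks. This is exactly where the hypothesis of only two intersection points makes the analysis elementary, in contrast with the general case, which is handled inductively in Section \ref{section:DT_generalcase}.
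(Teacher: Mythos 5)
Your argument for the incompressibility of $S_1$ is fine (and a legitimate alternative to the paper's), but there are two genuine gaps. First, for $S_2$ you argue that $[\wtilde{\alpha}]$ is conjugate to $[\alpha]^{\mp 1}$ and $[\sigma]$ to $[\alpha][\beta]$, and conclude that they generate ``up to conjugacy the same subgroup as $[\alpha]$ and $[\beta]$''. This does not follow: the two conjugating elements need not coincide, and conjugating the generators separately can change the isomorphism type of the subgroup they generate. To run your line of reasoning you would still have to exclude, e.g., that $[\sigma]$ is conjugate in $\pi_1(\Sigma)$ to a power of $[\alpha]$ (primitivity of $\wtilde{\alpha}$ only pins down the first generator), and your proposal gives no argument for this. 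Second, the marked-teardrop exclusion is asserted rather than proved: the boundary of an $s_1$-marked (resp.\ $s_2$-marked) teardrop may traverse the graph $\alpha\cup\beta$ (resp.\ $\wtilde{\alpha}\cup\sigma$) many times, with arbitrarily many corners at the marked point, and Lemma \ref{lemma:countingcorners} only constrains \emph{algebraic} corner counts. The step ``after discarding superfluous corners, the teardrop is bounded by one sub-arc of $\alpha$ and one sub-arc of $\beta$'' is precisely what needs proof, and for $S_2$ the ``translation back through the surgery and the perturbation'' is not defined at all; you yourself flag this as the main obstacle, but the proposal does not close it.

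For comparison, the paper avoids both difficulties by working inside a closed regular neighborhood $N$ of $\alpha\cup\beta$, a four-holed sphere: minimal position plus the bigon criterion shows $\partial N$ is essential, so $N$ is incompressible in $\Sigma$ (Lemma \ref{lemma:incompressible}), and all computations are then done in the free group $\pi_1(N)$ and in $\pi_2(N,-,-)$. There $[\alpha],[\beta]$ (resp.\ $[\wtilde{\alpha}],[\sigma]$) visibly belong to a basis of the rank-$3$ free group, giving incompressibility of $S_1$ and $S_2$ via Proposition \ref{prop:relativepi2}; and $\pi_2(N,\alpha\cup\beta,s_1)=0$, while $\pi_2(N,\wtilde{\alpha}\cup\sigma,s_2)\cong\Z$ is generated by the obvious triangle with three distinct corners, so Lemma \ref{lemma:countingcorners} rules out marked teardrops without any case analysis of multiply-wrapped boundaries. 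Incorporating this incompressible-neighborhood reduction (or otherwise supplying the missing subgroup and teardrop arguments) is what your proof still needs.
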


Before giving the proof of the lemma, we make some preliminary observations. 
It will be convenient to perform the necessary computations inside the regular neighborhood $N$.
To relate computations in $N$ to those in $\Sigma$, we will use the fact 
that $N$ is incompressible in $\Sigma$ (recall from Definition \ref{def:incompressible}
that this means that the maps $\pi_1(N) \to \pi_1(\Sigma)$ induced by the inclusion are injective).

\begin{lemma}
Assume that $\alpha$ and $\beta$ are in minimal position. 
Then the regular neighborhood $N$ of $\alpha \cup \beta$ is incompressible in $\Sigma$.
\end{lemma}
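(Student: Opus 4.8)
The plan is to show that any essential simple closed curve in the regular neighborhood $N$ remains essential in $\Sigma$, using the classical fact that the only way incompressibility can fail is if $N$ contains a compressing disk, which would force an essential curve on $\bdry N$ to bound a disk in $\Sigma$. Since $N$ is a compact surface with boundary, $\pi_1(N)$ is a free group, and the inclusion-induced map $\pi_1(N) \to \pi_1(\Sigma)$ fails to be injective precisely when some nontrivial element of $\pi_1(N)$ becomes nullhomotopic in $\Sigma$. Because $\Sigma$ is aspherical (its universal cover is the disk, as $g \geq 2$), a nullhomotopic loop bounds a disk, and the standard loop theorem / innermost argument lets us assume this disk is embedded and meets $\bdry N$ in its boundary — i.e.\ $N$ is compressible in the classical $3$-manifold sense applied to surfaces, which for surfaces means that some boundary component of $N$ bounds a disk in $\Sigma$, or equivalently $N$ sits inside a larger subsurface with strictly fewer boundary components of the same genus.

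First I would recall that $N$ is a four-holed sphere (this was noted in the text), so $\bdry N$ has four components, each an embedded circle in $\Sigma$. The map $\pi_1(N) \to \pi_1(\Sigma)$ is injective if and only if none of these four boundary circles is nullhomotopic in $\Sigma$ and no two of them are freely homotopic in such a way as to cobound an annulus that could be capped off — more precisely, by a standard argument (see e.g.\ the discussion of incompressible subsurfaces), a subsurface with boundary in a surface of negative Euler characteristic is incompressible if and only if no component of its boundary bounds a disk in the ambient surface. So the crux reduces to: show that no component of $\bdry N$ is nullhomotopic in $\Sigma$.

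Next I would use the hypothesis that $\alpha$ and $\beta$ are in minimal position. The key input is the bigon criterion: two simple closed curves are in minimal position if and only if they do not cobound a bigon. I would argue that if some boundary component $\delta$ of $N$ were nullhomotopic in $\Sigma$, then $\delta$ bounds an embedded disk $D$ in $\Sigma$ (using asphericity and the fact that simple nullhomotopic curves on surfaces bound disks, cf.\ the argument via Lemma~\ref{lemma:criterion_top_unob} or directly Epstein's theorem \cite{Epstein66}). The curve $\delta$ is built from alternating arcs of $\alpha$ and $\beta$; if $\delta$ passes over at least one intersection point, the disk $D$ it bounds would contain a bigon or a teardrop-type configuration between $\alpha$ and $\beta$, contradicting minimal position (one analyzes the cases according to how many arcs $\delta$ traverses — with only two intersection points the possible boundary patterns are very limited). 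If $\delta$ is a curve parallel to $\alpha$ or to $\beta$ alone, then $\delta$ being nullhomotopic would force $\alpha$ or $\beta$ to be nullhomotopic, contradicting that $\alpha, \beta$ are essential curves. Hence no component of $\bdry N$ bounds a disk, so $N$ is incompressible.

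The main obstacle I anticipate is the careful case analysis of the boundary components of the four-holed sphere $N$: one must enumerate which arcs of $\alpha$ and $\beta$ make up each of the four boundary circles and rule out, for each, that it bounds a disk — translating "$\delta$ bounds a disk" into "$\alpha$ and $\beta$ cobound a bigon" requires an innermost-disk argument to make the bigon embedded and to ensure it is genuinely a bigon rather than a more degenerate configuration. This is where minimal position (equivalently, the bigon criterion from \cite{FarbMargalit}) does the real work, and getting the combinatorics exactly right — especially confirming that the two intersection points of opposite sign cannot conspire to produce a nullhomotopic boundary curve without producing a bigon — is the delicate point. Everything else (freeness of $\pi_1(N)$, asphericity of $\Sigma$, simple nullhomotopic curves bounding disks) is standard surface topology.
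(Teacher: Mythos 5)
Your proposal is correct and follows essentially the same route as the paper: minimal position plus the bigon criterion shows that no boundary circle of $N$ is nullhomotopic in $\Sigma$, and then the standard fact that a codimension-zero submanifold with incompressible boundary (equivalently, non-contractible boundary circles, since $\pi_1(\Sigma)$ is torsion-free) is itself incompressible — which the paper quotes as a separate lemma, citing the appendix of \cite{GanorTanny} — completes the argument.
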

\begin{proof}
Since the curves $\alpha$ and $\beta$ are in minimal position, it follows from the
Bigon Criterion \cite[Proposition 1.7]{FarbMargalit} that they do not bound bigons.
In particular, this implies that the boundary circles of $N$ are non-contractible in $\Sigma$. 
The incompressibility of $N$ is then a consequence of the following well-known fact.

\begin{lemma}
\label{lemma:incompressible}
Let $M$ be a smooth compact manifold and $X \subset M$ a smooth codimension $0$ compact submanifold with boundary. 
If $\bdry X$ is incompressible in $M$, then $X$ is incompressible in $M$.
\end{lemma}

We refer to the appendix of \cite{GanorTanny} for an elegant proof of this lemma.
In the case of surfaces, $\bdry X$ is the union of embedded circles. 
Since the fundamental group of an orientable surface has no torsion, the condition that
$\bdry X$ be incompressible is equivalent to the non-contractibility of each component of $\bdry X$. 
Therefore, we conclude from Lemma \ref{lemma:incompressible} that
the regular neighborhood $N$ above is incompressible.
\end{proof}

The incompressibility of $N$ implies that for a subset $A \subset N$, the inclusions induce
an isomorphism $\pi_2(N, A, *) \to \pi_2(\Sigma, A, *)$. 
This follows from comparison of the homotopy exact sequences of the pairs $(N,A)$ and $(\Sigma, A)$.

We now proceed to the proof that the cobordisms $S_i$ are topologically unobstructed.

\begin{proof}[Proof of Lemma \ref{lemma:surgeriesareunobstructed}]
\hfill

\noindent\textit{$S_1$ is topologically unobstructed.}

The group $\pi_1(N, s_1)$ is free of rank $3$. 
The classes $[\alpha]$ and $[\beta]$ belong to a basis of $\pi_1(N, s_1)$, hence span a free group of rank $2$. 
Since $N$ is incompressible, they also span a free group of rank $2$ in $\pi_1(\Sigma, s_1)$. 
Hence, by Proposition \ref{prop:relativepi2}, $S_1$ is incompressible.

Moreover, we have $\pi_2(N, \alpha \cup \beta, s_1) = 0$, hence also $\pi_2(\Sigma, \alpha \cup \beta, s_1) =0$. 
By Lemma \ref{lemma:countingcorners}, $\alpha$ and $\beta$ do not bound $s_1$-marked teardrops. 
We conclude from Proposition \ref{prop:teardropcriterion} that $S_1$ does not bound teardrops.

\noindent\textit{$S_2$ is topologically unobstructed.}

Again, it can be checked directly that $[\wtilde{\alpha}]$ and $[\sigma]$ belong to a basis of $\pi_1(N,s_2)$.
By the same argument, it follows that $S_2$ is incompressible.

It remains to show that $\wtilde{\alpha}$ and $\sigma$ do not bound $s_2$-marked teardrops.
The group $\pi_2(\Sigma, \wtilde{\alpha} \cup \sigma, s_2) \iso \pi_2(N, \wtilde{\alpha} \cup \sigma, s_2)$ is isomorphic to $\Z$ and is spanned by the class of the obvious triangle with boundary on $\widetilde{\alpha} \cup \sigma$. 
Since this triangle has three distinct corners, Lemma \ref{lemma:countingcorners} ensures 
that no classes in $\pi_2(\Sigma, \wtilde{\alpha} \cup \sigma, s_2)$ can be represented by $s_2$-marked teardrops.

\end{proof}

\subsection{Proof of Theorem \ref{thm:dehntwistaction}: general case}
\label{section:DT_generalcase}

We now show how to deduce the general case of Theorem \ref{thm:dehntwistaction} from the special case of Proposition \ref{prop:dehntwist2pts}.
The proof uses an inductive argument based on the following lemma, due to Lickorish.

\begin{lemma}[Lickorish \cite{Lick64}]
\label{lemma:lickorish_trick}
Let $\alpha$ and $\beta$ be simple curves in general position with $N \geq 2$ intersection points.
In the case $N = 2$, assume that the intersections have the same sign.
Then there exists a simple curve $\gamma$ with the following properties:
\begin{enumerate}[label = (\roman*), font=\normalfont]
	\item $\gamma$ intersects $\beta$ transversely in one point or two points of opposite signs.
	\item $\gamma$ intersects $\alpha$ transversely in at most $N-1$ points.
	\item The Dehn twist $T_{\gamma} \beta$ is isotopic to a curve $\delta$ that intersects $\alpha$ transversely in at most $N-1$ points.
\end{enumerate}
\end{lemma}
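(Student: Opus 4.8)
The goal is the Lickorish-style lemma which, given two simple curves $\alpha, \beta$ in general position with $N \geq 2$ intersection points (and in the case $N=2$ with intersections of equal sign), produces an auxiliary curve $\gamma$ satisfying (i)–(iii). I would begin by recalling the standard combinatorial picture from surface topology: orient $\alpha$ and walk along it, recording the signed sequence of intersections with $\beta$ that one meets. Since the intersections are not all of the same sign when $N > 2$ (and not all of the same sign is automatic once $N$ is large unless $\alpha,\beta$ are not in minimal position, but we do \emph{not} assume minimal position here), I would distinguish cases according to whether there exist two \emph{adjacent} intersection points along $\alpha$ (i.e. consecutive in the cyclic order on $\alpha$) that have opposite signs on $\beta$, or along $\beta$. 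The key elementary observation (Lickorish's trick, cf. \cite[Section 3.1.1]{FarbMargalit}) is that if $p, q$ are two intersection points of $\alpha$ with $\beta$ that are adjacent along $\beta$ and have opposite signs, then one can build $\gamma$ from the arc of $\beta$ between $p$ and $q$ together with one of the two arcs of $\alpha$ between $p$ and $q$, pushed off slightly to be embedded. This $\gamma$ meets $\beta$ in exactly two points of opposite sign (the pushed-off copies near $p$ and $q$), giving (i); it meets $\alpha$ in a controlled number of points, which one arranges to be $\leq N-1$ by choosing the arc of $\alpha$ that contains fewer of the remaining intersections with $\beta$ — this gives (ii).

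**Main steps.** First I would fix notation and record the cyclic sequences of signs of intersection points as read along $\alpha$ and along $\beta$. Second, I would prove the \textbf{key combinatorial claim}: under the hypotheses (in particular, in the case $N=2$ the two intersections have the \emph{same} sign, so we are \emph{not} in that degenerate subcase, and for $N\geq 3$ one can always find the required adjacency), there exist two points of $\alpha \cap \beta$ that are adjacent along \emph{one} of the two curves and that contribute oppositely to the algebraic count along \emph{the other}. This is where one must be careful: the statement of the lemma allows $\alpha \cdot \beta$ to be anything, so "opposite signs" must be read correctly — it is the \emph{local} contribution at a point, read with a consistent convention, that matters, and the existence of an oppositely-signed adjacent pair is what lets the surgered curve $\gamma$ intersect $\beta$ algebraically zero (two points, opposite signs) or in a single point. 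Third, with such a pair $p,q$ chosen, I would explicitly construct $\gamma$: take the sub-arc $b \subset \beta$ from $p$ to $q$ (the one not passing through the other intersection points, if possible; otherwise the shorter one in terms of $\alpha$-crossings) and a sub-arc $a \subset \alpha$ from $q$ back to $p$, smooth the two corners, and push $\gamma$ off $\alpha \cup \beta$ into a regular neighborhood so that it becomes simple and transverse to both $\alpha$ and $\beta$. Count intersections to verify (i) and (ii). Fourth, for (iii), I would observe that $T_\gamma \beta$ is obtained from $\beta$ by cutting along the annular neighborhood of $\gamma$ and re-gluing with a twist; since $\gamma$ was built largely out of an arc of $\beta$ itself, the twisted curve $T_\gamma\beta$ can be isotoped to agree with $\beta$ outside a neighborhood of $a \subset \alpha$, and a direct count of crossings of this isotoped representative with $\alpha$ gives $\leq N-1$. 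The cleanest way to see the bound in (iii) is to note that $T_\gamma \beta = T_\gamma(\beta)$ and $\gamma \cdot \beta \in \{0, \pm 1\}$ or the two-point opposite-sign case, so the "new" crossings with $\alpha$ created by the twist are compensated by crossings removed near $p,q$; I would make this precise by drawing the local models in a neighborhood of the annulus around $\gamma$.

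**Expected main obstacle.** The genuinely delicate step is the combinatorial claim in Step 2 — establishing that an oppositely-signed adjacent pair (along $\alpha$ or along $\beta$) always exists under exactly the stated hypotheses, and in particular understanding why the case $N=2$ with \emph{equal} signs must be excluded (there the only candidate pair has the same sign, so the construction would instead produce a $\gamma$ with $\gamma\cdot\beta = \pm 2$, violating (i)); this excluded case is of course precisely the one already handled by Proposition \ref{prop:dehntwist2pts}, which is why the inductive scheme of Section \ref{section:DT_generalcase} bottoms out there. A secondary subtlety is bookkeeping for (ii) and (iii) simultaneously: one must choose the arc $a$ of $\alpha$ so that \emph{both} $\gamma$ and $T_\gamma\beta$ stay within the $N-1$ bound, and a priori the optimal choice for one might not be optimal for the other; I would resolve this by showing that the total number of $\alpha$-$\beta$ crossings distributed between the two arcs of $\alpha$ determined by $p,q$ is $N-2$ (the points $p,q$ themselves being excluded), so the arc with the fewer crossings has at most $\lfloor (N-2)/2 \rfloor \leq N-2$ of them, and then accounting for the one or two crossings $\gamma$ and $T_\gamma\beta$ pick up near $p$ and $q$ still leaves us at $\leq N-1$. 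Once the combinatorial claim is in hand, the rest is a routine, if somewhat fiddly, exercise in drawing pictures of curves in a neighborhood of $\alpha \cup \beta$, entirely in the spirit of \cite{Lick64} and \cite[Chapter 3]{FarbMargalit}.
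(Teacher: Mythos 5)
There is a genuine gap, and it sits exactly at the step you yourself flag as the delicate one. Your key combinatorial claim --- that under the stated hypotheses there always exist two intersection points adjacent along one curve whose contributions have opposite signs --- is false, and the way you have read the hypotheses is inverted. The lemma \emph{includes} the case $N=2$ with intersections of the \emph{same} sign (it is the case $N=2$ with \emph{opposite} signs that is excluded, because that is precisely the situation handled by Proposition \ref{prop:dehntwist2pts}). In the $N=2$ same-sign case there is no oppositely-signed pair at all, so your construction never gets started; and the same failure occurs for any $N\geq 3$ in which all intersections carry the same sign (e.g.\ $\alpha\cdot\beta = N$ with every crossing positive), contradicting your assertion that "for $N\geq 3$ one can always find the required adjacency." Consequently your scheme, which only ever produces a $\gamma$ meeting $\beta$ in two points of opposite signs, cannot cover the situations in which conclusion (i) is supposed to be realized by the \emph{one-point} alternative.

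The correct dichotomy, which is Lickorish's and the one used in the paper, is taken along a single curve: either (a) there are two intersections consecutive along $\alpha$ having the \emph{same} sign, in which case one builds $\gamma$ from the arc of $\alpha$ between them together with an arc of $\beta$, and this $\gamma$ meets $\beta$ transversely in exactly one point; or (b) there are three intersections consecutive along $\alpha$ with alternating signs, in which case the construction yields a $\gamma$ meeting $\beta$ in two points of opposite signs. One of (a), (b) always holds when $N\geq 2$ \emph{except} when $N=2$ with opposite signs, which is exactly why that case is excluded by hypothesis (and dealt with separately by Proposition \ref{prop:dehntwist2pts} as the base of the induction in Section \ref{section:action_mcg}). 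Your remaining steps (pushing off to get a simple transverse $\gamma$, and the crossing count for $T_{\gamma}\beta$ in a neighborhood of the twisting annulus giving (ii) and (iii)) are in the right spirit and would go through once the case analysis is repaired, but as written the argument is missing the same-sign construction entirely and rests on a false existence claim; see Lemma 2 of \cite{Lick64} and Figure \ref{fig:lickorish_trick} for the two local models.
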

\begin{proof}
The idea of the proof is that $\alpha$ and $\beta$ either have 
\begin{enumerate}[label = (\alph*)] 
	\item two intersections that are consecutive along $\alpha$ and have the same sign, or
	\item three intersections that are consecutive along $\alpha$ and have alternating signs.
\end{enumerate} 
In each case, a curve $\gamma$ satisfying the above conditions is represented in Figure \ref{fig:lickorish_trick}.
We refer to the proof of Lemma 2 of \cite{Lick64} for more details.

\begin{figure}
	\centering
	\includegraphics[width=\textwidth]{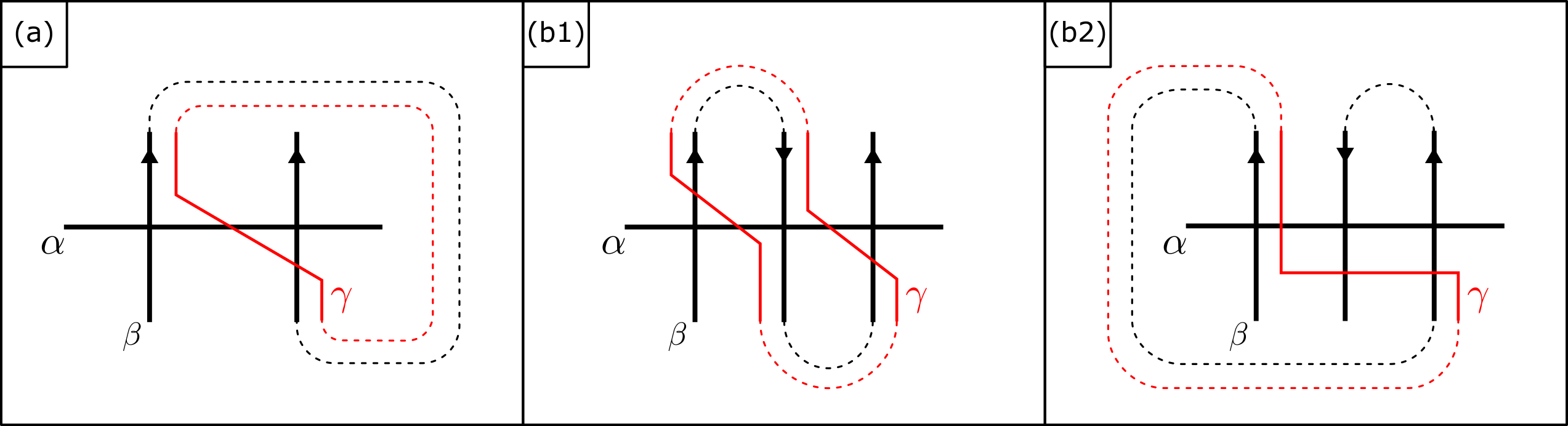}
	\caption{The choice of the curve $\gamma$ in the proof of Lemma \ref{lemma:lickorish_trick}.}
	\label{fig:lickorish_trick}
\end{figure}
\end{proof}

\begin{proof}[Proof of Theorem \ref{thm:dehntwistaction}.]
The proof is by induction over the geometric intersection number $N$ of the curves. 
By moving $\alpha$ and $\beta$ by isotopies if necessary, we may assume that they are in minimal position.

We start by handling a few base cases.
For $N=0$, Formula \eqref{eq:dehntwistformula} follows from the fact that $T_{\alpha} \beta$ is isotopic to $\beta$.

For $N=1$,
Formula \eqref{eq:dehntwistformula} follows from the fact that, in this case, the Dehn twist $T_{\alpha} \beta$ 
is isotopic either
to the surgery $\alpha \# \beta$ if $\alpha \cdot \beta > 0$, 
or to the surgery $\alpha^{-1}\# \beta$ if $\alpha \cdot \beta < 0$. 
The resulting surgery cobordism is embedded and therefore bounds no teardrops. 
Moreover, it follows from Corollary \ref{cor:nonzerointersectionimpliesnodisks} that the cobordism is incompressible.
Hence, the surgery cobordism is topologically unobstructed, and therefore unobstructed.

In the case where $N=2$ and $\alpha \cdot \beta = 0$, the
theorem reduces to Proposition \ref{prop:dehntwist2pts}.

We deal with the remaining cases by induction, using Lemma \ref{lemma:lickorish_trick} to introduce auxiliary curves whose role is to
reduce the number of intersections.
Suppose then that $\alpha$ and $\beta$ have geometric intersection number $N \geq 2$, and in the case $N=2$ that $\alpha \cdot \beta \neq 0$.
As above, we assume that $\alpha$ and $\beta$ are in minimal position.
Let $\gamma$ and $\delta$ be curves satisfying the conclusion of Lemma \ref{lemma:lickorish_trick}.

We claim that $\delta$ and $\gamma$ are non-contractible.
Indeed, the curve $\delta$ is isotopic to $T_{\gamma} \beta$, which is non-contractible since $\beta$ is non-contractible.
To see that $\gamma$ is non-contractible, note that the geometric intersection number of $T_{\gamma} \beta$ and $\alpha$ is at most $N-1$, 
which implies that $T_{\gamma} \beta$ is not isotopic to $\beta$. 
In turn, this implies that $T_{\gamma}$
is not isotopic to the identity, hence that $\gamma$ is non-contractible.

Using the $N=1$ and $N=2$ cases proven before, we have $[T_{\gamma} \beta] = [\beta] + (\gamma \cdot \beta) [\gamma]$.
Since $T_{\gamma} \beta$ is isotopic to $\delta$, this gives the relation 
\begin{equation}
\label{eq:relationdelta}
[\delta] = [\beta] + (\gamma \cdot \beta) [\gamma].
\end{equation}
Applying the Dehn twist along $\alpha$, we then obtain:
\[
T_{\alpha} [\beta] = T_{\alpha} [\delta] - (\gamma \cdot \beta) T_{\alpha} [\gamma].
\]
Since $\alpha$ intersects $\delta$ and $\gamma$ in less than $N$ points, we can use the induction hypothesis and deduce that
\[
T_{\alpha} [\beta] = [\delta] + (\alpha \cdot \delta) [\alpha] - (\gamma \cdot \beta) \left([\gamma] + (\alpha \cdot \gamma)[ \alpha] \right).
\]
Using relation \eqref{eq:relationdelta} again, this last equality simplifies to $T_{\alpha}[\beta] = [\beta] + (\alpha \cdot \beta) [\alpha]$.
\end{proof}

\section{Computation of \texorpdfstring{$\Gunob(\Sigma)$}{the unobstructed cobordism group}}
\label{section:computation}

In this section, we combine the results of Sections \ref{section:obstruction_dim2} -- \ref{section:action_mcg} to complete the computation of 
the cobordism group $\Gunob(\Sigma)$.
We will show that the map
\[
\Gunob(\Sigma) \to H_1(S \Sigma; \Z) \oplus \R  
\]
defined in Section \ref{section:invariants} is an isomorphism.

Given the results of Sections \ref{section:invariants} -- \ref{section:action_mcg}, 
the structure of the proof is identical to the computation of $K_0 \DFuk(\Sigma)$ in Section 7 and Section 8 of \cite{Abouzaid08}.
The only new ingredient that is needed is the verification that the surgeries appearing in Lemma \ref{lemma:pair_of_pants_relation} below, which corresponds to Lemma 7.6 in \cite{Abouzaid08}, give rise to unobstructed cobordisms.
In the rest of this section, we recall Abouzaid's computation and indicate the adjustments needed to deal with obstruction.

\begin{figure}
	\centering
	\includegraphics[width = 0.8\textwidth]{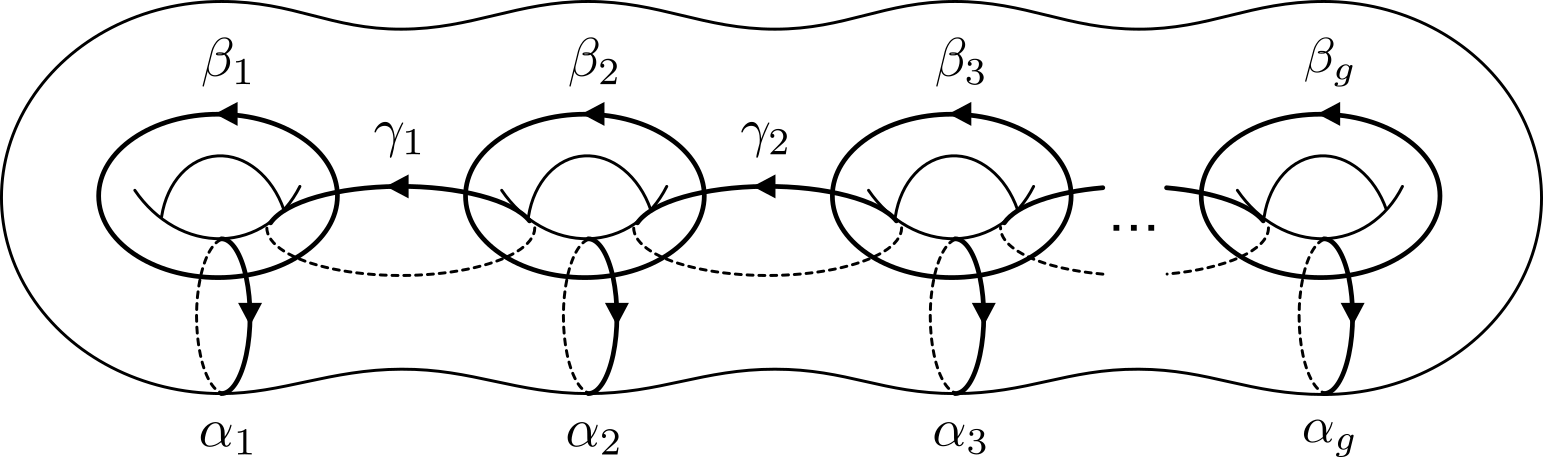}
	\caption{The Lickorish generators of $\Mcg(\Sigma)$.}
	\label{fig:Lickorish}
\end{figure}

\subsection{Generators of $\Gred(\Sigma)$}
\label{subsection:generators_reduced_group}

The first step is to determine a simple set of generators for $\Gred(\Sigma)$.
These generators will be obtained from a set of generators of the mapping class group of $\Sigma$.
By a well-known theorem of Lickorish \cite{Lick64}, $\Mcg(\Sigma)$ is generated by the Dehn twists about the $3g-1$ curves 
$\alpha_1, \ldots, \alpha_g$, $\beta_1, \ldots, \beta_g$ and $\gamma_1, \ldots, \gamma_{g-1}$ that are represented in Figure \ref{fig:Lickorish}.

To describe the generators of $\Gred(\Sigma)$, we will first need the following lemma.

\begin{lemma}
\label{lemma:class_of_boundary}
Suppose that $\gamma$ and $\gamma'$ are the oriented boundaries of diffeomorphic compact surfaces in $\Sigma$.
Then $[\gamma] = [\gamma']$ in $\Gred(\Sigma)$.
\end{lemma}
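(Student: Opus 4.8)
The plan is to reduce the computation of $[\gamma]=[\partial P]$ in $\Gred(\Sigma)$ to the case of a pair of pants by cutting $P$ along a pants decomposition, and then to appeal to the pair-of-pants relation of Lemma \ref{lemma:pair_of_pants_relation}. The point is that the contribution of the interior curves of the decomposition cancels for purely formal reasons, leaving an expression that only sees the Euler characteristic of $P$.

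First I would fix a pants decomposition of $P$: a finite collection of disjoint, pairwise non-isotopic simple closed curves $d_1,\dots,d_r$ in the interior of $P$, each non-contractible in $\Sigma$, cutting $P$ into pairs of pants $Q_1,\dots,Q_m$ (disc, annulus, and closed components of $P$ are handled trivially and set aside). Orienting each $\partial Q_i$ as the boundary of $Q_i$, every component of $\partial P$ occurs exactly once in $\coprod_i\partial Q_i$, with its orientation as a boundary of $P$, while each interior curve $d_j$ occurs exactly twice, with opposite induced orientations. Since reversing the orientation of a curve is the inverse operation in the cobordism group, the interior contributions cancel and
\[
[\partial P]\;=\;\sum_{i=1}^{m}[\partial Q_i]\qquad\text{in }\Gred(\Sigma).
\]

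Next I would invoke Lemma \ref{lemma:pair_of_pants_relation}, according to which the class $[\partial Q]\in\Gred(\Sigma)$ of the oriented boundary of an embedded pair of pants $Q\subset\Sigma$ with non-contractible boundary is one and the same element $\mathbf p\in\Gred(\Sigma)$, independently of $Q$. Given this, $\sum_i[\partial Q_i]=m\,\mathbf p$; since Euler characteristic is additive over gluings along circles, $m=-\chi(P)$, hence $[\partial P]=-\chi(P)\,\mathbf p$. This expression involves only the diffeomorphism type of $P$, so from $P\cong P'$ (whence $\chi(P)=\chi(P')$) we get $[\gamma]=[\partial P]=[\partial P']=[\gamma']$.

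The hard part is the input from Lemma \ref{lemma:pair_of_pants_relation}: that $\mathbf p$ does not depend on how the pair of pants sits in $\Sigma$. The mapping class group action is not enough for this — although every Dehn twist $T_e$ fixes the class of a bounding (hence nullhomologous) multicurve, since by Theorem \ref{thm:dehntwistaction} one has $T_e[\gamma]=[\gamma]+(e\cdot[\gamma])[e]=[\gamma]$, two embedded pairs of pants need not be related by an ambient diffeomorphism (for instance one whose three boundary curves are non-separating versus one with a separating boundary curve). Bridging these configurations requires Lagrangian surgery cobordisms, and the genuinely new ingredient of this section is the verification that those surgeries can be carried out so as to produce cobordisms that are unobstructed in the sense of Section \ref{section:fukaya_cob}; this is done via the topological criteria of Sections \ref{section:top_unob_cobordisms}--\ref{section:obstruction_dim2} together with Proposition \ref{prop:concatenation_unobstructed}. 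Two minor points remain: choosing the pants decomposition of $P$ with all curves non-contractible in $\Sigma$ (routine surface topology), and, as a sanity check, observing that $\mathbf p$ maps to the single class $-[\sigma]\in H_1(S\Sigma;\Z)$ under the winding and homology invariants of Section \ref{section:invariants}, consistent with its being well defined.
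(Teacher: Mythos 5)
Your reduction to pairs of pants has a genuine circularity problem in the paper's logical order. The input you lean on --- that the boundary class of an embedded pair of pants is one element $\mathbf{p}\in\Gred(\Sigma)$ independent of how it sits in $\Sigma$ --- is Lemma \ref{lemma:pair_of_pants_relation}, whose very statement is an equality with the class $T$ of torus-bounding curves, and $T$ is only well defined \emph{because of} Lemma \ref{lemma:class_of_boundary}; its proof likewise invokes the definition of $T$ (and, in the genus-$2$ and isotopic-curve branches, well-definedness again). So you are using a later result that presupposes the lemma you are proving, at least in the one-holed-torus case, and you never supply an independent argument for the independence of $\mathbf{p}$ --- which you yourself identify as ``the hard part.'' Even granting the pair-of-pants relation as a black box, your decomposition step is not covered by it: a pants decomposition of $P$ may contain pieces whose boundary curves include two or three curves that are separating in $\Sigma$ (the lemma assumes at most one), and, unavoidably when $P$ is a one-holed torus, a piece both of whose cut boundary circles are glued to the \emph{same} curve of $\Sigma$, so the piece is not an embedded pair of pants in $\Sigma$ at all; in that base case the statement you would need is exactly the well-definedness of $T$, i.e.\ the lemma itself.

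The paper's proof is the direct mapping class group argument that you record and then set aside as insufficient. It is sufficient here: $\gamma$ and $\gamma'$ are single curves, so ``bounding diffeomorphic compact subsurfaces'' pins down the topological type of the splitting of $\Sigma$ on both sides, and the change of coordinates principle gives an orientation-preserving diffeomorphism of $\Sigma$ taking $\gamma$ to $\gamma'$. Writing it as an ambient isotopy composed with Dehn twists and applying Theorem \ref{thm:dehntwistaction}, each twist fixes the class because $\gamma$ (and each of its successive images) is null-homologous, so all correction terms $(e\cdot\gamma)[e]$ in \eqref{eq:dehntwistformula} vanish; isotopies act trivially on $\Gred(\Sigma)$ by construction. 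Your objection that two embedded pairs of pants need not be ambiently diffeomorphic is correct, but it concerns Lemma \ref{lemma:pair_of_pants_relation}, not this lemma, where the bounded subsurface has connected boundary. The Euler-characteristic bookkeeping you propose is essentially the content of the later Lemma \ref{lemma:class_separating_curve}, which the paper proves \emph{after} this lemma and the pair-of-pants relation, in the correct logical order.
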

\begin{proof}
There is an orientation-preserving diffeomorphism of $\Sigma$ that takes $\gamma$ to $\gamma'$. 
This diffeomorphism can be written
as the composition of an ambient isotopy and a sequence of symplectic Dehn twists (see Section \ref{section:def_dehn_twists}). 
It follows from the Dehn twist relation \eqref{eq:dehntwistformula} that $[\gamma] = [\gamma']$ in $\Gred(\Sigma)$. 
\end{proof}

We let $T \in \Gred(\Sigma)$ be the class represented by curves bounding genus $1$ surfaces in $\Sigma$.
This is well-defined by Lemma \ref{lemma:class_of_boundary}. 
We can now state the main result of this section.

\begin{proposition}
\label{prop:generators}
$\Gred(\Sigma)$ is generated by $T$ and the Lickorish generators $[\alpha_i]$ and $[\beta_i]$ for $i=1, \ldots, g$. 
\end{proposition}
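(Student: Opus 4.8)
The plan is to adapt Abouzaid's argument \cite[Section~7]{Abouzaid08}, with the Dehn twist relation of Theorem~\ref{thm:dehntwistaction} as the principal tool. Since $\Gred(\Sigma)$ is generated by the classes of simple closed curves, and every such curve is either non-separating or separating, it suffices to treat these two families. For the non-separating curves, let $G_0 \subseteq \Gred(\Sigma)$ be the subgroup generated by the classes of the $3g-1$ Lickorish curves of Figure~\ref{fig:Lickorish}. The point is that $G_0$ is $\Mcg(\Sigma)$-invariant: by Lickorish's theorem $\Mcg(\Sigma)$ is generated by the Dehn twists $T_d$ about these curves, and for Lickorish curves $d,\delta$ the element $T_d[\delta] = [\delta] + (d\cdot\delta)[d]$ again lies in $G_0$. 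As all non-separating simple closed curves lie in one $\Mcg(\Sigma)$-orbit (the change of coordinates principle, \cite[Section~1.3]{FarbMargalit}), every such class is of the form $\phi[\alpha_1] \in G_0$. It then remains to eliminate the auxiliary generators $[\gamma_j]$: each $\gamma_j$ meets the core curve it is attached to (say $\alpha_j$) transversally in a single point, and one of the two Lagrangian surgeries at that point is isotopic to the adjacent core curve $\alpha_{j+1}$ --- smoothing $\gamma_j$ along $\alpha_j$ removes the strand of $\gamma_j$ winding around the $j$-th handle. Because the geometric intersection number is one, the surgery domain has no double points, so, exactly as in the base case $N=1$ of the proof of Theorem~\ref{thm:dehntwistaction}, the surgery cobordism is embedded, hence quasi-exact, and it is incompressible by Corollary~\ref{cor:nonzerointersectionimpliesnodisks}. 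This yields a relation expressing $[\gamma_j]$ through $[\alpha_j]$, $[\alpha_{j+1}]$ (and possibly a multiple of $T$), so $G_0 \subseteq \langle T, [\alpha_i], [\beta_i]\rangle$.

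For the separating curves, Lemma~\ref{lemma:class_of_boundary} shows that the class of a curve bounding a compact subsurface of genus $h$ depends only on $h$; call it $T_h$, so that $T_1 = T$. Decomposing a genus-$h$ subsurface with one boundary circle as the union of a pair of pants, a genus-$1$ piece and a genus-$(h-1)$ piece, its three interior boundary curves cobound an embedded pair of pants, so the pair-of-pants relation of Lemma~\ref{lemma:pair_of_pants_relation} gives $T_h = T_{h-1} + T$, and inductively $T_h = h\,T$. (On the closed surface $\Sigma$ the remaining ``genus $g$'' case corresponds to a contractible curve, which is not an object of $\Gunob(\Sigma)$.) Combined with the previous paragraph, this shows that every generator of $\Gred(\Sigma)$ lies in $\langle T, [\alpha_i], [\beta_i] : 1 \le i \le g\rangle$, which is the assertion.

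The one genuinely substantial point is the pair-of-pants relation of Lemma~\ref{lemma:pair_of_pants_relation}: when the curves $\gamma_1,\gamma_2,\gamma_3$ cobound an embedded pair of pants but are pairwise disjoint, realizing $[\gamma_3] = [\gamma_1] + [\gamma_2]$ by a Lagrangian cobordism forces one to perturb two of the curves so that they meet in two points of opposite sign, perform an iterated Lagrangian surgery, and then verify that the resulting concatenation is quasi-exact. This is where the obstruction criteria of Section~\ref{section:obstruction_dim2} --- Propositions~\ref{prop:teardropcriterion} and~\ref{prop:relativepi2version2} and Lemma~\ref{lemma:countingcorners} --- together with the concatenation result of Proposition~\ref{prop:concatenation_unobstructed} do the work, and where one must also check that the intermediate surgered curves remain non-contractible and topologically unobstructed. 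The remaining ingredients --- the change of coordinates principle, Lickorish's generation theorem, and the bookkeeping in iterating Theorem~\ref{thm:dehntwistaction} --- are standard and pose no difficulty.
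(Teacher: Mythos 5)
Your overall skeleton (non-separating curves via the $\Mcg(\Sigma)$-orbit of $\alpha_1$ and the Dehn twist formula of Theorem~\ref{thm:dehntwistaction}, separating curves via boundary relations) is the same as the paper's, but the two steps where you depart from it are both wrong. First, the elimination of the auxiliary curves $[\gamma_j]$: in the Lickorish configuration of Figure~\ref{fig:Lickorish} the curve $\gamma_j$ is \emph{disjoint} from $\alpha_j$ and $\alpha_{j+1}$ (the three curves, suitably oriented, cobound an embedded pair of pants; this is exactly what feeds Lemma~\ref{lemma:pair_of_pants_relation}), so your claim that $\gamma_j$ meets $\alpha_j$ transversally in one point is not the right picture, and the surgery you propose cannot exist in any picture: if a surgery of $\gamma_j$ with $\alpha_j^{\pm 1}$ at a single transverse intersection point were isotopic to $\alpha_{j+1}$, then $[\gamma_j]=[\alpha_{j+1}]\mp[\alpha_j]$ in $H_1(\Sigma;\Z)$, whence $\gamma_j\cdot\alpha_j=0$, contradicting the assumed geometric intersection number one. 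The correct elimination is the pair-of-pants relation itself, $-[\alpha_j]+[\gamma_j]+[\alpha_{j+1}]=T$, which is how the paper concludes.

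Second, your treatment of separating curves misapplies Lemma~\ref{lemma:pair_of_pants_relation}: in your decomposition all three boundary circles of the pair of pants (the outer curve and the boundaries of the genus-$1$ and genus-$(h-1)$ pieces) are separating in $\Sigma$, violating the hypothesis that at most one of the three curves be separating, so the relation cannot be invoked for that configuration. Moreover the recursion you extract, $T_h=T_{h-1}+T$ and hence $T_h=hT$, is false: a careful orientation count (even granting the relation) gives $T_h=T_{h-1}+2T$, and the correct statement is Lemma~\ref{lemma:class_separating_curve}, $[\gamma]=-\chi(S)\,T=(2h-1)T$, proved by splitting off a genus-one piece with \emph{two} boundary circles and cutting it into two pairs of pants along non-separating curves so as to stay within the lemma's hypotheses. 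For bare generation any expression of separating classes as multiples of $T$ would suffice, but your derivation does not establish one, and the precise coefficient (together with Corollary~\ref{cor:order_of_T}) is needed later in the computation of $\Gred(\Sigma)$, so this is not a harmless slip.
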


The proof of the proposition relies on Lemma \ref{lemma:pair_of_pants_relation} and Lemma \ref{lemma:class_separating_curve} below.

\begin{lemma}
\label{lemma:pair_of_pants_relation}
Suppose that the curves $\sigma_1$, $\sigma_2$ and $\sigma_3$ form the oriented boundary of a pair of pants.
Moreover, assume that at most one of the three curves is separating.
Then, in $\Gred(\Sigma)$ there is the relation
\begin{equation}
\label{eq:pair_of_pants_relation}
[\sigma_1] + [\sigma_2] + [\sigma_3] = T.
\end{equation}
\end{lemma}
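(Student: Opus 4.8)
The plan is to realize the relation \eqref{eq:pair_of_pants_relation} by an explicit chain of Lagrangian surgery cobordisms, following the scheme of Lemma 7.6 of \cite{Abouzaid08} and the surgery constructions of \cite{Perrier19}, and to supply the unobstructedness verification which is the new ingredient here. After relabelling I may assume that $\sigma_1$ and $\sigma_2$ are non\nobreakdash-separating, since at most one of the three curves is separating. Fix an embedded pair of pants $P\subset\Sigma$ with $\partial P=\sigma_1\cup\sigma_2\cup\sigma_3$. The goal is to construct a Lagrangian cobordism with ends $(\sigma_1,\sigma_2,\sigma_3)$ and a single curve $c$ that bounds a genus\nobreakdash-$1$ subsurface of $\Sigma$: granting this, Lemma \ref{lemma:class_of_boundary} gives $[c]=T$ in $\Gred(\Sigma)$ and the cobordism gives $[\sigma_1]+[\sigma_2]+[\sigma_3]=[c]=T$.

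To build this cobordism I would introduce an auxiliary non\nobreakdash-separating simple curve $\delta$, carried by a handle of $\Sigma$ in the complement of the interior of $P$ and crossing $\sigma_1$ and $\sigma_2$ transversally, and then perform a short sequence of surgeries (surgering $\sigma_1$ with $\delta$, then the result with $\sigma_2$, and so on), choosing the surgery handles large enough that the intermediate curves are embedded where this is needed. The reason for routing through $\delta$, rather than band\nobreakdash-summing the disjoint cuffs $\sigma_1$ and $\sigma_2$ directly, is precisely to avoid the bigon (an $s$\nobreakdash-marked teardrop, in the sense of Definition \ref{def:marked_teardrop}) that an innermost clasp between $\sigma_1$ and $\sigma_2$ would create; it is also through this detour that the class $T$, and not $0$, arises. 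Concatenating the traces of the surgeries produces an immersed cobordism $(\sigma_1,\sigma_2,\sigma_3)\cob(c)$, and one checks directly that $c$ is isotopic to the boundary of a genus\nobreakdash-$1$ subsurface.

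It then remains to check that each surgery cobordism in the chain is topologically unobstructed, so that Proposition \ref{prop:concatenation_unobstructed} turns the concatenation into a quasi-exact cobordism and the surgery relations $[\alpha\#\beta]=[\alpha]+[\beta]$ hold in $\Gunob(\Sigma)$. As in Lemma \ref{lemma:surgeriesareunobstructed}, I would carry out these verifications inside a regular neighborhood $N$ of the curves involved: since the curves are put in minimal position and are pairwise non\nobreakdash-isotopic, $N$ is incompressible in $\Sigma$ by Lemma \ref{lemma:incompressible}, so that the relevant $\pi_1$ and $\pi_2$ computations may be done in $N$. Incompressibility of each surgery cobordism follows from Proposition \ref{prop:relativepi2}, and in the steps where two curves meet with nonzero algebraic intersection directly from Corollary \ref{cor:nonzerointersectionimpliesnodisks}; the absence of teardrops follows from Proposition \ref{prop:teardropcriterion} combined with the corner count of Lemma \ref{lemma:countingcorners} applied to the (free, explicitly described) relative homotopy groups $\pi_2(N,\,\cdot\,)$.

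The main obstacle is twofold. First, one must choose $\delta$ and the surgery points so that \emph{simultaneously} every intermediate curve meets the next one minimally with no bigons, the relevant subgroups of $\pi_1$ are free of the expected rank (so Proposition \ref{prop:relativepi2} applies) with no class of the relative $\pi_2$ meeting the corner conditions of Lemma \ref{lemma:countingcorners}, and the terminal curve $c$ genuinely bounds a genus\nobreakdash-$1$ surface; this requires care with orientations, as the naive variant would instead produce a curve isotopic to $\sigma_3$ with reversed orientation and the false relation $[\sigma_1]+[\sigma_2]+[\sigma_3]=0$. Second, when one of the $\sigma_i$ is separating it is disjoint from every non\nobreakdash-separating curve and cannot serve as the handle curve $\delta$; in that case one leaves the separating curve untouched, builds the cobordism from the two non\nobreakdash-separating cuffs together with the auxiliary curves, and identifies $c$ as before. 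I expect the winding\nobreakdash-number and holonomy identities of Lemma \ref{lemma:holonomy+winding_bounding_curves} to provide a useful consistency check on the orientation bookkeeping at each step.
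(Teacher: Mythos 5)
Your proposal is a scheme rather than a proof: the actual chain of surgeries, the identification of the terminal curve, and the unobstructedness verifications --- which are the substance of the lemma --- are all deferred to ``obstacles'' that you acknowledge but do not resolve. Beyond incompleteness, two concrete points fail. First, with the auxiliary curve $\delta$ meeting only $\sigma_1$ and $\sigma_2$, every curve produced by your surgeries lies in a neighborhood of $\sigma_1 \cup \sigma_2 \cup \delta$ and never meets $\sigma_3$ (the three cuffs are pairwise disjoint), so $\sigma_3$ is never consumed and the relation you would obtain does not involve $[\sigma_3]$ at all; moreover $\delta$ is fed in only once, so it remains an end of the concatenated cobordism and $[\delta]$ survives in the relation unless you also introduce a Hamiltonian copy of $\delta^{-1}$ and cancel it, as in Section~\ref{section:DT_specialcase} --- hence the asserted cobordism $(\sigma_1,\sigma_2,\sigma_3) \cob (c)$ does not come out of the surgeries you describe. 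Second, your treatment of the separating case (``leave the separating curve untouched'') proves at best $[\sigma_1]+[\sigma_2]=[c]=T$, which is compatible with \eqref{eq:pair_of_pants_relation} only if $[\sigma_3]=0$ in $\Gred(\Sigma)$; this is false, since by Lemma \ref{lemma:holonomy+winding_bounding_curves} the image of $[\sigma_3]$ under the map $\Gred(\Sigma) \to H_1(S\Sigma;\Z)$ is $\chi(S)$ times the fiber class, where $S$ is the subsurface it bounds, and $\chi(S)$ is odd while the fiber class has even order $|\chi(\Sigma)|$.

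The missing mechanism is precisely the one the paper uses. The intermediate outputs of such surgeries are immersed curves, hence not generators of $\Gunob(\Sigma)$, and one cannot invoke ``isotopic, hence equal in $\Gred(\Sigma)$'' for them. The paper therefore runs the construction from both ends: it surgers $(\sigma_1^{-1},\gamma,\tau)$ into an immersed curve $\delta$ and $(\gamma,\sigma_2,\sigma_3)$ into an immersed curve $\delta'$, where $\tau$ is a torus-bounding curve whose area is normalized so that $\hol(\delta)=\hol(\delta')$, and then joins the two halves by the suspension of an exact homotopy through topologically unobstructed curves (Lemmas \ref{lemma:ham_isotopic_unob_case} and \ref{lemma:obstruction_suspension}) before applying Proposition \ref{prop:concatenation_unobstructed}; the surgeries themselves are checked to be unobstructed via Corollary \ref{cor:nonzerointersectionimpliesnodisks} and Lemma \ref{lemma:surgeriesareunobstructed}. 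Your one-sided plan could bypass the holonomy bookkeeping only if the surgery chain genuinely terminated at an \emph{embedded} genus-one-bounding curve, and you neither exhibit such a chain, nor verify unobstructedness of its steps, nor address the genus-$2$ case, where the auxiliary curves must be chosen differently. As it stands the proposal does not establish \eqref{eq:pair_of_pants_relation}.
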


\begin{proof}
The proof uses the same surgeries as the proof of Lemma 7.6 in \cite{Abouzaid08}, which we recall for convenience.
We need to show that the associated surgery cobordisms are unobstructed.
The proof splits in two cases: the case where $\Sigma$ has genus $g \geq 3$ and the case where $\Sigma$ has genus $2$.
We consider first the case $g \geq 3$.

\begin{figure}[b]
	\centering
	\includegraphics[scale = 1]{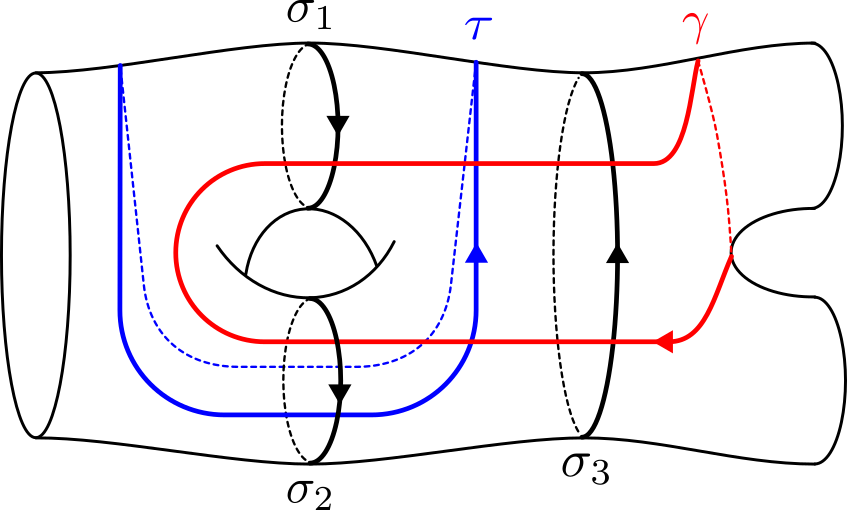}
	\caption{The curves used in the proof of Lemma \ref{lemma:pair_of_pants_relation} in the case $g \geq 3$.}
	\label{fig:curves_lemma_pop}
\end{figure}

First, we can assume that none of the curves $\sigma_i$ are isotopic to each other.
Indeed, if two of the curves are isotopic then the third curve bounds a torus
and \eqref{eq:pair_of_pants_relation} follows directly
from the definition of $T$.

Let $P$ be the pair of pants bounded by the curves $\sigma_i$.
By assumption, at most one of the curves is separating. 
Without loss of generality, assume
that $\sigma_1$ and $\sigma_2$ are non-separating.
Then there is a curve $\sigma_4$ such that $\sigma_1$, $\sigma_2$ and $\sigma_4$ together bound
a pair of pants $P'$ whose interior is disjoint from $P$.

Let $\tau$ be a curve in $P \cup P'$ that bounds a torus $T$ and whose intersection pattern with the $\sigma_i$ is specified in 
Figure \ref{fig:curves_lemma_pop}.
We can assume that $\tau$ satisfies the holonomy condition
\begin{equation}
\label{eq:holonomy_tau}
\hol(\tau) = \hol(\sigma_1) + \hol(\sigma_2) + \hol(\sigma_3).
\end{equation}
Indeed, by Lemma \ref{lemma:holonomy+winding_bounding_curves}, this condition is equivalent to $\area(T) = \area(P)$, which can
be arranged by an isotopy of $\tau$ without changing the intersection pattern.

Since $\Sigma$ has genus at least $3$, we can find a further curve $\gamma$ which is in minimal position with respect
to the curves $\sigma_i$ and $\tau$, and
whose intersection pattern is specified in
Figure \ref{fig:curves_lemma_pop}.

Next, we perform the surgeries represented in Figure \ref{fig:surgery_pop}.
In the left column of Figure \ref{fig:surgery_pop}, we perform surgeries between $\sigma_1^{-1}$, $\gamma$ and $\tau$, 
producing a curve $\delta$.
In the right column of Figure \ref{fig:surgery_pop}, we perform surgeries
between $\gamma$, $\sigma_2$ and $\sigma_3$, producing a curve $\delta'$.

The curves $\delta$ and $\delta'$ are topologically unobstructed and regularly homotopic.
Moreover, by \eqref{eq:holonomy_tau} their holonomies satisfy
\[
\hol(\delta') = \hol(\gamma) + \hol(\sigma_2 ) + \hol(\sigma_3) = \hol(\gamma) + \hol(\tau) - \hol(\sigma) = \hol(\delta).
\]
By Lemma \ref{lemma:ham_isotopic_unob_case}, $\delta$ and $\delta'$ are exact homotopic through topologically unobstructed curves.
By concatenating the surgery cobordisms obtained above with the suspension of an unobstructed exact homotopy from $\delta$ to $\delta'$, we obtain an immersed Lagrangian cobordism $(\sigma_2, \gamma, \sigma_3) \cob (\tau, \gamma, \sigma_1^{-1} )$. 

To finish the proof, it remains to show that there is an unobstructed cobordism with the same ends.
By Proposition \ref{prop:concatenation_unobstructed}, it suffices to prove that each step in the above procedure produces a topologically unobstructed cobordism.
The surgeries labelled L1 and R1 in Figure \ref{fig:surgery_pop} involve curves intersecting once, hence they are topologically unobstructed by Corollary \ref{cor:nonzerointersectionimpliesnodisks}.
The surgeries labelled L2 and R2 involve curves which intersect twice.
Moreover, these curves are in minimal position since 
$\gamma$ was chosen to be in minimal position with respect to $\tau$ and $\sigma_3$.
Hence, we are in the same situation as in Section \ref{section:DT_specialcase}, and we conclude that these surgeries are topologically unobstructed
by Lemma \ref{lemma:surgeriesareunobstructed}.
Finally, the exact homotopy between $\delta$ and $\delta'$ is topologically unobstructed by Lemma \ref{lemma:obstruction_suspension}.

Suppose now that $\Sigma$ has genus $2$.
If one of the $\sigma_i$ is separating, then it bounds a genus $1$ surface. In this case, the other two curves
are isotopic and \eqref{eq:pair_of_pants_relation} follows.
Suppose now that all the $\sigma_i$ are non-separating.
Then, up to a diffeomorphism, we are in the situation of Figure \ref{fig:curves_pop_genus2}.
To obtain Equation \eqref{eq:pair_of_pants_relation}, apply the same surgery procedure as above using the curves 
$\tau$ and $\gamma$ indicated on the right of Figure \ref{fig:curves_pop_genus2}.

\begin{figure}[hp]
	\centering
	\includegraphics[width = \textwidth]{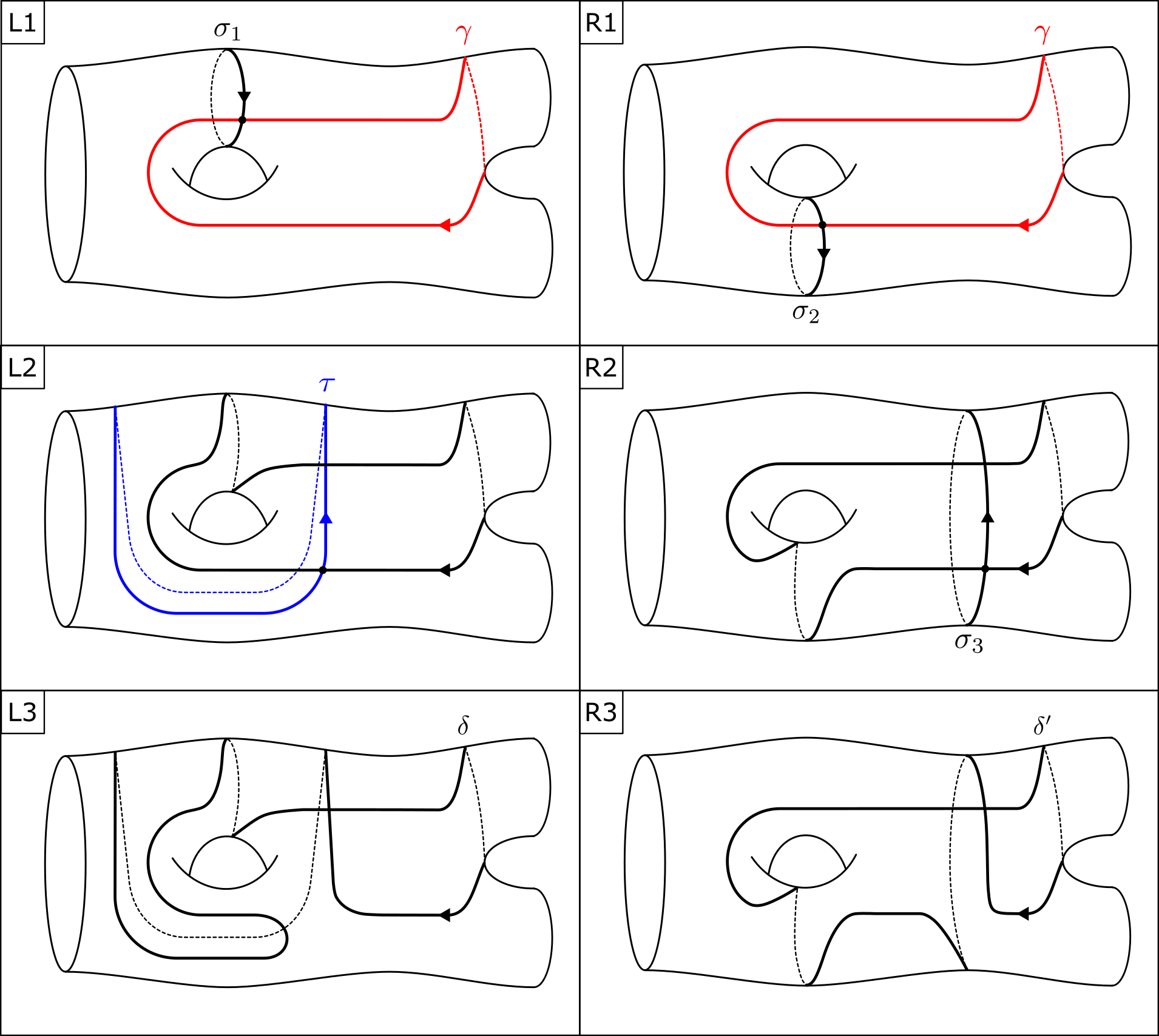}
	\caption{The surgeries used in the proof of Lemma \ref{lemma:pair_of_pants_relation} in the case $g \geq 3$. This figure is adapted from Figure 15 of \cite{Abouzaid08}.}
	\label{fig:surgery_pop}
\end{figure}

\begin{figure}[hp]
	\centering
	\includegraphics[width = 0.9\textwidth]{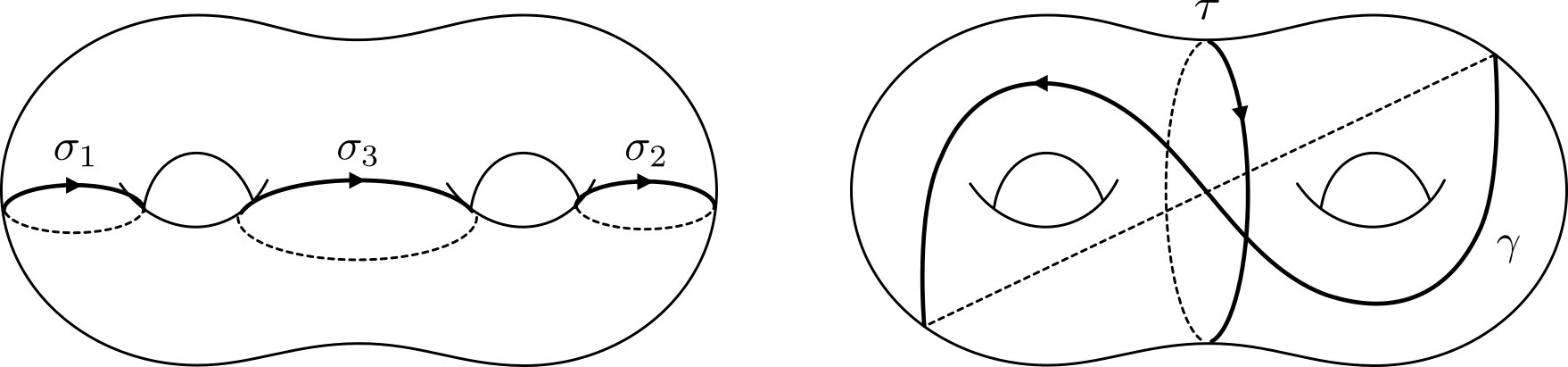}
	\caption{The curves used in the proof of Lemma \ref{lemma:pair_of_pants_relation} in the case $g = 2$. This figure is adapted from Figure 16 of \cite{Abouzaid08}.}
	\label{fig:curves_pop_genus2}
\end{figure}

\end{proof}

\begin{lemma}
\label{lemma:class_separating_curve}
Suppose that $\gamma$ is the oriented boundary of an embedded surface $S \subset \Sigma$.
Then $[\gamma] = -\chi(S) \, T$ in $\Gred(\Sigma)$.
\end{lemma}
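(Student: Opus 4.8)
The plan is to prove the identity by induction on $-\chi(S)$, repeatedly peeling off a pair of pants from $S$ and applying Lemma~\ref{lemma:pair_of_pants_relation} together with Lemma~\ref{lemma:class_of_boundary}. First I would reduce to the case where $S$ is connected with non-empty boundary, each component of which is non-contractible: a closed component of $S$ would have to be all of $\Sigma$ (so $\gamma = \emptyset$, a case covered below), and for disconnected $S$ one simply adds the relations over the components, using $\chi(S) = \sum_j \chi(S_j)$.

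The first ingredient is the identity $\chi(\Sigma)\,T = (2-2g)\,T = 0$ in $\Gred(\Sigma)$. To establish it I would fix a pants decomposition of the closed surface $\Sigma$ into $2g-2$ pairs of pants whose $3g-3$ cutting curves are all \emph{non-separating} (such decompositions exist for every $g \geq 2$). Each pair of pants then has all three boundary curves non-separating, so Lemma~\ref{lemma:pair_of_pants_relation} applies to it. Summing the $2g-2$ relations and noting that each cutting curve bounds exactly two of the pieces with opposite induced orientations — so the two contributions cancel in $\Gred(\Sigma)$ — leaves exactly $0 = (2g-2)\,T$. This also settles the degenerate case $S = \Sigma$ of the lemma.

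For connected $S$ with non-empty boundary I would induct on $-\chi(S) \geq 0$. The base case $-\chi(S) = 0$ is an annulus, whose two boundary circles are isotopic with opposite orientations so that their classes sum to zero; the one-holed torus is immediate from the definition of $T$. For the inductive step, if $S$ has a non-separating boundary component I would cut off a pair of pants $P \subset S$ — adjacent to two boundary components if $S$ is planar, or a genus-reducing one adjacent to a single boundary component otherwise — chosen so that $S' := S \setminus \operatorname{int}(P)$ is connected, $-\chi(S') = -\chi(S) - 1$, and $S'$ again has a non-separating boundary component; this can always be arranged, and the homological constraint $\sum_i [\gamma_i] = 0$ in $H_1(\Sigma)$ guarantees that $P$ has at most one separating boundary curve, so Lemma~\ref{lemma:pair_of_pants_relation} applies to $P$. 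Combining $\sum_{\partial P}[\,\cdot\,] = T$ with the inductive hypothesis $[\partial S'] = -\chi(S')\,T$, and using that the interior curves created by the cut occur in $\partial S'$ with the reversed orientation, yields $[\partial S] = T - \chi(S')\,T = -\chi(S)\,T$. A short further induction on the genus — now cutting off a genus-reducing pair of pants adjacent to the single boundary curve of a one-holed surface, with the two new boundary curves chosen non-separating (so that $P$ still has only one separating boundary curve and the resulting surface falls into the previous regime) — then proves $[\gamma] = (2h-1)\,T$ whenever $\gamma$ bounds a genus-$h$ surface.

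The remaining case, in which \emph{every} boundary component of $S$ is separating in $\Sigma$, is the main obstacle, since then no pair of pants contained in $S$ satisfies the hypothesis of Lemma~\ref{lemma:pair_of_pants_relation} (all of their boundary curves are separating). A short argument shows this configuration forces $S$ to be planar with $b \geq 3$ boundary components and $g \geq 3$: a non-contractible separating curve cuts off a subsurface of positive genus, these subsurfaces $B_1, \dots, B_b$ — one per boundary curve, on the side away from $S$ — are pairwise disjoint and each has a single boundary curve, and their genera sum to $g$. Since $\Sigma = S \cup B_1 \cup \dots \cup B_b$ and $\gamma_i$ bounds $B_i$ from the side opposite to $S$, one has $[\gamma_i]_{\partial S} = -[\partial B_i]$; as each $B_i$ has a single boundary curve, the one-holed case already proved gives $[\partial B_i] = -\chi(B_i)\,T$, whence
\[
[\partial S] \;=\; \sum_{i=1}^{b} \chi(B_i)\,T \;=\; \bigl(\chi(\Sigma) - \chi(S)\bigr)\,T \;=\; -\chi(S)\,T,
\]
the last step using $\chi(\Sigma)\,T = 0$. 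Throughout, the one point requiring care is the bookkeeping of orientations of the various boundary curves, all fixed by the convention that a curve common to a subsurface and the ambient surface inherits the same orientation from each.
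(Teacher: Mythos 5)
Your proof is correct, but it takes a genuinely different and considerably longer route than the paper. The paper reads the lemma for a single separating curve $\gamma$ bounding a one-holed subsurface $S$ and inducts on the genus of $S$: it peels off a two-holed torus $Q$ with $\partial Q = \gamma \cup \gamma'$, splits $Q$ into two pairs of pants whose two common boundary curves are non-separating, and applies Lemma \ref{lemma:pair_of_pants_relation} twice, giving $[\gamma] = 2T + [\gamma'] = -\chi(S)T$ in one line; the identity $\chi(\Sigma)T=0$ is only deduced afterwards (Corollary \ref{cor:order_of_T}) \emph{from} the lemma. You instead prove the stronger multicurve statement for an arbitrary compact subsurface, establish $\chi(\Sigma)T=0$ independently via a pants decomposition of $\Sigma$ with non-separating cutting curves, run an induction on $-\chi(S)$ by cutting off a pair of pants whenever $\partial S$ has a non-separating component, and handle the all-separating-boundary case by passing to the complement, which decomposes into disjoint one-holed pieces. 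This buys generality (the multicurve version, and a proof of $(2g-2)T=0$ not relying on the lemma) at the price of a substantial case analysis that the paper's two-holed-torus trick avoids entirely; for the paper's later uses (Corollary \ref{cor:order_of_T}, Proposition \ref{prop:generators}) only the single-curve case is needed.

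Two small points in your write-up should be repaired, though neither affects the conclusion. First, the reason the chosen pair of pants $P$ has at most one separating boundary curve is not the constraint $\sum_i[\gamma_i]=0$ over all of $\partial S$, but the homology relation among the three boundary curves of $P$ itself together with the fact that your choices always place a non-separating curve on $\partial P$ (one of the two chosen boundary curves in the planar case; the curve parallel to the non-separating simple closed curve $c$ in the genus-reducing case); with that choice the claim is immediate, since if two of the three boundary classes vanished the third would as well. Second, your assertion that the all-separating case forces $S$ to be planar is false (a genus-one subsurface with two separating boundary curves exists already in genus $3$), and the ``genera sum to $g$'' step presupposes what it is meant to show; fortunately it is never used, as your complement computation $[\partial S]=\sum_i\chi(B_i)\,T=(\chi(\Sigma)-\chi(S))\,T=-\chi(S)\,T$ only needs that each complementary piece $B_i$ is one-holed of positive genus, which your argument does establish.
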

\begin{proof}
The proof is by induction on the genus of $S$. 
If $S$ has genus $1$, this follows from the definition of $T$.
Suppose now that $S$ has genus $g(S) > 1$.
We can write $S = S' \cup Q$, where
\begin{enumerate}
	\item $S'$ is a compact surface of genus $g(S') = g(S) -1$ and $1$ boundary component $\gamma'$.
	\item $Q$ is a compact surface of genus $1$ with $2$ boundary components $\gamma$ and $\gamma'$.
	\item $Q \cap S' = \gamma'$.
\end{enumerate}
Furthermore, the surface $Q$ can be decomposed as the union of two
pairs of pants whose common boundary components are two non-separating curves $\alpha$ and $\beta$.
Applying Lemma \ref{lemma:pair_of_pants_relation} to these pairs of pants and using the induction hypothesis, it follows that
\begin{equation*}
[\gamma] 
= T - [\alpha] - [\beta]
= 2 T + [\gamma']
= 2T -\chi(S')T
= -\chi(S) T.
\end{equation*}

\end{proof}

\begin{corollary}
\label{cor:order_of_T}
$T$ has order $\chi(\Sigma)$ in $\Gred(\Sigma)$.
\end{corollary}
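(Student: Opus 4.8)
The plan is to establish the order of $T$ by proving two divisibility statements: that $\chi(\Sigma) \cdot T = 0$ in $\Gred(\Sigma)$, so the order divides $\chi(\Sigma)$; and that $T$ maps to an element of order $\chi(\Sigma)$ under the winding number invariant, so the order is at least $\chi(\Sigma)$. Together these give that $T$ has order exactly $\chi(\Sigma)$ (with the paper's convention that $\chi(\Sigma)$ stands for $|\chi(\Sigma)| = 2g-2$).

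For the first statement I would apply Lemma \ref{lemma:class_separating_curve} to a separating simple closed curve. Since $g \geq 2$, choose a separating simple closed curve $\gamma$ that divides $\Sigma$ into two subsurfaces $S_1$ and $S_2$, each with one boundary component and each of positive genus; then $\gamma$ is non-contractible, so $[\gamma]$ makes sense in $\Gred(\Sigma)$. Orienting $\gamma$ as the oriented boundary of $S_1$, Lemma \ref{lemma:class_separating_curve} gives $[\gamma] = -\chi(S_1)\,T$. The curve $\gamma$ with reversed orientation is the oriented boundary of $S_2$, so the same lemma gives $-[\gamma] = -\chi(S_2)\,T$, using that reversing the orientation of a curve is the inverse in $\Gunob(\Sigma)$ (hence in $\Gred(\Sigma)$). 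Adding the two relations and using the additivity of the Euler characteristic under gluing along a circle, $\chi(S_1) + \chi(S_2) = \chi(\Sigma)$, yields $\chi(\Sigma)\,T = 0$.

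For the lower bound I would use the winding number. The homology class of the Gauss lift gives a morphism $\Gunob(\Sigma) \to H_1(S\Sigma;\Z)$ (Proposition \ref{prop:invariance_homology_class}); composing with a winding number morphism $\mu \colon H_1(S\Sigma;\Z) \to \Z_{\chi(\Sigma)}$ produces a morphism $\bar{\mu}\colon \Gunob(\Sigma) \to \Z_{\chi(\Sigma)}$. Since isotopic curves have homotopic Gauss lifts and hence equal classes in $H_1(S\Sigma;\Z)$, the subgroup $\mathcal{I}$ lies in $\ker \bar{\mu}$, so $\bar{\mu}$ descends to $\Gred(\Sigma) \to \Z_{\chi(\Sigma)}$. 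Now $T$ is represented by a curve bounding a genus $1$ surface $P$ with one boundary component, for which $\chi(P) = -1$, so Lemma \ref{lemma:holonomy+winding_bounding_curves} gives $\bar{\mu}(T) = -1$ in $\Z_{\chi(\Sigma)}$. As $-1$ generates $\Z_{\chi(\Sigma)}$, it has order $\chi(\Sigma)$, so the order of $T$ in $\Gred(\Sigma)$ is at least $\chi(\Sigma)$, and combined with the first part it is exactly $\chi(\Sigma)$.

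I do not expect any real obstacle here: the only point requiring care is that the winding number morphism genuinely factors through $\Gred(\Sigma)$, which is immediate from the homotopy invariance of the Gauss lift under isotopies, and that the orientation-reversal relation $[\bar{\gamma}] = -[\gamma]$ is applied correctly in the computation with the separating curve.
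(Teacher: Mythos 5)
Your proposal is correct and takes essentially the same route as the paper: the upper bound comes from Lemma \ref{lemma:class_separating_curve} applied to a separating curve together with additivity of the Euler characteristic (the paper phrases this via the complement of the genus-$1$ subsurface and the definition of $T$, while you add the two boundary relations, which is the same computation), and the lower bound is exactly the paper's winding-number argument $\mu(T) = -1 \in \Z_{\chi(\Sigma)}$ from Lemma \ref{lemma:holonomy+winding_bounding_curves}. Your extra remark that the winding number genuinely descends to $\Gred(\Sigma)$ (since isotopic curves have homologous Gauss lifts) is a point the paper leaves implicit, and is correct.
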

\begin{proof}
Suppose that $\gamma$ bounds a torus in $\Sigma$. Then $\gamma^{-1}$ bounds a surface $S$ with $\chi(S) = \chi(\Sigma)-1$. 
By Lemma \ref{lemma:class_separating_curve}, we then have
\[
-T = [\gamma^{-1}] = -( \chi(\Sigma) - 1 ) \, T,
\]
from which it follows that $\chi(\Sigma) T = 0$. 
Moreover, $\mu(T) = -1 \in \Z/ \chi(\Sigma)\Z$ by Lemma \ref{lemma:holonomy+winding_bounding_curves}. 
Hence $T$ has order $\chi(\Sigma)$.
\end{proof}

We can now complete the proof of Proposition \ref{prop:generators}.

\begin{proof}[Proof of Proposition \ref{prop:generators}.]

If $\gamma$ is a separating curve, then by Lemma \ref{lemma:class_separating_curve} $[\gamma]$ lies in the subgroup generated by $T$.

Suppose now that $\gamma$ is a non-separating curve. 
Since $\Diff^+(\Sigma)$ acts transitively on non-separating curves, $\gamma$
can be obtained from $\alpha_1$ by an isotopy and a sequence of Dehn twists about the Lickorish generators.
By the Dehn twist formula \eqref{eq:dehntwistformula},
$[\gamma]$ belongs to the subgroup generated by the classes of the Lickorish generators. 
By Lemma \ref{lemma:pair_of_pants_relation}, the Lickorish generators satisfy the relation $-[\alpha_i] + [\gamma_i] + [\alpha_{i+1}] = T$.
Hence, we conclude that $[\gamma]$ belongs to the subgroup generated by $T$ and $[\alpha_i]$, $[\beta_i]$, $i=1 , \ldots , g$.
\end{proof}

\subsection{End of the proofs of the main results}

\begin{proof}[Proof of Theorem \ref{thm:computation_Gunob}]

By the results of Section \ref{section:isotopies}, there is a diagram with split-exact rows
\[
\begin{tikzcd}
0 \arrow{r} &\R \arrow{r}{i} \arrow{d}{\id}  &\Gunob(\Sigma) \arrow{r} \arrow{d}            &\Gred(\Sigma) \arrow{r} \arrow{d}{\vphi}   &0 \\
0 \arrow{r} &\R \arrow{r}			         &H_1(S \Sigma; \Z) \oplus \R  \arrow{r} 		&H_1(S \Sigma; \Z) \arrow{r}   				&0
\end{tikzcd}
\]
Hence, it suffices to prove that the map $\vphi: \Gred(\Sigma) \to H_1(S \Sigma; \Z)$ is an isomorphism.
This map is surjective by Corollary \ref{cor:homology_map_surjective}.
To prove that it is injective, suppose that $A$ belongs to the kernel of $\vphi$.
By Proposition \ref{prop:generators}, we can write $A$ in terms of the generators
\[
A = k \, T + \sum_{i=1}^g n_i \, [\alpha_i] + m_i [\beta_i].
\]
By projecting to $H_1(\Sigma; \Z)$, we have the relation
\[
\sum_{i=1}^g n_i \, [\alpha_i] + m_i [\beta_i] = 0 \in H_1(\Sigma; \Z).
\]
Since the classes $[\alpha_i]$ and $[\beta_i]$ form a basis of $H_1(\Sigma; \Z)$, 
we deduce that $n_i = m_i =0$ for all $i=1, \ldots , g$. 
Hence $A = k \, T$.
By Lemma \ref{lemma:holonomy+winding_bounding_curves}, we then have $0 = \mu(A) = -k \mod \chi(\Sigma)$.
Since $T$ has order $\chi(\Sigma)$ by Corollary \ref{cor:order_of_T}, we conclude that $A = 0$.
\end{proof}

\begin{proof}[Proof of Corollary \ref{cor:isomorphism_Kgroup}]
By Proposition 6.1 of \cite{Abouzaid08}, the invariants introduced in Section \ref{section:invariants}
descend to surjective morphisms $K_0 \DFuk(\Sigma) \to \R$ and $K_0 \DFuk(\Sigma) \to H_1(S\Sigma; \Z)$.
Hence, there is a diagram
\[
\begin{tikzcd}
\Gunob(\Sigma) \arrow[twoheadrightarrow]{r}{\Theta} \arrow{dr}[swap]{\iso} &K_0 \DFuk(\Sigma) \arrow[twoheadrightarrow]{d} \\
& H_1(S \Sigma; \Z) \oplus \R
\end{tikzcd}
\]
It follows that $\Theta$ is an isomorphism.
\end{proof}

\begin{proof}[Proof of Corollary \ref{cor:computation_Gimm}]
We have the following diagram:
\[
\begin{tikzcd}
\Gunob(\Sigma) \arrow{r} \arrow{dr}[swap]{\iso}	&\Gimm(\Sigma) \arrow{d} \\
												&H_1(S\Sigma; \Z) \oplus \R
\end{tikzcd}
\]
This shows that the map $\Gunob(\Sigma) \to \Gimm(\Sigma)$ is injective.
To show surjectivity, it suffices to prove that $\Gimm(\Sigma)$ is generated
by the classes of non-contractible simple curves.
To see this, note first that $\Gimm(\Sigma)$ is generated by simple curves.
Indeed, any class in $\Gimm(\Sigma)$ is represented by an immersed curve in general position.
Doing surgery at each self-intersection point of this curve yields a Lagrangian cobordism to a collection of
disjoint simple curves.

Next, we prove that any contractible simple curve is a sum of non-contractible simple curves.
To see this, pick two simple curves $\alpha$ and $\beta$ that are independent in $H_1(\Sigma; \Z)$,
have zero geometric intersection and intersect in two points. 
After reversing the orientation of one of the curves if necessary, doing the surgery at the two intersection points
yields a simple curve $\gamma$ in the homology class $[\alpha]+ [\beta]$ and a contractible simple curve $\delta$.
There is a Lagrangian cobordism $(\alpha, \beta) \cob (\gamma, \delta)$, which shows
that $[\delta]$ is a sum of non-contractible simple curves.
By Lemma \ref{lemma:ham_isotopic_embedded_case}, any contractible simple curve which has
the same holonomy as $\delta$ is Hamiltonian isotopic to $\pm \delta$.
Moreover, by isotoping $\alpha$ and $\beta$, we may produce in this way contractible simple curves of arbitrarily small area.
This proves the claim for contractible curves of sufficiently small area.
The general case follows by writing an arbitrary contractible curve
as a sum of curves of small area.
\end{proof}

\appendix

\section{Action-type invariants of Lagrangians in monotone manifolds}
\label{appendix:action}

In this appendix, we define real-valued Lagrangian cobordism invariants of oriented Lagrangian immersions $L \looparrowright M$,
where $M$ is a monotone symplectic manifold.
These invariants arise as characteristic numbers associated to a cohomology class
which generalizes the \emph{action class} $[\lambda|_L] \in H^1(L; \R)$ associated to a Lagrangian immersion into an
exact symplectic manifold $(M, d\lambda)$.
The invariants defined here also generalize the \emph{holonomy} of an immersed curve in a closed surface of genus $g \neq 1$, as defined
in Section \ref{subsection:holonomy}.

\begin{remark}
The reader should be aware that we will only make use of the special case of surfaces of genus $g \geq 2$ in this paper.
However, the case of general monotone manifolds is of independent interest and, to the best of the author's knowledge, 
does not appear in the litterature.
\end{remark}

For the rest of this appendix, we assume that $(M, \omega)$ is a \emph{monotone} symplectic manifold, in the sense that
there is a constant $\tau \in \R$ such that $[\omega] = \tau \, c_1(M)$ as elements of $H^2(M; \R)$.

\subsection{Definition of the action class}

Let $\Grass M$ be the (unoriented) Lagrangian Grassmannian of $M$ and $\pi: \Grass M \to M$ be the projection.
The key ingredient for the definition of the action is the following elementary fact.

\begin{lemma}
\label{lemma:pullback_c1}
For any symplectic manifold $M$, $2 \, \pi^*c_1(M) = 0$ in $H^2(\Grass M; \Z)$. 
\end{lemma}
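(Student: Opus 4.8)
The statement to prove is Lemma~\ref{lemma:pullback_c1}: for any symplectic manifold $M$, one has $2\,\pi^* c_1(M) = 0$ in $H^2(\Grass M; \Z)$, where $\pi\colon \Grass M \to M$ is the Lagrangian Grassmannian bundle. Let me sketch how I would argue this.

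The plan is to reduce to a statement about the fiber $\Lambda_n = U(n)/O(n)$ and the universal bundle. First I would recall that the Lagrangian Grassmannian bundle $\Grass M \to M$ carries a tautological rank-$n$ real vector bundle $\tau$ whose fiber over a Lagrangian plane $\ell \subset T_p M$ is $\ell$ itself. This bundle is totally real in $\pi^* TM$, and in fact $\pi^* TM \cong \tau \otimes_\R \C$ as complex vector bundles: over each point of $\Grass M$, the tangent space $T_p M$ is the complexification of the Lagrangian plane $\ell$. Consequently $\pi^* c_1(M) = c_1(\pi^* TM) = c_1(\tau \otimes_\R \C)$. Now for \emph{any} real vector bundle $E$, the complexification $E \otimes_\R \C$ is isomorphic to its own conjugate $\overline{E \otimes_\R \C}$, which forces $c_1(E \otimes_\R \C) = -c_1(E\otimes_\R\C)$, i.e. $2\, c_1(E \otimes_\R \C) = 0$. (Equivalently, $c_1$ of a complexified real bundle is $\beta(w_1(E)^2 + \ldots)$, a $2$-torsion class; the cleanest route is just conjugation-invariance.) Applying this with $E = \tau$ gives $2\, \pi^* c_1(M) = 2\, c_1(\tau \otimes_\R \C) = 0$.

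**The key steps, in order.** (1) Construct the tautological totally real subbundle $\tau \subset \pi^* TM$ on $\Grass M$ and verify the canonical isomorphism $\pi^* TM \cong \tau \otimes_\R \C$; this is a fiberwise statement about a Lagrangian subspace being a real form of a symplectic (hence complex, via a compatible $J$) vector space, and it globalizes since the construction is natural. (2) Deduce $\pi^* c_1(M) = c_1(\tau \otimes_\R \C)$ from naturality of Chern classes. (3) Prove the general fact that $c_1$ of a complexified real bundle is $2$-torsion, via the isomorphism $E \otimes_\R \C \cong \overline{E \otimes_\R \C}$ and the identity $c_1(\overline F) = -c_1(F)$. (4) Combine.

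**The main obstacle.** None of the steps is genuinely hard, but the one requiring the most care is step~(1): making precise that $\pi^* TM \cong \tau \otimes_\R \C$ \emph{canonically} and globally. The subtlety is that the identification of $T_pM$ with a complex vector space requires a choice of compatible almost complex structure $J$, and while the space of such $J$ is contractible, one should check the resulting isomorphism of bundles is independent of this choice up to homotopy (it is, by contractibility), so that the induced map on Chern classes is well-defined. Alternatively — and this is probably the slickest presentation — one avoids choosing $J$ altogether: the complexification $\tau \otimes_\R \C$ has a canonical map to $\pi^* TM$ given by $(v, w) \mapsto v + J_0 w$ for any fixed $J_0$, or better, one observes that $\tau \oplus \tau \cong \tau \otimes_\R \C$ as real bundles and $\tau \oplus \tau \cong \pi^* TM$ as \emph{symplectic} bundles (splitting $TM$ into the Lagrangian plane and a complementary Lagrangian), which already pins down $c_1$. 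I would expect the author to take whichever of these is shortest, likely the conjugation argument combined with the tautological splitting, and the whole proof should be only a few lines.
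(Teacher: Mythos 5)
Your proposal is correct and follows essentially the same route as the paper: identify the tautological Lagrangian subbundle $\gamma \subset \pi^*TM$, note that $\pi^*TM \cong \gamma \otimes_{\R} \C$ after fixing a compatible almost complex structure, and conclude via the fact that the first Chern class of a complexified real bundle is $2$-torsion (conjugation invariance). Your extra care about independence of the choice of $J$ is fine but not needed for the conclusion, since any single choice already yields $2\,\pi^*c_1(M)=0$.
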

\begin{proof}
Recall first that if a complex vector bundle $E$ is the complexification of a real bundle,
then $E \iso \overline{E}$. Therefore, we have $c_1(E) = -c_1( \overline{E}) = -c_1(E)$,
so that $2 c_1(E) = 0$.

We apply the previous observation to the bundle $\pi^*TM$. 
Equip $TM$ with a compatible almost complex structure. 
Then the Hermitian bundle $\pi^*TM$ has a Lagrangian subbundle given by the
tautological bundle $\gamma$ over $\Grass M$. 
Hence, there is an isomorphism of complex bundles $\pi^*TM \iso \gamma \tens \C$.
We conclude that $2 \pi^* c_1(M) = 2 c_1( \pi^*TM ) = 0$.
\end{proof}

\begin{corollary}
\label{cor:pullback_omega_exact}
Let $(M, \omega)$ be a monotone symplectic manifold. 
Then $\pi^*\omega$ is exact.
\end{corollary}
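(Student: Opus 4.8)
The plan is to deduce this directly from Lemma \ref{lemma:pullback_c1} together with the monotonicity hypothesis, by passing to real coefficients where torsion classes disappear.

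First I would record that the natural map $H^2(\Grass M; \Z) \to H^2(\Grass M; \R)$ annihilates torsion classes. By Lemma \ref{lemma:pullback_c1}, the class $\pi^* c_1(M)$ is $2$-torsion in $H^2(\Grass M; \Z)$, hence its image in $H^2(\Grass M; \R)$ is zero. Next I would use the monotonicity assumption $[\omega] = \tau\, c_1(M)$ in $H^2(M; \R)$ and apply $\pi^*$ to obtain
\[
[\pi^* \omega] = \pi^*[\omega] = \tau \, \pi^* c_1(M) = 0 \quad \text{in } H^2(\Grass M; \R),
\]
where in the last equality $\pi^* c_1(M)$ is understood as a real cohomology class. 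Finally, since $\pi^*\omega$ is a closed $2$-form whose de Rham cohomology class vanishes, it is exact, which is the desired conclusion.

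There is essentially no obstacle here: the only point requiring a word of care is the (standard) fact that torsion integral classes map to zero in real cohomology, which follows from the universal coefficient theorem since $H^2(\Grass M;\R) \cong H^2(\Grass M;\Z) \otimes \R$ modulo the torsion of $H_1$, i.e. the comparison map factors through $H^2(\Grass M;\Z)/\text{torsion}$. All the real work has already been done in Lemma \ref{lemma:pullback_c1}; this corollary is a formal consequence of combining that lemma with the definition of monotonicity and de Rham's theorem.
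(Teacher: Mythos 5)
Your proposal is correct and is essentially the argument the paper intends: the corollary is deduced directly from Lemma \ref{lemma:pullback_c1} by observing that the $2$-torsion class $\pi^*c_1(M)$ dies in real cohomology (indeed $2y=0$ in the vector space $H^2(\Grass M;\R)$ forces $y=0$), so monotonicity gives $[\pi^*\omega]=\tau\,\pi^*c_1(M)=0$ and de Rham's theorem yields exactness. No discrepancies with the paper's approach.
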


Although Corollary \ref{cor:pullback_omega_exact} will be sufficient for our purposes, it
is interesting to note that the exactness of $\pi^*\omega$ actually characterizes monotonicity.

\begin{proposition}
$(M,\omega)$ is monotone if and only if $\pi^*\omega$ is exact.
\end{proposition}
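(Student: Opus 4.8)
The plan is to deduce the proposition from an identification of the kernel of the pullback $\pi^*\colon H^2(M;\R)\to H^2(\Grass M;\R)$. The ``if'' direction is Corollary \ref{cor:pullback_omega_exact}: if $[\omega]=\tau\,c_1(M)$ then $\pi^*\omega$ is exact. For the converse, assume $\pi^*\omega$ is exact, so that $\pi^*[\omega]=0$ in $H^2(\Grass M;\R)$. It then suffices to prove
\[
\Ker\bigl(\pi^*\colon H^2(M;\R)\to H^2(\Grass M;\R)\bigr)=\R\cdot c_1(M),
\]
since this forces $[\omega]\in\R\cdot c_1(M)$, which is precisely the monotonicity condition (read as $[\omega]=0$ in the degenerate case where $c_1(M)=0$ in real cohomology).

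To compute this kernel I would run the Serre spectral sequence of the fiber bundle $\Grass M\to M$, whose fiber is the Lagrangian Grassmannian $\mathcal{L}_n$ of $\C^n$. Because the structure group $\mathrm{Sp}(2n,\R)$ is connected, the coefficient system on $H^*(\mathcal{L}_n;\R)$ is trivial. Recall that $\pi_1(\mathcal{L}_n)\iso\Z$ (the Maslov index), so $H^1(\mathcal{L}_n;\R)\iso\R$; fix a generator $\mu$. The edge homomorphism $H^2(M;\R)\to H^2(\Grass M;\R)$ is $\pi^*$, and since the only differential affecting the $(2,0)$-entry is $d_2\colon E_2^{0,1}\to E_2^{2,0}$, the kernel of $\pi^*$ equals the image of $d_2\colon H^1(\mathcal{L}_n;\R)\to H^2(M;\R)$, i.e.\ the line $\R\cdot d_2(\mu)$. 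Now $d_2(\mu)$ is a characteristic class of the symplectic bundle $TM$: the bundle $\Grass M\to M$ is pulled back from the universal Lagrangian Grassmannian bundle over $B\mathrm{Sp}(2n)\simeq BU(n)$, and $H^2(BU(n);\R)=\R\cdot c_1$, so by naturality $d_2(\mu)=k\,c_1(M)$ for a universal constant $k$. Hence $\Ker\pi^*\subseteq\R\cdot c_1(M)$. The reverse inclusion is immediate from Lemma \ref{lemma:pullback_c1}: it gives $2\pi^*c_1(M)=0$ in $H^2(\Grass M;\Z)$, hence $\pi^*c_1(M)=0$ in $H^2(\Grass M;\R)$. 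This proves the displayed identity and finishes the argument.

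The only input that is not purely formal is the identification, up to scale, of the transgression $d_2(\mu)$ with $c_1(M)$; this is the step to treat carefully. It can be bypassed in the generic case $c_1(M)\neq 0$ in $H^2(M;\R)$: there $\Ker\pi^*$ is a quotient of $H^1(\mathcal{L}_n;\R)\iso\R$, hence at most one-dimensional, and it contains the nonzero class $c_1(M)$ by Lemma \ref{lemma:pullback_c1}, so it must equal $\R\cdot c_1(M)$. The naturality argument is needed only to rule out a nonzero transgression when $c_1(M)$ vanishes rationally, in which case $d_2(\mu)=k\,c_1(M)=0$ gives $\Ker\pi^*=0$ and hence $[\omega]=0$.
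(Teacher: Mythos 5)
Your argument is correct, and its skeleton is the same as the paper's: reduce the statement to showing $\Ker\bigl(\pi^*\colon H^2(M;\R)\to H^2(\Grass M;\R)\bigr)=\R\,c_1(M)$, then run the Serre spectral sequence of $\Grass M\to M$ (trivial coefficients because the structure group is connected) and identify this kernel with the image of the transgression $d_2\colon E_2^{0,1}\to E_2^{2,0}$ applied to the Maslov generator $\mu$. Where you diverge is in how the transgression is pinned down. The paper quotes the precise computation $d_2(\mu)=\pm 2\,c_1(M)$ (citing the appendix of Viterbo), which immediately gives the kernel. You instead avoid computing the constant: naturality of the spectral sequence under pullback from the universal $\mathcal{L}_n$-bundle over $BSp(2n)\simeq BU(n)$, together with $H^2(BU(n);\R)=\R\,c_1$, gives $d_2(\mu)=k\,c_1(M)$ for a universal $k$, hence $\Ker\pi^*\subseteq\R\,c_1(M)$; the reverse inclusion comes from Lemma \ref{lemma:pullback_c1} (which kills $\pi^*c_1(M)$ rationally). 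This is a sound and somewhat more self-contained variant: it trades the external transgression formula for a soft universality argument, and your observation that the universality step is only needed when $c_1(M)$ vanishes in $H^2(M;\R)$ is accurate. The paper's route buys the sharper statement (the exact value $\pm 2c_1$ of the transgression), but that extra precision is not needed for the proposition, so both proofs are complete.
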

\begin{proof}
It suffices to show that the kernel of $\pi^*: H^2(M; \R) \to H^2(\Grass M; \R)$ is the subspace
generated by $c_1(M)$.
To do this, consider the $E_2$ page of the Serre spectral sequence with $\R$ coefficients of the fibration $\pi$.
Note that $\pi$ is orientable (in the sense that the associated local coefficient system is simple) 
since it is a fiber bundle associated to a principal $U(n)$-bundle over $M$.

By a well-known property (see e.g. \cite[Theorem 5.9]{McCleary}), the kernel of $\pi^*$ is identified
with the image of the transgression
$d_2^{0,1}: E_2^{0,1} \to E_2^{2,0}$.
In the present case, we have $E_2^{0,1} \iso H^1( U(n)/O(n); \R) \iso \R$, generated by the Maslov class $\mu \in H^1(U(n)/O(n); \R)$.
Moreover, $E_2^{2,0} \iso H^2(M; \R)$.
Finally, under these identifications we have that $\mu$
transgresses to $\pm 2 c_1(M)$ (this appears to be well-known; see e.g. Proposition 1 of the appendix of \cite{Viterbo87}).
We conclude that $\Ker \pi^*$ is the subspace spanned by $c_1(M)$.
\end{proof}

A primitive of $\pi^*\omega$ will be called an \Def{action form}.
Using such a primitive, we can define the action of Lagrangian immersions in $M$.

\begin{definition}
\label{def:action}
Let $f: L \to M$ be a Lagrangian immersion.
The \Def{action} of $f$ with respect to the action form $\lambda$ is the class
\[
a_{\lambda}(f) = [\wtilde{f}^*\lambda] \in H^1(L; \R),
\]
where $\wtilde{f}: L \to \Grass M$ is the Gauss map of $f$.
\end{definition}

\begin{remark}
The action depends on the choice of the primitive $\lambda$. 
If $\lambda$ and $\lambda'$ are two primitives,
then $\lambda - \lambda'$ is closed and defines a class
$\delta = [\lambda - \lambda'] \in H^1(\Grass M; \R)$. 
Then, for a Lagrangian immersion $f$ we have
\[
a_{\lambda}(f) - a_{\lambda'}(f) = \wtilde{f}^* \delta.
\]
In particular, if $\delta = 0$, i.e. $\lambda$ and $\lambda'$ differ by an exact form, then $\lambda$ and $\lambda'$ define the same action class.
\end{remark}

\begin{remark}\label{rmk:oriented_vs_unoriented}
In the case where the manifold $L$ is oriented, the action of a Lagrangian immersion $f: L \to M$ could alternatively be defined as
the class $[\wtilde{f}^*\lambda]$, where ${p : \GrassOr(M) \to M}$ is the \emph{oriented} Lagrangian Grassmannian bundle, $\lambda$ is
a primitive of $p^*\omega$ and $\wtilde{f}: L \to \GrassOr(M)$ is the oriented Gauss map.
The two definitions are naturally related by the two-fold covering $\GrassOr(M) \to \Grass(M)$.
This covering induces an isomorphism on $H^1(- ; \R)$, hence it induces a
bijection between the cohomology classes of primitives of $\pi^*\omega$ and those of $p^*\omega$.
As a consequence, for oriented Lagrangians the two definitions are essentially interchangeable.
\end{remark}

\subsection{Invariance under Lagrangian cobordisms}

Our next aim is to prove that action is invariant under Lagrangian cobordisms, in the
sense that any Lagrangian cobordism carries a cohomology class that
extends the action classes of its ends.
To show this, we extend the definition of the action class to Lagrangian cobordisms. 

Observe that for a monotone manifold $(M, \omega)$, the product $(\C \times M, \omega_{\C} \oplus \omega)$ is also monotone.
Hence, by applying Corollary \ref{cor:pullback_omega_exact}, we deduce that $\wtilde{\pi}^*(\omega_{\C} \oplus \omega)$ is exact, where
$\wtilde{\pi}: \Grass(\C \times M) \to \C \times M$ is the projection of the Lagrangian Grassmannian.

\begin{lemma}
\label{lemma:compatible_primitive}
Given an action form $\lambda$, there exists a primitive $\wtilde{\lambda}$ of $\wtilde{\pi}^*( \omega_{\C} \oplus \omega)$ such that
$i^* \wtilde{\lambda} - \lambda$ is exact, where $i: \Grass(M) \to \Grass(\C \times M)$ is the stabilization map.
\end{lemma}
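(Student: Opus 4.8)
The plan is to transport a primitive across the stabilization map. The first observation is that $\widetilde{\pi} \circ i \colon \Grass(M) \to \C \times M$ equals $j \circ \pi$, where $j \colon M \to \C \times M$ is the inclusion $p \mapsto (0,p)$: indeed $i$ sends a Lagrangian plane $U \subset T_p M$ to $\R \oplus U \subset T_{(0,p)}(\C \times M)$, which lies over $j(p)$. Since $j^*\omega_{\C} = 0$ and $j^*\omega = \omega$, this yields $i^*\widetilde{\pi}^*(\omega_{\C} \oplus \omega) = \pi^* j^*(\omega_{\C} \oplus \omega) = \pi^*\omega$. Now $\C \times M$ is monotone, so Corollary \ref{cor:pullback_omega_exact} furnishes some primitive $\mu$ of $\widetilde{\pi}^*(\omega_{\C} \oplus \omega)$ on $\Grass(\C \times M)$. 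Then $i^*\mu$ and the given $\lambda$ are both primitives of $\pi^*\omega$ on $\Grass(M)$, so $\lambda - i^*\mu$ is a closed $1$-form; write $c \in H^1(\Grass(M); \R)$ for its cohomology class. We will look for $\widetilde{\lambda}$ of the form $\mu + \theta$ with $\theta$ a closed $1$-form on $\Grass(\C \times M)$: such a $\widetilde{\lambda}$ is again a primitive of $\widetilde{\pi}^*(\omega_{\C} \oplus \omega)$, and $i^*\widetilde{\lambda} - \lambda = i^*\mu + i^*\theta - \lambda$ is closed with class $[i^*\theta] - c$, hence exact provided $[i^*\theta] = c$. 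So it suffices to prove that $i^* \colon H^1(\Grass(\C \times M); \R) \to H^1(\Grass(M); \R)$ is surjective.

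This surjectivity is the crux of the argument, and I expect it to be the only real obstacle. It follows from the same reasoning as Lemma \ref{lemma:stable_homology}: the map $i$ is a morphism of fiber bundles covering the homotopy equivalence $j \colon M \hookrightarrow \C \times M$, and on fibers it restricts to the stabilization $V \mapsto \R \oplus V$ of Lagrangian Grassmannians. Writing $V = A \cdot \R^n$ with $A \in U(n)$, one has $\R \oplus V = \mathrm{diag}(1, A) \cdot \R^{n+1}$, so the squared-determinant maps are compatible with stabilization; since $\det^2$ induces an isomorphism on $\pi_1$ of each Lagrangian Grassmannian (it realizes the Maslov class), so does stabilization. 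Comparing the long exact sequences of homotopy groups of the two bundles and applying the five lemma (using $\pi_1(\C \times M) \cong \pi_1(M)$, $\pi_2(\C \times M) \cong \pi_2(M)$, and that $j$ is a homotopy equivalence) shows that $i_* \colon \pi_1(\Grass(M)) \to \pi_1(\Grass(\C \times M))$ is an isomorphism, hence so is $i^*$ on $H^1(-; \R)$.

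Granting this, I would finish by choosing a closed $1$-form $\theta$ on $\Grass(\C \times M)$ with $[i^*\theta] = c$ and setting $\widetilde{\lambda} = \mu + \theta$, which then satisfies the conclusion by the computation above. The points that deserve a line of verification are the identity $i^*\widetilde{\pi}^*(\omega_{\C} \oplus \omega) = \pi^*\omega$ (which is what makes $\lambda - i^*\mu$ closed) and the compatibility of $\det^2$ with stabilization; both are immediate. As an alternative to the homotopy-sequence argument, one could compare the Serre spectral sequences of the two bundles using naturality of the transgression, much as in the proof of the Proposition following Corollary \ref{cor:pullback_omega_exact}, but the $\pi_1$-level argument is more economical.
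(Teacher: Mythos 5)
Your proof is correct and follows essentially the same route as the paper: the key input in both is that the stabilization map $i$ induces an isomorphism on $\pi_1$ (by the argument of Lemma \ref{lemma:stable_homology}, valid in the unoriented case), hence on $H^1(-;\R)$, which lets one correct a primitive of $\wtilde{\pi}^*(\omega_{\C}\oplus\omega)$ by a closed $1$-form so that it pulls back to $\lambda$ up to an exact form. You have merely spelled out the cohomological bookkeeping that the paper compresses into the phrase ``bijection between cohomology classes of primitives.''
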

\begin{proof}
The map $i$ induces an isomorphism on $\pi_1$ (see the proof of Lemma \ref{lemma:stable_homology}, which also applies to the unoriented case). 
Hence, $i$ also induces an isomorphism on $H^1(- ; \R)$. 
Therefore, $i$ induces a bijection between the cohomology classes of primitives of $\wtilde{\pi}^*(\omega_{\C} \oplus \omega)$ and those of 
$\pi^*\omega = \iota^* \wtilde{\pi}^*(\omega_{\C} \oplus \omega)$.
\end{proof}

We fix a primitive $\wtilde{\lambda}$ which is compatible with $\lambda$ as in Lemma \ref{lemma:compatible_primitive}.
We can now define the action class of a Lagrangian cobordism $F: V \to \C \times M$ by setting
\[
a_{\tilde{\lambda}}(F) = [\wtilde{F}^* \wtilde{\lambda}] \in H^1(V; \R),
\]
where $\wtilde{F}: V \to \Grass(\C \times M)$ is the Gauss map of $F$.

\begin{lemma}
\label{lemma:invariance_action}
Let $f_k : L_k \to M$, $k=1, \ldots, r$, be Lagrangian immersions and $F:(f_1, \ldots, f_r) \cob \emptyset$
be a Lagrangian cobordism.
Then
\[
a_{\tilde{\lambda}}(F)|_{L_k} =  a_{\lambda}(f_k).
\]
\end{lemma}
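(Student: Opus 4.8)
The plan is to reduce the statement to a naturality property of Gauss maps together with the compatibility of the primitives $\lambda$ and $\wtilde{\lambda}$ established in Lemma \ref{lemma:compatible_primitive}. First I would recall that near each end $L_k$, the cobordism $F: V \to \C \times M$ is cylindrical: there is a collar $(1-\eps,1] \times L_k \hookrightarrow V$ on which $F$ takes the form $(t,p) \mapsto (t, k, f_k(p))$ (or the analogous negative-end formula). Restricting to the slice $\{t_0\} \times L_k$ for a fixed $t_0$, this is exactly the composition of $f_k: L_k \to M$ with the inclusion $M \hookrightarrow \{t_0\}\times\{k\}\times M \subset \C \times M$ as a "horizontal" slice. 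The key observation is then that the Gauss map $\wtilde{F}$ restricted to this slice factors as $i \circ \wtilde{f}_k$, where $\wtilde{f}_k : L_k \to \Grass(M)$ is the Gauss map of $f_k$ and $i: \Grass(M) \to \Grass(\C\times M)$ is the stabilization map: indeed the tangent space to the horizontal slice at $F(p)$ is $\R \oplus T_{f_k(p)}f_k(L_k)$, which is precisely $i$ applied to $T_{f_k(p)} f_k(L_k)$. Here one uses that the inclusion $L_k \hookrightarrow V$ of the end is a homotopy equivalence onto its collar image, so restricting the class $a_{\tilde\lambda}(F) = [\wtilde F^* \wtilde\lambda]$ to $L_k$ computes the pullback along $\wtilde F \circ (\text{slice inclusion}) = i \circ \wtilde f_k$.

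Next I would put the pieces together: we have
\[
a_{\tilde\lambda}(F)|_{L_k} = [\,(\wtilde F|_{L_k})^* \wtilde\lambda\,] = [\,\wtilde f_k^{\,*}\, i^* \wtilde\lambda\,].
\]
By Lemma \ref{lemma:compatible_primitive}, $i^*\wtilde\lambda - \lambda$ is exact, so $\wtilde f_k^{\,*}(i^*\wtilde\lambda) - \wtilde f_k^{\,*}\lambda$ is exact as well, and hence $[\wtilde f_k^{\,*} i^*\wtilde\lambda] = [\wtilde f_k^{\,*}\lambda] = a_\lambda(f_k)$ in $H^1(L_k;\R)$. This gives the claimed identity. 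One small point to address carefully is the identification of $T_p L_k$ with the tangent space of the collar slice under the homotopy equivalence; since the collar parametrization is smooth and respects orientations (and, if one works with oriented Grassmannians as in Remark \ref{rmk:oriented_vs_unoriented}, the stabilization uses the product orientation $\R \oplus U$, matching the convention in the definition of $i$ in Section \ref{subsection:discrete_invariants}), this identification is the tautological one and carries $\wtilde f_k$ to $i \circ \wtilde f_k$ on the nose.

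The main obstacle, such as it is, is bookkeeping rather than mathematics: one must be careful that the stabilization map $i$ used in the definition of $a_{\tilde\lambda}$ (via Lemma \ref{lemma:compatible_primitive}) is literally the same map, up to homotopy, as the one arising from the cylindrical structure near the ends — in particular that it does not matter which horizontal slice $\{t_0\}\times\{k\}$ one uses, since all such inclusions of $M$ into $\C\times M$ are homotopic and $i$ was only defined up to the choice of basepoint $z \in \C$. Since $H^1$ pullback only depends on the homotopy class of a map, this ambiguity is harmless. With that settled, the proof is the short computation displayed above, and I would present it essentially in the two lines given, preceded by the remark identifying $\wtilde F|_{L_k}$ with $i \circ \wtilde f_k$.
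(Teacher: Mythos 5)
Your proposal is correct and follows exactly the paper's argument: the cylindrical structure near the ends gives $\wtilde{F}|_{L_k} = i \circ \wtilde{f}_k$, and then Lemma \ref{lemma:compatible_primitive} (exactness of $i^*\wtilde{\lambda} - \lambda$) yields the equality of cohomology classes. The extra bookkeeping you supply (choice of slice, homotopy-independence of $i$) is fine but not needed beyond what the paper records.
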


\begin{proof}
By definition, the Gauss map of the cobordism satisfies $\wtilde{F}|_{L_k} = i \circ \wtilde{f}_k$.
Therefore, since $i^*\wtilde{\lambda} - \lambda$ is exact, we have
\[
a_{\tilde{\lambda}}(F)|_{L_k} = [ f_{k}^* i^* \wtilde{\lambda}] = [f_k^* \lambda] = a_{\lambda}(f_k).
\]
\end{proof}

In the usual fashion, Lemma \ref{lemma:invariance_action} allows us to define cobordism invariants of oriented Lagrangian immersions
as characteristic numbers involving the action class.

\begin{corollary}
\label{cor:characteristic_numbers}
Let $x \in H^{n-1}(\Grass M; \R)$ be a stable class, i.e. $x$ is in the image of the stabilization map $\Grass M \to \Grass(\C \times M)$. 
Let $f: L \to M$ be an oriented Lagrangian immersion.
Then the number
\[
\langle  a_{\lambda}(f) \cup \wtilde{f}^* x , [L] \rangle \in \R
\]
is an invariant of oriented Lagrangian cobordism.
\end{corollary}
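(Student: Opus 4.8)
The plan is to show that the real number $\langle a_{\lambda}(f)\cup\wtilde{f}^{*}x,[L]\rangle$ vanishes on every relation of the oriented Lagrangian cobordism group. Concretely, suppose $F:(f_1,\dots,f_r)\cob\emptyset$ is an oriented Lagrangian cobordism, with underlying compact oriented $(n+1)$-manifold $V$, modelled near its $k$-th end on the closed oriented Lagrangian immersion $f_k:L_k\to M$; I would prove
\[
\sum_{k=1}^{r}\big\langle a_{\lambda}(f_k)\cup\wtilde{f}_k^{*}x,[L_k]\big\rangle=0 .
\]
The idea is that, after cupping with the pullback of a stable lift of $x$, the extended action class of $F$ becomes a degree-$n$ cohomology class on all of $V$; since $\partial V$ bounds, such a class pairs trivially with the fundamental class of $\partial V$, and by Lemma \ref{lemma:invariance_action} this pairing is exactly the sum above.

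First I would use that $x$ is stable to pick $y\in H^{n-1}(\Grass(\C\times M);\R)$ with $i^{*}y=x$, where $i:\Grass M\to\Grass(\C\times M)$ is the stabilization map, and then set
\[
c \;=\; a_{\wtilde{\lambda}}(F)\cup\wtilde{F}^{*}y \;\in\; H^{n}(V;\R),
\]
where $\wtilde{F}:V\to\Grass(\C\times M)$ is the Gauss map of $F$ and $\wtilde{\lambda}$ is the primitive of $\wtilde{\pi}^{*}(\omega_{\C}\oplus\omega)$ compatible with $\lambda$ from Lemma \ref{lemma:compatible_primitive}. Near the $k$-th end, $F$ is the product of $f_k$ with an interval, so $\wtilde{F}|_{L_k}=i\circ\wtilde{f}_k$; hence $\wtilde{F}^{*}y|_{L_k}=\wtilde{f}_k^{*}(i^{*}y)=\wtilde{f}_k^{*}x$, and together with Lemma \ref{lemma:invariance_action} this gives $c|_{L_k}=a_{\lambda}(f_k)\cup\wtilde{f}_k^{*}x$. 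In particular $c|_{L_k}$ depends on $y$ only through $i^{*}y=x$, so the choice of lift is irrelevant. Next, writing $\iota:\partial V\hookrightarrow V$ for the inclusion and using the orientation conventions for oriented Lagrangian cobordisms to identify $\partial V$ with $\coprod_k L_k$ as oriented manifolds (so that $[\partial V]=\sum_k[L_k]$), I would invoke the standard fact that the fundamental class of the boundary dies in $V$, i.e. $\iota_{*}[\partial V]=0$ in $H_n(V;\R)$ because $[\partial V]$ is the image of the relative fundamental class $[V,\partial V]$ under the connecting homomorphism $H_{n+1}(V,\partial V;\R)\to H_n(\partial V;\R)$. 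Combining these,
\[
\sum_{k=1}^{r}\big\langle a_{\lambda}(f_k)\cup\wtilde{f}_k^{*}x,[L_k]\big\rangle=\sum_{k=1}^{r}\big\langle c|_{L_k},[L_k]\big\rangle=\big\langle c,\iota_{*}[\partial V]\big\rangle=0 .
\]

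The substantive input is Lemma \ref{lemma:invariance_action}, which produces an action class on the cobordism restricting correctly to the ends; given that, the argument is just the classical statement that characteristic numbers of closed oriented manifolds are cobordism invariants, applied to the class $c$. For this reason I do not anticipate a real obstacle: the only points requiring care are the existence of a lift $y$ of $x$ (immediate from stability) — and the observation that the final answer does not depend on it — together with the orientation bookkeeping at the ends of $V$, which is the same as in the proofs of Proposition \ref{prop:invariance_homology_class} and Proposition \ref{prop:invariance_holonomy}.
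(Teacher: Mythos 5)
Your proposal is correct and follows essentially the same route the paper intends: the corollary is deduced "in the usual fashion" from Lemma \ref{lemma:invariance_action}, i.e.\ by the classical characteristic-number argument that a degree-$n$ class defined on all of $V$ (here $a_{\tilde{\lambda}}(F)\cup\wtilde{F}^{*}y$ for a stable lift $y$ of $x$) pairs trivially with $\iota_{*}[\partial V]=0$, while restricting on each end to $a_{\lambda}(f_k)\cup\wtilde{f}_k^{*}x$. The only point worth noting is that the boundary orientation agrees with the end orientations only up to a uniform sign, which does not affect the vanishing of the sum.
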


As a special case, when $M$ is a closed surface of genus $g \neq 1$ and we take $x = 1$, we recover 
the holonomy of immersed curves in $M$, as defined in Section \ref{subsection:holonomy}.

\subsection{Action is a primitive of flux}
\label{appendix:flux}

As in the exact case, the action class determines the flux of Lagrangian paths in $M$.
Recall that the \emph{flux} of a path of Lagrangian immersions $f: L \times I \to M$ is the class $\flux(f) \in H^1(L; \R)$
whose value on a loop $\gamma$ in $L$ is given by
\[
\flux(f) \cdot \gamma = \int_{\gamma \times I} f^*\omega.
\]
See for example \cite{Solomon} for a more detailed discussion of Lagrangian flux.

The flux of a path is invariant under homotopies with fixed endpoints.
Hence, for a given manifold $L$, one can see flux as an $H^1(L; \R)$-valued cocycle on the space of Lagrangian immersions $L \to M$.
The next proposition asserts that the action is a primitive of this cocycle.

\begin{proposition}
\label{prop:action_primitive_of_flux}
Let $f : L \times I \to M$ be a Lagrangian path. Then
\[
\flux(f) = a_{\lambda}(f_1) - a_{\lambda}(f_0).
\]
\end{proposition}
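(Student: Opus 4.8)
The plan is to unwind both sides of the claimed identity as integrals and reduce to a Stokes-type computation on the Gauss-lifted cylinder. First I would fix a loop $\gamma: S^1 \to L$ and evaluate both sides on $[\gamma]$; since $H^1(L;\R)$ is detected by loops this suffices. Consider the cylinder $c = \gamma \times \mathrm{id}_I : S^1 \times I \to L \times I$ and the restricted Lagrangian path $g = f \circ c$. By definition of flux, $\flux(f)\cdot[\gamma] = \int_{S^1 \times I} g^*\omega$. On the other hand, $a_\lambda(f_t)\cdot[\gamma] = \int_{S^1} (\widetilde{f_t} \circ \gamma)^*\lambda$, where $\widetilde{f_t}$ is the Gauss map of $f_t$. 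So I must show
\[
\int_{S^1} (\widetilde{f_1}\circ\gamma)^*\lambda - \int_{S^1}(\widetilde{f_0}\circ\gamma)^*\lambda = \int_{S^1\times I} g^*\omega.
\]

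The key step is to interpret the left-hand side via the Gauss map of the \emph{path} $g$. The path of Lagrangian immersions $g: S^1 \times I \to M$ has at each point a tangent Lagrangian plane, giving a Gauss map $\widetilde{g}: S^1 \times I \to \Grass M$ whose restrictions to $S^1 \times \{t\}$ are exactly $\widetilde{f_t}\circ\gamma$. Applying Stokes' theorem to $\widetilde{g}^*\lambda$ on the cylinder $S^1 \times I$ gives
\[
\int_{S^1\times\{1\}} \widetilde{g}^*\lambda - \int_{S^1\times\{0\}} \widetilde{g}^*\lambda = \int_{S^1\times I} \widetilde{g}^*(d\lambda) = \int_{S^1\times I} \widetilde{g}^*\pi^*\omega = \int_{S^1\times I} g^*\omega,
\]
using $d\lambda = \pi^*\omega$ (the defining property of an action form) and $\pi\circ\widetilde{g} = g$. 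This is precisely the desired identity, so the proposition follows.

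The main obstacle — really the only subtlety — is making sure the Gauss map $\widetilde{g}$ of the \emph{family} $g$ is the correct object and that its boundary restrictions genuinely agree with $\widetilde{f_t}\circ\gamma$. The point is that the Gauss map of a single immersion $f_t : L \to M$ records $T f_t$, whereas $g$ is an immersion of $S^1 \times I$, not $L$; one must check that the Lagrangian plane $T(f_t)(T_{\gamma(\theta)}L)$ — a plane in $T_{f_t(\gamma(\theta))}M$ — is what $\widetilde{g}$ assigns along $S^1 \times \{t\}$, i.e. that differentiating $g$ along the $S^1$ direction recovers the restriction of $Tf_t$ to the image of $\gamma$. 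Since $g(\theta,t) = f_t(\gamma(\theta))$, the $\partial_\theta$-derivative is $Tf_t \circ \gamma'(\theta)$, so $\widetilde{g}|_{S^1\times\{t\}}$ takes values in the line (or, after identifying with the full tangent plane via the Lagrangian condition, the Lagrangian plane) determined by $Tf_t$ at $\gamma(\theta)$; this matches $\widetilde{f_t}\circ\gamma$ up to the obvious identification. Once this bookkeeping is in place, everything reduces to the one-line Stokes argument above, exactly parallel to the exact-symplectic case treated in \cite{Solomon}.
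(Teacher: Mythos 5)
Your argument is correct and takes essentially the same route as the paper: both proofs lift the path by Gauss maps to $\Grass M$ and apply Stokes on the cylinder over $\gamma$, using $d\lambda = \pi^*\omega$ and $\pi\circ\wtilde{g} = g$. The only cosmetic point is that $\wtilde{g}$ should simply be \emph{defined} as the restriction to $\gamma\times I$ of the Gauss lift $(x,t)\mapsto \wtilde{f_t}(x)$ of the family (when $\dim L>1$ the $\partial_\theta$-derivative of $g$ spans only a line inside the Lagrangian plane $Tf_t(T_{\gamma(\theta)}L)$), which is exactly the bookkeeping you indicate.
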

\begin{proof}
By taking Gauss maps, $f$ lifts to a path $\wtilde{f} : L \times I \to \Grass M$. 
Then, for any loop $\gamma$ in $L$ we have
\[
\flux(f) \cdot \gamma 
= \int_{\gamma \times I} f^*\omega
= \int_{\gamma \times I} \wtilde{f}^* \pi^* \omega
\stackrel{\text{Stokes}}{=} \int_{\gamma} \wtilde{f}_1^* \lambda - \int_{\gamma} \wtilde{f}_0^*\lambda = a_{\lambda}(f_1) \cdot \gamma - a_{\lambda}(f_0) \cdot \gamma.
\]
\end{proof}

\begin{corollary}
Let $(M, \omega)$ be a monotone manifold and $f: L \times I \to M$ be a Lagrangian loop. Then $\flux(f) = 0$.
\end{corollary}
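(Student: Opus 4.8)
The plan is to derive this directly from Proposition \ref{prop:action_primitive_of_flux}. By definition, a Lagrangian loop is a Lagrangian path $f : L \times I \to M$ whose endpoints agree, that is $f_0 = f_1$. The hypothesis that $M$ is monotone guarantees, via Corollary \ref{cor:pullback_omega_exact}, that $\pi^*\omega$ admits a primitive $\lambda$, so that the action class $a_\lambda$ is defined on Lagrangian immersions into $M$; fixing such a $\lambda$ is the only preliminary step required.

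First I would invoke Proposition \ref{prop:action_primitive_of_flux}, which gives
\[
\flux(f) = a_\lambda(f_1) - a_\lambda(f_0).
\]
The key observation is that $a_\lambda(f_t)$ depends only on the immersion $f_t$ (it is the pullback $[\widetilde{f_t}^*\lambda]$ of a fixed form along the Gauss map, with no reference to the path leading to $f_t$), so from $f_0 = f_1$ we get $a_\lambda(f_1) = a_\lambda(f_0)$. Hence the right-hand side vanishes and $\flux(f) = 0$.

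There is no substantive obstacle here: the entire content of the statement is already packaged into the exactness of $\pi^*\omega$, which is exactly what Corollary \ref{cor:pullback_omega_exact} extracts from monotonicity. If desired, I would append a short remark contrasting this with the case of a general symplectic manifold, where the flux of a loop of Lagrangian embeddings can be a nonzero element of the Lagrangian flux group; monotonicity rules this out precisely because the action class furnishes a genuine (global, not merely local) primitive of the flux cocycle on the space of Lagrangian immersions $L \to M$.
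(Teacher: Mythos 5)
Your proof is correct and is exactly the argument the paper intends: the corollary is an immediate consequence of Proposition \ref{prop:action_primitive_of_flux}, since for a loop $f_0 = f_1$ and the action class $a_{\lambda}(f_t) = [\widetilde{f_t}^*\lambda]$ depends only on the immersion, so the difference $a_{\lambda}(f_1) - a_{\lambda}(f_0)$ vanishes. No gaps; your closing remark about monotonicity entering only through the exactness of $\pi^*\omega$ is also accurate.
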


\section{Proof of Proposition \ref{prop:isotopy_relation}}
\label{appendix:isotopy}

In this appendix, we prove Proposition \ref{prop:isotopy_relation}.
The proofs given here elaborate and simplify the arguments used by Perrier in the proofs of Proposition 5.4 and Lemma 5.15 of \cite{Perrier19}.
Our main purpose is to give complete proofs that the cobordisms constructed
in \cite{Perrier19} are unobstructed in the sense of Definition \ref{def:unob_cobordisms}.

We start by proving the proposition in the case of non-separating curves in Section \ref{appendix:nonsep}.
The case of separating curves will be deduced from the non-separating case in Section \ref{appendix:sep}.

\subsection{Non-separating case}
\label{appendix:nonsep}

Let $(\alpha, \wtilde{\alpha})$ and $(\beta, \wtilde{\beta})$ be two pairs of isotopic non-separating curves with 
$\hol(\wtilde{\alpha}) - \hol(\alpha) = \hol(\wtilde{\beta}) - \hol(\beta) = x$. 
We wish to show that 
\[
[\wtilde{\alpha}] - [\alpha] = [\wtilde{\beta}] - [ \beta ]
\]
in $\Gunob(\Sigma)$.

The first case to consider is when $x=0$. 
By Lemma \ref{lemma:ham_isotopic_embedded_case}, we then have that $\alpha$ is Hamiltonian isotopic to $\wtilde{\alpha}$ and $\beta$ is Hamiltonian isotopic to $\wtilde{\beta}$.
It follows that $[\wtilde{\alpha}] - [\alpha] = 0 = [\wtilde{\beta}] - [\beta]$ and Proposition \ref{prop:isotopy_relation} is proved in this case. 
Hence, from now on we will assume that $x \neq 0$.

The general case of Proposition \ref{prop:isotopy_relation} will be deduced from two special cases, which
are stated as Lemma \ref{lemma:disjointcase} and Lemma \ref{lemma:disjointcase2} below.

\begin{lemma}
\label{lemma:disjointcase}
Let $(\alpha, \wtilde{\alpha})$ and $(\beta, \wtilde{\beta})$ be two pairs of isotopic non-separating curves such that 
$\hol(\wtilde{\alpha}) - \hol(\alpha) = \hol(\wtilde{\beta}) - \hol(\beta)$.
Furthermore, suppose that
\begin{enumerate}[label = (\roman*), font=\normalfont]
	\item $\beta$ and $\wtilde{\beta}$ are disjoint,
	\item $\alpha$ is disjoint from $\beta \cup \wtilde{\beta}$.
\end{enumerate}
Then $[\wtilde{\alpha}] - [\alpha] = [\wtilde{\beta}] - [\beta]$ in $\Gunob(\Sigma)$.
\end{lemma}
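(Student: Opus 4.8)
Since $\alpha$ is disjoint from $\beta\cup\wtilde\beta$ and $\hol(\wtilde\alpha)-\hol(\alpha)=\hol(\wtilde\beta)-\hol(\beta)=:x$, the idea is to connect the two ``difference'' relations by finding a single curve $\gamma$ together with surgeries realizing both $[\wtilde\alpha]-[\alpha]$ and $[\wtilde\beta]-[\beta]$ as the same cobordism class in $\Gunob(\Sigma)$. More precisely, I would first use Lemma \ref{lemma:isotopy_arbitrary_holonomy} to choose, in a small neighborhood of $\alpha$, a curve $\alpha'$ isotopic to $\alpha$ with $\hol(\alpha')-\hol(\alpha)=x$, arranged so that $\alpha'$ is still disjoint from $\beta\cup\wtilde\beta$; by Lemma \ref{lemma:ham_isotopic_embedded_case} the condition $\hol(\alpha')=\hol(\wtilde\alpha)$ forces $\alpha'$ to be Hamiltonian isotopic to $\wtilde\alpha$, so $[\wtilde\alpha]=[\alpha']$ in $\Gunob(\Sigma)$. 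Thus it suffices to prove $[\alpha']-[\alpha]=[\wtilde\beta]-[\beta]$, i.e. $[\alpha']+[\beta]=[\alpha]+[\wtilde\beta]$.

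\textbf{Building the cobordism.} To get this identity I would construct an unobstructed Lagrangian cobordism with ends $(\alpha',\beta)$ on one side and $(\alpha,\wtilde\beta)$ on the other. The natural candidate is assembled from Lagrangian suspensions: since $\alpha$ and $\alpha'$ are disjoint and the isotopy between them can be taken supported away from $\beta\cup\wtilde\beta$, and similarly $\beta$ and $\wtilde\beta$ are disjoint with an isotopy supported away from $\alpha$, one can take a product-type cobordism $\eta_1: \alpha'\cob\alpha$ (suspension of an exact homotopy of area $x$) and $\eta_2:\wtilde\beta\cob\beta$ (suspension of an exact homotopy of area $x$), and superimpose them in $\C\times\Sigma$ in such a way that the four ends sit over four distinct points of $\C$. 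By Lemma \ref{lemma:obstruction_suspension}, each suspension is topologically unobstructed provided each slice is non-contractible, which holds since $\alpha,\alpha',\beta,\wtilde\beta$ are all non-contractible. Because the two homotopies can be chosen with disjoint projections to $\Sigma$ during the deformation, the union $\eta_1\sqcup\eta_2$ is an embedded (hence quasi-exact, by the Proposition at the end of Section \ref{section:top_unob_cobordisms}) cobordism $(\alpha',\wtilde\beta)\cob(\alpha,\beta)$. Reading off the cobordism relation gives $[\alpha']+[\wtilde\beta]=[\alpha]+[\beta]$, which combined with $[\wtilde\alpha]=[\alpha']$ and rearranging yields exactly $[\wtilde\alpha]-[\alpha]=[\wtilde\beta]-[\beta]$. (If instead one wants to connect $(\alpha',\beta)$ to $(\alpha,\wtilde\beta)$ directly, one reverses orientations on $\alpha$ and $\wtilde\beta$ and uses that reversal is the inverse in $\Gunob(\Sigma)$; the bookkeeping is the same.)

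\textbf{The main obstacle.} The delicate point is ensuring that the two isotopies can genuinely be performed with \emph{disjoint supports in $\Sigma$ throughout}, so that the superposed suspension cobordism is actually embedded (or at least topologically unobstructed). The hypotheses only give that the \emph{endpoints} $\alpha,\wtilde\alpha,\beta,\wtilde\beta$ satisfy the stated disjointness, not that there are disjoint isotopies. I expect one must argue: the isotopy from $\alpha$ to $\alpha'$ can be taken inside an arbitrarily small tubular neighborhood of $\alpha$ (this is how $\alpha'$ was produced via Lemma \ref{lemma:isotopy_arbitrary_holonomy}, by wrapping in an annulus transverse to $\alpha$), and similarly the isotopy from $\beta$ to $\wtilde\beta$ can be taken in a small neighborhood of $\beta\cup\wtilde\beta$; since $\alpha$ is disjoint from $\beta\cup\wtilde\beta$, shrinking the neighborhoods makes the supports disjoint. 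A subtlety is that Lemma \ref{lemma:isotopy_arbitrary_holonomy} as stated only produces \emph{some} isotopic curve with the prescribed holonomy change, using an auxiliary curve $\beta_0$ intersecting $\alpha$ once; one must check this auxiliary construction can be localized so as not to disturb $\beta\cup\wtilde\beta$, which is possible because a transverse curve to $\alpha$ and a thin annular neighborhood of it can be chosen inside the complement of $\beta\cup\wtilde\beta$ (the complement of $\beta\cup\wtilde\beta$ contains $\alpha$ and is an open surface in which $\alpha$ remains non-separating, or at least non-trivial, giving room for the wrapping). Once disjointness of supports is secured, unobstructedness is immediate from Lemma \ref{lemma:obstruction_suspension} and embeddedness, and the cobordism relation finishes the proof.
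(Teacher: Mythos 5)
There is a fatal flaw at the heart of your construction. A Lagrangian suspension exists only for an \emph{exact} homotopy, and an exact homotopy has zero flux, hence preserves holonomy (Lemma \ref{lemma:holonomy_primitive_of_flux}); the phrase ``suspension of an exact homotopy of area $x$'' is self-contradictory when $x\neq 0$. More fundamentally, no oriented Lagrangian cobordism $\eta_1:\alpha'\cob\alpha$ (or $\eta_2:\wtilde{\beta}\cob\beta$) can exist at all when $\hol(\alpha')\neq\hol(\alpha)$, by the holonomy invariance of Proposition \ref{prop:invariance_holonomy}. So the two pieces you propose to superimpose do not exist in the only nontrivial case of the lemma (for $x=0$ the statement is immediate from Lemma \ref{lemma:ham_isotopic_embedded_case}). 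Even setting this aside, a \emph{disjoint} union of two cobordisms $\alpha'\cob\alpha$ and $\wtilde{\beta}\cob\beta$ would impose the two relations $[\alpha']=[\alpha]$ and $[\wtilde{\beta}]=[\beta]$ separately, not the cross-relation you want, and the relation you read off, $[\alpha']+[\wtilde{\beta}]=[\alpha]+[\beta]$, has the wrong sign relative to the desired $[\alpha']-[\alpha]=[\wtilde{\beta}]-[\beta]$. The underlying point is that holonomy cannot be ``transferred'' between connected components of a cobordism: any argument must produce a \emph{connected} cobordism tying the $\alpha$-curves to the $\beta$-curves.

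This is exactly what the paper's proof does and what your outline is missing: one chooses an auxiliary curve $\gamma$ meeting each of $\alpha$, $\beta$, $\wtilde{\beta}$ in a single point, performs the successive surgeries of $\gamma$ with $\alpha$, $\beta^{-1}$, $\wtilde{\beta}$ to get a curve $\delta$, then surgers $\delta$ with a Hamiltonian pushoff $\wtilde{\gamma}$ of $\gamma^{-1}$ to get an embedded curve $\tau$. All surgeries are at single intersection points, so the cobordisms are embedded and incompressible (Corollary \ref{cor:nonzerointersectionimpliesnodisks}), and their concatenation is quasi-exact by Proposition \ref{prop:concatenation_unobstructed}; this yields $[\tau]=[\alpha]+[\wtilde{\beta}]-[\beta]$. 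Finally $\tau$ is isotopic to $\alpha$ and the holonomy hypothesis gives $\hol(\tau)=\hol(\wtilde{\alpha})$, so $\tau$ is Hamiltonian isotopic to $\wtilde{\alpha}$ by Lemma \ref{lemma:ham_isotopic_embedded_case}, which closes the argument. Your first step (replacing $\wtilde{\alpha}$ by a pushoff $\alpha'$ of $\alpha$ with the correct holonomy) is fine, but it only relocates the problem; the essential content of the lemma is the connected surgery cobordism, which your proposal does not supply.
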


\begin{proof}

We use the same surgery procedure as in the proof of Proposition 5.4 in \cite{Perrier19}.
The hypothesis implies that there is a curve $\gamma$ that intersects each of the three curves $\alpha$, $\beta$ and $\wtilde{\beta}$ in
one point.
We may assume that $\gamma$ intersects each curve positively.
We then perform the following surgeries, which are illustrated in Figure \ref{fig:surgery_lemmaB1}.

\begin{figure}
	\centering
	\includegraphics[width=\textwidth]{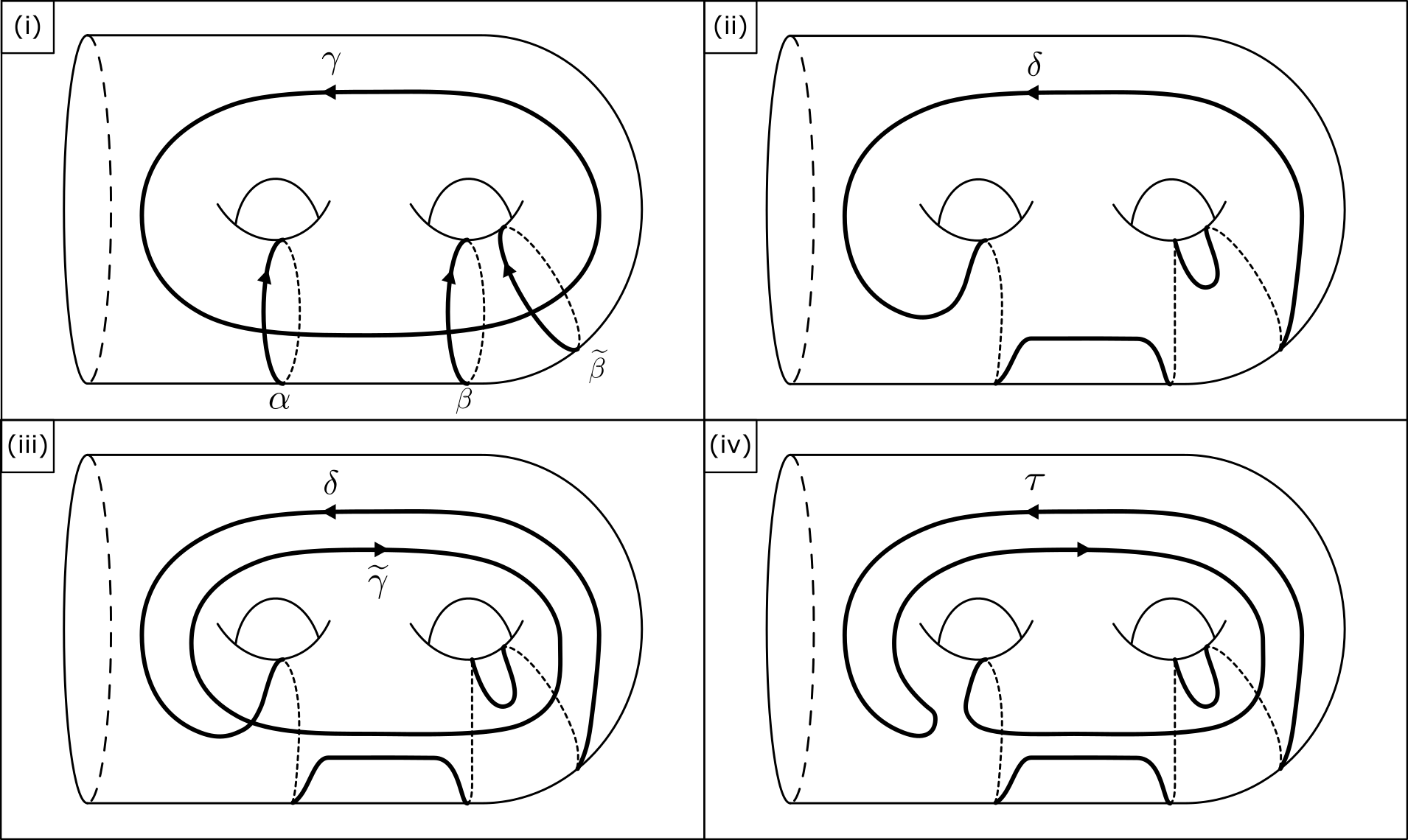}
	\caption{The surgery procedure used in the proof of Lemma \ref{lemma:disjointcase}.}
	\label{fig:surgery_lemmaB1}
\end{figure}

\begin{enumerate}
	\item The successive surgeries of $\gamma$ with $\alpha$, $\beta^{-1}$ and $\wtilde{\beta}$ at their unique intersection point, 
	producing a curve $\delta$.
	\item The surgery of $\delta$ with a curve $\wtilde{\gamma}$ which is Hamiltonian isotopic to $\gamma^{-1}$ and intersects $\delta$ in one point lying on $\alpha$. This produces a curve $\tau$.
\end{enumerate}

Since all the curves involved in the above surgeries intersect only once, the associated surgery cobordisms are embedded.
Moreover, the cobordisms are incompressible by Lemma \ref{cor:nonzerointersectionimpliesnodisks}. 
Hence, by concatenating these cobordisms and using Proposition \ref{prop:concatenation_unobstructed}, we obtain an unobstructed cobordism 
$\tau \cob (\beta^{-1}, \gamma, \alpha, \wtilde{\beta}, \wtilde{\gamma})$.
This gives the relation $[\tau] = [\alpha] + [\wtilde{\beta}] - [\beta]$.

It is easy to check that the curve $\tau$ is isotopic to $\alpha$, hence also isotopic to $\wtilde{\alpha}$. 
Moreover, by the holonomy assumption we have
\[
\hol( \tau ) = \hol(\alpha) + \hol(\wtilde{\beta}) - \hol ( \beta) = \hol(\widetilde{\alpha}).
\]
Hence, by Lemma \ref{lemma:ham_isotopic_embedded_case}, $\tau$ is Hamiltonian isotopic to $\wtilde{\alpha}$ and the conclusion follows.
\end{proof}

\begin{lemma}
\label{lemma:disjointcase2}
Let $(\alpha, \wtilde{\alpha})$ and $(\beta, \wtilde{\beta})$ be two pairs of isotopic non-separating curves such that 
$\hol(\wtilde{\alpha}) - \hol(\alpha) = \hol(\wtilde{\beta}) - \hol(\beta)$. 
Suppose that $\alpha \cap \wtilde{\alpha} = \emptyset = \beta \cap \wtilde{\beta}$.

Then  $[\wtilde{\alpha}] - [\alpha] = [\wtilde{\beta}] - [\beta]$ in $\Gunob(\Sigma)$.
\end{lemma}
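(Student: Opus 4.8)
The plan is to reduce Lemma~\ref{lemma:disjointcase2} to Lemma~\ref{lemma:disjointcase} by interpolating between the pairs $(\alpha,\widetilde\alpha)$ and $(\beta,\widetilde\beta)$ along a chain of auxiliary non-separating curves. Write $x = \hol(\widetilde\alpha) - \hol(\alpha) = \hol(\widetilde\beta) - \hol(\beta)$. The first observation is that the disjointness hypotheses force $|x| < \area(\Sigma)$: by Lemma~\ref{lemma:holonomy_primitive_of_flux}, $x$ is the flux of any regular homotopy from $\alpha$ to $\widetilde\alpha$, and taking the homotopy that sweeps the embedded annulus cobounded by the disjoint isotopic curves $\alpha$ and $\widetilde\alpha$ shows that $|x|$ equals the area of that annulus, which is strictly less than $\area(\Sigma)$. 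We also record that, by Lemma~\ref{lemma:ham_isotopic_embedded_case}, a simple curve may be replaced by an isotopic curve of the same holonomy without changing its class in $\Gunob(\Sigma)$.

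The engine of the argument is the following transport statement, which is just a re-reading of Lemma~\ref{lemma:disjointcase}: if $(c,\widetilde c)$ and $(d,\widetilde d)$ are pairs of isotopic non-separating curves with $\hol(\widetilde c)-\hol(c) = \hol(\widetilde d)-\hol(d) = x$, with $c \cap \widetilde c = \emptyset$, and with $d$ disjoint from $c \cup \widetilde c$, then $[\widetilde d]-[d] = [\widetilde c]-[c]$ in $\Gunob(\Sigma)$ (apply Lemma~\ref{lemma:disjointcase} with $(d,\widetilde d)$ playing the role of $(\alpha,\widetilde\alpha)$ and $(c,\widetilde c)$ that of $(\beta,\widetilde\beta)$). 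Now choose a chain $\alpha = c_0, c_1, \dots, c_m = \beta$ of non-separating simple closed curves with $i(c_j,c_{j+1}) = 0$ for all $j$; such a chain exists because any two non-separating simple closed curves on a surface of genus $g \geq 2$ are joined by one. Starting from $\widetilde c_0 = \widetilde\alpha$, we produce inductively curves $\widetilde c_j$ isotopic to $c_j$ with $\hol(\widetilde c_j) = \hol(c_j)+x$, such that $c_j \cap \widetilde c_j = \emptyset$ and $c_{j+1}$ is disjoint from $c_j \cup \widetilde c_j$, ending with $\widetilde c_m = \widetilde\beta$ (which has the correct holonomy by hypothesis). Applying the transport statement to each consecutive pair then gives
\[
[\widetilde\alpha]-[\alpha] = [\widetilde c_1]-[c_1] = \cdots = [\widetilde c_{m-1}]-[c_{m-1}] = [\widetilde\beta]-[\beta],
\]
which is the assertion of the lemma. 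The proof introduces no new Lagrangian cobordisms beyond those furnished by Lemma~\ref{lemma:disjointcase} and the Hamiltonian suspensions implicit in Lemma~\ref{lemma:ham_isotopic_embedded_case}, so the unobstructedness verifications needed are exactly those already carried out there.

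The main obstacle is the simultaneous realization, at each stage of the chain, of the disjointness and holonomy conditions. Concretely, once $c_j$ and $c_{j+1}$ have been fixed disjointly, one must find $\widetilde c_j$ isotopic to $c_j$, disjoint from both $c_j$ and $c_{j+1}$, and with holonomy shift exactly $x$; equivalently, one needs an embedded annulus of area $|x|$ having one boundary component equal to $c_j$ and the other disjoint from $c_{j+1}$, and then takes $\widetilde c_j$ to be that other component (cf.\ the wrapping construction in Lemma~\ref{lemma:isotopy_arbitrary_holonomy}). This is precisely where the bound $|x| < \area(\Sigma)$ is used: one grows a collar of $c_j$ inside the connected surface obtained by cutting $\Sigma$ along $c_{j+1}$, which has area $\area(\Sigma) > |x|$, until it reaches area $|x|$. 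Carrying this out while keeping the chosen representatives of the chain curves (and of the parallel copies $\widetilde\alpha$ at the initial stage) in general position, together with dispatching the easy special cases in which some of $\alpha,\widetilde\alpha,\beta,\widetilde\beta$ are isotopic, completes the proof; these are the routine but delicate points that the write-up must address.
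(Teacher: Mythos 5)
Your proposal follows the same skeleton as the paper (a chain of non-separating curves with consecutive members disjoint, repeated transport via Lemma \ref{lemma:disjointcase}, and the identification of curves of equal holonomy via Lemma \ref{lemma:ham_isotopic_embedded_case}), but the key inductive step contains a genuine gap. At each stage you need a curve $\wtilde c_j$ isotopic to $c_j$, disjoint from $c_j \cup c_{j+1}$, whose holonomy shift is the \emph{full} amount $x$, and you justify its existence by growing a collar of $c_j$ in the surface cut along $c_{j+1}$, invoking only $|x| < \area(\Sigma)$. This is not sufficient: the collar is an embedded annulus with one boundary component equal to $c_j$, so its interior avoids $c_j$ as well, and hence it is confined to the closure of a single component of $\Sigma \setminus (c_j \cup c_{j+1})$, namely the side of $c_j$ dictated by the sign of $x$. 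That component can have area much smaller than $|x|$. Concretely, if $c_j \cup c_{j+1}$ separates $\Sigma$ (which your arbitrary chain does not exclude) and $c_{j+1}$ is not isotopic to $c_j$, then any curve isotopic to $c_j$ and disjoint from $c_j \cup c_{j+1}$ lies in one of the two complementary pieces and cobounds with $c_j$ an annulus inside that piece; its shift therefore has the sign attached to that side and magnitude strictly less than the area of that piece. If the piece on the side forced by the sign of $x$ has area $\leq |x|$, no admissible $\wtilde c_j$ exists, even though $|x| < \area(\Sigma)$ (in genus $2$, where such a pair necessarily cobounds an annulus, the same phenomenon occurs: the achievable shifts miss, e.g., the value equal to the area of that annulus and all values of the opposite sign of magnitude at least the area of the other piece). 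The same difficulty already appears at the first step, where $c_1$ must in addition avoid the \emph{given} curve $\wtilde\alpha$. A repair along your lines would need both a choice of chain for which each union $c_j \cup c_{j+1}$ is non-separating and a proof that embedded annuli attached to $c_j$ on a prescribed side realize every area in $(0, \area(\Sigma))$; neither point is addressed.

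The paper avoids this issue by a subdivision trick rather than transporting the full shift: since $\alpha \cap \wtilde\alpha = \emptyset$, the isotopy between them sweeps an embedded annulus, and cutting it into $N$ slices of flux $x/N$ and applying Lemma \ref{lemma:disjointcase} to consecutive slices yields $[\wtilde\alpha] - [\alpha] = N\,([\alpha'] - [\alpha])$ for a parallel copy $\alpha'$ of shift $x/N$, and similarly for $\beta$. For $N$ large, the small shift $x/N$ is then carried along the chain using pushoffs taken inside pairwise disjoint tubular neighborhoods of the chain curves, where the required area and all disjointness conditions are available by construction. Either adopt this subdivision, or supply the missing realization lemma described above; as written, the inductive construction of the $\wtilde c_j$ does not go through.
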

\begin{proof}

By Theorem 4.4 of \cite{FarbMargalit}, there is a sequence of non-separating curves $(\gamma_j)_{j=0}^{\ell}$ such that 
$\gamma_0 = \alpha$, $\gamma_{\ell} = \beta$ and $\gamma_j \cap \gamma_{j+1} = \emptyset$ for $j = 0, \ldots, \ell - 1$.

For any integer $N$, we can write
\begin{align*}
[\wtilde{\alpha}] - [\alpha] &= N \;  \left( \, [\alpha'] - [\alpha]  \,  \right)
\end{align*}
where $\alpha'$ is isotopic to $\alpha$ and $\alpha \cap \alpha' = \emptyset$.
Indeed, since $\alpha \cap \wtilde{\alpha} = \emptyset$, there is an isotopy $(\alpha_t)_{t \in [0,1]}$ such that 
$\alpha_0 = \alpha$, $\alpha_1 = \wtilde{\alpha}$ and the 
corresponding map $S^1 \times [0,1] \to \Sigma$ is an embedding (see Lemma 2.4 of \cite{Epstein66}).
Pick a subdivision $t_0 =0 < t_1 < \ldots < t_N = 1$ of the interval $[0,1]$ with the property that 
\[
\hol( \alpha_{t_{i+1}} )  - \hol ( \alpha_{t_i} ) = \frac{x}{N}
\]
for $i = 0 , \ldots, N-1$.
Applying Lemma \ref{lemma:disjointcase} to the pairs $(\alpha, \alpha_{t_1})$ and $(\alpha_{t_i}, \alpha_{t_{i+1}})$, we have
\begin{align*}
[\wtilde{\alpha}] -  [\alpha] 
&= \sum_{i=0}^{N-1} [\alpha_{t_{i+1}}] - [\alpha_{t_i}] \\
&= N \; \left( \, [\alpha_{t_1}] - [\alpha] \, \right)
\end{align*}
Likewise, we can write $[\wtilde{\beta}] - [\beta] = N  \, \left( [\beta'] - [\beta] \right)$.

By choosing $N$ large enough, we can assume that each curve $\gamma_j$ has a tubular neighbordhood $A_j$ such that the two components of $A_j \setminus \gamma_j$ have area greater than $|x|/ N$, and moreover that $A_j \cap A_{j+1} = \emptyset$. 
In particular, for each $j = 0 , \ldots, \ell$ there is a curve $\wtilde{\gamma}_j$ with the following properties:
\begin{enumerate}[label = (\roman*), font=\normalfont]
	\item $\hol(\wtilde{\gamma}_j) - \hol(\gamma_j ) = \frac{x}{N}$,
	\item $\wtilde{\gamma}_j \cap \gamma_j = \emptyset$,
	\item $(\gamma_j, \wtilde{\gamma}_j)$ is disjoint from $(\gamma_{j+1}, \wtilde{\gamma}_{j+1})$.
\end{enumerate}

The result now follows from applying Lemma \ref{lemma:disjointcase} successively to the pairs 
$(\alpha, \alpha')$, $(\gamma_1, \wtilde{\gamma}_1)$, $(\gamma_2, \wtilde{\gamma}_2)$, $\ldots$ , $(\beta, \beta')$.

\end{proof}

\begin{lemma}
\label{lemma:pushoffwithgivenholonomy}
There is a constant $c > 0$ with the following property: 
for each non-separating curve $\alpha$ and each $x \in \R$ with $|x| < c$, there is a
curve $\wtilde{\alpha}$ isotopic to $\alpha$ such that $\alpha \cap \wtilde{\alpha} = \emptyset$ and $\hol(\wtilde{\alpha}) - \hol(\alpha) = x$.
\end{lemma}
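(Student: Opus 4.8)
The plan is to reduce the statement to a single \emph{model} push-off inside a standard annulus, and to extract the uniform constant $c$ from the fact that every non-separating curve carries a symplectic annular neighbourhood of a fixed symplectic area. The heart of the argument is thus the following \emph{Key Claim}: there is a constant $c>0$ such that every non-separating simple closed curve $\alpha\subset\Sigma$ admits a symplectic embedding of a fixed annulus $\mathcal A=(S^1\times(-1,1),\omega_{\mathcal A})$ — normalized so that each of the two halves $S^1\times(0,1)$ and $S^1\times(-1,0)$ has $\omega_{\mathcal A}$-area equal to $c$ — onto a neighbourhood of $\alpha$ carrying the core $S^1\times\{0\}$ to $\alpha$.

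To prove the Key Claim I would first fix, once and for all, a reference non-separating curve $\alpha_0$ together with some symplectic annular neighbourhood $N_0$ of total area $2c$; this $c$ will be the constant in the lemma. Given an arbitrary non-separating $\alpha$, the change-of-coordinates principle for surfaces \cite{FarbMargalit} furnishes a mapping class taking the isotopy class of $\alpha_0$ to that of $\alpha$, and by the identification $\Smcg(\Sigma)\iso\Mcg(\Sigma)$ (Moser's theorem, as recalled earlier) it is realized by some $\phi_1\in\Symp(\Sigma)$; then $\phi_1(\alpha_0)$ is isotopic to $\alpha$ and $\phi_1(N_0)$ is a symplectic annular neighbourhood of it of total area $2c$. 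Next I would correct the holonomy: applying the wrapping construction from the proof of Lemma~\ref{lemma:isotopy_arbitrary_holonomy} to $\alpha_0$ — realized there as an ambient symplectic isotopy $\psi^{(t)}$ — and using that $\phi_1$ is symplectic, hence preserves the flux of isotopies, Lemma~\ref{lemma:holonomy_primitive_of_flux} shows that $t\mapsto\hol(\phi_1\psi^{(t)}(\alpha_0))$ is affine in $t$ with nonzero slope, hence surjective onto $\R$; choosing $t_0$ appropriately, the symplectomorphism $\phi:=\phi_1\psi^{(t_0)}$ has $\phi(\alpha_0)$ isotopic to $\alpha$, $\hol(\phi(\alpha_0))=\hol(\alpha)$, and a symplectic annular neighbourhood of total area $2c$. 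Finally, Lemma~\ref{lemma:ham_isotopic_embedded_case} makes $\phi(\alpha_0)$ Hamiltonian isotopic to $\alpha$; post-composing $\phi$ with a Hamiltonian diffeomorphism realizing that isotopy yields $\Psi\in\Symp(\Sigma)$ with $\Psi(\alpha_0)=\alpha$, and $\Psi$ carries the fixed symplectic embedding of $N_0$ to the desired embedding $\iota_\alpha\colon\mathcal A\hookrightarrow\Sigma$.

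Granting the Key Claim, the lemma follows immediately. Given $\alpha$ and $x$ with $|x|<c$, take $\iota_\alpha$ as above. In the model annulus the curves $S^1\times\{s\}$, $s\in(-1,1)$, are disjoint from and isotopic to the core, and the signed $\omega_{\mathcal A}$-area of the region between $S^1\times\{0\}$ and $S^1\times\{s\}$ defines a homeomorphism $(-1,1)\to(-c,c)$; pick the $s$ mapping to $x$ and set $\wtilde\alpha:=\iota_\alpha(S^1\times\{s\})$. Then $\wtilde\alpha$ is isotopic to $\alpha$ and disjoint from it, and pushing the straight-line homotopy $S^1\times\{\tau s\}_{\tau\in[0,1]}$ forward by the symplectomorphism $\iota_\alpha$ produces a regular homotopy from $\alpha$ to $\wtilde\alpha$ whose flux equals the model flux, namely $x$; by Lemma~\ref{lemma:holonomy_primitive_of_flux}, $\hol(\wtilde\alpha)-\hol(\alpha)=x$, as required.

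The main obstacle is the Key Claim, and more precisely the \emph{uniformity} of $c$ over the infinitely many isotopy classes of non-separating curves. For a single curve, or a single isotopy class, one may simply push off inside a collar; but a priori the symplectic area available in such a collar could degenerate as the curve varies, so the point of routing through $\Symp(\Sigma)$ (change of coordinates, the isomorphism $\Smcg\iso\Mcg$, a holonomy correction, and Lemma~\ref{lemma:ham_isotopic_embedded_case}) is exactly that a symplectomorphism transports a fixed-area annular neighbourhood of the reference curve onto one of the same area around $\alpha$. The slightly delicate point within this is the holonomy-matching step: without it one only obtains $\Psi(\alpha_0)$ isotopic to $\alpha$, and a push-off of $\Psi(\alpha_0)$ need not be disjoint from $\alpha$ itself.
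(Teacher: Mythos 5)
Your proposal is correct and follows essentially the same route as the paper: the paper's proof takes a Weinstein (annular) neighbourhood $(-c,c)\times S^1$ of a fixed curve, pushes off radially, and gets uniformity of $c$ from the fact that all non-separating curves on a closed surface are related by an ambient symplectomorphism. Your Key Claim is exactly that transitivity statement, which the paper simply asserts; your extra work (change of coordinates, $\Smcg(\Sigma)\iso\Mcg(\Sigma)$, the holonomy correction, and Lemma~\ref{lemma:ham_isotopic_embedded_case}) is a valid and more detailed justification of it rather than a different argument.
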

\begin{proof}
For a given curve $\alpha$, there is a Weinstein neighborhood $U$ and a symplectomorphism $\vphi: U \iso (-c,c) \times S^1$
such that $\vphi(\alpha) = \{0 \} \times S^1$.
Then the required curves $\wtilde{\alpha}$ are obtained by pushing $\alpha$ in the radial direction of the annulus.
The same constant $c$ works for all non-separating curves, since all non-separating curves on a closed surface are symplectomorphic.
\end{proof}

\begin{remark}
The optimal constant in the preceding lemma is $c = \area(\Sigma)$, but we will not make use of this.
\end{remark}

We now proceed to the proof of Proposition \ref{prop:isotopy_relation} in the case of non-separating curves.

\begin{proof}[Proof of Proposition \ref{prop:isotopy_relation} -- non-separating case.]

Pick a constant $c$ as in Lemma \ref{lemma:pushoffwithgivenholonomy}. 
By choosing isotopies from $\alpha$ to $\wtilde{\alpha}$ and from $\beta$ to $\wtilde{\beta}$
and breaking them into isotopies of small flux, it suffices to prove the proposition when the holonomy difference satisfies $|x| < c$.

By our choice of $c$, there is a curve $\what{\alpha}$ isotopic to $\alpha$ such that $\what{\alpha} \cap \alpha = \emptyset$ and
\[
\hol(\what{\alpha}) - \hol(\alpha) = x.
\]
In particular, $\what{\alpha}$ is Hamiltonian isotopic to $\wtilde{\alpha}$, so $[\what{\alpha}] = [\wtilde{\alpha}]$.
Likewise, there is a curve $\what{\beta}$ isotopic to $\beta$ such that $\what{\beta} \cap \beta = \emptyset$ and $[\what{\beta}] = [\wtilde{\beta}]$.
The proposition now follows by applying Lemma \ref{lemma:disjointcase2} to the pairs $(\alpha, \what{\alpha})$ and $(\beta, \what{\beta})$.
\end{proof}

\subsection{Separating case}
\label{appendix:sep}

The following lemma reduces Proposition \ref{prop:isotopy_relation} in the case of separating curves to the non-separating case.

\begin{lemma}
\label{lemma:reduction_to_nonsep_case}
Let $\alpha$ and $\wtilde{\alpha}$ be isotopic non-contractible separating curves.
Then there are isotopic non-separating curves $\gamma$ and $\wtilde{\gamma}$ such that
\[
[\wtilde{\alpha}] - [\alpha] = [\wtilde{\gamma}] - [\gamma].
\]
\end{lemma}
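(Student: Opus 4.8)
The plan is to realize the difference $[\wtilde{\alpha}] - [\alpha]$ for a separating curve as a difference of classes of non-separating curves by introducing an auxiliary non-separating curve that is disjoint from the support of the isotopy. First I would fix an isotopy $(\alpha_t)_{t\in[0,1]}$ from $\alpha$ to $\wtilde{\alpha}$, taking place inside an annular neighborhood of $\alpha$ (which is possible since both curves are separating and isotopic; see Lemma 2.4 of \cite{Epstein66} if the curves are disjoint, or break the isotopy into small steps otherwise). Since $\alpha$ is separating, it bounds a subsurface $S\subset\Sigma$ of genus $h\geq 1$ on one side. Inside $S$ one can choose a non-separating simple closed curve $\gamma_0$ that, together with a parallel pushoff, forms part of a pair of pants whose third boundary component is $\alpha$; more precisely, I would pick curves so that $\alpha$, $\gamma$, and $\gamma'$ (with $\gamma'$ a parallel copy of $\gamma$ with reversed orientation, or a suitable band-sum) cobound a pair of pants $P$ inside $S$, with $P$ disjoint from the annulus supporting the isotopy $(\alpha_t)$.

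The key step is then to run the pair-of-pants relation of Lemma \ref{lemma:pair_of_pants_relation} (which applies since at most one of the three boundary curves of $P$ is separating — here only $\alpha$ is) simultaneously for the pair of pants with boundary $(\alpha,\gamma,\gamma')$ and for the pair of pants with boundary $(\wtilde{\alpha},\gamma,\gamma')$ — note the same $\gamma,\gamma'$ work for both because they are disjoint from the isotopy. Wait: more care is needed, since Lemma \ref{lemma:pair_of_pants_relation} only gives the relation modulo $\mathcal{I}$ (it is stated in $\Gred(\Sigma)$), whereas here we want an equality in $\Gunob(\Sigma)$. So instead I would argue directly: construct an explicit unobstructed cobordism. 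Take the surgery cobordisms realizing $\gamma \#_s \gamma'$ etc. as in the proof of Lemma \ref{lemma:pair_of_pants_relation}, but now observe that subtracting the two pair-of-pants configurations, the $\gamma$ and $\gamma'$ contributions cancel and one is left, after concatenating with a Lagrangian suspension of the isotopy $(\alpha_t)$ (which is disjoint from $P$ and $\gamma\cup\gamma'$), with a cobordism whose ends are $\wtilde{\alpha}, \gamma, \gamma'$ on one side and $\alpha, \wtilde{\gamma}, \wtilde{\gamma}'$ on the other, where $\wtilde{\gamma},\wtilde{\gamma}'$ are the images of $\gamma,\gamma'$ under the relevant operations and are isotopic to $\gamma,\gamma'$. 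Unobstructedness of all the surgery cobordisms follows from Corollary \ref{cor:nonzerointersectionimpliesnodisks} and Lemma \ref{lemma:surgeriesareunobstructed} exactly as in Lemma \ref{lemma:pair_of_pants_relation}, and unobstructedness of the concatenation follows from Proposition \ref{prop:concatenation_unobstructed}.

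The cleanest formulation is probably this: by Lemma \ref{lemma:pair_of_pants_relation} applied to $P$ with boundary $(\alpha^{-1}, \gamma, \gamma')$ we get an unobstructed cobordism showing $[\gamma] + [\gamma'] = T + [\alpha]$ in $\Gunob(\Sigma)$, and applying the same construction (with the pair of pants pushed off by the isotopy) to $(\wtilde{\alpha}^{-1},\gamma,\gamma')$ gives $[\gamma]+[\gamma'] = T + [\wtilde{\alpha}]$ — but these use the same $\gamma,\gamma'$ and the same value of $T$, so subtracting yields $[\wtilde\alpha]-[\alpha]=0$, which is too strong and wrong in general. The resolution is that the holonomy of the pair of pants changes when we push it off, so the third curve cannot literally be held fixed; the correct statement is that $[\wtilde\alpha]-[\alpha]$ equals $[\wtilde\gamma']-[\gamma']$ where $\gamma'$ is non-separating and $\wtilde\gamma'$ is the isotopic curve carrying the compensating holonomy. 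So the main obstacle — and the step I would spend the most care on — is bookkeeping the holonomies: one must choose the auxiliary non-separating curves and their pushoffs so that the holonomy differences match up, using Lemma \ref{lemma:holonomy+winding_bounding_curves} and Lemma \ref{lemma:holonomy_primitive_of_flux}, and so that the resulting relation reads precisely $[\wtilde\alpha]-[\alpha] = [\wtilde\gamma]-[\gamma]$ with $\gamma$ non-separating. Once the combinatorics of the surgery figure and the holonomy matching are set up, all unobstructedness claims are immediate from the criteria of Sections \ref{section:top_unob_cobordisms} and \ref{section:obstruction_dim2}, exactly as in the proofs of Lemma \ref{lemma:pair_of_pants_relation} and Lemma \ref{lemma:disjointcase}.
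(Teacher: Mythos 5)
Your proposal does not reach a proof: the construction that would actually establish the identity in $\Gunob(\Sigma)$ is never carried out. The route through Lemma \ref{lemma:pair_of_pants_relation} cannot work as stated, and for a structural reason that goes beyond the sign/holonomy mismatch you noticed: that lemma (and everything in Sections \ref{section:action_mcg}--\ref{section:computation}) lives in $\Gred(\Sigma) = \Gunob(\Sigma)/\mathcal{I}$, while both sides of the identity you must prove, $[\wtilde{\alpha}]-[\alpha]$ and $[\wtilde{\gamma}]-[\gamma]$, lie in $\mathcal{I}$ by definition. Any relation known only modulo $\mathcal{I}$ is therefore vacuous here; this lemma is precisely part of the machinery (Proposition \ref{prop:isotopy_relation}) needed to understand $\mathcal{I}$ in the first place, so one must exhibit explicit unobstructed cobordisms in $\Gunob(\Sigma)$, not quote $\Gred$-relations. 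Your own ``cleanest formulation'' illustrates the problem: it derives $[\wtilde{\alpha}]-[\alpha]=0$, which you correctly retract, but the proposed repair --- ``subtract the two pair-of-pants configurations and let the $\gamma$, $\gamma'$ contributions cancel, with the pushoff absorbing the holonomy'' --- is exactly the missing content. Formal sums of configurations do not cancel at the level of cobordisms; disjoint curves cannot be surgered; and to run the surgery scheme of Lemma \ref{lemma:pair_of_pants_relation} at the $\Gunob$ level you would have to specify the auxiliary intersecting curves, track which end carries the holonomy discrepancy, and verify unobstructedness of each step, none of which is done. You also never invoke the key mechanism that converts ``isotopic with equal holonomy'' into equality of classes, namely Lemma \ref{lemma:ham_isotopic_embedded_case}; without it, holonomy bookkeeping alone cannot close the argument in $\Gunob(\Sigma)$.

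For comparison, the paper's proof avoids disjoint auxiliary curves altogether: it chooses a non-separating curve $\gamma$ meeting the separating curve $\alpha$ in two points in minimal position, performs the two-step surgery of Section \ref{section:DT_specialcase} (first $\sigma=\alpha\#_{s_1}\gamma^{-1}$, then $\tau=\wtilde{\gamma}\#_{s_2}\sigma$ where $\wtilde{\gamma}$ is a small pushoff of $\gamma$ carrying the prescribed holonomy difference $x$), and obtains an unobstructed cobordism $\tau \cob (\wtilde{\gamma},\alpha,\gamma^{-1})$ by the arguments of Lemma \ref{lemma:surgeriesareunobstructed} and Proposition \ref{prop:concatenation_unobstructed}. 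Since $\tau$ is embedded, isotopic to $\alpha$, and has $\hol(\tau)=\hol(\alpha)+x$, Lemma \ref{lemma:ham_isotopic_embedded_case} identifies $\tau$ with $\wtilde{\alpha}$ up to Hamiltonian isotopy, giving $[\wtilde{\alpha}]-[\alpha]=[\wtilde{\gamma}]-[\gamma]$ for small $x$; the general case follows by subdividing an arbitrary isotopy, using lower semicontinuity of the admissible threshold $\eps(\alpha_t)$. If you want to salvage your approach, you would need to carry out an analogous explicit surgery construction (with the auxiliary curves actually intersecting $\alpha$), together with the holonomy matching and the Hamiltonian-isotopy step; as written, the proposal identifies the right difficulties but leaves the decisive step unproved.
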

\begin{proof}
The proof uses a simpler version of the surgery procedure described in the proof of Lemma 5.15 of \cite{Perrier19}.
We start by showing that for a given separating curve $\alpha$, there is a number $\eps(\alpha) > 0$ such that the lemma holds 
for any curve $\wtilde{\alpha}$
isotopic to $\alpha$ that satisfies $|\hol(\wtilde{\alpha}) - \hol(\alpha)| < \eps(\alpha)$.

Since $\alpha$ is separating and non-contractible, there is a non-separating curve $\gamma$ such that 
$\alpha$ and $\gamma$ intersect twice and are in minimal position. 
We are therefore in the setting considered in Section \ref{section:DT_specialcase}.
We perform the following surgeries, which are
a small modification of the procedure described in Section \ref{section:DT_specialcase}.
The whole procedure is illustrated in Figure \ref{fig:surgery_lemmaB5}.

\begin{figure}
	\centering
	\includegraphics[width=\textwidth]{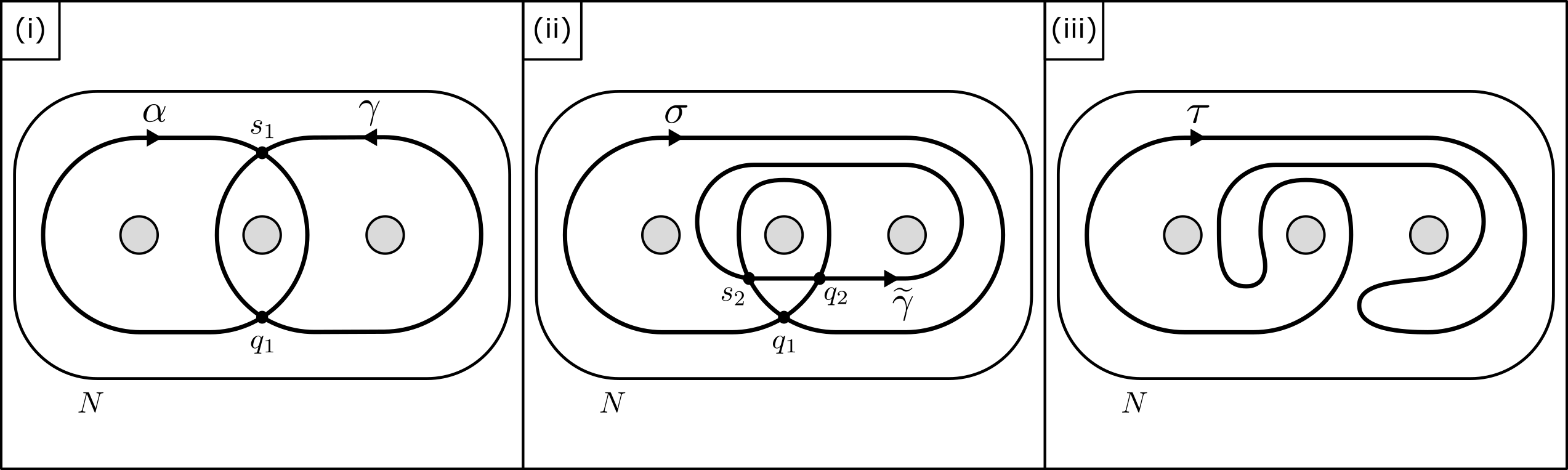}
	\caption{The surgery procedure used in the proof of Lemma \ref{lemma:reduction_to_nonsep_case}.}
	\label{fig:surgery_lemmaB5}
\end{figure}

Let $s_1$ be the negative intersection point of $(\alpha, \gamma)$, and $q_1$ be the positive intersection. 
The first step is to form the surgery $\sigma = \alpha \#_{s_1} \gamma^{-1}$.
Next, by considering small pushoffs of $\gamma$, we can find an $\eps > 0$ with the property that for any $x$ with $|x|< \eps$, there is a curve 
$\wtilde{\gamma}$ isotopic to $\gamma$ that satisfies
\begin{enumerate}[label = (\roman*), font=\normalfont]
	\item $\hol(\wtilde{\gamma}) - \hol(\gamma)  = x$,
	\item $\wtilde{\gamma}$ intersects $\sigma$ in two points;
	the first one, denoted $s_2$, lies on the arc of $\gamma$ near $q_1$, and the other, denoted $q_2$, lies on the arc of $\alpha$ near $q_1$.
\end{enumerate}
Given such a curve $\wtilde{\gamma}$, we let $\tau = \wtilde{\gamma} \#_{s_2} \sigma$.
The size of the surgery handle is chosen big enough so that $\tau$ is embedded. 

The curve $\tau$ is isotopic to $\alpha$. 
Moreover, there is an immersed cobordism $\tau \cob (\wtilde{\gamma}, \alpha, \gamma^{-1})$.
The unobstructedness arguments used in Section \ref{section:DT_specialcase} apply verbatim to the present case, and
we conclude that there is an unobstructed cobordism with the same ends. 
Hence, we obtain the relation
$[\tau] - [\alpha] = [\wtilde{\gamma}]- [\gamma]$.
Moreover, we have
\[
\hol(\tau) = \hol(\alpha) + \hol(\wtilde{\gamma}) - \hol(\gamma) = \hol(\alpha) +  x.
\]

Suppose now that $\wtilde{\alpha}$ is any curve isotopic to $\alpha$ with $\hol(\wtilde{\alpha}) - \hol (\alpha) = x$ and $|x| < \eps$.
Then we have
\[
\hol(\wtilde{\alpha}) = \hol(\alpha) + x = \hol(\tau),
\]
so $\tau$ is Hamiltonian isotopic to $\wtilde{\alpha}$ by Lemma \ref{lemma:ham_isotopic_embedded_case}. 
Hence, we obtain the required relation $[\wtilde{\alpha}] - [\alpha] = [\gamma] - [\wtilde{\gamma}]$.

If $\beta$ is a curve sufficiently $C^1$-close to $\alpha$, then the preceding construction still works with the same choices of $\gamma$ and 
$\wtilde{\gamma}$. 
In particular, this means that the function $\eps(\alpha)$ can be chosen to be lower-semicontinuous in the $C^1$ topology.

Consider now the general case where $\alpha$ and $\wtilde{\alpha}$ are arbitrary isotopic separating curves. 
Choose an isotopy $(\alpha_t)_{t \in [0,1]}$ from $\alpha$ to $\wtilde{\alpha}$.
Then $\eps(\alpha_t)$ is bounded away from $0$, hence there is a subdivision $t_0 =0 < t_1 < \ldots < t_N = 1$
of the interval
such that 
\[
|\hol(\alpha_{t_{i+1}}) - \hol(\alpha_{t_i}) | < \eps(\alpha_{t_i})
\]
for $i = 0, \ldots, N-1$. 
The lemma now follows by applying the preceding case to each pair $(\alpha_{t_{i}}, \alpha_{t_{i+1}})$.
\end{proof}

\section{A criterion for marked teardrops}
\label{appendix:teardrops}

In this appendix,
we establish a necessary condition for the existence of marked teardrops
with boundary on immersed curves in a surface $\Sigma$ of genus $g \geq 2$.
In the following, we fix immersed curves $\alpha$ and $\beta$ in $\Sigma$
that are in general position and intersect at a point $s \in \Sigma$.

\begin{proposition}
\label{prop:marked_teardrops_criterion}
Suppose that the curves $\alpha$ and $\beta$ are topologically unobstructed and bound an $s$-marked teardrop.
Then there is a bigon with boundary on $\alpha \cup \beta$.
\end{proposition}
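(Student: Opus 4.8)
The plan is to pass to the universal cover $p:\wtilde\Sigma\to\Sigma$ (an open disk, since $g\ge 2$) and run an innermost‑disk argument. First I would lift the $s$‑marked teardrop $u:(\D,\bdry\D)\to(\Sigma,\alpha\cup\beta)$ to a map $\wtilde u:\D\to\wtilde\Sigma$; this is possible because $\D$ is simply connected. Since $\alpha$ and $\beta$ are topologically unobstructed, Lemma~\ref{lemma:criterion_top_unob} shows that the full preimages $p^{-1}(\alpha)$ and $p^{-1}(\beta)$ are unions of \emph{properly embedded lines} in $\wtilde\Sigma$, and two distinct such lines cross transversally at every point they share. Writing $\Gamma=\wtilde u|_{\bdry\D}$, the boundary is a cyclic concatenation of arcs $\tilde a_0,\dots,\tilde a_k$ with $\tilde a_j$ lying on a line $\ell_j\in p^{-1}(\alpha)\cup p^{-1}(\beta)$, consecutive arcs $\tilde a_{j-1},\tilde a_j$ meeting at $\wtilde u(z_j)$, which is a lift of $s$ for $j\ge 1$ and a lift of $c$ for $j=0$. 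A short local check shows $\ell_{j-1}\ne\ell_j$ for all $j$: over $s$ this holds because $\alpha$ and $\beta$ each meet $s$ exactly once, so $\ell_{j-1},\ell_j$ are the \emph{unique} $\alpha$‑ and $\beta$‑lines through $\wtilde u(z_j)$; over $c$ it holds because no line of $p^{-1}(\alpha\cup\beta)$ can self‑intersect.

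The conceptual heart of the argument is the dichotomy: \emph{if some two distinct lines $\ell,\ell'$ of $p^{-1}(\alpha)\cup p^{-1}(\beta)$ meet in at least two points, then $\alpha\cup\beta$ bounds a bigon}. Indeed, for two intersection points $x,y$ of $\ell$ and $\ell'$, the arc of $\ell$ from $x$ to $y$ and the arc of $\ell'$ from $x$ to $y$ form a loop in $\wtilde\Sigma$, hence bound a disk there, and this disk projects to a continuous $2$‑cornered polygon on $\alpha\cup\beta$ with corners at $p(x)$ and $p(y)$. Conversely, since any bigon in $\Sigma$ lifts to one in $\wtilde\Sigma$ and a bigon has only two boundary arcs, the existence of a bigon forces some pair of lines to meet twice. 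So the theorem reduces to the following: \emph{assuming that every pair of distinct lines of $p^{-1}(\alpha)\cup p^{-1}(\beta)$ meets in at most one point, there is no $s$‑marked teardrop.}

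To prove this last statement I would argue by excising sub‑loops of $\Gamma$ and inducting on the number of corners. If $\Gamma$ has a transverse self‑intersection at a point $w$ (necessarily lying in the interiors of two arcs $\tilde a_i,\tilde a_j$ on distinct lines), split $\Gamma$ at $w$; the sub‑loop not containing the lift of $c$ has all its corners among the lifts of $s$, together with one new corner at $w$, and thus projects to a \emph{smaller} $s$‑marked teardrop (with special corner at $p(w)$) — unless $p(w)=s$, in which case it is an "$s$‑polygon" with only $s$‑corners, or unless the count drops to $2$, in which case we have a bigon. Similarly, if two non‑consecutive arcs of $\Gamma$ lie on a common line $\Lambda$, cutting along an arc of $\Lambda$ produces a polygon with strictly fewer corners. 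After these reductions one is left with the case that $\Gamma$ is an embedded loop whose $k+1$ carrying lines are pairwise distinct; here I would exploit the defining feature of a marked teardrop — that \emph{all but one} corner sits at the single point $s$, so that the two lines switched at such a corner are the unique $\alpha$‑line and $\beta$‑line through the corresponding lift of $s$ — to force two of these lines to share a second intersection point, contradicting the standing assumption.

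The main obstacle I expect is exactly this final combinatorial step, together with the bookkeeping needed to make the excision process terminate: the reductions do not obviously stay inside the class of marked teardrops (they can create "$s$‑polygons" or triangle‑like polygons, and embedded triangles \emph{are} compatible with the one‑intersection hypothesis), so one must track the concentration of corners at $s$ carefully — presumably combined with the algebraic corner count of Lemma~\ref{lemma:countingcorners} — to rule out the embedded case. The innermost dichotomy and the reduction to "any two lines meet at most once" are routine; all of the genuine difficulty is in the closing case analysis. An alternative route worth pursuing is to first make the boundary of $u$ locally embedded, observe that its non‑triviality then obstructs injectivity of $\pi_1(\alpha\cup\beta)\to\pi_1(\Sigma)$ while $\pi_1(\alpha)$ and $\pi_1(\beta)$ inject, and deduce a bigon from this failure of injectivity by a cut‑and‑paste argument on the lines in $\wtilde\Sigma$.
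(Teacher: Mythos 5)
Your setup (lifting to the universal cover, noting that the components of $p^{-1}(\alpha)\cup p^{-1}(\beta)$ are properly embedded lines by Lemma \ref{lemma:criterion_top_unob}, and the observation that two such lines meeting twice project to a bigon) is sound, and it matches the easy half of the argument. But the proof has a genuine gap exactly where you say you expect trouble: the reduction leaves you with the claim that if every pair of distinct lines meets at most once, then no $s$-marked teardrop exists, and this claim is never established. The excision/induction scheme is not shown to terminate inside a usable class of polygons (as you note, it can produce polygons all of whose corners are lifts of $s$, or embedded triangles, which are perfectly compatible with the ``pairwise at most one intersection'' hypothesis), and the closing combinatorial step --- forcing two carrying lines of an embedded polygon with all but one corner over $s$ to meet twice --- is precisely the content of the proposition, not a routine verification. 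Your alternative suggestion via failure of $\pi_1$-injectivity is likewise only a sketch. So as written this is an outline of a strategy, not a proof, and it is not clear the purely combinatorial route closes without further geometric input.

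For comparison, the paper's proof injects that input through hyperbolic geometry. One fixes the hyperbolic structure, takes the isometries $a,b$ corresponding to $[\alpha],[\beta]$ based at $s$, and trichotomizes on their axes: if the axes coincide or are disjoint, the two lifts of $\alpha,\beta$ through a common lift of $s$ share endpoints at infinity with the axes and hence cross at least twice, giving an embedded bigon upstairs. In the remaining case (axes crossing once) one passes to the intermediate cover $\HH/\langle a,b\rangle$, a punctured torus in which the geodesic representatives are simple and intersect exactly once; the marked teardrop forces the lifted curves there either to be non-simple --- and a non-simple curve freely homotopic to a simple geodesic bounds an embedded bigon by Hass--Scott --- or to be simple but intersecting more than minimally, in which case the bigon criterion applies. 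If you want to salvage your approach, you would need a substitute for this last step, e.g.\ by straightening the lines to geodesics or by importing the minimal-position/bigon machinery you already cite; without it the argument does not go through.
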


As a sample application, we mention the following corollary.

\begin{corollary}
Let $\alpha$ and $\beta$ be simple curves in minimal position.
Then there is no marked teardrop with boundary on $\alpha \cup \beta$.
\end{corollary}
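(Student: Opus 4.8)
The plan is to obtain the corollary as a short formal consequence of Proposition~\ref{prop:marked_teardrops_criterion} and the Bigon Criterion \cite[Proposition~1.7]{FarbMargalit}. First I would dispose of the degenerate case in which $\alpha$ and $\beta$ are disjoint: the only potential double points of $\alpha\cup\beta$ would be points of $\alpha\cap\beta$, of which there are none, so $\alpha\cup\beta$ bounds no polygon with a corner, and in particular no marked teardrop. Since a contractible simple curve has zero geometric intersection number with every curve, being in minimal position forces such a curve to be disjoint from the other; hence I may henceforth assume that $\alpha$ and $\beta$ intersect and, consequently, that both are non-contractible.

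The next step is to check that $\alpha$ and $\beta$ are topologically unobstructed, so that Proposition~\ref{prop:marked_teardrops_criterion} applies to them. Incompressibility is equivalent to non-contractibility because $\pi_1(\Sigma)$ is torsion-free (see the remark following Definition~\ref{def:top_unob_lagrangian}). Moreover an embedded curve bounds no teardrop at all: the boundary lift of a teardrop is a path with distinct endpoints, and this cannot map to a loop under an injective map. Thus both $\alpha$ and $\beta$ are topologically unobstructed.

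Finally, assume for contradiction that $\alpha\cup\beta$ bounds an $s$-marked teardrop for some intersection point $s$. By Proposition~\ref{prop:marked_teardrops_criterion}, $\alpha\cup\beta$ then bounds a bigon; passing to an innermost one yields an embedded bigon between the simple curves $\alpha$ and $\beta$, which contradicts the Bigon Criterion, according to which curves in minimal position bound no bigon. Hence no marked teardrop exists. I do not anticipate any real obstacle here, since the argument merely assembles the preceding proposition with a classical fact of surface topology; the only points calling for a line of justification are the disjoint/contractible degenerate case and the observation that a simple non-contractible curve is topologically unobstructed, both of which are routine.
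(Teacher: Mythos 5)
Your argument is correct and follows essentially the same route as the paper's proof: apply Proposition~\ref{prop:marked_teardrops_criterion} to produce a bigon and contradict the Bigon Criterion, with your explicit check of topological unobstructedness (and the disjoint/contractible degenerate case) merely making precise a hypothesis the paper's one-line proof leaves implicit. The only step treated differently is the passage from the possibly singular bigon of Proposition~\ref{prop:marked_teardrops_criterion} to an embedded one: the paper cites \cite[Lemma 1.8]{FarbMargalit} for this, and your appeal to an \emph{innermost} bigon should be read as shorthand for that standard fact (e.g.\ via lifts to the universal cover), since an innermost-disk argument does not literally apply to an immersed bigon.
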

\begin{proof}
If $\alpha$ and $\beta$ bound a marked teardrop, then by Proposition \ref{prop:marked_teardrops_criterion}
they bound a bigon. By Lemma 1.8 of \cite{FarbMargalit}, they
also bound an embedded bigon.
By the Bigon Criterion \cite[Proposition 1.7]{FarbMargalit}, the curves are not in minimal position.
\end{proof}

Before giving the proof of the proposition, we make some remarks.

\begin{remark}
\hfill
\begin{enumerate}[label = (\roman*)]
	\item The bigon obtained from Proposition \ref{prop:marked_teardrops_criterion} 
	may have boundary on only one of the curves. See Figure \ref{fig:marked_teardrops} for an example.
	\item Under the assumptions of Proposition \ref{prop:marked_teardrops_criterion},
	it is not necessarily the case that the curves bound an embedded bigon.
	However, we may assume that the bigon is the projection of an embedded bigon in the universal cover.
	\item Proposition \ref{prop:marked_teardrops_criterion} hints
	at a deeper relation between marked teardrops and the Floer differential.
	It would be interesting to describe this relation in a more precise way.
\end{enumerate}
\end{remark}

\begin{proof}[Proof of Proposition \ref{prop:marked_teardrops_criterion}]
Write $\HH$ for the hyperbolic plane and fix a universal covering $\pi: \HH \to \Sigma$, 
where the deck group acts by isometries of the hyperbolic metric.
Choose $s$ as a basepoint of $\Sigma$, and fix a basepoint $\tilde{s}_0 \in \pi^{-1}(s)$.
The choices of basepoints give an identification of $\pi_1(\Sigma, s)$ with the deck group of $\pi$.
Let $\wtilde{\alpha}$ and $\wtilde{\beta}$ be the lifts of $\alpha$ and $\beta$ that
start at $\tilde{s}_0$.
Since $\alpha$ and $\beta$ are topologically unobstructed, these lifts
are properly embedded lines in $\HH$, see Lemma \ref{lemma:criterion_top_unob}.

Let $a$ and $b$ be the hyperbolic isometries associated to $[\alpha]$ and $[\beta]$, respectively.
Let $A$ (resp. $B$) be the axis of $a$ (resp. $b$).
The proof splits into three cases, depending on the behaviour of the axes.

\begin{description}
	\item[Case 1] $A=B$.
\end{description}

In this case $a$ and $b$ commute, so that $\alpha$ and $\beta$ generate a cyclic subgroup of $\pi_1(\Sigma, s)$.
This implies that the lifts $\wtilde{\alpha}$ and $\wtilde{\beta}$ intersect infinitely many times.
In particular, the lifts bound an embedded bigon, which projects to a bigon on $\alpha$ and $\beta$.

\begin{description}
	\item[Case 2] $A$ and $B$ are disjoint.
\end{description}

The lift $\wtilde{\alpha}$ has the same endpoints at infinity as $A$, and likewise for $\wtilde{\beta}$ and $B$.
Since $A$ and $B$ are disjoint, $\wtilde{\alpha}$ and $\wtilde{\beta}$ must
intersect at least twice. Hence the lifts bound an embedded bigon, which 
projects to a bigon
on $\alpha$ and $\beta$.

\begin{description}
	\item[Case 3] $A$ and $B$ are distinct and intersect.
\end{description}

In this case, the axes $A$ and $B$ intersect exactly once.
Let $\Gamma$ be the subgroup of $\pi_1(\Sigma, s)$ generated
by $\alpha$ and $\beta$.
Let $T = \HH/\Gamma$ be the covering of $\Sigma$ associated to $\Gamma$.
The hypothesis on the axes implies that $T$ is a punctured torus
and that $A$ and $B$ project to
simple closed geodesics $\gamma_A$ and $\gamma_B$ in $T$ that intersect once and form a basis of $\pi_1(T)$.
These properties can be proved in an elementary way by constructing an explicit fundamental domain for the action of $\Gamma$; see
the proof of Theorem 8 of \cite{Purzitsky}.

On the other hand, the lifts $\wtilde{\alpha}$ and $\wtilde{\beta}$ project 
to closed curves $\overline{\alpha}$ and $\overline{\beta}$ in $T$
that are lifts of $\alpha$ and $\beta$.
These curves are freely homotopic to $\gamma_A$ and $\gamma_B$, respectively.
Moreover, the existence of the $s$-marked teardrop on $\alpha$ and $\beta$
implies that either the lifts $\overline{\alpha}$ and $\overline{\beta}$
intersect at least twice, or that
one of the lifts has a self-intersection.

If either $\overline{\alpha}$ or $\overline{\beta}$ is not simple,
then since it is freely homotopic to a simple geodesic it bounds an embedded
bigon by Theorem 2.7 of \cite{HassScott}.
Otherwise, the curves $\overline{\alpha}$ and $\overline{\beta}$
are simple and intersect at least twice.
Since the geodesics $\gamma_A$ and $\gamma_B$ intersect only once, 
$\overline{\alpha}$ and $\overline{\beta}$
are not in minimal position.
Hence they bound an embedded bigon by the bigon criterion \cite[Proposition 1.7]{FarbMargalit}.
\end{proof}

\bibliographystyle{alpha}
\bibliography{bibliography}
	
\end{document}